\pgfplotsset{compat=newest}
\newtheorem{theorem}{Theorem}
\newtheorem*{theorem*}{Theorem}
\newtheorem{corollary}[theorem]{Corollary}
\newtheorem{proposition}[theorem]{Proposition}
\newtheorem{lemma}[theorem]{Lemma}
\newtheorem{definition}[theorem]{Definition}
\newtheorem{hypothesis}[theorem]{Hypothesis}
\newtheorem{remark}[theorem]{Remark}
\newcommand{\calO}{\mathcal{O}}
\newcommand{\OK}{\mathcal{O}_K}
\newcommand{\IC}{\mathbb{C}}
\DeclareMathOperator{\Res}{Res}
\DeclareMathOperator{\vol}{vol}
\DeclareMathOperator{\ord}{ord}
\DeclareMathOperator{\rank}{rank}
\DeclareMathOperator{\spant}{span}
\DeclareMathOperator{\ind}{\mathlarger{\mathbf{1}}}
\DeclareMathOperator{\N}{N}
\DeclareMathOperator{\Tr}{tr}
\DeclareMathOperator{\Id}{Id}
\DeclareMathOperator{\SL}{SL}
\DeclareMathOperator{\fS}{\mathfrak{S}}
\DeclareMathOperator{\GL}{GL}
\DeclareMathOperator{\Gr}{\mathbf{Gr}}
\DeclareMathOperator{\Pro}{\mathbf{P}}
\DeclareMathOperator{\card}{\normalfont \texttt{\#}}
\begin{document}
\title{Moments of the number of points in a bounded set for number field lattices}

\author{Nihar Gargava, Vlad Serban and Maryna Viazovska}
\date{\vspace{-5ex}}
\maketitle

\begin{abstract}
We examine the moments of the number of lattice points in a fixed ball of volume $V$ for lattices in Euclidean space which are modules over the ring of integers of a number field $K$. 
In particular, denoting by $\omega_K$ the number of roots of unity in $K$, we show that for lattices of large enough dimension the moments of the number of $\omega_K$-tuples of lattice points converge to those of a Poisson distribution of mean $V/\omega_K$. This extends work of Rogers for $\mathbb{Z}$-lattices. What is more, we show that this convergence can also be achieved by increasing the degree of the number field $K$ as long as $K$ varies within a set of number fields with uniform lower bounds on the absolute Weil height of non-torsion elements.
\end{abstract}

\tableofcontents

\section{Introduction}
A classical result in the geometry of numbers due to C.L. Siegel \cite{Sie45} establishes a mean value theorem for lattice sum functions $F_f(\Lambda)=\sum_{\lambda\in\Lambda\setminus\{0\}}f(\lambda)$, where $f: \mathbb{R}^t\to\IC$ is integrable and decays sufficiently fast. More precisely, the space $\SL_t(\mathbb{R})/\SL_t(\mathbb{Z})$ of unimodular lattices in $\mathbb{R}^t$ carries a canonical Haar measure of total mass one.  Viewing $F_f(\Lambda)$ as a random variable on that space, Siegel \cite{Sie45} proved the mean value theorem $$\mathbb{E}[F_f(\Lambda)]=\int_{\mathbb{R}^t} f(x) dx.$$
In particular, when $f$ is the indicator function of a bounded convex body $F_f(\Lambda)$ counts non-trivial lattice points and the famous Minkowski--Hlawka theorem \cite{h43} can be deduced in this way. Various refinements of this approach imposing extra structure have since appeared, in particular for lattices coming from maximal orders in number fields or $\mathbb{Q}$-division rings \cite{va2011,AV,G21}. The additional structure can often be leveraged for suitable applications; for instance A. Venkatesh in \cite{AV} deduces the currently best asymptotic lower bounds on the sphere packing density in high dimensions by working with cyclotomic integers. 

In a series of papers \cite{Rogers55,R1955,R1956}, C.A. Rogers established roughly a decade after Siegel formulas for the higher moments of $\mathbb{Z}$-lattices and explicitly evaluated those formulas when the lattice sum function is counting non-trivial lattice points in a bounded convex set. More precisely, Rogers obtains in \cite[Theorem 3]{R1956}: 

\begin{theorem*}{\bf (Rogers, 1956) }
Let $\Lambda  \subseteq \mathbb{R}^{t}$ 
be a random unit covolume lattice
and let $S$ be a centrally symmetric Borel set of volume $V$. Consider the random variable
\begin{equation}
\rho(\Lambda) :=F_{\ind_S}(\Lambda)= \card \left( S \cap ( \Lambda \setminus \{0\} )\right).
\end{equation}

Then, provided the $\mathbb{Z}$-rank $t$ of the lattices satisfies 
$t\geq \lceil \tfrac{1}{4}n^{2}+3\rceil,$
it follows that the $n$-th moment of the number of non-zero lattice points in $S$ satisfies
$$ 2^n \cdot m_{n}( \tfrac{V}{2} ) \leq \mathbb{E}[\rho(\Lambda)^n]\leq 2^n\cdot m_{n}( \tfrac{V}{2} ) +E_{n,t}\cdot (V+1)^{n-1},$$
where
\begin{equation}
	m_{n}(\lambda) = e^{-\lambda}\sum_{r=0}^\infty\frac{\lambda^{r}}{r!} r^{n}  = \mathbb{E}_{X \sim \mathcal{P}(\lambda) }(X^{n})
\label{eq:def_of_poisson}
\end{equation}
is the $n$th moment of a Poisson distribution with parameter $\lambda$
    and where $E_{n,t}$ is an error term 
    decaying exponentially as $t$ increases:
    $$E_{n,t} \le 2\cdot 3^{ \lceil \tfrac{n^2}{4}\rceil}\cdot ( \tfrac{\sqrt{3}}{2})^t+21\cdot 5^{ \lceil \tfrac{n^2}{4}\rceil}\cdot (\tfrac{1}{2})^{t}.$$
\end{theorem*}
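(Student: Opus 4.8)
The plan is to compute $\mathbb{E}[\rho(\Lambda)^n]$ by expanding the $n$-th moment into lattice‑point configurations, evaluating each configuration through the Siegel--Rogers mean‑value formalism, and separating a ``main term'' which reassembles to $2^n m_n(V/2)$ from a remainder supported on linearly degenerate configurations. The first step is purely combinatorial. Writing $\rho(\Lambda)^n$ as a sum over ordered $n$‑tuples in $(S\cap\Lambda\setminus\{0\})^n$ and grouping the tuples by the partition of $\{1,\dots,n\}$ generated by $i\sim j\Leftrightarrow\lambda_i=\pm\lambda_j$, a count of the admissible sign patterns in each block (using the symmetry of $S$ and the involution $\mu\mapsto-\mu$) gives, for every lattice,
$$\rho(\Lambda)^n=\sum_{k=1}^{n}S(n,k)\,2^{\,n-k}\,N_k(\Lambda),\qquad N_k(\Lambda):=\#\bigl\{(\mu_1,\dots,\mu_k)\in(S\cap\Lambda\setminus\{0\})^k:\ \mu_i\neq\pm\mu_j\ \text{for }i\neq j\bigr\},$$
where $S(n,k)$ is the Stirling number of the second kind. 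All coefficients are non‑negative, so $\mathbb{E}[\rho^n]=\sum_k S(n,k)2^{n-k}\mathbb{E}[N_k]$ in $[0,\infty]$, and since $m_n(\lambda)=\sum_{k}S(n,k)\lambda^k$ (Touchard's identity for the Bell polynomials) one has $\sum_kS(n,k)2^{n-k}V^k=2^nm_n(V/2)$. It therefore suffices to prove a two‑sided estimate $V^k\le\mathbb{E}[N_k(\Lambda)]\le V^k+\varepsilon_{k,t}$ for $1\le k\le n$ with an explicit $\varepsilon_{k,t}$ decaying in $t$; the quoted $E_{n,t}$ then emerges from $\sum_k S(n,k)2^{n-k}\varepsilon_{k,t}$, bounded crudely by $(V+1)^{n-1}$ times a $t$‑decaying constant.

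For the lower bound, observe that an ordered linearly independent $k$‑tuple automatically satisfies $\mu_i\neq\pm\mu_j$ for $i\neq j$, so $N_k(\Lambda)\ge\#\{\text{ordered linearly independent }k\text{-tuples in }S\cap\Lambda\}$. The expectation of the right‑hand side is exactly $V^k$: by the Siegel--Rogers mean‑value formula for primitive $k$‑frames, the expected number of ordered bases of primitive rank‑$k$ sublattices lying in a region of volume $V$ equals $V^k\big/\prod_{j=0}^{k-1}\zeta(t-j)$, and summing over the index $d$ of $\langle\mu_1,\dots,\mu_k\rangle$ inside its saturation, using the classical Dirichlet identity $\sum_{d\ge1}a_k(d)\,d^{-s}=\prod_{j=0}^{k-1}\zeta(s-j)$ with $a_k(d)$ the number of index‑$d$ subgroups of $\mathbb{Z}^k$, the zeta factors cancel and one is left with precisely $V^k$. (All the Dirichlet series converge for $t>k$, which is implied by $t\ge\lceil n^2/4+3\rceil$ since $k\le n$.) Hence $\mathbb{E}[N_k]\ge V^k$, and multiplying by $S(n,k)2^{n-k}\ge0$ and summing already yields $\mathbb{E}[\rho^n]\ge 2^nm_n(V/2)$.

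For the upper bound one must estimate $\mathbb{E}[N_k]-V^k=\mathbb{E}\bigl[\#\{\text{ordered }k\text{-tuples in }S\cap\Lambda\setminus\{0\}\text{ that are linearly dependent but have no two }\pm\text{-equal}\}\bigr]$. The plan is to stratify these degenerate tuples by their dependence type: the rank $r\le k-1$ of their span, the isomorphism type of the finite group $(\Lambda\cap\spant_{\mathbb{R}}(\mu_1,\dots,\mu_k))/\langle\mu_1,\dots,\mu_k\rangle$, and the integer matrix expressing the $k-r$ dependent vectors in terms of a basis of the span. For each type $\tau$, Rogers' mean‑value formula writes the contribution as $c_\tau(t)\cdot\int_{(\mathbb{R}^t)^r}(\dots)$, a product of at most $r\le k-1\le n-1$ single‑variable integrals — each bounded by $\vol(S)=V$, hence by $V+1$ — multiplied by a coefficient $c_\tau(t)$ assembled from factors $d^{-t}$ for the sublattice indices involved and factors $\zeta(t-j)^{-1}=1+O(2^{-t})$; for every non‑main $\tau$ this coefficient tends to $0$ geometrically in $t$. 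Summing over the finitely many types $\tau$ (and over $k$) produces $\varepsilon_{k,t}$, and hence the claimed form of $E_{n,t}$.

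The heart of the matter — and the source of both the hypothesis $t\ge\lceil n^2/4+3\rceil$ and the precise shape of $E_{n,t}$ — is this last step: organizing the dependence types, bounding $\sum_\tau c_\tau(t)$ by an explicit geometrically small quantity, and thereby in particular establishing finiteness of the moments in the first place. A rank‑$r$ dependence pattern among $k\le n$ vectors involves on the order of $r(k-r)$ free integer parameters, a quantity maximised near $r=k/2$ and bounded by $\lceil n^2/4\rceil$; this is exactly the number one must compare with $t$ for the defining sums to converge, while the constants $3^{\lceil n^2/4\rceil}$ and $5^{\lceil n^2/4\rceil}$ come from counting the admissible integer matrices at each rank. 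Verifying convergence in the stated range of $t$ and tracking these combinatorial constants through Rogers' recursion is where essentially all of the technical effort lies; everything else is the bookkeeping above.
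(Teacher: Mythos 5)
Your combinatorial reduction $\rho(\Lambda)^n=\sum_{k=1}^{n}S(n,k)\,2^{n-k}N_k(\Lambda)$ is correct (it amounts to $\sum_k S(n,k)m^{\underline{k}}=m^n$ applied to the $m=\rho/2$ antipodal pairs), and the lower bound is sound: the number of linearly independent $k$-tuples is the rank-$k$ (i.e.\ $D=I_k$) term of Rogers' integral formula and has expectation exactly $V^k$, giving $\mathbb{E}[\rho^n]\ge 2^nm_n(V/2)$ after summing. Note, for context, that this paper does not prove Rogers' 1956 theorem---it cites it and proves the number-field analogue---so the natural comparison is with Rogers' original argument and with this paper's Sections 4--5. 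Your $N_k$-reorganization is a legitimate alternative to Rogers' own route (he works directly on the full $n$-th moment and separates the matrices $D$ with a single $\pm1$ per column, which is what Lemma \ref{le:poisson_term} here mirrors via Touchard), and the two bookkeepings are equivalent.

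The genuine gap is in the upper bound, and it is not ``bookkeeping.'' Your stratum parameter $\tau$ includes the rational matrix $D$ expressing the $k-r$ dependent vectors in terms of a spanning subset; there are infinitely many such $D$, so the statement that one sums ``over the finitely many types $\tau$'' is simply false and contradicts your own closing paragraph. The substance of Rogers' theorem is precisely that the infinite sum $\sum_\tau c_\tau(t)\int(\cdots)$ converges for $t\ge\lceil n^2/4+3\rceil$ and that its value is $O\!\left((\sqrt{3}/2)^t+(\tfrac12)^t\right)$ uniformly in $V$. That requires two nontrivial inputs which your plan names but never supplies: (i) a quantitative decay of the intersection volumes $\int g(xD)\,dx$ as the entries of $D$ grow---this is Rogers' Lemma 5 (the analogue here is Lemma \ref{le:lemma5_rogers}/\ref{le:potato_slice}, which give the $(\sqrt{3}/2)^{td}$ factor, and for large-height entries the height--volume comparison of Lemmas \ref{le:lower_heigh_bound}, \ref{lemma:genvolumeratio})---and (ii) a count of the matrices $D$ at each rank compatible with that decay, which is where the $\lceil n^2/4\rceil$ in the hypothesis and the constants $3^{\lceil n^2/4\rceil}, 5^{\lceil n^2/4\rceil}$ actually come from. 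Also, bounding each single-variable integral by $\vol(S)$ and writing the answer as ``$c_\tau(t)\times(\text{at most }V^{k-1})$'' loses exactly the decay in $D$ that makes the infinite sum converge; you must keep the $D$-dependence inside the integral estimate, not discard it. Finally, since the statement is for a general centrally symmetric Borel set $S$, the intersection-volume lemma, which is proved for balls, needs a symmetrization step (cf.\ Theorem \ref{th:genbody}) to transfer the upper bound---that step is also absent. As written, the proposal is a correct outline with a correct lower bound, but the hard half of the theorem (the convergent, geometrically small upper bound on the error) is asserted rather than proved.
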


In other words, Rogers showed that the number of pairs of non-trivial lattice points in $S$ has a distribution which approaches a Poisson distribution with mean $\tfrac{1}{2}V$ for large values of $t$. 
In particular, we obtain for large rank $t$ essentially $2\sqrt{t}$ point count estimates 
\begin{align}
 \mathbb{E} \left(\tfrac{1}{2} \rho(\Lambda)  \right)& =  \tfrac{1}{2}\vol(S) , \\
 \mathbb{E} \left(( \tfrac{1}{2} \rho(\Lambda) )^{2}  \right)& =  \left( \tfrac{1}{2}\vol(S) \right)^{2} + \left( \tfrac{1}{2} \vol(S) \right) + o(1), \\
  \mathbb{E} \left(( \tfrac{1}{2} \rho(\Lambda) )^{3}  \right)& =  \left( \tfrac{1}{2}\vol(S)  \right)^{3} + 3\left( \tfrac{1}{2}\vol(S) \right)^{2} + \left( \tfrac{1}{2}\vol(S) \right) +  o(1), \\
	      & \mathrel{\makebox[\widthof{=}]{\vdots}}
\end{align}
which are valid independently of $\vol(S)$. 
Note that the polynomials appearing on the right hand side are Touchard polynomials\footnote{For the first moment, Siegel's theorem tells us that the error term is exactly zero.}
in $\tfrac{1}{2}\vol(S)$ and that the appearance of the fraction $\tfrac{1}{2}$ on either side of the estimates results from the symmetries of $\pm 1$ acting on all lattice vectors.

It seems natural to ask whether similar higher moment results hold for lattices with additional structure, or whether the behaviour is qualitatively different. For a number field $K$ the ring of integers $\OK$ can be seen via the Minkowski embedding as a lattice in $K\otimes_\mathbb{Q}\mathbb{R}\cong \mathbb{R}^{[K:\mathbb{Q}]}$. Thus, any free $\OK$-module of finite rank $t$ produces (after possible scaling) a unimodular lattice in 
the space $\SL_{t}(K\otimes_\mathbb{Q}\mathbb{R})/\SL_{t}(\OK)$, which also comes equipped with a canonical probability measure.  We study higher moments of the number of points in a bounded convex set for such $\OK$-lattices. \par

A first observation is that, assuming that $S$ is symmetric about the origin, the finite order units in $\OK$ act freely on the lattice points in $S$ and thus lattice points should come in $\omega_K$-tuples instead of pairs, where $\omega_K$ denotes the root number of $K$. As a consequence of our main theorems, we are indeed able to show that for balls $S$ the number of $\omega_K$-tuples of $\calO_K$-lattice points in $S$ have a distribution asymptotic to a Poisson distribution with mean $\tfrac{1}{\omega_K}V$:

\begin{theorem}
 Let $K$ be any number field and let $n$ be fixed. 
 Then there is an explicit constant $t_0(K,n)=O_K(n^3\log\log n)$ 
 such that the $n$-th moment $ \mathbb{E}[\rho(\Lambda)^n]$ of the number of nonzero lattice points lying 
 in an origin-centered ball of volume $V$ and a random unit covolume $\OK$-lattice of rank $t$ 
 satisfies
\begin{align}
       \omega_K^n \cdot   m_n( \tfrac{V}{\omega_K}  )
     &\leq \mathbb{E}[\rho(\Lambda)^n]
       \leq 
       \omega_K^n \cdot  m_n( \tfrac{V}{\omega_K}  )
       + E_{n,t,K} \cdot (V+1)^{n-1} 
       \\
    \end{align}
    with the error term 
    \begin{equation}
      E_{n,t,K} \le 
       C_K\cdot t^{(n-2)/2} \cdot e^{-\varepsilon_K\cdot (t-t_0)}
    \end{equation}
    provided that $t> t_0(K,n)$. 
    Here $m_n$ is as defined in Equation (\ref{eq:def_of_poisson}) and the ball of volume $V$ is with respect to the Euclidean norm given in Equation (\ref{eq:norm}). The constants $C_K,\varepsilon_K>0$ are uniform in the rank $t$ of the $\OK$-lattices and can also be explicitly described.
\end{theorem}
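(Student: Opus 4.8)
\emph{Proof strategy.} The plan is to deduce this theorem from a Rogers-type higher-moment formula for $\OK$-lattices, which I expect to be provided by the paper's main theorems. First I would establish an exact identity for
\[
\mathbb{E}\Big[\sum_{(\lambda_1,\dots,\lambda_k)\in(\Lambda\setminus\{0\})^{\,k}} f(\lambda_1,\dots,\lambda_k)\Big],
\]
where $\Lambda$ ranges over the unit-covolume $\OK$-lattices of rank $t$ with respect to the canonical probability measure on $\SL_t(K\otimes_{\mathbb Q}\mathbb R)/\SL_t(\OK)$. Following Rogers' approach via integration over a fundamental domain for $\SL_t(\OK)$, this identity should split as a finite sum indexed by the ``$\OK$-linkage type'' of the $k$-tuple, i.e. by the embedded isomorphism type of the saturation $\overline{\OK\lambda_1+\dots+\OK\lambda_k}\subseteq\Lambda$ together with the position of the $\lambda_i$ inside it (including the Steinitz class, since $\OK$ need not be a PID). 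To each type one attaches an integral of $f$ over the corresponding subspace of $\big((K\otimes_{\mathbb Q}\mathbb R)^t\big)^k$, weighted by archimedean densities and by convergent sums over fractional ideals of $\OK$ (tails of shifted Dedekind zeta values). The \emph{leading} type --- in which the $\lambda_i$ form a primitive free $\OK$-basis of a saturated rank-$k$ submodule --- should contribute, after normalization, $\int f$ times a factor tending to $1$ as $t\to\infty$, while all other types live on proper subspaces and feed the error term.

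Granting such an identity, specialize $f=\ind_S^{\otimes k}$ for $S$ the origin-centred ball of volume $V$ in the norm of \eqref{eq:norm}. Since that norm is $\mu_K$-invariant and $\Lambda$ is torsion-free as an $\OK$-module, the group $\mu_K$ of roots of unity acts freely on $S\cap(\Lambda\setminus\{0\})$, so $\rho(\Lambda)=\omega_K\,P(\Lambda)$ with $P$ the number of $\mu_K$-orbits. Then
\[
\mathbb{E}[\rho(\Lambda)^n]=\omega_K^{\,n}\sum_{\pi\,\vdash\,[n]}\mathbb{E}\big[(P)_{|\pi|}\big],
\]
where $(P)_k=P(P-1)\cdots(P-k+1)$ is the falling factorial, and
\[
\mathbb{E}\big[(P)_k\big]=\omega_K^{-k}\,\mathbb{E}\Big[\card\big\{(\lambda_1,\dots,\lambda_k)\in\big(S\cap(\Lambda\setminus\{0\})\big)^k:\ \text{pairwise distinct }\mu_K\text{-orbits}\big\}\Big].
\]
Applying the moment identity and retaining only the leading type gives $\mathbb{E}\big[(P)_k\big]=\omega_K^{-k}V^{k}+(\text{error})$; summing over set partitions and invoking the Touchard/Bell identity $\sum_{\pi\vdash[n]}\mu^{|\pi|}=m_n(\mu)$ yields exactly the main term $\omega_K^{\,n}\,m_n(V/\omega_K)$ (for $K=\mathbb Q$, $\omega_K=2$, this recovers Rogers' theorem).

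The hard part is bounding all non-leading contributions by $E_{n,t,K}\le C_K\,t^{(n-2)/2}e^{-\varepsilon_K(t-t_0)}$ for $t>t_0(K,n)=O_K(n^3\log\log n)$, with $C_K,\varepsilon_K$ depending only on coarse invariants of $K$ (the root number $\omega_K$, a lower bound on the absolute Weil height of non-torsion elements, the discriminant). Each non-leading type is an integral of $\ind_S^{\otimes k}$ over a proper subspace --- a configuration in which the $\lambda_i$ satisfy at least one $K$-linear relation, or generate a non-saturated or non-free submodule --- so it is controlled by: (i) the volume of $S$, or of a slice of $S$ by a subspace of positive codimension in dimension $[K:\mathbb Q]\,t$, together with the number of lattice points of a lower-rank $\OK$-sublattice inside $S$; these archimedean estimates yield the factor $(V+1)^{n-1}$ and the polynomial prefactor $t^{(n-2)/2}$; and (ii) a sum over the fractional ideals/sublattice indices attached to each relation, of shape $\sum_{\mathfrak a}\N(\mathfrak a)^{-(t-O(n))}$, which decays geometrically in $t$ once $t$ exceeds a bounded multiple of the number of relations. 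The main obstacles are then: (a) the number of $\OK$-linkage types and the combinatorial weights attached to them grow super-exponentially in $n$ (as in Rogers, where the analogous count is $\sim 3^{\lceil n^2/4\rceil}$) and must be dominated by the geometric decay, which is what forces $t_0$ to grow polynomially in $n$, a careful count giving the stated $O_K(n^3\log\log n)$; (b) when $\OK$ is not principal one must organize the sums over ideal classes and Steinitz invariants so that the implicit constants stay uniform in $t$; and (c) configurations in which two of the points are $K$-proportional through a \emph{non-torsion} unit must be handled via the lower bound on the Weil height --- such a unit is bounded away from $\mu_K$ at some archimedean place, forcing the corresponding pair into a thin region --- which is precisely what allows $C_K$ and $\varepsilon_K$ to be made explicit. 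Assembling these estimates over all $\pi\vdash[n]$ and all non-leading types gives the claimed bound on $E_{n,t,K}$ and completes the proof.
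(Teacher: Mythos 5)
Your high-level outline matches the paper's strategy: a Rogers-type integral formula over $\SL_t(K_\mathbb{R})/\SL_t(\OK)$ decomposed by the $K$-linear relations among the $n$ sample vectors, the $\mu_K$-action on the ball identifying the Poisson main term, Stirling/Touchard combinatorics reproducing $\omega_K^n m_n(V/\omega_K)$, and Weil-height lower bounds as the tool to tame the infinite unit group. Your factorial-moment bookkeeping ($P^n=\sum_k\stir{n}{k}(P)_k$) is a legitimate rearrangement but is the same combinatorics as the paper's direct count $\card A_m = \omega_K^{n-m}\stir{n}{m}$ (Lemma \ref{le:poisson_term}); it is not a genuinely different route, and the lower bound $\omega_K^n m_n(V/\omega_K)\le\mathbb{E}[\rho^n]$ should be stated explicitly as following from nonnegativity of all terms in the integral formula for $g=\ind_{S^n}$. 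One misdirection: Steinitz classes do not enter here, because the theorem is for the free moduli space $\SL_t(K_\mathbb{R})/\SL_t(\OK)$; the relevant combinatorial parameter is the row-reduced echelon matrix $D\in M_{m\times n}(K)$ (equivalently the subspace $\spant$ of the relations) together with its denominator $\mathfrak{D}(D)$, not the embedded isomorphism type of the submodule.

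The genuine gap is that the proposal leaves the central technical step unresolved. Phrases such as ``a non-torsion unit... forces the corresponding pair into a thin region'' gesture at the right phenomenon, but the proof requires a quantitative decay that must be \emph{summed over the entire unit group}: for each $D$ one must control $\sum_{\beta\in(\OK^\times)^M}\vol(B\cap(\alpha_1\beta_1)^{-1}B\cap\dots)/\vol(B)$, and the series converges only because (i) a convex-combination/ellipsoid-intersection bound (Lemmas \ref{le:intersection_of_ellipsoids}, \ref{lemma:genvolumeratio}) gives exponential decay in the Weil height, and (ii) a packing argument in the log-unit lattice with the Bogomolov constant as a lower bound on gap size (Lemma \ref{lemma:unitcountonecoord}) gives a polynomial upper bound on the number of units in each height shell. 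That pairing (Proposition \ref{prop:mintersections}) and its globalization over fractional ideals via partial Dedekind zeta sums (Proposition \ref{prop:projsumoverideals}) is the crux, and it does not appear in the proposal even in outline. Likewise, the $O_K(n^3\log\log n)$ bound on $t_0$ is asserted from a ``careful count'' of types, but it actually emerges from the specific threshold split of the $A_m^2$ terms at height $h_0\approx 2+\tfrac12\log(n-m)$ (Theorem \ref{thm:generalBogobound}); the $n^3$ comes from the $m^2(n-m)$-type factors needed to absorb the polynomial unit-shell counts, and the $\log\log$ from the $\log\log$ of the height threshold. Without these estimates the error bound $E_{n,t,K}\le C_K t^{(n-2)/2}e^{-\varepsilon_K(t-t_0)}$ is unjustified.
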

An expression for the explicit constants $t_0(K,n)$ and $\varepsilon_K$ in terms of $n$ and the geometry of the unit lattice of $K$ can be found in Corollary \ref{cor:PoissonforKfixed}. \par
Rogers' results rely on his integral formula for the $n$-th moments. Such a result is also available in the context of $\OK$-lattices and implicit in the literature. For instance, S. Kim \cite{K19} establishes an integral formula in the adelic language and deduces convergence of the second moment. See also, e.g., \cite{WeilSiegelformula} and \cite[Theorem 1]{hughes2023mean}. For the reader's convenience, we derive it explicitly in Appendix \ref{ap:integralformula}. However, one of the main challenges arising for general number fields is dealing with infinite unit groups in $\calO_K$ and bounding their contributions (see \ref{th:weil} for the integral formula). We remedy this by employing lower bounds on the Weil height of units $\OK^\times$. In fact, height considerations allow us to prove stronger asymptotic results by increasing not just the $\calO_K$-rank of the lattices, but also the degree of the number field. 

More precisely, we show: 

	\begin{theorem}\label{thm:mainintro}
		Let $\mathcal{S}$ denote any set of number fields $K$ such that the absolute Weil height of elements in $K^\times\setminus\mu_K$ has a strictly positive uniform lower bound on $\mathcal{S}$. There are then for a given $n$ explicit constants $t_0(n,\mathcal{S})=O_\mathcal{S}(n^3\log\log n)$ as well as explicit constants $C,\varepsilon>0$, all uniform in $\mathcal{S}$, 
		such that for any $t> t_0$ and for any $K\in\mathcal{S}$ of degree $d$ the $n$-th moment $ \mathbb{E}[\rho(\Lambda)^n]$ of the number of nonzero $\calO_K$-lattice points in an origin-centered ball of volume $V$ and $\Lambda$ in the space of unit covolume $\OK$-lattices of rank $t$ 
		satisfies:
		\begin{align}
			\label{eq:small_error_from_poisson}
			\omega_K^n  \cdot m_n( \tfrac{V}{\omega_{ K} } )
	       &\leq \mathbb{E}[\rho(\Lambda)^n]
	       \le 
	       \omega_K^n\cdot  m_n( \tfrac{V}{\omega_{ K }} )
	       + E_{n,t,K} \cdot (V+1)^{n-1}
		\end{align}
		with error term $E_{n,t,K}$ satisfying
		\begin{align}
			E_{n,t,K} \le 
	       C \cdot (td)^{(n-2)/2}\cdot \omega_K^{n^2/4}\cdot Z(K,t,n)\cdot e^{-\varepsilon \cdot d(t-t_0)} .
		\end{align}
		Here $\omega_{K} $ are the number of roots of unity in $K$, $ Z(K,t,n)$ denotes a finite product of Dedekind zeta values $\zeta_{K}$ at certain real values $>1$ and 
		$m_n$ is as in Equation (\ref{eq:def_of_poisson}).
	\end{theorem}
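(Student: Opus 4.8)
The plan is to start from the $\OK$-analogue of the Rogers--Siegel integral formula recorded in Theorem~\ref{th:weil}: expanding $\rho(\Lambda)=\sum_{\lambda\in\Lambda\setminus\{0\}}\ind_S(\lambda)$ gives $\mathbb{E}[\rho(\Lambda)^n]=\sum_{(\lambda_1,\dots,\lambda_n)}\mathbb{E}[\prod_{i=1}^n\ind_S(\lambda_i)]$, and the formula evaluates this sum according to the $\OK$-module type of the tuple $(\lambda_1,\dots,\lambda_n)$ — one partitions the indices into blocks recording which $\lambda_i$ lie on a common $K$-line, records the finite-index $\OK$-sublattice they generate inside its saturation (producing Dedekind zeta factors), and records the configuration of the mutually $K$-independent lines. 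Since $S$ is centrally symmetric it is $\mu_K$-stable, so $\rho(\Lambda)=\omega_K\cdot N(\Lambda)$ \emph{identically}, where $N(\Lambda)$ is the number of $\mu_K$-orbits of lattice points inside $S$; hence $\mathbb{E}[\rho(\Lambda)^n]=\omega_K^n\,\mathbb{E}[N(\Lambda)^n]$ and it suffices to compare $\mathbb{E}[N(\Lambda)^n]$ with $m_n(V/\omega_K)$. Using the identity $N^n=\sum_{k=1}^n S(n,k)\,(N)_k$, where $S(n,k)$ are the Stirling numbers of the second kind and $(N)_k=N(N-1)\cdots(N-k+1)$, and the fact that $\mathbb{E}_{X\sim\mathcal P(\lambda)}[(X)_k]=\lambda^k$ so that $m_n(\lambda)=\sum_k S(n,k)\lambda^k$, everything reduces to estimating $\mathbb{E}[(N)_k]$ for $1\le k\le n$, i.e.\ the expected number of ordered $k$-tuples of \emph{distinct} $\mu_K$-orbits of lattice points in $S$.

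\textbf{Main term and the lower bound.}
A $k$-tuple of distinct $\mu_K$-orbits is the same datum as an ordered $k$-tuple $(w_1,\dots,w_k)$ of nonzero lattice vectors in $S$, pairwise not related by a root of unity, modulo the $\omega_K^k$ choices of orbit representatives; thus $\omega_K^k\,\mathbb{E}[(N)_k]$ is exactly the expected number of such pairwise non-torsion $k$-tuples in $S$. Splitting off the sub-sum over $K$-linearly \emph{independent} $k$-tuples (which requires only $t\ge k$) and applying Theorem~\ref{th:weil}, that sub-sum equals $V^k$ times a convergent sum over the index of the generated sublattice whose leading term is exactly $V^k$ and all of whose other terms are nonnegative; hence $\omega_K^k\,\mathbb{E}[(N)_k]\ge V^k$, so $\mathbb{E}[(N)_k]\ge(V/\omega_K)^k$ and $\mathbb{E}[N^n]\ge\sum_k S(n,k)(V/\omega_K)^k=m_n(V/\omega_K)$. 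Multiplying by $\omega_K^n$ gives the lower bound in \eqref{eq:small_error_from_poisson}, with the first moment exact as in Siegel. For the matching upper bound one must show $\mathbb{E}[(N)_k]=(V/\omega_K)^k+\text{error}_k$ with $\sum_k S(n,k)\,\text{error}_k$ absorbed into $E_{n,t,K}(V+1)^{n-1}$, where $\text{error}_k$ collects: (a) the higher-index corrections in the $K$-independent count, which sum to the finite product of Dedekind zeta values $Z(K,t,n)=\prod_j\zeta_K(s_j)$ with each $s_j>1$ once $td$ is large, so $Z(K,t,n)-1\to0$; (b) $k$-tuples with genuine $K$-dependence among the lines, whose contribution — after the change of variables to the space of $\OK$-lattices — is an integral over a locus of $\mathbb Z$-codimension a positive multiple of $d$ per dependency and so decays like $e^{-\varepsilon\,d(t-t_0)}$, the analogue of Rogers' $(\tfrac{\sqrt3}{2})^t$ and $(\tfrac12)^t$ factors; and (c) the genuinely new contribution of \emph{non-torsion collisions}. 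Organising the $V$-dependence exactly as in Rogers \cite{R1956} keeps the error of order $(V+1)^{n-1}$, while the number of relevant dependency/torsion patterns of an $n$-tuple is $\exp(O(n^2))$, each carrying at most $\omega_K^{O(n^2)}$ torsion decorations, which accounts for the prefactors $(td)^{(n-2)/2}\,\omega_K^{n^2/4}\,Z(K,t,n)$; choosing $t_0(n,\mathcal S)=O_{\mathcal S}(n^3\log\log n)$ makes the geometric series over patterns converge and dominate those prefactors.

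\textbf{The main obstacle: non-torsion collisions and uniformity over $\mathcal S$.}
I expect term (c) to be the crux, and it is where the hypothesis on $\mathcal S$ enters. Over $\mathbb Q$ one has $\OK^\times=\{\pm1\}=\mu_K$, so there are no non-torsion collisions at all; over a general $K$ a single $K$-line may meet $S$ in many lattice points. If $w=\alpha v$ and $w'=\beta v$ lie in $S$ on a common $K$-line with $\alpha/\beta\notin\mu_K$, then the uniform lower bound $h(\alpha/\beta)\ge h_0>0$ on the absolute Weil height over $\mathcal S$ forces $\alpha/\beta$ to be archimedeanly large at some place, while $\|w\|,\|w'\|\le R$ with $R^d\asymp V$; a counting argument in the log-unit lattice of $K$ then bounds the number of non-torsion-inequivalent lattice points on any line meeting $S$ by $O_{\mathcal S}((\log(V+2))^{r})$, $r$ the unit rank, with constants depending only on $h_0$ — this is precisely the uniformity input, and the reason $\mathcal S$ must be constrained by a height bound rather than left arbitrary. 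These logarithmic factors are absorbed into $(V+1)^{n-1}$, and any pattern containing such a collision is still $K$-degenerate after normalising generators, hence also carries the $e^{-\varepsilon d(t-t_0)}$ decay. The most delicate bookkeeping will be handling \emph{mixed} patterns — those exhibiting non-torsion collisions and higher $K$-dependence simultaneously — uniformly in $K\in\mathcal S$ and with the dependence on $n$ tracked sharply enough to reach $t_0=O_{\mathcal S}(n^3\log\log n)$; this, together with the explicit evaluation of $\varepsilon$ and $C$ in terms of $h_0$ and the geometry of the unit lattice of $K$, is the technical heart of the argument, and is where Corollary~\ref{cor:PoissonforKfixed} extracts the explicit constants.
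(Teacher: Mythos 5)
Your overall framing — expand via the $\mathcal{O}_K$-version of the Rogers integral formula, isolate the Poisson main term coming from tuples that differ only by roots of unity, and bound the remaining contributions — matches the structure of the paper's proof, and your Stirling-number bookkeeping via factorial moments $\mathbb{E}[(N)_k]$ is an equivalent repackaging of the paper's combinatorial count of matrices in $A_m$. The lower bound argument is also correct: it is the same nonnegativity observation the paper uses.

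However, the upper bound has a genuine gap, and it sits exactly where you declared the heart of the matter to be. You claim in (b) and again at the end of (c) that any pattern exhibiting $K$-dependence, including a non-torsion collision such as $w=v$, $w'=\alpha v$ with $\alpha\in\OK^\times\setminus\mu_K$, automatically "carries the $e^{-\varepsilon d(t-t_0)}$ decay" as the analogue of Rogers' $(\tfrac{\sqrt3}{2})^t$. This is false as stated. For such a configuration the corresponding matrix is $D=(1,\alpha)$ with $\alpha$ a unit, so $\mathfrak{D}(D)=1$ and there is no denominator suppression at all; the contribution to the moment is literally $\vol(B\cap\alpha^{-1}B)$, which is $\vol(B)$ if one does not argue further. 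A codimension or rank-drop heuristic cannot make this small: the decay must come from the archimedean distortion $\alpha$ imposes on the ball. What the paper actually does (Lemmas \ref{le:intersection_of_ellipsoids} and \ref{lemma:genvolumeratio}) is prove a convex-combination inequality bounding $\vol(B\cap\alpha^{-1}B)/\vol(B)$ by a function of the Weil height of $\alpha$ that decays exponentially in $d$ once $h(\alpha)\geq c_1>0$. Your proposal replaces this volume bound with a \emph{counting} statement — that there are at most $O_{\mathcal{S}}((\log(V+2))^{r})$ non-torsion points on a $K$-line meeting $S$ — and proposes to absorb the logs into $(V+1)^{n-1}$. But a count does not damp the size of each individual term: if each unit $\alpha$ contributes $\vol(B\cap\alpha^{-1}B)\approx\vol(B)$, then even finitely many such terms destroy the claimed bound. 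The exponential volume decay with $h(\alpha)$, combined with the unit-count in the log-unit lattice (the paper's Lemma \ref{lemma:unitcountonecoord}) to control how many units have height in a given dyadic window, is the irreducible quantitative core, and it is absent from your outline.

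Secondary issues: your item (a) attributes the Dedekind zeta factors to "higher-index corrections in the $K$-independent count," but for $m=k$ the only row-reduced echelon $D$ is the identity and the contribution is exactly $V^k$ with no correction. In the paper the zeta factors arise from summing the height-based bound over the denominators of the $A_m^2$-type matrices (Lemma \ref{lemma:partialDedekindzeta}, Proposition \ref{prop:projsumoverideals}), and they multiply the exponential rather than converging to $1$. Finally, the most delicate step in the paper — matrices with entries of small Weil height that are nevertheless not roots of unity, handled by fixing a threshold height $h_0\approx\tfrac12\log n$ and treating the resulting finite set case by case in Section \ref{subse:a2m} — does not appear in your outline at all, and without that split the mixed patterns you flag cannot be controlled uniformly with the advertised $t_0=O_{\mathcal{S}}(n^3\log\log n)$.
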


See the more detailed Theorem \ref{thm:maingeneralmomentsBogo} for explicit values of the constant  $t_0(n,\mathcal{S})$, of the zeta factor $Z(K,t,n)$ and of the constants $C,\varepsilon$. Note that the terms $(td)^{(n-2)/2}\cdot \omega_K^{n^2/4}$ grow polynomially in $t,d$ since $\omega_K=O(d\log\log d)$ and the error term indeed decays exponentially in the dimension of the lattices. \par 

The height bound assumption on $\bigcup_{K\in\mathcal{S}}K$ in Theorem \ref{thm:mainintro} is in the literature referred to as the \emph{Bogomolov property}. A prototypical example of an infinite tower satisfying the Bogomolov property are the cyclotomic numbers $\mathbb{Q}^{cyc}=\bigcup_{i\geq 2} \mathbb{Q}(\zeta_i)$, so that the limiting results of Theorem \ref{thm:mainintro} in particular apply to lattices over cyclotomic integers of increasing degree for fixed large enough rank. In this case we can also bound the zeta factor uniformly-see Corollary \ref{cor:cyclomoments}. For the reader's convenience and as an illustration, we record here an entirely explicit ensuing second moment result over cyclotomic fields: 
\begin{corollary}\label{cor:introcyclo}
Consider a sequence of cyclotomic number fields given by $K_i=\mathbb{Q}(\zeta_{k_i})$ of degree $d_i=\varphi(k_i)$ and let $t_0=\tfrac{267}{10}$. 
There then exist uniformly bounded constants $C,\varepsilon> 0$ such that for any $t\geq 27$ and for any degree $d_i$ the second moment $\mathbb{E}[\rho(\Lambda)^2]$ of the  number of nonzero $\calO_K$-lattice points in an origin-centered ball of volume $V$ over the space of $\OK$-lattices of rank $t$ and unit covolume satisfies:

$$ V^2+V\cdot \omega_{K_i}\leq\mathbb{E}[\rho(\Lambda)^2]\leq V^2+V\cdot \omega_{K_i}\cdot (1+C\cdot e^{-\varepsilon\cdot d_i(t-t_0)}).$$
Moreover, the inequality holds for $\varepsilon=\frac{1}{400}$ and $C=(3+\tfrac{3}{1-e^{-d_i(t-t_0)/1124}})\cdot \zeta_{K_i}(\tfrac{37t}{52})\cdot \zeta_{K_i}(\tfrac{t}{25})$ for any given $t\geq 27$ and $d_i\geq 2$. In particular, $C\leq 5625\cdot \max_i(\zeta_{K_i}(\tfrac{37t}{52})\cdot \zeta_{K_i}(\tfrac{t}{25}))$ holds for all such $t,d_i$.
    
\end{corollary}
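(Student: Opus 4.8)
The plan is to obtain Corollary \ref{cor:introcyclo} as the fully explicit $n=2$ case of the general estimate of Theorem \ref{thm:maingeneralmomentsBogo} (the quantitative form of Theorem \ref{thm:mainintro}), applied to the cyclotomic family as in Corollary \ref{cor:cyclomoments}. The one external input is the explicit Bogomolov property for the field $\mathbb{Q}^{cyc}=\bigcup_{i\ge 2}\mathbb{Q}(\zeta_i)$ and its subfields: by the theorem of Amoroso and Dvornicich, every $\alpha\in(\mathbb{Q}^{cyc})^\times$ that is not a root of unity has absolute Weil height $h(\alpha)\ge\tfrac{\log 5}{12}$. Hence the set $\mathcal{S}$ of all subfields of $\mathbb{Q}^{cyc}$ is admissible for Theorem \ref{thm:maingeneralmomentsBogo} with a numerically explicit uniform height bound, and the constants $t_0(n,\mathcal{S})$, $Z(K,t,n)$, $C$ and $\varepsilon$ produced there become explicit once $\tfrac{\log 5}{12}$ is inserted for the height lower bound.

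Specialising to $n=2$ collapses the statement. Since the second moment of a Poisson law of mean $\lambda$ is $m_2(\lambda)=\lambda^2+\lambda$ by \eqref{eq:def_of_poisson}, the main term is $\omega_K^2\,m_2(V/\omega_K)=V^2+V\,\omega_K$, which is exactly the polynomial on both sides of the claimed inequalities; in particular the lower bound $V^2+V\,\omega_{K_i}\le\mathbb{E}[\rho(\Lambda)^2]$ is the lower bound of Theorem \ref{thm:maingeneralmomentsBogo} read off at $n=2$. In the error term the prefactor $(td)^{(n-2)/2}$ becomes $1$ and $\omega_K^{n^2/4}$ becomes $\omega_K$; moreover for $n=2$ the excess of the second moment over $V^2$ is carried by the configurations of $\OK$-rank one together with the diagonal contributions of size $O(V)$ in the integral formula of \ref{th:weil}, so the error is \emph{linear} in $V$, of the form $V\,\omega_{K_i}\cdot E$ with $E$ bounded by an absolute constant times $Z(K,t,2)\,e^{-\varepsilon d(t-t_0)}$; and $Z(K,t,2)$ is a product of exactly two Dedekind zeta values of $K$ at real arguments $>1$ depending linearly on $t$ through the height bound. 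Carrying out the (routine but delicate) arithmetic of the proof of Theorem \ref{thm:maingeneralmomentsBogo} with the value $\tfrac{\log 5}{12}$ then yields the arguments $\tfrac{37t}{52}$ and $\tfrac{t}{25}$, the decay rate $\varepsilon=\tfrac1{400}$, the threshold $t_0=\tfrac{267}{10}$, and the precise shape
$$C=\Bigl(3+\tfrac{3}{1-e^{-d(t-t_0)/1124}}\Bigr)\,\zeta_K\bigl(\tfrac{37t}{52}\bigr)\,\zeta_K\bigl(\tfrac{t}{25}\bigr),$$
valid for all $t>t_0$ and $d\ge 2$, the factor $3+\tfrac{3}{1-e^{-d(t-t_0)/1124}}$ being the sum of a geometric series bounding the tail of rank-one sublattice contributions.

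Two elementary clean-ups finish the proof. First, $t\ge 27$ and $d_i\ge 2$ force $d_i(t-t_0)\ge\tfrac{6}{10}$, so $3+\tfrac{3}{1-e^{-d_i(t-t_0)/1124}}\le 3+\tfrac{3}{1-e^{-0.6/1124}}<5625$, giving $C\le 5625\cdot\max_i\bigl(\zeta_{K_i}(\tfrac{37t}{52})\,\zeta_{K_i}(\tfrac{t}{25})\bigr)$. Second, to see that $C$ can actually be taken uniformly bounded in $i$, one uses that the Dedekind zeta value of $\mathbb{Q}(\zeta_k)$ at a fixed real $s>1$ is bounded independently of $k$: writing $\log\zeta_{\mathbb{Q}(\zeta_k)}(s)=\sum_{\mathfrak p}\sum_{m\ge 1}(m\,N\mathfrak p^{\,ms})^{-1}$, every unramified rational prime $q$ produces prime ideals of norm $q^f$ with $q^f\equiv 1\pmod k$, hence of norm $>k$, while the ramified primes contribute at most one prime ideal of small norm, so the sum is $O_s(1)$ uniformly in $k$; since $\tfrac{t}{25}\ge\tfrac{27}{25}>1$ and $\tfrac{37t}{52}>1$ for $t\ge 27$, this bounds both zeta factors.

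The substance of the argument is therefore bookkeeping: faithfully propagating the explicit constants of Theorem \ref{thm:maingeneralmomentsBogo} — in particular the two zeta arguments and the numbers $\tfrac1{400}$, $\tfrac1{1124}$ — through the insertion of the Amoroso--Dvornicich bound, where several inequalities in the proof of that theorem are optimised at once. The only genuinely arithmetic ingredient beyond this is the degree-uniform bound on cyclotomic Dedekind zeta values that underpins the ``uniformly bounded'' assertion; everything else is substitution into estimates already in hand.
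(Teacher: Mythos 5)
Your overall strategy — specialize the general moment estimates to $n=2$, insert an explicit cyclotomic height lower bound, and control the Dedekind zeta factors uniformly in the degree — is the right shape, and your argument for the degree-uniform bound on $\zeta_{\mathbb{Q}(\zeta_k)}(s)$ is essentially Lemma~\ref{lemma:cyclotomiczetabound}. But there are two genuine gaps that would prevent you from landing on $t_0 = \tfrac{267}{10}$ and the stated constants.

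First, the paper does \emph{not} obtain this corollary by setting $n=2$ in Theorem~\ref{thm:maingeneralmomentsBogo}. In the proof of that theorem the $m=1$ contribution is estimated via Theorem~\ref{thm:mainballintersection} with the parameter $k$ frozen at $k=4$; for $n=2$ there is no $m\geq 2$ contribution at all, so the whole content is the $m=1$ term, and the $k=4$ choice would yield a much worse threshold. The corollary actually follows from Corollary~\ref{cor:mainsecondmoment}, the specialized second-moment statement in which $k\geq 2$ remains a free parameter, chosen optimally for the cyclotomic family to be $k=26$ (see the remark after Theorem~\ref{thm:mainballintersection}). This is where the arguments $\tfrac{37t}{52} = t(\tfrac34-\tfrac1{26})$ and the threshold $t_0 \approx 26.7$ come from; they are not reachable from the general $n$ statement without reopening its proof.

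Second, and more substantively, Hypothesis~\ref{hyp:Lehmer} carries \emph{two} height constants, $c_1$ for arbitrary non-torsion elements of $K^\times$ and a larger $c_0$ for algebraic \emph{integers} not roots of unity. For cyclotomic towers $c_1 = \tfrac{\log 5}{12}\approx 0.134$ (Amoroso--Dvornicich, Theorem~\ref{thm:cycloheightboundnumbers}) but $c_0 = \tfrac12\log\varphi \approx 0.2406$ (Schinzel, Theorem~\ref{thm:cycloheightbound}; cf.\ Corollary~\ref{cor:cyclohypothesis}). In Corollary~\ref{cor:mainsecondmoment}, $t_0$ is governed by $\cosh\big(c_0(1-\tfrac1k)\big)$ via the unit-counting argument of Lemma~\ref{lemma:unitcountonecoord}, not by $c_1$. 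Your proposal only invokes $\tfrac{\log5}{12}$, and if one substitutes $c_1$ in place of $c_0$ the computation yields $t_0 \approx 85$, roughly three times the claimed $26.7$. Without Schinzel's bound for integral $\alpha$ the stated $t_0$ and hence the corollary cannot be recovered. (The constant $\varepsilon = \tfrac1{400}$ is governed by $c_1$ and does check out as $\tfrac12\log\cosh(\tfrac34 c_1)$, but $t_0$ and the factor $e^{-d(t-t_0)/1124}$ in $C$ need $c_0$.)

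In short: replace the reduction from Theorem~\ref{thm:maingeneralmomentsBogo} by a direct invocation of Corollary~\ref{cor:mainsecondmoment} with $k=26$, and add Schinzel's lower bound $h_\infty(\alpha)\geq \tfrac12\log\varphi$ for cyclotomic integers as a second, distinct input alongside Amoroso--Dvornicich. With both constants in place, propagating the explicit expressions in Corollary~\ref{cor:mainsecondmoment} yields the claimed values.
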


We refer the reader unfamiliar with heights and the Bogomolov property to the discussion in Section \ref{se:Bogosection} for details and other examples of infinite extensions with this property. We also partially prove in Proposition \ref{prop:morethanPoisson} the necessity of the height bound assumption in Theorem \ref{thm:mainintro}, showing that for any fixed rank $t$ there exist number fields $K_i$ of arbitrarily large degree with moments strictly larger than Poisson of mean $V/\omega_{K_i}$. Finally, we remark that similar results apply to more general convex sets $S$ beyond balls (see \ref{thm:genconvex}), however in pinning down the asymptotic distribution one needs to take into account the symmetry properties of the body $S$. \par 

In conclusion, we observe not just a limiting Poisson behaviour for the finer moduli space of $\OK$-lattices of fixed covolume, but also uncover additional flexibility in choosing the parameters of the Poisson distribution by varying the number of roots of unity in $K$. We therefore expect applications to the geometry of numbers and in particular hope to address the lattice packing and covering problems in the vein of \cite{AV},\cite{Rogers55},\cite{Rogers58},\cite{ORW22}.
Beyond that, integral formulas and higher moments have been employed among others in dynamics in the context of logarithm laws for flows on homogeneous spaces and Diophantine approximation (see e.g., \cite{Kleinbock1998LogarithmLF,ArthreyaMargulis}, \cite[Section 5]{Gorodnik2022poisson} and \cite{AggarwalGhosh2023central}). \par

Furthermore, $\OK$-lattices 
have emerged as interesting candidates for lattice-based cryptography (see e.g., \cite{RingLWE,IdealLatticeSVP,LLLOK}). 
	The setup in these works often resembles our line of investigation, even considering lattices of fixed $\mathcal{O}_K$-rank and varying cyclotomic number field $K$. In analysing the hardness of problems such as the shortest vector problem (SVP) on these restricted lattices, our results indicate a Poisson-like behaviour similar to the full probability space of random lattices, albeit with a different Poisson parameter.

\subsection{Outline of paper and proof}
The paper is organized as follows: 
% showing in particular that the expected moments can be attained up to arbitrarily high precision from lattices lifted from suitable finite sets of linear codes. 
% This is of independent interest, expanding on the literature showing lattices attaining the expected mean values can be achieved in this way (see, e.g., \cite{RushnSloane,Rush1989,Loeliger97averagingbounds,GaboritZemor2007,M16,GS21}). We also cover some preliminaries in this section. \par

Section \ref{se:average_over_lattice} presents the Rogers integral formula for $\OK$-lattices. 
Section \ref{se:convergence} then establishes convergence of the higher moments and includes some preparatory lemmas. Convergence can be deduced by relating moments to values of height zeta functions on suitable Grassmannians.  These converge by work of W. Schmidt \cite{S1967} on asymptotic counts for points of bounded height in such varieties. Section \ref{se:poisson} then deals with the main Poisson terms and some first estimates. 
\par 
Section \ref{se:Bogosection} tackles the general term and contains the main results. In order to go beyond just convergence of the moments, asymptotic estimates for points of bounded height are not sufficient, and one needs to have good control of the error terms for small heights as well. In order to illustrate how the results were achieved, we sketch our proof for the simple case of the second moment. In this case, via the integral formula the second moment computation for a fixed ball $S$ in $t$ copies of Euclidean space $K\otimes_\mathbb{Q}\mathbb{R}$ amounts to: 
$$\vol(S)^2+\sum_{\alpha\in K^\times}[\OK:(\alpha)^{-1}\cap\OK]^{-t}\cdot\vol(S\cap\alpha S).$$
To arrive at a result as in Corollary \ref{cor:introcyclo} it suffices to prove exponential decay of the sum
\begin{equation}\label{eq:introoutline}
    \sum_{\alpha\in (K^\times\setminus\mu_K)/\mu_K}[\OK:(\alpha)^{-1}\cap\OK]^{-t}\cdot\frac{\vol(S\cap\alpha S)}{\vol(S)},
\end{equation}
where $\mu_K$ denotes the roots of unity in $K$. In order to do so, we first bound for a fixed $\beta\in K^\times$ the shifted sum over units: 
$S_\beta=\sum_{\alpha\in (\OK^\times\setminus\mu_K)/\mu_K}\tfrac{\vol(S\cap\alpha\beta S)}{\vol(S)}$. The full result for \eqref{eq:introoutline} is then deduced by summing over principal ideals $(\beta)$ the quantity $[\OK:(\beta)^{-1}\cap\OK]^{-t}\cdot S_\beta$ and relating its decay to the decay of $S_\beta$ up to some Dedekind zeta values of $K$ (see e.g., Proposition \ref{prop:projsumoverideals}). In order to bound $S_\beta$, we use a geometric convex combination Lemma to show that the volume ratio $\tfrac{\vol(S\cap\alpha\beta S)}{\vol(S)}$ decays exponentially with the Weil height of $\alpha\beta$, see Lemma \ref{lemma:genvolumeratio} as well as Lemmas \ref{lemma:convexgen}, \ref{lemma:grtoproj}, and \ref{lemma:grtoprojallcols} for the more general case. The final ingredient is then a count of the number of units $\alpha\in\OK^\times$ such that $\alpha\beta$ has bounded Weil height. This is achieved in Lemma \ref{lemma:unitcountonecoord} using properties of heights and the unit lattice. Note that here it is really the points of \emph{small height} which have the weightiest contributions to $S_\beta$ and therefore we need genuine upper bounds on such counts as opposed to the classical asymptotic formulae for increasing height. We hope this also illustrates for the reader why height lower bounds for algebraic integers play an important role in our work. \par

For the $n$-th moment when $n>2$, there are additional complications. We must evaluate the sum  
\begin{equation}
\sum_{m=1}^{{n-1}}\sum_{\substack{D \in M_{m \times n }(K) \\ 
\rank(D) = m \\ 
D \text{ is row reduced echelon} }}
\mathfrak{D}(D)^{-t} \int_{x \in K\otimes\mathbb{R}^{t \times m }} \ind_{S^m}(x D ) dx,
\end{equation}
 where $\mathfrak{D}(D)$ is a measure of the denominators in $D$ extending $[\OK:(\alpha)^{-1}\cap\OK]$ for $(n,m)=(2,1)$. The main Poisson terms come from matrices $D$ with a single non-zero entry in $\mu_K$ per column (we denote this set by $A_m$). 
 While our overarching approach in estimating the error terms generalizing \eqref{eq:introoutline} is similar to the second moment, we now needed to distinguish several cases depending on the shape of $D$ - see Section \ref{subse:a2m} for details. The trickiest case are matrices $D$ \emph{close} to $A_m$, in that $D$ having entries of Weil height larger than some threshold $h_0\approx \tfrac{1}{2}\log n$ or having many non-vanishing $m\times m$ minors makes estimates easier. However, the remaining cases then constitute a finite set of matrices $D$ with entries of height bounded by $h_0$ and having at least one column which differs from columns in the main terms $A_m$. This is just enough to push through our results (see Proposition \ref{prop:GrWeilunitsum}) and obtain suitable exponential decay of each error term. 

\par
Finally, Section \ref{se:oddsends} adds some concluding remarks on height assumptions and more general bodies. 

\section*{Acknowledgements}

We would like to thank Matthew DeCourcy-Ireland, Gabrielle De Micheli,  Gauthier Leterrier and Philippe Michel for helpful conversations on topics related to this paper. We also thank Alexander Gorodnik for his insights concerning equidistribution of Hecke points and Barak Weiss for discussions about lattice coverings. 
We are grateful for Andreas Strömbergsson's important remarks on a previous version of this paper which have greatly benefited this text.
Finally, we thank Danylo Radchenko for supplying a major part of the argument in Lemma \ref{lemma:cyclotomiczetabound}.

This research was funded by the Swiss National Science Foundation (SNSF), Project funding (Div. I-III), "Optimal configurations in multidimensional spaces",
184927. 
Part of this research was carried out by the first named author during their stay at Quantinuum Ltd, Cambridge, UK.

\section{Rogers integration formula for number fields}
\label{se:average_over_lattice}

Let $K$ be a number field and let $\mathcal{O}_K$ denote its ring of integers. Let $K_\mathbb{R} = K \otimes_{\mathbb{Q}} \mathbb{R}$ denote the $[K:\mathbb{Q}]$-dimensional Euclidean space associated to $K$ and let $\overline{( \  )}: K_{\mathbb{R}} \rightarrow K_{\mathbb{R}}$ be a positive-definite involution 
\footnote{ The standard choice is to consider the involution $\overline{(\ )}$ given by identifying $K_\mathbb{R} \simeq \mathbb{R}^{r_1}\times \mathbb{C}^{r_2}$ and defining $\overline{( \ )}$ to be the identity on the real places and complex conjugation otherwise.}
on $K_\mathbb{R}$ such that the following is a positive-definite real quadratic form on $K_{\mathbb{R}}$:
\begin{equation}
\langle x,y \rangle = \Delta_K^{-\frac{2}{[K:\mathbb{Q}]}} \Tr(x\overline{y}).
\label{eq:norm}
\end{equation}
Here $\Delta_K$ is the absolute value of the discriminant of the number field $K$. 
Note that the quadratic form makes $\mathcal{O}_K$ into a lattice in $K_\mathbb{R}$ and the normalization in Equation (\ref{eq:norm}) ensures it has unit covolume. 
When multiple copies $K_\mathbb{R}$ are considered, we will assume that the quadratic form is the sum of the quadratic forms from Equation (\ref{eq:norm}) on each copy. 
This quadratic form therefore defines a Lebesgue measure on any number of copies of $K_\mathbb{R}$. We shall study unimodular lattices in $K_\mathbb{R}^t$ which are $\mathcal{O}_K$-modules for some $t\geq 2$. More precisely, we consider the space of rank $t$ free unimodular $\mathcal{O}_K$-lattices $\SL_{t}(K_\mathbb{R})/\SL_{t}(\OK)$. When $K$ does not have class number one and $\mathcal{O}_K$ is not a principal ideal domain, one may wish to allow for more general $\mathcal{O}_K$-modules. The adaptation of our results to such a setting will be discussed in forthcoming work.  
% viewed as a direct sum of Hilbert spaces.

As pointed out in the introduction, integral formulas for higher moments over number fields can be found in the literature. A. Weil vastly generalized Siegel's mean value theorem \cite{WeilSiegelformula}. One may recover a $n${th} moment formula from Weil's work by considering the algebraic group $G = \SL_{t}(K)$ acting on the left on the affine variety $M_{t \times n}(K)$ in Weil's setup as described in \S 5-12 of \cite{WeilSiegelformula}. Moreover, Appendix \ref{ap:integralformula} provides a self-contained derivation of the formula for the reader's convenience. We record the formulas which form the starting point for our work here. 
\begin{theorem} {\cite{K19,WeilSiegelformula,hughes2023mean} }
\label{th:weil}
For any number $t$ of copies of $K_\mathbb{R}$ let $g:K_{\mathbb{R}}^{t \times n} \rightarrow \mathbb{R}$ be an indicator function of a convex compact set and equip $K_\mathbb{R}^t$ with the measure as discussed around Equation (\ref{eq:norm}). Then, putting the Haar probability measure on $\SL_{t}(K_\mathbb{R})/\SL_{t}(\OK)$, we have that
\begin{equation}
\int_{\SL_{t}(K_\mathbb{R})/\SL_{t}(\OK)} \left(\sum_{v \in \gamma \mathcal{O}_K^{ t \times n}} g(v)  \right) d \gamma
=
	\sum_{m=0}^{{n}}\sum_{\substack{D \in M_{m \times n }(K) \\ 
\rank(D) = m \\ 
D \text{ is row reduced echelon}}}
\mathfrak{D}(D)^{-t} \int_{x \in K_\mathbb{R}^{t \times m }} g(x D ) dx,
\end{equation}
where $\mathfrak{D}(D)$ is the index of the sublattice 
$\{ C \in M_{1 \times m }(\mathcal{O}_K) \mid C \cdot D \in M_{1 \times n}(\mathcal{O}_K)\}$ in $M_{1 \times m}(\mathcal{O}_K)$. 
Here the right hand side could diverge (however, see Corollary \ref{co:convergence}), and the term at $m=0$ is understood to be $g(0)$.
\end{theorem}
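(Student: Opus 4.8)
The plan is to establish the Rogers-type integral formula of Theorem~\ref{th:weil} by reducing the computation of the left-hand side to a sum over orbits of $\SL_t(\OK)$ acting on $\OK^{t\times n}$, and then identifying each orbit with a unique row-reduced echelon matrix $D$ together with a lattice-index factor. First I would unfold the definition: for $v\in\OK^{t\times n}$, write $v$ as an $n$-tuple of columns in $\OK^t$, or better, think of $v$ as an $\OK$-linear map from $\OK^n$ to the lattice $\gamma\OK^t$. The key structural observation is that for a fixed $\gamma$, the set $\gamma\OK^{t\times n}$ is partitioned according to the rank $m$ of the sublattice of $\OK^t$ generated by the columns (equivalently, the rank of the image), and, within each rank, according to the ``shape'' of the sublattice. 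The standard move is to sum first over all saturated (primitive) rank-$m$ sublattices $\Lambda'\subseteq\OK^t$, and then over all $v$ whose columns generate a finite-index subgroup of $\Lambda'$. This is exactly the decomposition that, after integrating over $\gamma$, produces the Grassmannian-type sum on the right.

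The main steps, in order: \textbf{(1)} For each $m$, parametrize rank-$m$ primitive $\OK$-submodules of $\OK^t$ (up to the $\SL_t(\OK)$-action they are all equivalent to the standard coordinate submodule when $\OK$ is a PID, which we are assuming via free modules), and use a Siegel/unfolding argument: the average over $\SL_t(K_\R)/\SL_t(\OK)$ of a sum over $\gamma\Lambda'$ reduces, by the orbit-counting/mean-value principle applied to the smaller homogeneous space, to $\vol$-type integrals over $K_\R^{t\times m}$. \textbf{(2)} Separate the ``denominator'' data: given a primitive sublattice identified with $\OK^m\hookrightarrow\OK^t$, an arbitrary tuple of columns spanning a full-rank subgroup of it is recorded by a matrix in $M_{m\times n}(K)$, and the condition that the columns lie in $\OK^t$ (rather than $K^t$) is precisely the condition defining the lattice $\{C\in M_{1\times m}(\OK) : C\cdot D\in M_{1\times n}(\OK)\}$, whose index in $M_{1\times m}(\OK)$ is $\mathfrak D(D)$. \textbf{(3)} Put $D$ in row-reduced echelon form to get a canonical representative of each $\SL_t(\OK)$- (really $\GL_m(\OK)$-) orbit; the Jacobian/covolume bookkeeping for the change from $\OK^m$-coordinates to $K_\R^{t\times m}$-integration contributes the factor $\mathfrak D(D)^{-t}$ (the $t$-th power because there are $t$ copies of $K_\R$, one per row of the ambient $t\times m$). \textbf{(4)} Collect the $m=0$ term as $g(0)$ and assemble. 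I would either cite Weil \cite{WeilSiegelformula} / Kim \cite{K19} for the precise mean-value input in step (1), or, as the excerpt promises, give the self-contained derivation in Appendix~\ref{ap:integralformula}.

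The step I expect to be the main obstacle is \textbf{(1)} combined with the convergence/measure-theoretic justification: one needs the mean-value theorem on the intermediate homogeneous spaces (stabilizers of primitive sublattices), which are not compact, and one must justify interchanging the sum over sublattices $\Lambda'$ with the integral over $\gamma$. For a number field this requires care because the relevant parabolic subgroups of $\SL_t(K_\R)$ and their $\OK$-arithmetic quotients carry their own volume normalizations, and one must check these are compatible with the stated probability normalization and with the Lebesgue measure defined via \eqref{eq:norm}. This is exactly why the theorem statement hedges that ``the right hand side could diverge''—absolute convergence is deferred to Corollary~\ref{co:convergence}—so at the level of this theorem I would only need to establish the identity as an equality of (possibly $+\infty$) non-negative quantities, which lets me apply Tonelli freely and sidestep the convergence question entirely. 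The remaining steps (2)--(4) are essentially linear-algebra-over-$\OK$ and Smith-normal-form bookkeeping, routine once the normalizations in step (1) are pinned down; the one place to be careful there is the factor-of-$t$ in the exponent of $\mathfrak D(D)$, which I would verify by a direct covolume computation on a single coordinate hyperplane and then take the $t$-fold product.
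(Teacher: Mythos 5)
Your overall architecture — unfold over $\SL_t(\OK)$-orbits in $\OK^{t\times n}$, stratify by rank, identify orbit representatives with row-reduced echelon $D$, and recover $\mathfrak D(D)^{-t}$ as a lattice-index factor — matches the skeleton of the paper's Appendix~\ref{ap:integralformula}, and in particular Proposition~\ref{th:diamond_lemma}, which performs exactly this folding/unfolding to produce the right-hand side of Equation~\eqref{eq:integral_formula_upto_constants} \emph{up to unknown constants} $c_D$. Where you diverge from the paper is in how those constants are pinned down. You propose to compute them head-on by a mean-value theorem on the intermediate homogeneous spaces $\mathcal G_\omega(\mathbb R)/\Gamma_\omega$, which requires knowing (or recomputing) the covolumes of these arithmetic quotients in the chosen normalization — a Tamagawa-number-flavored input that, as you rightly flag, is the hardest part of your route. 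The paper avoids that entirely: after the unfolding, it substitutes the family of test functions $x\mapsto f(\varepsilon x)\ind_{M_{t\times m_0}(K_\mathbb R)D_0}(x)$, rescales by $\varepsilon^{m_0 td}$, and lets $\varepsilon\to 0$; dominated convergence then isolates the single term with $D=D_0$ and identifies $c_{D_0}=[M_{t\times m_0}(\OK):\Lambda_0]^{-1}=\mathfrak D(D_0)^{-t}$ with no direct volume computation on parabolic quotients. Both approaches are valid; the paper's limiting trick buys self-containedness, whereas yours would buy a more classical (Siegel-style) proof at the cost of importing or re-deriving the relevant volume normalizations.

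One correction to your framing of the role of convergence: you suggest that one can "apply Tonelli freely and sidestep the convergence question entirely." That is true for justifying the unfolding as an identity of $[0,\infty]$-valued quantities (which is indeed how Proposition~\ref{th:diamond_lemma} is established), but the paper's determination of $c_D$ relies essentially on the \emph{uniform} bound of Proposition~\ref{pr:bounded_integral} to justify the dominated-convergence step as $\varepsilon\to 0$. So in the paper the Siegel-domain estimate is not merely bookkeeping for the later convergence corollary; it is load-bearing for the proof of the theorem itself. If you go your route, you don't need that estimate to identify the constants, but you would instead need the unimodularity and finite-covolume facts about the stabilizers $\mathcal G_\omega$ together with a clean covolume normalization — which is not substantively easier, just differently placed.
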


What we will actually use as our higher moment formula in practice is the following version.
\begin{corollary}
\label{re:non-zero_terms}
In the same setting as Theorem \ref{th:weil}, we have 
\begin{equation}
\int_{\SL_{t}(K_\mathbb{R})/\SL_{t}(\OK)} \left(\sum_{v \in \gamma \mathcal{O}_K^{ t \times n}\setminus \{0\}} g(v)  \right) d \gamma
=
\sum_{m=1}^{{n}}\sum_{\substack{D \in M_{m \times n }(K) \\ 
\rank(D) = m \\ 
D \text{ is row reduced echelon} \\ D \text{ has no zero columns }}}
\mathfrak{D}(D)^{-t} \int_{x \in K_\mathbb{R}^{t \times m }} g(x D ) dx.
\end{equation}

\end{corollary}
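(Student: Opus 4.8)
The plan is to deduce the corollary from Theorem~\ref{th:weil} by a Möbius inversion over the set of columns forced to vanish. Here the sum on the left is to be read as running over those $v\in\gamma\mathcal{O}_K^{t\times n}$ \emph{none} of whose columns is the zero vector, i.e.\ over $(\gamma\mathcal{O}_K^{t}\setminus\{0\})^{n}$; this is exactly the sum that arises when one expands $\rho(\Lambda)^{n}=\bigl(\sum_{\lambda\in\Lambda\setminus\{0\}}\ind_S(\lambda)\bigr)^{n}$ into a sum over $n$-tuples of nonzero lattice vectors. Correspondingly, on the right one must discard the $m=0$ term and every row reduced echelon $D$ having a zero column, so the content of the corollary is a matching of vanishing-column patterns on the two sides.

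For $I\subseteq\{1,\dots,n\}$ let $\iota_{I}\colon K_\mathbb{R}^{t\times|I|}\to K_\mathbb{R}^{t\times n}$ be the linear inclusion placing the given columns in the positions indexed by $I$ and the zero vector in the remaining positions, and set $g_{I}:=g\circ\iota_{I}$. Each $g_{I}$ is again the indicator of a convex compact set — a coordinate slice of $\supp g$, possibly empty, in which case the associated term is $0$ — so Theorem~\ref{th:weil}, applied with $n$ replaced by $|I|$ and $g$ by $g_{I}$, computes
\[
M(I):=\int_{\SL_{t}(K_\mathbb{R})/\SL_{t}(\OK)}\Bigl(\,\sum_{\substack{v\in\gamma\mathcal{O}_{K}^{t\times n}\\ v_{j}=0\ \text{for}\ j\notin I}}g(v)\Bigr)\,d\gamma
=\sum_{m=0}^{|I|}\ \sum_{\substack{D'\in M_{m\times|I|}(K)\ \text{row reduced echelon}\\ \rank D'=m}}\mathfrak{D}(D')^{-t}\int_{x\in K_\mathbb{R}^{t\times m}}g_{I}(xD')\,dx .
\]
Grouping the $D'$ by their set $J\subseteq I$ of nonzero columns, deletion of the zero columns turns $D'$ into a row reduced echelon matrix $D$ on $|J|$ columns with no zero column and the same rank, a bijection for each fixed $J$. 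A zero column imposes no integrality constraint on a test row, so $\mathfrak{D}(D')=\mathfrak{D}(D)$, and $\iota_{I}(xD')=\iota_{J}(xD)$ gives $\int_{x}g_{I}(xD')\,dx=\int_{x}g_{J}(xD)\,dx$. Hence $M(I)=\sum_{J\subseteq I}P(J)$, where $P(\emptyset):=g(0)$ and, for $J\neq\emptyset$,
\[
P(J):=\sum_{m\ge 1}\ \sum_{\substack{D\in M_{m\times|J|}(K)\ \text{row reduced echelon}\\ \rank D=m,\ D\ \text{has no zero column}}}\mathfrak{D}(D)^{-t}\int_{x\in K_\mathbb{R}^{t\times m}}g_{J}(xD)\,dx .
\]

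Writing $N(I):=\int\bigl(\sum_{v:\,\{j:\,v_{j}\neq 0\}=I}g(v)\bigr)\,d\gamma$, one has $M(I)=\sum_{I'\subseteq I}N(I')$ by definition, so Möbius inversion on the Boolean lattice gives $N(I)=\sum_{I'\subseteq I}(-1)^{|I|-|I'|}M(I')=\sum_{J\subseteq I}P(J)\sum_{J\subseteq I'\subseteq I}(-1)^{|I|-|I'|}=P(I)$, since the inner sum equals $1$ if $J=I$ and $0$ otherwise. Taking $I=\{1,\dots,n\}$ is then exactly the corollary: its left-hand side is $N(\{1,\dots,n\})$, its right-hand side is $P(\{1,\dots,n\})$, and the $m=0$ term has dropped out because the all-zero matrix has a zero column. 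This is a purely formal manipulation of Theorem~\ref{th:weil} and carries no convergence issue of its own (that being the content of Corollary~\ref{co:convergence}); I expect the only genuine work to be checking the two invariances under deletion of zero columns and that the slices $g_{I}$ remain indicators of convex compact sets — so the "obstacle" here is merely the mild combinatorial bookkeeping. Alternatively, one can bypass the inclusion–exclusion by invoking the internal structure of the proof of Theorem~\ref{th:weil}: there each $w$ with $v=\gamma w$ is sorted by the $K$-row space of $w$, represented by its unique row reduced echelon matrix $D$ with $w=yD$ for some $y$ of full column rank $m$; since $\ker y=0$, the $j$-th column of $w$ vanishes if and only if the $j$-th column of $D$ does, so restricting the left-hand sum to matrices with no zero column corresponds term by term to restricting the right-hand sum to $D$ with no zero column, the $m=0$ (all-zero) term being excluded.
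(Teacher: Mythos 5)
Your argument is correct, and your reading of the ambiguous left-hand side is the right one: the set $\gamma\mathcal{O}_K^{t\times n}\setminus\{0\}$ must be parsed as $(\gamma\mathcal{O}_K^{t}\setminus\{0\})^{n}$ (no zero column), not as the complement of the zero matrix --- the latter reading would only delete the $m=0$ term and would not produce the ``no zero columns'' constraint on the right. The paper states the corollary without a proof, and the intended argument is almost certainly your second one: in the derivation of Theorem~\ref{th:weil} (Appendix~\ref{ap:integralformula}, Proposition~\ref{th:diamond_lemma}) each $\omega\in\Lambda$ is grouped by its rank factorization $\omega=CD$ with $D$ row reduced echelon and $C$ of full column rank, and since $C$ is injective, $\omega$ has a zero $j$-th column exactly when $D$ does; the term-by-term correspondence between ``no zero column'' on the two sides is therefore immediate. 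Your main argument via Möbius inversion on the Boolean lattice is a genuinely different route: it stays entirely at the level of the \emph{statement} of Theorem~\ref{th:weil}, applied to the coordinate slices $g_I = g\circ\iota_I$ (which are again indicators of compact convex sets, possibly empty), and uses the invariances $\mathfrak{D}(D')=\mathfrak{D}(D)$ and $\iota_I(xD')=\iota_J(xD)$ under deletion of zero columns, both of which you verify correctly. The trade-off is that the Möbius approach is slightly longer but does not require reopening the proof of the theorem, while the direct bijection is shorter but tacitly depends on the internal rank-factorization parametrization. Your remark that the manipulation introduces no new convergence issue is also correct: applying Theorem~\ref{th:weil} with $n$ replaced by $|I|\le n$ converges under the same hypothesis $t\geq n+1$ of Corollary~\ref{co:convergence}.
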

\begin{remark}
\label{re:smooth_functions}
It is possible to replace the function $g$ above with any smooth function with a compact support or indicator functions of sets with ``nice'' boundary.
\end{remark}

\begin{remark}
\label{re:module_lattices}
 The proof given in Appendix \ref{ap:integralformula} easily generalizes to arbitrary module lattices in $K \otimes \mathbb{R}^t$ which are torsion-free (but not necessarily free) $\OK$-modules. The relevant moduli space is a union of $h_K$ homogeneous spaces $\SL_t(K_{\mathbb{R}})/\Gamma_i$ for some arithmetic groups $\Gamma_i$, where $h_K$ is the class number of $K$. The work of estimating the quantities in the integral formula carried through in the next sections therefore remains valid for non-principal components when $h_K\neq 1$. See \cite{DK22} for an overview of module lattices.
\end{remark}

\section{Convergence of the higher moment formula}
\label{se:convergence}

In this section, we explain how to establish convergence of the 
expression in the integral formula of Theorem \ref{th:weil} and thus convergence of the moments. 

For this purpose, it is sufficient to consider the case when $g$ is the indicator function of a unit ball in $K_\mathbb{R}^{t \times n}$, since if the integral is bounded in this case then it should be bounded for all $g \in \mathcal{C}_c(K_\mathbb{R}^{t})$. The formula can then be related to height zeta functions of Grassmannians and convergence follows from estimates of W. Schmidt on points of bounded height in Grassmannians \cite{S1967}. Of course, the more crucial case for us is when $g(x_1,x_2,\dots,x_n) = \ind_B(x_1)\ind_B(x_2) \dots $, where each $\ind_B$ is the indicator function of some ball $B \subseteq K_\mathbb{R}^{t}$, however we postpone this discussion for now.

\begin{lemma}
\label{le:volume_det}
Let $g$ be the indicator function as described in the preceding paragraph and let $V(d)$ denote the volume of a $d$-dimensional unit ball. If $D \in M_{m \times n}(K)$ is a full-rank matrix, then we have that 
\begin{equation}
\label{eq:ball_integral}
\int_{K_\mathbb{R}^{t \times m }} g(xD) dx = \det(D;M_{t \times m  }(\mathcal{O}_{K}))^{-1} V(m  t [K:\mathbb{Q}]).
\end{equation}
Here, we define $\det(D; M_{ t \times m}\left( \mathcal{O}_K\right))$ as the volume of the fundamental domain of the $mt[K:\mathbb{Q}]$-dimensional $\mathbb{Z}$-lattice $M_{t \times m}(\mathcal{O}_K) \cdot D$.
\end{lemma}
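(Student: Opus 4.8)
The plan is to compute the integral $\int_{K_\mathbb{R}^{t\times m}} g(xD)\,dx$ by a change of variables argument that reduces the problem to computing the volume of a ball divided by the covolume of a lattice. First I would observe that since $g$ is the indicator function of a unit ball $B^{t\times n}\subseteq K_\mathbb{R}^{t\times n}$ and $D$ has full rank $m$, the condition $xD\in B^{t\times n}$ cuts out a bounded region of $K_\mathbb{R}^{t\times m}$ (the map $x\mapsto xD$ is injective because $D$ has a right inverse over $K$), so the integral is finite. The right multiplication by $D$ is a linear map $\Phi_D\colon K_\mathbb{R}^{t\times m}\hookrightarrow K_\mathbb{R}^{t\times n}$ of real vector spaces, whose image is the $mt[K:\mathbb{Q}]$-dimensional subspace $W=K_\mathbb{R}^{t\times m}\cdot D$. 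Thus
\begin{equation}
\int_{K_\mathbb{R}^{t\times m}} g(xD)\,dx = \int_{K_\mathbb{R}^{t\times m}} \ind_{B^{t\times n}}(xD)\,dx = \frac{1}{J}\cdot \vol_{W}\bigl(B^{t\times n}\cap W\bigr),
\end{equation}
where $J$ is the Jacobian factor relating the Lebesgue measure on $K_\mathbb{R}^{t\times m}$ transported to $W$ via $\Phi_D$ and the intrinsic Euclidean measure $\vol_W$ on $W$ induced from the ambient quadratic form.

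The key point is then to identify $J$ with $\det(D;M_{t\times m}(\calO_K))$, the covolume of the $\mathbb{Z}$-lattice $M_{t\times m}(\calO_K)\cdot D$ sitting inside $W$. This is exactly the content of how pushforward measures interact with lattice covolumes: the standard lattice $M_{t\times m}(\calO_K)$ has covolume $1$ in $K_\mathbb{R}^{t\times m}$ by the normalization in Equation (\ref{eq:norm}) (each copy of $\OK$ is unimodular), so the pushforward Lebesgue measure $(\Phi_D)_*(dx)$ on $W$ assigns mass exactly $\det(D;M_{t\times m}(\calO_K))^{-1}$ to a fundamental domain of $M_{t\times m}(\calO_K)\cdot D$ measured in the intrinsic metric — equivalently $(\Phi_D)_*(dx) = \det(D;M_{t\times m}(\calO_K))^{-1}\cdot \vol_W$. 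Substituting this into the displayed identity gives
\begin{equation}
\int_{K_\mathbb{R}^{t\times m}} g(xD)\,dx = \det(D;M_{t\times m}(\calO_K))^{-1}\cdot \vol_W\bigl(B^{t\times n}\cap W\bigr).
\end{equation}
Finally, since $W$ is an $mt[K:\mathbb{Q}]$-dimensional subspace of the Euclidean space $K_\mathbb{R}^{t\times n}$ and $B^{t\times n}$ is the unit ball, the intersection $B^{t\times n}\cap W$ is an $mt[K:\mathbb{Q}]$-dimensional unit ball, whose volume is $V(mt[K:\mathbb{Q}])$ by definition of $V(d)$. This yields Equation (\ref{eq:ball_integral}).

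I expect the main obstacle to be making the identification $(\Phi_D)_*(dx) = \det(D;M_{t\times m}(\calO_K))^{-1}\cdot\vol_W$ fully rigorous, i.e. carefully bookkeeping the three measures involved — the chosen Lebesgue measure on the source $K_\mathbb{R}^{t\times m}$ (normalized so $M_{t\times m}(\calO_K)$ is unimodular), the ambient Euclidean measure on $K_\mathbb{R}^{t\times n}$ restricted to the subspace $W$, and the pushforward — and checking they are consistently normalized. The clean way to do this is to pick any $\mathbb{Z}$-basis of the lattice $M_{t\times m}(\calO_K)$; its image under $\Phi_D$ is a $\mathbb{Z}$-basis of $M_{t\times m}(\calO_K)\cdot D$, and the covolume of the latter in $W$ is by definition $\det(D;M_{t\times m}(\calO_K))$, while the covolume of the former in the source is $1$; since pushforward of Lebesgue measure scales covolumes of lattices by the same factor on every fundamental domain, the ratio of the two measures on $W$ is the constant $\det(D;M_{t\times m}(\calO_K))^{-1}$, and a symmetry/density argument (the ball is the unit ball, both measures are translation-invariant on $W$) finishes it. Everything else is bookkeeping about the dimension count $\dim_\mathbb{R} W = mt[K:\mathbb{Q}]$ and the fact that an affine slice of a unit ball by a linear subspace through the origin is again a unit ball in that subspace.
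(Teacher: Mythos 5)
Your proof is correct, and it takes a genuinely different route from the paper. The paper computes the integral as the limit of a Riemann sum:
\[
\int_{K_\mathbb{R}^{t\times m}} g(xD)\,dx = \lim_{\varepsilon\to 0}\varepsilon^{mt[K:\mathbb{Q}]}\sum_{x\in M_{t\times m}(\OK)} g(\varepsilon xD),
\]
and then observes that the sum counts points of the lattice $\varepsilon M_{t\times m}(\OK)\cdot D$ in the unit ball sliced by $W$, so the limit is $\vol_W(B\cap W)/\det(D;M_{t\times m}(\OK)) = V(mt[K:\mathbb{Q}])\cdot\det(D;M_{t\times m}(\OK))^{-1}$. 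You instead perform a direct change of variables: push the (already unimodular, thanks to the normalization in Equation (\ref{eq:norm})) Lebesgue measure on $K_\mathbb{R}^{t\times m}$ forward along $\Phi_D$ and identify the resulting measure on $W$ as $\det(D;M_{t\times m}(\OK))^{-1}\vol_W$ by comparing covolumes of the lattice $M_{t\times m}(\OK)\cdot D$ in the two measures. Both ultimately encode the same Jacobian, but your version avoids having to justify the limiting lattice-point asymptotic and is arguably cleaner because the measure identification is a one-line comparison of two translation-invariant Haar measures on $W$; the paper's version is more concrete and elementary, making the key covolume visible as a lattice-point density. Your dimension count and the observation that a central linear slice of a Euclidean unit ball is again a unit ball are correct, so the argument closes.
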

\begin{remark}
Equivalently, $\det(D;M_{t \times m}(\OK))$ is the $\left( mt [K:\mathbb{Q}] \right)$-dimensional volume of the image of a unit cube in $K_{\mathbb{R}}^{t \times m }$ via $x \mapsto xD$. This image is a parellelepiped in $K_\mathbb{R}^{t \times n } \simeq \mathbb{R}^{tn[K:\mathbb{Q}]}$. 
\end{remark}

\begin{proof} { \bf (of Lemma \ref{le:volume_det})}

Observe that by the definition of the Riemann integral
\begin{equation}
\int_{K_\mathbb{R}^{t \times m }}g(xD) dx = \lim_{\varepsilon \rightarrow 0 }  \varepsilon^{mt[K:\mathbb{Q}]} \left( \sum_{x \in M_{t \times m }(\OK)} g( \varepsilon \cdot x D) \right).
\end{equation}

The sum is now counting the number of lattice points of $\varepsilon M_{t \times m }(\OK)$ in the ball.
\end{proof}

\begin{lemma}
\label{le:height_relation}
Suppose $D \in M_{m \times n}(K)$. Then
\begin{equation}
\det(D ; M_{t \times m }(\mathcal{O}_K) )  =  \det(D; M_{1 \times m }(\mathcal{O}_K))^{t}.
\end{equation}
Here the left-hand side is the quantity described above and the right hand side is the analogous quantity computing the volume of the fundamental domain of $M_{1 \times m}(\mathcal{O}_K) \cdot D \subseteq M_{1 \times n}(K_{\mathbb{R}}) \simeq K_\mathbb{R}^{n}$.
\end{lemma}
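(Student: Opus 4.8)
The plan is to exploit the fact that multiplication by $D$ acts on $M_{t\times m}(\OK)$ ``one row at a time''. Writing $M_{t\times m}(\OK)$ as the direct sum of $t$ copies of $M_{1\times m}(\OK)$ (one per row), and similarly identifying $M_{t\times m}(K_\mathbb{R})\simeq (M_{1\times m}(K_\mathbb{R}))^{\oplus t}\simeq (K_\mathbb{R}^m)^{\oplus t}$, I would observe that the $\mathbb{Z}$-linear map $x\mapsto xD$ on $M_{t\times m}(K_\mathbb{R})$ is precisely the block-diagonal map that applies the \emph{same} $\mathbb{Z}$-linear map $\varphi_D\colon M_{1\times m}(K_\mathbb{R})\to M_{1\times n}(K_\mathbb{R})$, $C\mapsto CD$, independently to each of the $t$ rows. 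Indeed the $i$th row of $xD$ depends only on the $i$th row of $x$, and this dependence is the same for every $i$. Hence the lattice $M_{t\times m}(\OK)\cdot D$ inside $M_{t\times n}(K_\mathbb{R})$ is the orthogonal direct sum of $t$ copies of the lattice $M_{1\times m}(\OK)\cdot D$ sitting inside $M_{1\times n}(K_\mathbb{R})$, where orthogonality of the blocks holds because, by the convention fixed around Equation (\ref{eq:norm}), the quadratic form on $M_{t\times n}(K_\mathbb{R})$ is the sum of the forms on the $t$ row-blocks $M_{1\times n}(K_\mathbb{R})$.

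From here the result is immediate: the covolume of an orthogonal direct sum of lattices is the product of the covolumes of the summands, so
\begin{equation}
\det(D;M_{t\times m}(\OK)) = \prod_{i=1}^{t}\det(D;M_{1\times m}(\OK)) = \det(D;M_{1\times m}(\OK))^{t}.
\end{equation}
One subtlety to address carefully is that $\varphi_D$ is injective precisely because $D$ has rank $m$ (it has a right inverse over $K$), so $M_{1\times m}(\OK)\cdot D$ is genuinely a full-rank lattice in the $m[K:\mathbb{Q}]$-dimensional space $M_{1\times m}(K_\mathbb{R})\cdot D\subseteq M_{1\times n}(K_\mathbb{R})$ and the quantity $\det(D;M_{1\times m}(\OK))$ on the right-hand side is well-defined and nonzero; the same remark makes $\det(D;M_{t\times m}(\OK))$ well-defined. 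If one prefers to phrase it via the Gram matrix: choosing a $\mathbb{Z}$-basis of $M_{1\times m}(\OK)$ and taking the induced ``block'' $\mathbb{Z}$-basis of $M_{t\times m}(\OK)$, the Gram matrix of the image basis under $x\mapsto xD$ is block-diagonal with $t$ identical blocks, so its determinant is the $t$th power of the determinant of a single block, and taking square roots gives the claim.

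I do not expect a genuine obstacle here; the only thing requiring a moment's care is making the block-orthogonality explicit, i.e. pinning down that the chosen quadratic form on several copies of $K_\mathbb{R}$ is the direct sum form (which is exactly the normalization convention stated in the paper) and that the map $x\mapsto xD$ respects the row decomposition. Everything else is the standard multiplicativity of covolume under orthogonal direct sums.
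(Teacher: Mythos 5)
Your proof is correct and is the natural argument: the map $x\mapsto xD$ decouples across the $t$ rows, the paper's choice of quadratic form on $M_{t\times n}(K_\mathbb{R})$ is the orthogonal direct sum of the forms on the row-blocks, and covolume is multiplicative over orthogonal direct sums. The paper itself states this lemma without a written proof (treating it as immediate), so there is no alternative argument to compare against; your block-diagonal/Gram-matrix phrasing is exactly the reasoning the authors left implicit.
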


\subsection{Heights of subspaces}
\label{ss:heights}

Let us recreate the height functions on $K$-subspaces of $K^{t}$ as given in \cite{S1967}. Consider the standard Pl\"ucker embedding
\begin{align}
\iota: \Gr(m,K^n) &  \rightarrow  \Pro\left({ \mathlarger \wedge}^{m} K^{n} \right)\\
\spant_{K}(w_1,w_2,\dots,w_m) & \mapsto  [ w_1 \wedge w_2 \wedge \dots \wedge w_m ] .
\end{align}
Here $\Pro({\mathlarger \wedge}^{m}K^{n})=\Gr(1,{\mathlarger \wedge}^{m}K^n)$ is the $m$th exterior product (over $K$) of the vector space $K^{n}$ and $w_1,\dots,w_m$ are some $K$-linearly independent vectors inside $K^{n}$. A constructive way to see this map is that if $S \in \Gr(m,K^{n})$ is generated by $w_1,\dots,w_m$ then $\iota(S)$ is the one-dimensional subspace generated by the $m \times m$ minors of the $n \times m $ matrix whose columns are $w_1,\dots,w_m$. We shall denote the norm of the fractional $\mathcal{O}_K$-ideal generated by $x_1,\dots,x_N \in K$ by: 
\begin{equation}\label{def:langlenorm}
    \N( \langle x_1,x_2,\dots,x_N \rangle):=\N(\OK x_1+\cdots +\OK x_N).
\end{equation}

Let $\sigma_1,\sigma_2,\dots,\sigma_{[K:\mathbb{Q}]} : K \rightarrow \mathbb{C}$ be all the complex embeddings of $K$. 
We can apply them coordinate-wise and lift them as $\sigma_1, \dots , \sigma_{N} : K^{N} \rightarrow \mathbb{C}^{N}$ for any $N \ge 1$. Now, for any projective space $\Pro(K^{N})$, we can define the $l^2$-height function as

\begin{align}
H: \Pro(K^{N}) \rightarrow & \  \mathbb{R}_{\ge 0}\\
% S  \mapsto & \ H(S) \\
[x_1,\dots,x_N ] \mapsto & \frac{1}{\N( \langle x_1,x_2,\dots,x_N \rangle) } \prod_{i=1}^{[K:\mathbb{Q}]} \sqrt{\sum_{j=1}^{N}  |\sigma_i(x_j)|^{2}}.
\label{eq:l2_heigh_on_pro}
\end{align}

We similarly define the $l^{\infty}$-height function: 
\begin{align}
H_W: \Pro(K^{N}) \rightarrow & \  \mathbb{R}_{\ge 0}\\
% S  \mapsto & \ H(S) \\
[x_1,\dots,x_N ] \mapsto & \frac{1}{\N( \langle x_1,x_2,\dots,x_N \rangle) } 
\prod_{i=1}^{[K:\mathbb{Q}]}\max_{j=1\dots N} |\sigma_{i}(x_j)|
\label{eq:linf_heigh_on_pro}
\end{align}
Observe that both the heights defined above are well-defined functions on $\Pro(K^{N})$.

Enumerating the size-$m$ subsets of $\{1,2,\dots,n\}$, we get an obvious identification 
$\Pro({\mathlarger \wedge}^{m} K^{n} ) \leftrightarrow \Pro(K^{\binom{n}{m}})$. Using this, we can define the height of a subspace in $\Gr(m,K^{n})$ to be
\begin{align}
H: \Gr(m,K^{n}) \rightarrow & \  \mathbb{R}_{\ge 0}\\
S  \mapsto & \ H(\iota(S)) \\
\label{eq:defi_of_height}
\end{align}
and similarly,
\begin{align}
H_W: \Gr(m,K^{n}) \rightarrow & \  \mathbb{R}_{\ge 0}\\
S  \mapsto & \ H_W(\iota(S)) \\
\label{eq:defi_of_height_inf}
\end{align}

Now, we are ready to state an important lemma, which is essentially Theorem 1 from \cite{S1967}.

\begin{lemma}
\label{le:equivalence_of_height}
Suppose $m \le n$. Let $D \in M_{m \times n}(K)$ be a full-rank row reduced matrix and let $S = D^T  K^{m}\in  \Gr(m,K^{n})$ be the $m$-dimensional subspace spanned by its rows. The height function $H$ from Equation (\ref{eq:defi_of_height}) satisfies
\begin{equation}
H(S) 
=  \det(D; M_{1 \times m }(\mathcal{O}_K))
\cdot \mathfrak{D}(D).
\end{equation}
Here $\det(D; M_{1 \times m }(\mathcal{O}_K))$ is as defined in Lemma \ref{le:height_relation} and $\mathfrak{D}(D)$ is as defined in Theorem \ref{th:weil}.
\end{lemma}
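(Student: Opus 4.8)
The plan is to unwind the three normalizations involved---the Plücker height $H(S)$, the lattice covolume $\det(D;M_{1\times m}(\mathcal{O}_K))$, and the index $\mathfrak{D}(D)$---and check they combine as claimed. First I would reduce to a convenient normal form for $D$. Since $D$ is full-rank row reduced echelon, after permuting columns we may write $D=(\,I_m \mid D')\,$ (the pivot columns form an identity block), and then the rows of $D$ span $S$ while the $m\times m$ minors of $D^T$ are, up to sign, the $\binom{n}{m}$ sub-determinants, one of which (the pivot one) equals $1$. I would spell out that clearing denominators: let $\delta\in\mathcal{O}_K$ (or more precisely a fractional ideal) be such that all entries of $D$ lie in $\delta^{-1}\mathcal{O}_K$; the key bookkeeping is that $\mathfrak{D}(D)$, the index of $\{C\in M_{1\times m}(\mathcal{O}_K): CD\in M_{1\times n}(\mathcal{O}_K)\}$ in $M_{1\times m}(\mathcal{O}_K)$, measures exactly the ideal-theoretic denominator content of the non-pivot block $D'$.

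The core of the argument is a volume/ideal computation. I would compute $\det(D;M_{1\times m}(\mathcal{O}_K))$, the covolume of the $\mathbb{Z}$-lattice $M_{1\times m}(\mathcal{O}_K)\cdot D\subseteq K_\mathbb{R}^n$, by comparing it to the covolume of $\mathcal{O}_K^m$ inside $K_\mathbb{R}^m$ (which is $1$ by the normalization around Equation (\ref{eq:norm})). The map $x\mapsto xD$ from $K_\mathbb{R}^m$ to $K_\mathbb{R}^n$ is injective with image an $m[K:\mathbb{Q}]$-dimensional subspace, and its "volume distortion" is governed by the Gram determinant $\det(DD^*)$ (suitably interpreted over $K_\mathbb{R}$), which in turn unpacks via Cauchy--Binet into a sum over the $m\times m$ minors of $D$ under the archimedean embeddings---precisely the quantity $\prod_i\sqrt{\sum_j|\sigma_i(x_j)|^2}$ appearing in $H(\iota(S))$ before dividing by the ideal norm. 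The ideal norm $\N(\langle\text{minors}\rangle)^{-1}$ in the definition of $H$ is then matched against $\mathfrak{D}(D)$: the fractional ideal generated by the $m\times m$ minors of $D^T$ is exactly the inverse of the denominator ideal whose norm is $\mathfrak{D}(D)$, using that one minor equals $1$ (so the numerator content is trivial) and the rest have content exactly $\mathfrak{D}(D)^{-1}$ in the ideal generated. Assembling: $\det(D;M_{1\times m}(\mathcal{O}_K))$ equals $\big(\prod_i\sqrt{\sum_j|\sigma_i(\text{minor}_j)|^2}\big)$ times a correction that the ideal-norm factor supplies, and dividing appropriately yields $H(S)=\det(D;M_{1\times m}(\mathcal{O}_K))\cdot\mathfrak{D}(D)$.

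Concretely, the steps in order: (i) normalize $D$ to $(I_m\mid D')$ via a column permutation and note this changes neither side (both are permutation-invariant: $H$ on the Grassmannian is intrinsic to $S$ up to the Plücker coordinates being permuted, and $\mathfrak{D}(D)$, $\det(D;\cdot)$ likewise); (ii) identify the Plücker vector $\iota(S)$ with the vector of $m\times m$ minors of $D^T$, one coordinate being $1$; (iii) show $\N(\langle\text{minors}\rangle)=\mathfrak{D}(D)^{-1}$ by a local computation at each prime, relating the denominator lattice $\{C: CD\in M_{1\times n}(\mathcal{O}_K)\}$ to the content of the minor ideal; (iv) compute the archimedean factor $\prod_i\sqrt{\sum_j|\sigma_i(\text{minor}_j)|^2}$ as the covolume of $M_{1\times m}(\mathcal{O}_K)\cdot D$ via a Gram/Cauchy--Binet determinant identity, using that $\mathcal{O}_K$ has covolume $1$ in $K_\mathbb{R}$; (v) combine (iii) and (iv) with the definition (\ref{eq:defi_of_height}) of $H(S)$.

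The main obstacle I expect is step (iii)---pinning down precisely that the ideal generated by the full set of $m\times m$ minors of $D^T$ is the inverse of the "denominator ideal" implicit in $\mathfrak{D}(D)$. This is a purely local (prime-by-prime) statement about content ideals of matrices over a Dedekind domain, and one has to be careful that $\mathfrak{D}(D)$ is defined via a specific sublattice index rather than directly as an ideal norm; translating between the two requires the Smith-normal-form / elementary-divisor description of $M_{1\times m}(\mathcal{O}_K)\cap D^{-1}M_{1\times n}(\mathcal{O}_K)$ and matching its elementary divisors to the minors. The archimedean computation in step (iv), while it looks like the "analytic" heart, is in fact a standard Gram-determinant identity once one fixes the inner product from Equation (\ref{eq:norm}); the discriminant factors $\Delta_K^{-2/[K:\mathbb{Q}]}$ built into that inner product are exactly what make the normalization come out clean, and I would double-check that these cancel correctly between the $m$ copies in $K_\mathbb{R}^m$ and the $m$ chosen among the $n$ copies in $K_\mathbb{R}^n$.
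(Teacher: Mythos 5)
Your plan matches the paper's Appendix B proof essentially step for step: you split the equality into an archimedean factor (handled by a Gram/Cauchy--Binet determinant identity, exactly the paper's Claim~1 of Proposition~\ref{pr:result_about_volumes}) and a non-archimedean factor (your step~(iii), which is the paper's Claim~2, proven by clearing denominators and a prime-by-prime elementary-divisor count, with the paper finally deferring to \cite[Lemma 4.5]{S1967}). Your initial normalization to $D=(I_m\mid D')$ by a column permutation is a minor simplification the paper does not bother with — it works with general $K$-linearly independent $w_1,\dots,w_m$, which is why its Claim~2 is phrased with the two-sided index ratio $[\Lambda:\Lambda'\cap\Lambda]/[\Lambda':\Lambda'\cap\Lambda]$ rather than the single index $\mathfrak{D}(D)$ — but in the row-reduced case your formulation $\N(\langle\text{minors}\rangle)=\mathfrak{D}(D)^{-1}$ is equivalent and is recovered on specializing.
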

\begin{proof}
A proof is given for the reader's convenience in Appendix \ref{ap:three_page_proof}.
\end{proof}

\subsection{Relating the two types of heights}

The following gives a relationship between the two types of heights defined in this section. This will be useful for proving Poisson estimates later on in the paper.

\begin{lemma}
\label{le:lower_heigh_bound}
Let $x=[x_1,x_2,\dots,x_N] \in \Pro(K^{N})$. Then the following relation exists between heights defined in Equation (\ref{eq:l2_heigh_on_pro}) and (\ref{eq:linf_heigh_on_pro}):
\begin{equation}
H(x)^{2} \ge \left( H_W(x)^{\frac{2}{[K:\mathbb{Q}]}} + (N-1)\frac{M(x)^{ \frac{2 }{[K:\mathbb{Q}] (N-1)} }}{H_W(x)^{\frac{2}{[K:\mathbb{Q}](N-1)}} } \right)^{[K:\mathbb{Q}]} ,
\end{equation}
where 
\begin{equation}
\label{eq:defi_of_M}
M(x) = \frac{\N(x_1)\N(x_2) \dots \N(x_{N})}{\N(\langle x_1, x_2,\dots,x_N \rangle )^{N}}.
\end{equation}
Here $\N(x_i)$ denotes the norm of the ideal generated by $x_i$ and $N$ is any strictly positive integer.
\end{lemma}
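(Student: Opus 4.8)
The plan is to work one archimedean place at a time and then multiply. Write $d = [K:\mathbb{Q}]$ and let $I = \N(\langle x_1,\dots,x_N\rangle)$ denote the ideal norm appearing in the denominators. For each embedding $\sigma_i$, set $a_i = \max_{j}|\sigma_i(x_j)|$ (the ``local sup-norm'') and $b_i = \left(\sum_{j=1}^N |\sigma_i(x_j)|^2\right)^{1/2}$ (the ``local $\ell^2$-norm''). Then by definition $H_W(x) = I^{-1}\prod_i a_i$ and $H(x) = I^{-1}\prod_i b_i$, so that $H(x)^2 = I^{-2}\prod_i b_i^2$. The goal is therefore a lower bound for each $b_i^2$ in terms of $a_i$ and the other data. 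First I would isolate, for the place $i$, the largest coordinate: some $|\sigma_i(x_{j_0})| = a_i$ contributes $a_i^2$ to the sum defining $b_i^2$, and the remaining $N-1$ terms contribute $\sum_{j \neq j_0}|\sigma_i(x_j)|^2 \ge (N-1)\left(\prod_{j\neq j_0}|\sigma_i(x_j)|^2\right)^{1/(N-1)}$ by the AM--GM inequality. Hence
\begin{equation}
b_i^2 \ \ge\ a_i^2 + (N-1)\left(\prod_{j\neq j_0}|\sigma_i(x_j)|^2\right)^{1/(N-1)} \ =\ a_i^2 + (N-1)\,\frac{\left(\prod_{j=1}^N|\sigma_i(x_j)|^2\right)^{1/(N-1)}}{a_i^{2/(N-1)}}.
\end{equation}

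Next I would take the product over all $i$ from $1$ to $d$ and match terms with the claimed expression. On the right-hand side of the asserted inequality we see $H_W(x)^{2/d} = \left(I^{-1}\prod_i a_i\right)^{2/d}$, which raised to the $d$-th power recovers $I^{-2}\prod_i a_i^2$; this will pair with the $a_i^2$ summands. The cross term involves $M(x)^{2/(d(N-1))}$ divided by $H_W(x)^{2/(d(N-1))}$; unwinding the definition, $M(x) = I^{-N}\prod_{j=1}^N \N(x_j)$, and the product formula for norms gives $\N(x_j) = \prod_{i=1}^d |\sigma_i(x_j)|$, so $M(x) = I^{-N}\prod_{i=1}^d\prod_{j=1}^N|\sigma_i(x_j)|$. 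Dividing by $H_W(x) = I^{-1}\prod_i a_i$ and raising to the power $2/(N-1)$ reproduces exactly $I^{-2(N-1)^{-1}\cdot\text{(something)}}$ times $\prod_i \left(\prod_j|\sigma_i(x_j)|^2/a_i^2\right)^{1/(N-1)}$; the bookkeeping of the $I$-powers is arranged so that, after taking $d$-th roots and $d$-th powers in the statement, everything is consistent. In short, the inequality to prove reduces, after dividing through by $I^{-2}$, to the purely real statement that for positive reals the product $\prod_i b_i^2$ dominates $\left(\left(\prod_i a_i^2\right)^{1/d} + (N-1)\left(\prod_i c_i^2\right)^{1/(d(N-1))}\big/\left(\prod_i a_i^2\right)^{1/(d(N-1))}\right)^d$ where $c_i = \prod_j|\sigma_i(x_j)|$.

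The main technical point — and the step I expect to be the real obstacle — is the passage from the per-place bounds $b_i^2 \ge a_i^2 + (N-1)(c_i^2)^{1/(N-1)}/a_i^{2/(N-1)}$ to the product inequality with the $d$-th power structure on the outside. This is an instance of a generalized Hölder/Mahler-type inequality: if $u_i \ge v_i + w_i$ for each $i$, then $\prod_i u_i \ge \left((\prod_i v_i)^{1/d} + (\prod_i w_i)^{1/d}\right)^d$, which follows from Minkowski's inequality for the $1/d$-``norm'' (equivalently, superadditivity of the geometric mean, $\mathrm{GM}(v+w) \ge \mathrm{GM}(v) + \mathrm{GM}(w)$). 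I would invoke this with $u_i = b_i^2$, $v_i = a_i^2$, and $w_i = (N-1)(c_i^2)^{1/(N-1)}/a_i^{2/(N-1)}$, noting that $\prod_i w_i = (N-1)^d \left(\prod_i c_i^2\right)^{1/(N-1)}\big/\left(\prod_i a_i^2\right)^{1/(N-1)}$, which is precisely the $d$-th power of the cross term after dividing by $I^2$. Assembling the three pieces — the AM--GM step at each place, the superadditivity of the geometric mean, and the translation back into $H$, $H_W$, $M$ via the product formula and the definition of $M(x)$ — completes the argument. The only care needed is the degenerate cases (some $x_j = 0$, or $a_i$ attained with multiplicity), but AM--GM and the geometric-mean inequality hold with the natural conventions, so these cause no trouble.
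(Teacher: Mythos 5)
Your proof is correct and uses the same two ingredients as the paper's argument, namely AM--GM applied to the $N-1$ non-maximal coordinates at each place, together with superadditivity of the geometric mean (Mahler's inequality, arising from convexity of $\log\sum e^{x_i}$), with the bookkeeping via $M(x)$ and the ideal norm $I=\N(\langle x_1,\dots,x_N\rangle)$ handled in the same way. The only difference is that you apply AM--GM at each place first and then pass to the product over places via Mahler's inequality, whereas the paper applies Mahler's inequality first (after a per-place decreasing rearrangement) and then AM--GM; this is a trivial reordering that yields the identical bound.
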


\begin{proof}

Observe that the following is a convex function on $\mathbb{R}^{N}$:
\begin{equation}
(x_1,\dots,x_N) \rightarrow \log( e^{x_1} + e^{x_2} + \dots + e^{x_N}),
\end{equation} and hence we get that for $x_{ij} \ge 0$ \begin{equation} \prod_{j=1}^{r}\left(\sum_{i=1}^{N} x_{ij}\right) \ge \left( \sum_{i=1}^{N} \left( \prod_{j=1}^{r}x_{ij} \right)^{\frac{1}{r}} \right)^{r}.
\end{equation}

For maximum efficacy, before applying the above inequality, one should rearrange the inner sums in the decreasing order. So we add the assumption that for each $j$, $x_1j \ge x_{2j} \ge \dots \ge x_{rj}$. Now, using the arithmetic-mean-geometric-mean inequality on the last $N-1$ terms on each of the $r$ multiplicands, we get: 

\begin{align}
\prod_{j=1}^{r}\left(\sum_{i=1}^{N} x_{ij}\right) \ge \left( 
\sum_{i=1}^{N} \left( \prod_{j=1}^{r}x_{ij} \right)^{\frac{1}{r}} \right)^{r}
& \ge
\left(  \left( \prod_{j=1}^{r}x_{1j} \right)^{\frac{1}{r}} + (N-1) \left( \prod_{i=2}^{N}\prod_{j=1}^{r} x_{ij}\right)^{\frac{1}{r( N-1 )}} \right)^{r} \\
& = \left(  \left( \prod_{j=1}^{r}x_{1j} \right)^{\frac{1}{r}} + (N-1) \frac{\left( \prod_{i=1}^{N}\prod_{j=1}^{r} x_{ij}\right)^{\frac{1}{r( N-1 )}}}{ \left( \prod_{j=1}^{r} x_{1j}\right)^{\frac{1}{r(N-1)}} } \right)^{r}. \\
\end{align}

Now set $r=[K:\mathbb{Q}]$ and for each $r$ let $\{ x_{i1},x_{i2},\dots\}$ be the numbers $\{|\sigma(x_1)|^{2}, |\sigma(x_2)|^{2} , \dots \}$ written down in the decreasing order, with $\sigma:K \rightarrow \mathbb{C}$ being the $i$th embedding with respect to some enumeration. This way, we have that 
\begin{equation}
\prod_{j=1}^{r} x_{1j} = \prod_{\sigma:K \rightarrow \mathbb{C}} \max_{i=1\dots N} |\sigma(x_{i})|^{2} = H_W(x)^{2} N(\langle x_1, x_2, \dots, x_N \rangle )^{2}.
\end{equation}

So we reach the conclusion that 
\begin{equation}
H(x)^{2} \ge \left( H_W(x)^{\frac{2}{[K:\mathbb{Q}]}} + (N-1)\frac{M(x)^{ \frac{2 }{[K:\mathbb{Q}] (N-1)} }}{H_W(x)^{\frac{2}{[K:\mathbb{Q}](N-1)}} } \right)^{[K:\mathbb{Q}]} .
\end{equation}
\end{proof}

%\begin{remark}
%\label{re:note}
%Note that although Lemma \ref{le:height_relation} works for all values $x_1,x_2,\dots,x_N$ but $M(x)=0$ unless $x_1,x_2,\dots,x_N \neq 0$. 
%\end{remark}

Concerning the quantity $M(x)$ we have: 

\begin{lemma}
\label{le:about_Mx}
Let $x = [x_1,\dots,x_N] \in \Pro(K^{N})$. The quantity $M(x)$ defined in Equation (\ref{eq:defi_of_M}) is an integer at least $1$ if and only if $x_1\cdots x_N\neq 0$ and zero otherwise. Moreover, $M(x)=1$ implies that $x_i\in \OK^{\times}$ for all $i$ (up to scaling, i.e. as an element of $\Pro(K^{N})$) and if $M(x)>1$ it equals the norm of a non-trivial ideal in $\OK$.
\end{lemma}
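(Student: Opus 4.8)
The plan is to unwind the definition of $M(x)$ in terms of the factorization of fractional ideals. Write $(x_i) = \OK x_i$ for the fractional ideal generated by $x_i$ (when $x_i \neq 0$) and set $\mathfrak{a} = \langle x_1,\dots,x_N\rangle = \OK x_1 + \cdots + \OK x_N$, so that $\mathfrak{a} = \gcd((x_1),\dots,(x_N))$ as fractional ideals. First I would observe that if some $x_i = 0$, then $\N(x_i) = \N((0)) = 0$ in the convention used here (the numerator contains a zero factor), so $M(x) = 0$; this handles the "zero otherwise" clause. So assume all $x_i \neq 0$. Since $M(x)$ is a well-defined function on $\Pro(K^N)$, I may rescale and assume all $x_i \in \OK$, so that each $(x_i)$ is an integral ideal and $\mathfrak{a}$ is integral.

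The key step is to show $M(x) = \N\!\left( \prod_{i=1}^N (x_i) \mathfrak{a}^{-1} \right)$ is the norm of an \emph{integral} ideal, from which integrality and $M(x) \ge 1$ follow at once. To see the ideal $\mathfrak{b} := \left(\prod_i (x_i)\right)\mathfrak{a}^{-N}$ is integral, I would work prime-by-prime: fix a prime $\mathfrak{p}$ and let $e_i = v_{\mathfrak{p}}((x_i))$ be the valuations, ordered so that $e_1 \le e_2 \le \cdots \le e_N$; then $v_{\mathfrak{p}}(\mathfrak{a}) = \min_i e_i = e_1$, and $v_{\mathfrak{p}}(\mathfrak{b}) = \sum_{i=1}^N e_i - N e_1 = \sum_{i=1}^N (e_i - e_1) \ge 0$. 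Hence $\mathfrak{b}$ is integral and $M(x) = \N(\mathfrak{b}) \in \mathbb{Z}_{\ge 1}$. This gives the main claim together with $M(x) \ge 1$.

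For the final assertions: if $M(x) = 1$ then $\N(\mathfrak{b}) = 1$, so $\mathfrak{b} = \OK$, meaning $v_{\mathfrak p}(\mathfrak b) = 0$ for every prime $\mathfrak p$, i.e. $\sum_i (e_i - e_1) = 0$ with all summands $\ge 0$, forcing $e_1 = e_2 = \cdots = e_N$ at every $\mathfrak p$. Thus all the ideals $(x_i)$ are equal, say to $\mathfrak{c}$, and $\mathfrak{a} = \mathfrak{c}$ as well; after the rescaling that cleared denominators we then have $(x_i/x_1) = \OK$ for all $i$, i.e. each $x_i/x_1 \in \OK^\times$. Normalizing the projective point so that $x_1 = 1$ (or dividing through by $x_1$) gives $x_i \in \OK^\times$ for all $i$, as claimed. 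Finally, if $M(x) > 1$ then $\mathfrak{b}$ is a non-trivial integral ideal and $M(x) = \N(\mathfrak{b})$ is by definition the norm of such an ideal. I do not expect any real obstacle here; the only mild subtlety is being careful that $M(x)$ is genuinely scaling-invariant (which is clear from the homogeneity: replacing $x_i$ by $\lambda x_i$ multiplies the numerator by $\N((\lambda))^N$ and multiplies $\N(\mathfrak a)^N$ by the same factor), so that the reduction to $x_i \in \OK$ is legitimate.
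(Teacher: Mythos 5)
Your proof is correct and follows essentially the same approach as the paper: both arguments reduce the claim to a prime-by-prime computation of $\mathfrak{p}$-adic valuations, showing that $\sum_i v_{\mathfrak{p}}((x_i)) - N\min_i v_{\mathfrak{p}}((x_i)) \geq 0$ and identifying $M(x)$ as the norm of the resulting integral ideal. One small typo: in the displayed formula you wrote $M(x) = \N\!\left(\prod_i (x_i)\mathfrak{a}^{-1}\right)$ where the exponent should be $-N$, though you define $\mathfrak{b}$ correctly in the next clause.
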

\begin{proof}
For a prime ideal $\mathcal{P} \subseteq \OK$, let $\nu_{\mathcal{P}}(x) \in \mathbb{Z}$ be the $\mathcal{P}$-adic valuation of $x \in K$. Then, we observe that 
\begin{equation}
\N(\langle x_1,\dots,x_N \rangle) = \prod_{\substack{\mathcal{P} \subseteq \OK \\ \mathcal{P} \text{ is prime} }} \N(\mathcal{P} )^{\min_{i =1 \dots N} \nu_{\mathcal{P}}(x_i)}.
\end{equation}
Note that the product is supported on finitely many primes. On the other hand
\begin{equation}
\N(x_1\dots x_N ) = \prod_{\substack{\mathcal{P} \subseteq \OK \\ \mathcal{P} \text{ is prime} }} \N(\mathcal{P} )^{\sum_{i =1 \dots N} \nu_{\mathcal{P}}(x_i)}.
\end{equation}
So we get that 
\begin{equation}
M(x) =  \prod_{\substack{\mathcal{P} \subseteq \OK \\ \mathcal{P} \text{ is prime} }} \N(\mathcal{P} )^{\sum_{i =1 \dots N} \nu_{\mathcal{P}}(x_i) - N \min_{i=1\dots N} \nu_{p}(x_i) }.
\end{equation}
All the exponents are positive integers. They are zero only when all the $\nu_{\mathcal{P}}(x_i)$ are equal to each other and this is only possible if they differ at most by units. 
\end{proof}

\subsection{Rational points of bounded height in Grassmannian varieties over number fields }

Lemma \ref{le:volume_det}, Lemma \ref{le:height_relation}, Lemma \ref{le:equivalence_of_height} and Corollary \ref{re:non-zero_terms} yield the following.

\begin{lemma}
\label{le:height_zeta}
Let $g$ be the indicator function $\ind_{B_R}$, where $B_R$ is a ball in $K_\mathbb{R}^{t \times n }$ of radius $R$. 
Then, we have that 
\begin{align}
\sum_{m=0}^{{n}}\sum_{\substack{D \in M_{m \times n }(K) \\ 
\rank(D) = m \\ 
D \text{ is row reduced echelon} }}
\mathfrak{D}(D)^{-t} \int_{x \in K_\mathbb{R}^{t \times m }} g(x D ) dx
	 \\
	 = 
1+ \sum_{m=1}^{{n}}
Z(t;\Gr(m,K^{n}),H)\cdot
V(m t [K:\mathbb{Q}]) R^{mt}.
\end{align}
Here $Z(t;\Gr(m,K^{n}),H)$ is the height zeta function defined as 
\begin{equation}
Z(t;\Gr(m,K^{n}),H) = \sum_{S \in \Gr(m,K^{n})} \frac{1}{H(S)^{t}}.
\end{equation}
\end{lemma}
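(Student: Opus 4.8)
The plan is to split the left-hand sum according to the rank $m$ and treat each term separately, with the $m=0$ term contributing the constant $1$ (since $g(0) = \ind_{B_R}(0) = 1$ when $0 \in B_R$). For $m \geq 1$, I would first apply Lemma \ref{le:volume_det} to each full-rank matrix $D \in M_{m\times n}(K)$ to rewrite the integral $\int_{K_\mathbb{R}^{t\times m}} g(xD)\,dx$ as $\det(D; M_{t\times m}(\mathcal{O}_K))^{-1} V(mt[K:\mathbb{Q}]) R^{mt}$; note the factor $R^{mt}$ enters because scaling the ball radius by $R$ scales the $mt[K:\mathbb{Q}]$-dimensional volume by $R^{mt[K:\mathbb{Q}]}$ while the dimension of the unit-ball volume constant is absorbed — more carefully, $\int \ind_{B_R}(xD)\,dx = R^{mt[K:\mathbb{Q}]}\cdot (\text{unit-radius integral})$ and the unit-radius integral is $\det(D;\cdot)^{-1}V(mt[K:\mathbb{Q}])$, but since the lattice $M_{t\times m}(\mathcal{O}_K)\cdot D$ has covolume $\det(D;\cdot)$ computed with the fixed quadratic form, one must check the bookkeeping of the $R$ power carefully (this is the one genuinely fiddly point). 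Then I would invoke Lemma \ref{le:height_relation} to replace $\det(D; M_{t\times m}(\mathcal{O}_K))$ by $\det(D; M_{1\times m}(\mathcal{O}_K))^t$.

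Next, combining with the factor $\mathfrak{D}(D)^{-t}$ already present in the formula, the $D$-term becomes
$$
\mathfrak{D}(D)^{-t}\cdot \det(D; M_{1\times m}(\mathcal{O}_K))^{-t}\cdot V(mt[K:\mathbb{Q}])\, R^{mt} = \bigl(\det(D; M_{1\times m}(\mathcal{O}_K))\cdot \mathfrak{D}(D)\bigr)^{-t}\cdot V(mt[K:\mathbb{Q}])\, R^{mt}.
$$
By Lemma \ref{le:equivalence_of_height}, the base of the $-t$ power is exactly $H(S)$, where $S = D^T K^m \in \Gr(m,K^n)$ is the row span of $D$. So the $D$-term equals $H(S)^{-t}\cdot V(mt[K:\mathbb{Q}])\, R^{mt}$.

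Finally I would observe that $D \mapsto D^T K^m$ is a bijection between the set of full-rank row-reduced-echelon matrices in $M_{m\times n}(K)$ and the set of $m$-dimensional $K$-subspaces $\Gr(m,K^n)$ (this is the standard fact that every subspace has a unique reduced row echelon basis matrix). Summing over all such $D$ therefore gives $\bigl(\sum_{S\in\Gr(m,K^n)} H(S)^{-t}\bigr)\cdot V(mt[K:\mathbb{Q}])\,R^{mt} = Z(t;\Gr(m,K^n),H)\cdot V(mt[K:\mathbb{Q}])\,R^{mt}$, and summing over $m$ from $1$ to $n$ and adding the $m=0$ contribution $1$ yields the claimed identity. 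The main obstacle, such as it is, is purely notational: making sure the power of $R$ and the dimension of the unit-ball volume factor are tracked consistently through Lemma \ref{le:volume_det}; everything else is a direct substitution of the three preceding lemmas plus the reduced-echelon/subspace bijection.
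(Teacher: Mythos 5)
Your proposal is correct and follows exactly the chain of lemmas the paper cites for this statement (Lemma \ref{le:volume_det}, Lemma \ref{le:height_relation}, Lemma \ref{le:equivalence_of_height}, plus the reduced-row-echelon/Grassmannian bijection); the paper itself gives no written proof beyond naming these ingredients. One remark: the bookkeeping point you flag is real --- the correct power of $R$ coming out of the scaling in Lemma \ref{le:volume_det} is $R^{mt[K:\mathbb{Q}]}$ (since the image of $x\mapsto xD$ is an $mt[K:\mathbb{Q}]$-dimensional Euclidean subspace), so the $R^{mt}$ appearing in the lemma's displayed formula is a typo; your computation is the correct one, and the discrepancy is harmless for the convergence corollaries that follow because only the height zeta factor matters there.
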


To show the convergence of the right hand side in Theorem \ref{th:weil} (or Corollary \ref{re:non-zero_terms}), it is sufficient to show that all the height zeta functions in Lemma \ref{le:height_zeta} converge. The asymptotic growth of points of bounded height on these varieties has been established by Schmidt and we have from \cite[Theorem 3]{S1967}:
\begin{theorem}
{\bf (Schmidt, 1967)}
There exist constants $C_1, C_2 > 0$ depending only on $n,m,K$ such that 
\begin{equation}
C_1 T^{n}\le \card \{S \in \Gr(m,K^{n}) \mid H(S) \le T \} \le  C_2 T^{n}
\end{equation}
\label{th:schmidt}
\end{theorem}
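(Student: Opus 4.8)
The plan is to deduce Schmidt's asymptotic count for points of bounded height in Grassmannians from the geometry of numbers, closely following the structure of \cite{S1967} but keeping track of what is actually needed here, namely the two-sided bound $C_1 T^n \le \#\{S \in \Gr(m,K^n) : H(S) \le T\} \le C_2 T^n$. First I would reduce the count of $m$-dimensional subspaces to a count of primitive $m$-frames: a subspace $S \in \Gr(m,K^n)$ is represented by an $n \times m$ matrix $W$ over $\OK$ whose columns span $S \cap \OK^n$ as a primitive $\OK$-submodule (i.e.\ $\OK^n / (\OK\text{-span of columns})$ is torsion-free), and by the definition of the Pl\"ucker height in Equations~(\ref{eq:l2_heigh_on_pro})--(\ref{eq:defi_of_height}), $H(S)$ equals the normalized covolume of the lattice $\iota(S)$, i.e.\ up to the ideal-norm normalization it is $\prod_{i=1}^{[K:\mathbb{Q}]} \big(\det(\sigma_i(W)^\ast \sigma_i(W))\big)^{1/2}$ divided by $\N(\langle \text{minors}\rangle)$. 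When $K$ has class number one every primitive submodule is free and has such a $W$; in general one works with ideal classes, which only changes constants.

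Next I would set up the counting as a lattice-point count in a cone. Fixing a fundamental domain for $\GL_m(\OK)$ acting on the right (a reduction theory / Hermite--Minkowski normal form for $m$-frames), the subspaces $S$ with $H(S) \le T$ correspond bijectively to reduced frames $W$ lying in the region $\{W : H(W) \le T\}$ of the $nm[K:\mathbb{Q}]$-dimensional Euclidean space $M_{n\times m}(K_\mathbb{R})$, intersected with the discrete set $M_{n\times m}(\OK)$ (minus a lower-dimensional ``imprimitive'' sublocus). The key geometric input is that $\{W : H(W) \le T\}$, after the reduction-theory truncation, is (contained in and contains) a bounded, star-shaped region that scales like $T^{n/m}$ in each of the $m$ ``column directions'' — more precisely $H(cW) = c^{[K:\mathbb{Q}]m}\cdots$ scaling behaviour forces the homogeneous degree, and a computation shows the relevant volume is finite and positive, giving $\asymp T^n$. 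Then Davenport's lemma / a Lipschitz-boundary lattice-point estimate yields $\#(\text{lattice}\cap\text{region}) = c\,T^n + O(T^{n-\delta})$, and subtracting the imprimitive locus (a finite union of thinner subvarieties, contributing $O(T^{n-1})$ or a smaller-order Grassmannian count handled by induction on $n$) leaves the main term. Summing over the finitely many ideal classes when $h_K > 1$ changes only $C_1, C_2$.

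For the purpose of the present paper, though, only the two-sided bound $C_1 T^n \le N(T) \le C_2 T^n$ is needed rather than a precise asymptotic with error term, so I would actually present the cheaper argument: the upper bound $N(T) \le C_2 T^n$ follows from a crude volume-packing estimate (each $S$ with $H(S)\le T$ has a reduced frame in a fixed scaled region of volume $O(T^n)$, and distinct reduced frames are separated), and the lower bound $N(T) \ge C_1 T^n$ follows by exhibiting $\gg T^n$ explicit subspaces of height $\le T$ — e.g.\ for $m=1$ the subspaces $K\cdot(1, a_2, \dots, a_n)$ with $a_i \in \OK$ of bounded size, of which there are $\gg T^n$ with height $\ll T$, and for general $m$ by an analogous construction stacking such vectors, using that $H$ of a subspace spanned by ``small'' independent integral vectors is bounded by the product of their norms.

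The main obstacle is the reduction theory for $m$-frames over $\OK$ when $m > 1$: one must choose a genuine fundamental domain for the $\GL_m(\OK)$-action so that each $S \in \Gr(m,K^n)$ is counted exactly once, and control its shape well enough to apply a lattice-point count — this is exactly the technical heart of \cite{S1967} and involves Minkowski-reduced bases, the structure of the unit group, and (for $h_K>1$) a sum over ideal classes. The other delicate point is disentangling the primitivity condition: ensuring we count subspaces, not arbitrary integral frames, requires a M\"obius-type inclusion–exclusion over ideals (or an induction on $n$), whose error terms must be shown to be of lower order than $T^n$. Since the paper only invokes Theorem~\ref{th:schmidt} as a black box to establish convergence of the height zeta functions $Z(t;\Gr(m,K^n),H)$ for $t$ large, I would simply cite \cite[Theorem 3]{S1967} for the statement and defer the full proof, noting that for our applications the elementary two-sided bound sketched above already suffices.
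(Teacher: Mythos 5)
Your proposal ends by doing exactly what the paper does: the paper states Theorem~\ref{th:schmidt} as a quotation from \cite[Theorem 3]{S1967} with no proof of its own, using it purely as a black box to establish convergence of the height zeta functions in Corollary~\ref{co:abel_sum}. Your decision to cite Schmidt and defer the full proof is therefore the same approach as the paper; the preceding sketch (reduction to primitive $m$-frames, $\GL_m(\OK)$-reduction theory, lattice-point count in a scaled region, M\"obius inversion over ideals for primitivity, and summing over ideal classes) is a reasonable and accurate description of what Schmidt's argument involves, but it is supplementary material the paper itself does not attempt.
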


% \begin{remark}
% \label{re:thunder}
% If I understood correctly (please verify), after Lemma 8 Thunder states in \cite{T1993} the following.
% For a large enough $T$, we can assume $C_1=a(n,m)-\epsilon$  and $C_2 = a(n,m)+\epsilon$ where 
% \begin{equation}
% \label{eq:defi_of_anmK}
% a(n,m;K) = \frac{h_{K} R_{K}}{\omega_{K}} \left( \frac{2^{r_2}}{\Delta_{K}} \right)^{( n-m )m+1} \binom{n}{m}^{r_1+r_2} \frac{( V|^{n}_{m} )^{r_1}
% ( V_2|^{n}_{m})^{r_2}}{ \zeta_{K}|^{n}_{m}}.
% \end{equation}
% Here $h_{K}$ is the class number of $K$, $R_{K}$ is the regulator, $\omega_{K}$ is the size of the group of torsional units of $\OK^{*}$, $\Delta_{K}$ is the discriminant, $r_1$ is the number of real embeddings of $K$, $r_2$ is the the number of complex embeddings, $V_2(d) = V(2d)$ is the size of a $2d$-dimensional unit ball and for any function $f: \mathbb{Z}\rightarrow \mathbb{C}$, we denote 
% \begin{equation}
% f|^{n}_{m} = \frac{f(n)f(n-1)f(n-2)\dots f(n-m+1)}{f(1) f(2) \dots f(m)}.
% \end{equation}
% This might need some correction (but I don't think it does) since the height here may differ from the height defined in \cite{T1993} by a factor of some power of $\Delta_{K}$. 
% Kim in \cite{K19} seems to have made some improvements on this, for the case $K=\mathbb{Q}$. 
% Sands has some upper bounds on the quantity $h_K R_K / \omega_K$ in terms of the discriminant $\Delta_K$ in \cite{S1991}.
% \end{remark}

\begin{corollary}
\label{co:abel_sum}
The height zeta functions $Z(t;\Gr(m,K^{n}),H)$ converge when $t\geq n+1$.
\end{corollary}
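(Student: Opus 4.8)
The plan is to combine Schmidt's asymptotic count (Theorem \ref{th:schmidt}) with Abel summation (partial summation) to convert the pointwise count of subspaces of bounded height into convergence of the height zeta series. Concretely, set $N(T) = \card\{S \in \Gr(m,K^n) \mid H(S) \le T\}$ and recall that by Theorem \ref{th:schmidt} there is a constant $C_2>0$ with $N(T) \le C_2 T^n$ for all $T$. Note also that $H(S) \ge 1$ for all $S \in \Gr(m,K^n)$ — this follows because $H(S)$ is a product over archimedean places of $\ell^2$-norms divided by the norm of the ideal generated by the Plücker coordinates, and the standard Arakelov/adelic product formula forces such a height to be at least $1$ (alternatively one cites that the minimum of Schmidt's height is $1$, attained at coordinate subspaces). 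Thus the series $Z(t;\Gr(m,K^n),H) = \sum_S H(S)^{-t}$ is a sum of terms indexed by a set whose counting function up to height $T$ grows like $T^n$.

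The key step is then the Abel summation estimate. Writing the sum as a Stieltjes integral,
\begin{equation}
\sum_{S \,:\, H(S) \le T} \frac{1}{H(S)^t} = \int_{1^-}^{T} u^{-t}\, dN(u) = T^{-t} N(T) + t \int_1^T u^{-t-1} N(u)\, du.
\end{equation}
Using $N(u) \le C_2 u^n$, the boundary term is bounded by $C_2 T^{n-t}$, which tends to $0$ as $T \to \infty$ once $t > n$, and the integral is bounded by $t C_2 \int_1^\infty u^{n-t-1}\, du = \dfrac{t C_2}{t-n}$, which is finite precisely when $t > n$, i.e. $t \ge n+1$ since $t$ is an integer. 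Letting $T \to \infty$ shows $Z(t;\Gr(m,K^n),H) \le \dfrac{t C_2}{t-n} < \infty$ for every $m$ in the relevant range $1 \le m \le n$, which is exactly the claim.

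I expect the only genuinely delicate point to be the lower bound $H(S) \ge 1$ (or equivalently controlling the behaviour of $N(u)$ for small $u$, to make sure the Stieltjes integral starts at a finite place and there is no mass accumulating near $0$); everything else is the routine partial-summation argument above. If one prefers to avoid invoking $H(S)\ge 1$, one can instead note that the left endpoint contributes $N(1^-) \cdot 1^{-t}$ which is a fixed finite number (Schmidt's bound gives $N(1) \le C_2$, so only finitely many subspaces have height $\le 1$), and the tail integral from that point onward converges as shown; this still yields convergence for $t \ge n+1$. Finally, since Lemma \ref{le:height_zeta} expresses the right-hand side of Theorem \ref{th:weil} (for $g = \ind_{B_R}$) as $1 + \sum_{m=1}^n Z(t;\Gr(m,K^n),H)\cdot V(mt[K:\mathbb{Q}])R^{mt}$, the convergence of each $Z(t;\Gr(m,K^n),H)$ for $t \ge n+1$ gives the desired finiteness.
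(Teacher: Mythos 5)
Your proof is correct and follows essentially the same approach as the paper: define a counting function for subspaces of bounded height, apply Schmidt's upper bound $N(T) \ll T^n$, and use Abel/partial summation to bound the tail of the zeta series, yielding convergence for $t \ge n+1$. The only difference is cosmetic — the paper discretizes into shells $H(S) \in [l-1,l)$ while you write a continuous Stieltjes integral and spell out the (harmless) lower-endpoint issue $H(S) \ge 1$; both are the same argument.
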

\begin{proof}
Define for $n \ge 1$
\begin{align}
a_l = \  & \card \{S \in \Gr(m,K^{n}) \mid H(S) \in [l-1,l)\}.
\end{align}
Then Theorem \ref{th:schmidt} and Abel's summation formula gives us that 
\begin{align}
\sum_{l=1}^{T} \frac{a_l}{l^{t}} & = \left( \sum_{l=1}^{T}a_l\right)T^{-t} + \int_{1}^{T} \left( \sum_{l=1}^{\lfloor x \rfloor} a_l\right) \frac{t}{x^{t+1}} dx\\
& \le C_2 T^{n-t} +  tC_2\int_{1}^{T} x^{n-t-1} dx
\end{align}
Here, the first term converges as $T \rightarrow \infty$ since $n-t \le -1$ and the second term also converges since $n-t-1 \le -2$
\end{proof}
\begin{corollary}
\label{co:convergence}
The higher moment formula as given in Theorem \ref{th:weil} converges for $t\geq n+1$. 
% In particular, as $\N(\mathcal{P}) \rightarrow \infty$ in Theorem \ref{th:higher_moments}, the right side of (\ref{eq:right_side_converge}) is a finite quantity.
\end{corollary}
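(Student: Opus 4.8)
The plan is to reduce Corollary \ref{co:convergence} directly to the convergence of the height zeta functions $Z(t;\Gr(m,K^n),H)$, which has just been established in Corollary \ref{co:abel_sum}. Recall that by Lemma \ref{le:height_zeta}, when $g=\ind_{B_R}$ is the indicator of a ball of radius $R$ in $K_\mathbb{R}^{t\times n}$, the right-hand side of the formula in Theorem \ref{th:weil} equals
\[
1+\sum_{m=1}^{n} Z(t;\Gr(m,K^n),H)\cdot V(mt[K:\mathbb{Q}])\,R^{mt}.
\]
Each summand is a finite nonnegative constant once $t\geq n+1$, by Corollary \ref{co:abel_sum}, so the whole expression is a finite sum of finite terms and hence converges. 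This handles the case $g=\ind_{B_R}$.

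Next I would bootstrap from balls to an arbitrary $g\in\mathcal{C}_c(K_\mathbb{R}^{t\times n})$ (or indicator of a bounded convex/compact set, as in Theorem \ref{th:weil}). Since $g$ has compact support, there is a radius $R$ with $\supp(g)\subseteq B_R$ and a constant $\|g\|_\infty<\infty$ with $|g|\leq \|g\|_\infty\cdot\ind_{B_R}$ pointwise. The integrals $\int_{K_\mathbb{R}^{t\times m}} g(xD)\,dx$ appearing in the formula then satisfy $\left|\int g(xD)\,dx\right|\leq \|g\|_\infty\int \ind_{B_R}(xD)\,dx$, and likewise the $m=0$ term $g(0)$ is bounded by $\|g\|_\infty$. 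Comparing term by term with the ball case, the series for $g$ is dominated in absolute value by $\|g\|_\infty$ times the (convergent) series for $\ind_{B_R}$, so it converges absolutely. This is the step where one uses that the formula is "monotone enough" in $g$; it is essentially routine given that all the geometric quantities $\mathfrak{D}(D)^{-t}$ are nonnegative.

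The only mild subtlety — and the place I would be most careful — is the interchange between "convergence of the height zeta function $Z(t;\Gr(m,K^n),H)$" and "convergence of the sum over row-reduced echelon matrices $D$." These are matched up precisely by Lemma \ref{le:height_relation} and Lemma \ref{le:equivalence_of_height}, which identify $\mathfrak{D}(D)^{-t}\det(D;M_{t\times m}(\OK))^{-1}$ with $H(S)^{-t}$ up to the volume constant $V(mt[K:\mathbb{Q}])$, where $S=D^T K^m$; and the map $D\mapsto S$ is a bijection from full-rank row-reduced echelon matrices in $M_{m\times n}(K)$ onto $\Gr(m,K^n)$. So no point of the Grassmannian is counted with the wrong multiplicity, and the rearrangement is legitimate because all terms are nonnegative. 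Once this bijection is invoked, the statement for $t\geq n+1$ follows immediately, and I would simply cite Lemma \ref{le:height_zeta} and Corollary \ref{co:abel_sum} and conclude. No step here is a genuine obstacle; the work was already done in proving Corollary \ref{co:abel_sum} via Schmidt's Theorem \ref{th:schmidt} and Abel summation.
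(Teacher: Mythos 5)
Your proposal is correct and follows the same route as the paper: reduce to the ball case via pointwise domination by $\|g\|_\infty\cdot\ind_{B_R}$, identify the resulting sum over row-reduced echelon matrices with height zeta functions of Grassmannians via Lemmas \ref{le:volume_det}, \ref{le:height_relation} and \ref{le:equivalence_of_height} (packaged in Lemma \ref{le:height_zeta}), and then invoke the convergence established in Corollary \ref{co:abel_sum}. The paper leaves the domination step and the echelon-matrix-to-subspace bijection implicit, so your more explicit write-up is a faithful unpacking rather than a different argument.
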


\section{Towards Poisson distribution}
\label{se:poisson}
Going beyond convergence, we now turn towards establishing the limiting Poisson distribution. 
Let $V(n)$ henceforth denote the volume of the unit ball in dimension $n\geq 1$.  The following identifies the main Poisson term and is an adaptation of \cite[Lemma 4]{R1956}:

\begin{lemma}
\label{le:poisson_term}
Let $\mu_K$ denote the cyclic group of roots of unity in $\OK$. Let $\omega_K = \card \mu_K$.
Consider the set $A_m$ for $m \in \{1,\dots,n\}$ given by
\begin{equation}
A_{m} = \left\{ D\in  M_{m \times n}( K) \ {\Big| } \substack{ \ D_{ij} \in  \mu_K \cup \{0 \},\\  D \text{ is in row-reduced echelon form of }\rank(D) = m \\ D \text{ has exactly one non-zero entry in each column}   } \right\}.
\end{equation}
Let $B \subseteq K_{\mathbb{R}}^{t}$ 
denote a ball with respect to the norm given in 
Equation \ref{eq:norm}. 
Let $g= \ind_B \otimes \dots \otimes \ind_B :K_\mathbb{R}^{t \times n} \rightarrow \mathbb{R}$ be the $n$-fold indicator function of the ball in each coordinate. Restricting the higher moment formula of Theorem \ref{th:weil} to matrices in $A_m$, we obtain that
\begin{equation}
\sum_{m=1}^{n} \sum_{D \in A_{m}} \mathfrak{D}(D)^{-t} \int_{x \in K_{\mathbb{R}}^{t \times m}} g(xD) dx = \omega_K^{n} \exp\left(-\frac{ 1  }{\omega_K} \cdot V(t[K:\mathbb{Q}])\right) 
\sum_{r=0}^{\infty} \frac{r^{n}}{r!} \left(\frac{1}{\omega_{K}} \cdot V(t [K:\mathbb{Q}]) \right)^{r}.
\end{equation}
\end{lemma}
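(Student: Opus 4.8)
The plan is to parametrize matrices $D \in A_m$ explicitly and compute the contribution of each one. Since $D$ is in row-reduced echelon form with exactly one nonzero entry per column and that entry lies in $\mu_K$, the pivot columns are forced: the $m$ pivots occur in some set of columns $1 \le j_1 < j_2 < \dots < j_m \le n$, and because of the echelon condition the pivot in row $i$ sits in column $j_i$ with entry $1$. The remaining $n-m$ columns each have a single nonzero entry, which (again by the echelon/row-reduced condition) can appear in \emph{any} of the rows and can be \emph{any} root of unity $\zeta \in \mu_K$. So first I would count: the number of ways to choose which columns are non-pivot and assign to each a pair (row index, root of unity) is exactly the number of functions from the $n-m$ non-pivot columns to the set $\{1,\dots,m\}\times \mu_K$, refined by the constraint that row $i$'s pivot is the first nonzero entry in its row. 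Tracking this carefully, one finds that summing over all $m$ and all such $D$ is the same as summing over all functions $\phi: \{1,\dots,n\} \to (\text{some indexing set})$, which is the combinatorial heart of the Poisson/Touchard identity.

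Next I would compute $\mathfrak{D}(D)$ and the integral for a fixed $D \in A_m$. Since every nonzero entry of $D$ is a unit in $\OK$, the lattice $\{C \in M_{1\times m}(\OK) : C D \in M_{1\times n}(\OK)\}$ equals all of $M_{1\times m}(\OK)$, hence $\mathfrak{D}(D) = 1$. For the integral: $xD \in K_\mathbb{R}^{t\times n}$ has columns obtained from the $m$ columns of $x \in K_\mathbb{R}^{t\times m}$; the pivot columns of $xD$ are exactly the columns of $x$, and each non-pivot column equals $\zeta \cdot x_{(i)}$ for the relevant column $x_{(i)}$ of $x$ and root of unity $\zeta$. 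Because $B$ is a ball centered at the origin with respect to the norm in Equation (\ref{eq:norm}) and multiplication by a root of unity is an isometry for the trace form (roots of unity have all archimedean absolute values equal to $1$), we have $\ind_B(\zeta x_{(i)}) = \ind_B(x_{(i)})$. Therefore $g(xD) = \prod_{i=1}^m \ind_B(x_{(i)})^{c_i}$ where $c_i \ge 1$ is the number of columns of $D$ (pivot plus assigned non-pivot) that "use" row/column $i$; since $\ind_B$ is an indicator, $\ind_B^{c_i} = \ind_B$, so the integral is simply $\vol(B)^m = V(t[K:\mathbb{Q}])^m$ whenever $\vol(B)$ is normalized suitably — more precisely the integral $\int_{K_\mathbb{R}^{t\times m}} g(xD)\,dx = \left(\int_{K_\mathbb{R}^t}\ind_B\right)^{\!m}$, and I would write $v := \vol(B)$ for this one-copy volume.

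It then remains to reorganize the double sum $\sum_{m=1}^n \sum_{D \in A_m} v^m$ into the claimed Poisson-moment expression. The key observation is that choosing $D \in A_m$ amounts to: (i) choosing a surjection-type structure that partitions/maps the $n$ columns into the $m$ "groups" indexed by the pivot rows, together with (ii) a choice of root of unity for each of the $n-m$ non-pivot columns, i.e. $\omega_K^{n-m}$ choices, and (iii) the pivot columns carry no root-of-unity freedom (their entry is forced to be $1$). Counting the structures in (i): the number of $D \in A_m$ with a \emph{fixed} assignment of each column to one of $m$ labeled groups such that each group is nonempty and the first column in group $i$ is the pivot of row $i$ — this is governed by the Stirling number $\stir{n}{m}$ of the second kind after accounting for the echelon ordering. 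Assembling: $\sum_{D\in A_m} v^m = \stir{n}{m}\,\omega_K^{n-m}\,v^m$, hence
\begin{equation}
\sum_{m=1}^n \sum_{D\in A_m}\mathfrak{D}(D)^{-t}\!\int g(xD)\,dx = \sum_{m=1}^n \stir{n}{m}\,\omega_K^{n-m}\,v^m = \omega_K^n \sum_{m=1}^n \stir{n}{m}\left(\tfrac{v}{\omega_K}\right)^{\!m}.
\end{equation}
Finally, the Touchard/Dobiński identity gives $\sum_{m} \stir{n}{m}\lambda^m = e^{-\lambda}\sum_{r\ge 0}\frac{\lambda^r}{r!}r^n = m_n(\lambda)$ with $\lambda = v/\omega_K$, and since $v = V(t[K:\mathbb{Q}])$ is precisely $\vol(B)$, this is exactly the right-hand side of the lemma. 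The main obstacle I anticipate is the bookkeeping in step (i): verifying that the row-reduced echelon normalization makes each partition of the columns correspond to \emph{exactly one} matrix $D \in A_m$ (with the pivot entries forced to $1$ and only the non-pivot entries ranging over $\mu_K$), so that the count is cleanly $\stir{n}{m}\,\omega_K^{n-m}$ and not off by symmetry factors; one should double-check this against the known $\mathbb{Z}$-case $\omega_K = 2$ of \cite[Lemma 4]{R1956}, where it must reduce to $\sum_m \stir{n}{m} 2^{n-m} v^m = 2^n m_n(v/2)$.
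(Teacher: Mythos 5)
Your proposal is correct and takes essentially the same route as the paper: observe $\mathfrak{D}(D)=1$ for $D\in A_m$, use $\mu_K$-invariance of the ball to reduce each integral to $\vol(B)^m$, count $\card A_m = \stir{n}{m}\,\omega_K^{n-m}$ via the partition-of-columns bijection (pivot = minimum of each block), and conclude by the Touchard identity. The one point you flag as needing care — that each set partition of $\{1,\dots,n\}$ corresponds to exactly one choice of pivot columns consistent with the echelon ordering, with only the $n-m$ non-pivot entries ranging over $\mu_K$ — does go through exactly as you expect and is precisely how the paper handles the count.
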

\begin{proof}
Observe that for $D \in A_m$, we have $\mathfrak{D}(D)=1$. Next, observe that $B$ is invariant under the diagonal action of $\mu_K$ on $K_\mathbb{R}^{t}$ due to the choice of the quadratic form defining the ball. Therefore, for any units $\alpha_1,\alpha_2,\dots,\alpha_{n} \in \mu_K$ we have that
\begin{equation}
g(\alpha_1 x_1, \alpha_2 x_2 ,\dots, \alpha_n x_n) = g(x_1,x_2,\dots,x_n),
\end{equation}
where $\mu_K$ acts diagonally on $K_\mathbb{R}^{t}$ viewed as $t$ copies of $K_\mathbb{R}$. This implies that for any $D \in A_m$, we must have
\begin{equation}
\int_{x \in K_\mathbb{R}^{t \times m }} g(xD) dx = \vol( B  )^{m}.
\end{equation}

The combinatorial problem of counting $\card A_m$ is, up to multiplication by a power of $\omega_K$, the same as that of partitioning $n$ columns into $m$ sets. Therefore, we have that 
\begin{equation}
\card A_m  = \omega_{K}^{n-m} \stir{n}{m},
\end{equation}
where $\stir{n}{m}$ is the Stirling number of the second kind. Hence, setting $V= V(t[K:\mathbb{Q}])$ gives
\begin{equation}
\sum_{m=1}^{n}\sum_{D \in A_m} \int_{x \in K_\mathbb{R}^{t \times m }} g(xD) dx =   \sum_{m=1}^{n} V^{m} \omega_K^{n-m} \stir{n}{m} = \omega_K^{n}\sum_{m=1}^{n}  \stir{n}{m}\frac{V^{m}}{\omega_K^{m} } .
\end{equation}

Now we invoke the following identity about Touchard polynomials and we are done:
\begin{equation}
\sum_{m=1}^{n} \stir{n}{m} x^{m}  = e^{-x}\sum_{r=0} ^{\infty}\frac{r^{n}}{r!}x^{r}.
\end{equation}

\end{proof}

We now turn to studying and bounding the contributions of the rest of the terms. To that end, we introduce the following notations for the remainder of the paper: 
\begin{align}
A^1_{m} & = \left\{ D \in  M_{m \times n}( K) \ {\Big| } \substack{ \ D_{ij} \in  K  ,
\\  D \text{ is in row-reduced echelon form with }\rank(D) = m \\ 
\text{All the matrix entries are in $\mu_K\cup \{0\}$}   } \right\} \setminus 
 A_m . \\
A^2_{m} & = \left\{ D\in  M_{m \times n}( K) \ {\Big| } \substack{ \ D_{ij} \in  K  ,\\  D \text{ is in row-reduced echelon form of }\rank(D) = m \\ 
} 
\right\} 
\setminus\left( A_m^{1}\sqcup A_m\right).\\
\end{align}

Note that for $m=1$ we have that $A_m=A^1_m$. For $m\geq 2$ we record here a standard estimate on volume ratios: 

\begin{lemma}
\label{le:gamma_ratio}
We have the estimates on volume ratios: 
\begin{equation}
	\frac{V(mt[K:\mathbb{Q}])}{V(t[K:\mathbb{Q}])^{m}}
	= 
\frac{\Gamma\left(\frac{t[K:\mathbb{Q}]}{2}+1\right)^{m} }{ \Gamma\left(\frac{m t [K:\mathbb{Q}] }{2} + 1 \right)} 
<
\frac{(t[K:\mathbb{Q}]\pi )^{\frac{m-1}{2}}}
{m^{\frac{1}{2}mt[K:\mathbb{Q}]+\frac{1}{2}}}\cdot e^{\tfrac{m}{6t[K:\mathbb{Q}] }}.
\end{equation}
\end{lemma}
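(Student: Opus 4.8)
\textbf{Proof plan for Lemma \ref{le:gamma_ratio}.}

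The plan is to first rewrite the volume ratio in terms of Gamma functions, and then to estimate the resulting expression by Stirling's approximation. Writing $d = [K:\mathbb{Q}]$ for brevity, recall that the volume of the $N$-dimensional unit ball is $V(N) = \pi^{N/2}/\Gamma(N/2 + 1)$. Hence
\begin{equation}
\frac{V(mtd)}{V(td)^m} = \frac{\pi^{mtd/2}/\Gamma(mtd/2+1)}{\pi^{mtd/2}/\Gamma(td/2+1)^m} = \frac{\Gamma(td/2+1)^m}{\Gamma(mtd/2+1)},
\end{equation}
which establishes the first (exact) equality. This reduces everything to bounding a ratio of Gamma values.

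For the inequality, I would apply the standard Stirling bounds. A convenient two-sided form is $\sqrt{2\pi}\, x^{x+1/2} e^{-x} \le \Gamma(x+1) \le \sqrt{2\pi}\, x^{x+1/2} e^{-x} e^{1/(12x)}$ for $x > 0$; applied with $x = td/2$ in the numerator (using the upper bound, raised to the $m$-th power) and $x = mtd/2$ in the denominator (using the lower bound), one gets
\begin{equation}
\frac{\Gamma(td/2+1)^m}{\Gamma(mtd/2+1)} \le \frac{(2\pi)^{m/2}(td/2)^{m(td+1)/2} e^{-mtd/2} e^{m/(6td)}}{(2\pi)^{1/2}(mtd/2)^{(mtd+1)/2} e^{-mtd/2}}.
\end{equation}
The exponential factors $e^{-mtd/2}$ cancel, and collecting the remaining powers of $2\pi$, $td/2$ and $m$ should yield exactly $(td\pi)^{(m-1)/2} \cdot m^{-(mtd/2 + 1/2)} \cdot e^{m/(6td)}$ after simplification: the factor $(2\pi)^{(m-1)/2}$ combines with $(td/2)^{m(td+1)/2}/(td/2)^{(mtd+1)/2} = (td/2)^{(m-1)/2}$ to give $(\pi td)^{(m-1)/2}$, while the $(mtd/2)^{(mtd+1)/2}$ in the denominator contributes the $m^{-(mtd+1)/2}$ together with a compensating power of $td/2$ already accounted for. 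Since we want a strict inequality, it suffices to note that the Stirling upper bound used in the numerator is strict (or simply that the lower Stirling bound in the denominator is strict), so the chain of inequalities is strict.

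The main thing to be careful about is the exact bookkeeping of the powers of $td/2$ and of $2$ when dividing $(td/2)^{m(td+1)/2}$ by $(mtd/2)^{(mtd+1)/2}$ — one must check that all powers of $2$ indeed cancel and that the leftover power of $td$ is precisely $(m-1)/2$, not off by a constant. I do not expect any genuine obstacle here; it is a routine, if slightly fiddly, application of Stirling's formula, and the displayed bound is stated with enough slack (the crude $e^{m/(6td)}$ factor, and no attempt to capture the $\sqrt{2\pi}$-type constants sharply beyond what the powers of $\pi$ already give) that the estimate goes through directly.
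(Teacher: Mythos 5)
Your proof is correct and is exactly what the paper intends — the paper's own proof is the one-liner "Straightforward by using honest upper and lower bounds in Stirling approximation," which is precisely your two-sided Stirling computation. The bookkeeping checks out: $(2\pi)^{(m-1)/2}(N/2)^{(m-1)/2}=(\pi N)^{(m-1)/2}$ with $N=td$, so no powers of $2$ or of $N$ are left over.
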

\begin{proof}
Straightforward by using honest upper and lower bounds in Stirling approximation.
\end{proof}

\subsection{Matrices of type \texorpdfstring{$A^{1}_m$}{A1m}}
In this subsection we obtain the following bound on the contribution of $A_{m}^{1}$-type terms.
These are the terms for which the geometrical methods utilized by Rogers \cite{R1956} generalize without much difficulty. The more delicate terms, involving contributions from unit entries of infinite order, will be dealt with in Section \ref{se:Bogosection}. 

\begin{theorem}
\label{th:poisson_dist}
Consider the setup of Lemma \ref{le:poisson_term}. 
Let $K$ be a number field and let $n<t$.
We then have that
\begin{align}
	V(t[K:\mathbb{Q}])^{-m} \sum_{m=1}^{n} \sum_{D\in A_m^1} \mathfrak{D}(D)^{-t} \int_{ K_\mathbb{R}^{t \times m}} g(xD) dx & \le 
	C ( \tfrac{\sqrt{3}}{2})^{t[K:\mathbb{Q}]},
\end{align}
where the constant does not depend on $n,m,K$. If the number field $K$ is also changing with $n,m$ fixed, the constant $C$ grows at most polynomially in $[K:\mathbb{Q}]$.

\end{theorem}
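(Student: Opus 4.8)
The plan is to reduce the estimate to a bound on a single row-reduced echelon matrix $D\in A_m^1$ and then sum over such matrices. First I would use Lemma~\ref{le:volume_det} and Lemma~\ref{le:height_relation} to rewrite the integral $\int g(xD)\,dx$ as $\det(D;M_{1\times m}(\OK))^{-t}\cdot V(mt[K:\mathbb{Q}])$, and then Lemma~\ref{le:equivalence_of_height} to express $\mathfrak{D}(D)^{-t}\det(D;M_{1\times m}(\OK))^{-t} = H(S_D)^{-t}$, where $S_D\in\Gr(m,K^n)$ is the row span of $D$. Dividing by $V(t[K:\mathbb{Q}])^m$ and invoking Lemma~\ref{le:gamma_ratio} to control the gamma-ratio $V(mt[K:\mathbb{Q}])/V(t[K:\mathbb{Q}])^m$ by $(t[K:\mathbb{Q}]\pi)^{(m-1)/2}m^{-(mt[K:\mathbb{Q}]+1)/2}e^{m/6t[K:\mathbb{Q}]}$, the problem becomes bounding
\[
\sum_{m=1}^n \frac{(t[K:\mathbb{Q}]\pi)^{(m-1)/2}}{m^{(mt[K:\mathbb{Q}]+1)/2}}\,e^{m/6t[K:\mathbb{Q}]}\sum_{D\in A_m^1}H(S_D)^{-t}.
\]

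Next I would bound $\sum_{D\in A_m^1}H(S_D)^{-t}$. Since every entry of $D$ lies in $\mu_K\cup\{0\}$, the denominators are trivial ($\mathfrak{D}(D)=1$) and $H(S_D)^2 = \prod_{i=1}^{[K:\mathbb{Q}]}\bigl(\sum_{J}|\text{(minor}_J)|^2\bigr)$; because the minors are roots of unity or zero, each factor equals the number of nonzero maximal minors of $D$, call it $N(D)\ge 1$. So $H(S_D)^t = N(D)^{t[K:\mathbb{Q}]/2}$. The key combinatorial point, exactly as in Rogers, is that for $D\in A_m^1$ (i.e.\ $D$ is \emph{not} in $A_m$, so at least one column has more than one nonzero entry, or equivalently the columns do not all sit in distinct coordinate lines), one necessarily has $N(D)\ge 2$ whenever $m\ge 2$, and more refined counting shows that the number of $D\in A_m^1$ with exactly $N$ nonzero maximal minors grows only polynomially (with $\omega_K$-factors) while $N$ itself can be taken $\ge$ some growing function once $m$ is large. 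Concretely, I would split $A_m^1$ according to the shape of the $m\times n$ $0$-$\mu_K$ pattern: the count of such patterns with $m$ pivot columns and $n-m$ non-pivot columns is at most $\omega_K^{nm}\binom{n}{m}2^{m(n-m)}$ or similar, a quantity bounded by $C(n,m)\omega_K^{nm}$, and for each such pattern $N(D)\ge 2$. This yields $\sum_{D\in A_m^1}H(S_D)^{-t}\le C(n,m)\,\omega_K^{nm}\,2^{-t[K:\mathbb{Q}]/2}$.

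Combining the two bounds, the $m$-th summand is at most $C(n,m)\,\omega_K^{nm}(t[K:\mathbb{Q}]\pi)^{(m-1)/2}m^{-(mt[K:\mathbb{Q}]+1)/2}2^{-t[K:\mathbb{Q}]/2}e^{m/6t[K:\mathbb{Q}]}$, and summing over $m\in\{1,\dots,n\}$ one checks the $m=2$ term dominates: for $m=2$ this is $\asymp \omega_K^{2n}(t[K:\mathbb{Q}])^{1/2}2^{-t[K:\mathbb{Q}]}2^{-t[K:\mathbb{Q}]/2}=O_{n,K}((\tfrac{1}{2\sqrt2})^{t[K:\mathbb{Q}]})$, which is even smaller than the claimed $(\tfrac{\sqrt3}{2})^{t[K:\mathbb{Q}]}$; for general $m$ the factor $m^{-mt[K:\mathbb{Q}]/2}$ kills everything, so the whole sum is $\le C\,(\tfrac{\sqrt3}{2})^{t[K:\mathbb{Q}]}$ with $C$ independent of $n,m,K$ when these are fixed, and growing only polynomially in $[K:\mathbb{Q}]$ otherwise (the only $K$-dependence being the polynomially-many $0$-$\mu_K$ patterns weighted by $\omega_K=O([K:\mathbb{Q}]\log\log[K:\mathbb{Q}])$ and the $(t[K:\mathbb{Q}])^{(m-1)/2}$ factor). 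The main obstacle I anticipate is the careful combinatorial bookkeeping in the second step: precisely counting the $0$-$\mu_K$ patterns in $A_m^1$ and showing that being outside $A_m$ genuinely forces $N(D)\ge 2$ (and ideally $N(D)$ large for large $m$) uniformly, so that the geometric decay $2^{-t[K:\mathbb{Q}]/2}$ is actually available for every term — this is where Rogers' argument for $\mathbb{Z}$ needs to be adapted to allow the $\mu_K$-coefficients, and where one must be sure no degenerate pattern with $N(D)=1$ has slipped into $A_m^1$.
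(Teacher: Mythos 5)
Your route is genuinely different from the paper's: the paper bounds each $D$-term geometrically (pick a column of $D$ with at least two nonzero entries, reduce to two variables via Lemma~\ref{le:potato_slice}, then bound the two-ball intersection via Lemma~\ref{le:lemma5_rogers}), whereas you reduce to a lower bound on the Schmidt height $H(S_D)$ and invoke Lemma~\ref{le:gamma_ratio}. That is a legitimate alternative, but your central combinatorial claim is wrong. You assert that for $D\in A_m^1$ with entries in $\mu_K\cup\{0\}$ ``the minors are roots of unity or zero,'' so that each archimedean factor $\sum_{J}|\sigma(\det D_J)|^2$ equals the number $N(D)$ of nonzero maximal minors. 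This is false: a maximal minor is a \emph{sum} of products of roots of unity and need not be a unit. Take $D=\begin{pmatrix}1&0&1&1\\0&1&1&-1\end{pmatrix}\in A_2^1$; the minor on the last two columns is $\det\begin{pmatrix}1&1\\1&-1\end{pmatrix}=-2$, so for that $D$ one has $\sum_{J}|\sigma(\det D_J)|^2=9$ while $N(D)=6$, and the identity $H(S_D)^2=N(D)^{[K:\mathbb{Q}]}$ fails.

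Fortunately, you never needed that identity, only a lower bound, and one that does hold. Writing $D=(\mathrm{Id}_m\mid\ast)$, the pivot minor has determinant $1$, so the nonarchimedean factor in $H(S_D)$ is trivial as you observed. Since $D\notin A_m$, some non-pivot column $c$ has at least two nonzero entries $c_{i_1},c_{i_2}\in\mu_K$; the minors obtained by replacing the $i_1$-th (resp.\ $i_2$-th) pivot column of the identity block by $c$ have determinants $\pm c_{i_1}$ (resp.\ $\pm c_{i_2}$), each of absolute value $1$ at every embedding. Hence $\sum_{J}|\sigma(\det D_J)|^2\ge 3$ for every $\sigma$, giving $H(S_D)^{-t}\le 3^{-t[K:\mathbb{Q}]/2}$. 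Combined with $V(mtd)/V(td)^m\lesssim (td\pi)^{(m-1)/2}m^{-mtd/2}$ and $m\ge 2$, each $D$-term is of order $(\tfrac{1}{2\sqrt3})^{t[K:\mathbb{Q}]}$, comfortably inside the stated $(\tfrac{\sqrt3}{2})^{t[K:\mathbb{Q}]}$; the count $|A^1_m|$ grows only polynomially in $\omega_K=O([K:\mathbb{Q}]\log\log[K:\mathbb{Q}])$ exactly as in Lemma~\ref{le:counting_a1m}. (The $m=1$ term is vacuous since $A_1^1$ is empty once zero columns are excluded.) So the height-based plan is salvageable and even gives a slightly sharper exponent than the paper's geometric argument, but you must replace the ``roots of unity'' step with this cofactor argument, and you do not need the more ambitious claim that $N(D)$ grows with $m$ --- the $m^{-mtd/2}$ factor already carries that.
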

We record a trivial count which reduces the proof to bounding the contribution of each individual matrix:
\begin{lemma}
\label{le:counting_a1m}
We have that
\begin{equation}
	\sum_{m=1}^{n}\card A_{m}^{1} \le   \sum_{m=1}^{n} \stir{n}{m} (1+ \omega_{K} )^{(n-m)m}.
\end{equation}
\end{lemma}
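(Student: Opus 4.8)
The plan is to establish the stronger termwise bound $\card A^1_m\le\stir{n}{m}(1+\omega_K)^{(n-m)m}$ for each $m\in\{1,\dots,n\}$ and then sum over $m$. Since (as in Corollary~\ref{re:non-zero_terms}) the matrices in play have no zero column, $A^1_m$ is contained in the set $\mathcal M_m$ of all $m\times n$ row-reduced echelon matrices of rank $m$ with no zero column and all entries in $\mu_K\cup\{0\}$ --- here the echelon normalisation makes every pivot entry equal to $1$ --- so it suffices to bound $\card\mathcal M_m$.

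First I would set up an injection from $\mathcal M_m$ into the set of pairs $(P,\mathrm{data})$, where $P$ is a partition of $\{1,\dots,n\}$ into $m$ nonempty blocks (so there are $\stir{n}{m}$ choices of $P$) and $\mathrm{data}$ assigns to each of the $n-m$ non-pivot columns an entry-tuple, in such a way that, for a fixed $P$, each non-pivot column admits at most $(1+\omega_K)^m$ tuples. Concretely: given $D\in\mathcal M_m$ with pivot columns $j_1<\dots<j_m$ (so column $j_k$ is the $k$-th standard basis vector), the absence of zero columns makes the first column a pivot, so each non-pivot column $j$ has a nonempty set $R(j)=\{k:j_k<j\}$ of rows in which it may be nonzero together with a genuine nonzero entry there; put $b(j)=\max\{k\in R(j):D_{kj}\neq 0\}$. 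Let $P$ be the partition whose $k$-th block is $\{j_k\}\cup\{j\ \text{non-pivot}:b(j)=k\}$; this is legitimate because $b(j)\in R(j)$ forces $j_{b(j)}<j$, so $j_k$ is the minimum of block $k$ and the pivot set is recovered from $P$ as the set of block minima. Record for each non-pivot column $j$ in block $k$ the tuple $(D_{1j},\dots,D_{kj})$, in which $D_{kj}\in\mu_K$ and $D_{1j},\dots,D_{k-1,j}\in\mu_K\cup\{0\}$; all remaining entries of $D$ are forced to be $0$ by the echelon condition together with the definition of $b(j)$. The assignment $D\mapsto(P,\mathrm{data})$ is injective, since $P$ determines the pivot columns and the block of every non-pivot column, and the recorded tuples then reconstruct $D$ column by column.

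For the count, once $P$ is fixed the number of tuples for a non-pivot column in block $k$ is $\omega_K(1+\omega_K)^{k-1}\le(1+\omega_K)^k\le(1+\omega_K)^m$, using $k\le\# R(j)\le m$; multiplying over the $n-m$ non-pivot columns gives at most $(1+\omega_K)^{(n-m)m}$ tuples for each of the $\stir{n}{m}$ partitions, whence $\card\mathcal M_m\le\stir{n}{m}(1+\omega_K)^{(n-m)m}$, and the lemma follows upon summing over $m$.

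The main point to get right --- and the reason for routing everything through a set partition rather than first choosing the pivot columns --- is that a naive count would yield $\binom{n}{m}(1+\omega_K)^{(n-m)m}$, which is not what is claimed (for small $m$ the binomial coefficient is much larger than $\stir{n}{m}$). Folding the choice of pivot set together with the ``lowest nonzero row'' index $b(j)$ of each non-pivot column into a single set partition is exactly what the echelon condition allows, because that condition pins the support of a non-pivot column among the rows of strictly earlier pivots --- precisely the rule governing set partitions, where a non-minimal element joins a block with a smaller minimum. The only bookkeeping subtlety is the no-zero-column hypothesis, which guarantees $R(j)\neq\emptyset$ and that $b(j)$ is well defined; this is in force throughout this part of the paper by Corollary~\ref{re:non-zero_terms}.
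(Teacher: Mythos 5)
Your proof is correct, and the set-partition encoding is in the same spirit the authors use when computing $\card A_m = \omega_K^{n-m}\stir{n}{m}$ in Lemma~\ref{le:poisson_term}; the paper itself labels Lemma~\ref{le:counting_a1m} as a ``trivial count'' and supplies no proof, so you have filled that gap with the natural argument. One caveat worth flagging: the clean statement really does need the no-zero-column restriction that is implicit from Corollary~\ref{re:non-zero_terms} (which you correctly invoke), since without it the pivot set need not contain $1$ and the map to set partitions breaks down; your reduction to $\mathcal M_m$ handles this exactly as intended.
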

The following result, an adaptation of \cite[Lemma 5]{R1956}, suffices for our purposes: 
\begin{lemma}
\label{le:lemma5_rogers}
Let $f:K_\mathbb{R}^{t}\rightarrow \mathbb{R}$ be the indicator function of a ball $B$ of radius $R > 0$. Then, for any $\alpha_1, \alpha_2 \in \mu_K$ and $a \in K_\mathbb{R}^{t}$, we have that 
\begin{align}
	\frac{1}{ V(t[K:\mathbb{Q}])^{2} R^{2t[K:\mathbb{Q}]} } 
 \int_{K_\mathbb{R}^{2 \times t }} f(x) f(y) f(\alpha_1 x + \alpha_2 y ) dxdy  
	\le  
2 (\tfrac{\sqrt{3} }{2})^{t[K:\mathbb{Q}]} .
\end{align}
\end{lemma}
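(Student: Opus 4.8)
\textbf{Proof plan for Lemma \ref{le:lemma5_rogers}.}

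The plan is to reduce the three-fold integral to a one-dimensional computation by exploiting the fact that $\alpha_1,\alpha_2$ are roots of unity and hence act as orthogonal transformations on $K_\mathbb{R}^t$ preserving the ball $B$. First I would observe that after the change of variables $x\mapsto \alpha_1^{-1}x$, $y\mapsto \alpha_2^{-1}y$ (which preserves both the measure and $f$, since $B$ is $\mu_K$-invariant by the choice of quadratic form), the integral becomes $\int f(x)f(y)f(x+y)\,dx\,dy$, so without loss of generality $\alpha_1=\alpha_2=1$. Thus we must bound
\begin{equation}
\frac{1}{V(N)^2 R^{2N}}\int_{(\mathbb{R}^N)^2} \ind_{B_R}(x)\ind_{B_R}(y)\ind_{B_R}(x+y)\,dx\,dy,
\end{equation}
where $N=t[K:\mathbb{Q}]$ and $B_R$ is the radius-$R$ ball in $\mathbb{R}^N$. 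By scaling we may take $R=1$.

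Next I would interpret the remaining integral geometrically. Writing it as $\int_{B_1}\vol(B_1\cap(B_1-x))\,dx$ after integrating out $y$, or more symmetrically, it equals $\vol\{(x,y): |x|\le 1, |y|\le 1, |x+y|\le 1\}$. The standard trick (this is exactly Rogers' Lemma 5) is: condition on the value of $x$; the set of admissible $y$ is $B_1\cap(-x+B_1)$, the intersection of two unit balls whose centers are at distance $|x|$; its volume is at most the volume of a ball whose radius is the half-width of that lens, which one checks is $\sqrt{1-|x|^2/4}$. Hence the integral is at most $\int_{B_1}\left(1-|x|^2/4\right)^{N/2}V(N)\,dx = V(N)^2\cdot N\int_0^1 r^{N-1}(1-r^2/4)^{N/2}\,dr$ after passing to polar coordinates and dividing by $V(N)$. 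So the ratio we want to bound is at most $N\int_0^1 r^{N-1}(1-r^2/4)^{N/2}\,dr$.

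Finally I would estimate this integral. The integrand $r^{N-1}(1-r^2/4)^{N/2}$ concentrates near $r=1$ (since $r^{N-1}$ forces $r$ close to $1$ and there $(1-r^2/4)^{N/2}\approx (3/4)^{N/2}$), so one expects the bound $(\sqrt 3/2)^N = (3/4)^{N/2}$ up to a polynomially bounded or even constant factor. Concretely, I would bound $N\int_0^1 r^{N-1}(1-r^2/4)^{N/2}\,dr \le \sup_{r\in[0,1]}\left(r^{N-1}(1-r^2/4)^{N/2}\right)^{?}$ — more carefully, substitute $u=r^N$ or simply use $r^{N-1}\le r^N$ is false, so instead use that on $[0,1]$, $r^{N-1}(1-r^2/4)^{N/2}\le \big(r(1-r^2/4)^{1/2}\big)^{N-1}$ since $(1-r^2/4)^{1/2}\le 1$; the function $g(r)=r\sqrt{1-r^2/4}$ on $[0,1]$ attains its maximum $\tfrac{1}{\sqrt 2}\sqrt{\tfrac34}\cdot\sqrt2=\sqrt{3}/2$ at $r=\sqrt 2$ — but $\sqrt 2>1$, so on $[0,1]$ it is increasing and $g(1)=\sqrt 3/2$. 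Therefore the integrand is $\le (\sqrt 3/2)^{N-1}$ on all of $[0,1]$, giving the bound $N\cdot(\sqrt 3/2)^{N-1} = \tfrac{2N}{\sqrt 3}(\sqrt 3/2)^{N}$. The factor $\tfrac{2N}{\sqrt3}$ is too large for the claimed constant $2$, so the main obstacle is sharpening this last step: one should not bound the integrand by its supremum but rather exploit the decay away from $r=1$, e.g. write $r^{N-1}\,dr = \tfrac1N d(r^N)$ and then bound $(1-r^2/4)^{N/2}\le (3/4)^{N/2}\cdot(\text{something}\le 1)$, or integrate by parts once, to absorb the factor $N$ and arrive at the clean constant $2$ in $2(\sqrt 3/2)^{t[K:\mathbb{Q}]}$. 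I expect a short explicit estimate — splitting $[0,1]$ at $r=1-c/N$ and handling the two pieces separately, or using the substitution $r=\cos\theta$-type bounds — will give the stated constant.
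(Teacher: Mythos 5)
Your reduction to $\alpha_1=\alpha_2=1$ and the geometric reformulation as an intersection-volume integral match the paper's opening moves. Where you diverge: the paper slices the lens $B_R\cap(\alpha y+B_R)$ to get its exact volume, namely $2V(N-1)R^N\int_{\|\alpha y\|/(2R)}^1(1-\rho^2)^{(N-1)/2}\,d\rho$, and then handles the resulting double integral by citing Rogers' explicit computation; you instead bound the lens by the ball of radius $R\sqrt{1-\|y\|^2/(4R^2)}$ centred at the midpoint (a correct containment), which leads to the cleaner single integral $N\int_0^1 r^{N-1}(1-r^2/4)^{N/2}\,dr$. That simplification is legitimate and still captures the exponential rate.

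The gap, which you honestly flag yourself, is that you never actually bound $N\int_0^1 r^{N-1}(1-r^2/4)^{N/2}\,dr$ by $2(\sqrt3/2)^N$: your supremum estimate yields $\tfrac{2N}{\sqrt3}(\sqrt3/2)^N$, an extra factor of $N$, and the proposed fixes (split near $r=1$, substitute, integrate by parts) are only gestured at. Because your integral is not the same as Rogers', you cannot simply cite his Lemma 5 the way the paper does; you must close this yourself. As it stands the proposal does not prove the lemma.

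The gap is closable in a few lines, and it is worth seeing why. Substituting $r=2\sin\theta$ and then $\phi=2\theta$, and using $\sin\theta\cos\theta=\tfrac12\sin2\theta$ together with $\cos^2(\phi/2)=\tfrac12(1+\cos\phi)$, one finds
\begin{equation}
N\int_0^1 r^{N-1}\bigl(1-\tfrac{r^2}{4}\bigr)^{N/2}\,dr
=\frac{N}{2}\int_0^{\pi/3}\sin^{N-1}\phi\,(1+\cos\phi)\,d\phi.
\end{equation}
The term carrying $\cos\phi$ integrates exactly to $(\sqrt3/2)^N/N$. For the remaining term, observe that $\cos\phi\ge\tfrac12$ on $[0,\pi/3]$, so $\sin^{N-1}\phi\le 2\sin^{N-1}\phi\cos\phi$ there, giving $\int_0^{\pi/3}\sin^{N-1}\phi\,d\phi\le 2(\sqrt3/2)^N/N$. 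Altogether the right-hand side is $\le\tfrac{3}{2}(\sqrt3/2)^N\le 2(\sqrt3/2)^N$, which is even a little better than the claimed constant. Had you carried this out, the proposal would be a complete proof by a route slightly more elementary than the paper's (single integral and a pointwise inequality instead of invoking Rogers' double-integral computation).
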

\begin{proof}

Since $f$ is invariant under $\mu_K$, we can assume that the integral is 
\begin{equation}
	\int_{K_\mathbb{R}^{t \times 2} } f(x) f(y) f(\alpha y - x ) dxdy , \text{ for some $\alpha \in \mu_K$.} 
\end{equation}

We can rewrite the above as
\begin{equation}
  \int_{K_\mathbb{R}^{t}} f(y) \left( \int_{K_\mathbb{R}^{t}}  f(x) f(\alpha y -x )dx \right) dy.
\end{equation}
The inner term is the intersectional volume of two translates of $B$, one centered at the origin and the other at $\alpha y $. By doing some elementary geometry (see Figure \ref{fig:geometry}), one can see that 
\begin{equation}
	\int_{K_\mathbb{R}^{t}} f(x) f(\alpha y - x) dx =  2 V(N-1) R^{N}  \int_{\frac{1}{2}{\frac{ \|\alpha y \|}{R} }}^{1} (1- \rho^{2})^{\frac{N-1}{2}} d\rho,
\end{equation}
where $N = t[K:\mathbb{Q}]$ and $\rho$ is an integration parameter (see Figure \ref{fig:geometry}). We understand the right hand side to be $0$ if $\|\alpha y\| > 2 R$.

Substituting this in our expression gives 
\begin{align}
	& \left(2 V(N- 1) R^{t[K:\mathbb{Q}]}  \right) \int_{K_\mathbb{R}^{t}} f(y)\left(  \int_{\frac{1}{2} \frac{\|  y\|}{R}}  ( 1- \rho^{2})^{\frac{N-1}{2}} \right)dy \\
	& =  2V(N-1)V(N) R^{2N} N \int_{0}^{1} \xi^{N-1} \left( \int_{\tfrac{1}{2} \xi }^{1} (1-\rho^{2})^{\frac{N-1}{2}} d\rho  \right) d \xi.
\end{align}

Performing explicit computations as in \cite[Lemma 5]{R1956}, we find that this expression is bounded by 
\begin{align}
	& \le  2V(N)^{2} R^{2N}  (\tfrac{\sqrt{3}}{2})^{N}.
\end{align}

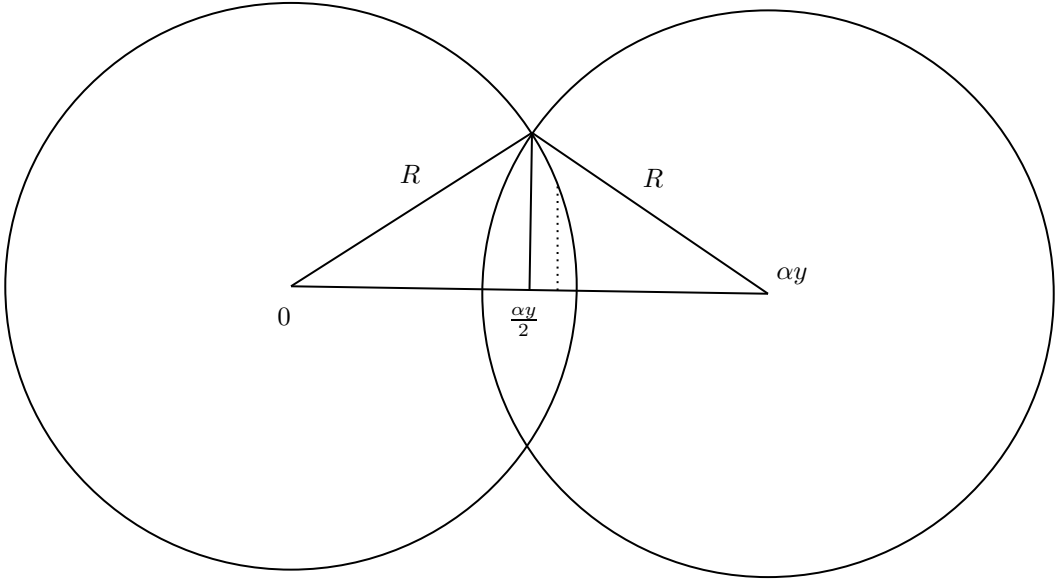
\begin{figure}[ht!]

\tikzset{every picture/.style={line width=0.75pt}} %set default line width to 0.75pt
\centering
\begin{tikzpicture}[x=0.75pt,y=0.75pt,yscale=-1,xscale=1]
%uncomment if require: \path (0,338); %set diagram left start at 0, and has height of 338

%Shape: Circle [id:dp28914935226458827]
\draw   (327,166.1) .. controls (327,87.37) and (390.82,23.55) .. (469.55,23.55) .. controls (548.28,23.55) and (612.1,87.37) .. (612.1,166.1) .. controls (612.1,244.83) and (548.28,308.65) .. (469.55,308.65) .. controls (390.82,308.65) and (327,244.83) .. (327,166.1) -- cycle ;

%Line above rho
\draw[dotted]    (364.55,112.11) -- (364.55,164.23) ;
% rho
% \draw (353.55,171.23) node [anchor=north west][inner sep=0.75pt]    {$R\rho$};

%Straight Lines [id:da37101459952464455]
\draw    (231.55,162.35) -- (469.55,166.1) ;
%Straight Lines [id:da7757660592890617]
\draw    (231.55,162.35) -- (351.8,85.11) ;
%Straight Lines [id:da7618638047810867]
\draw    (469.55,166.1) -- (351.8,85.11) ;
%Straight Lines [id:da6043511128483685]
\draw    (351.8,85.11) -- (350.55,164.23) ;
%Shape: Circle [id:dp22544151902595067]
\draw   (89,162.35) .. controls (89,83.62) and (152.82,19.8) .. (231.55,19.8) .. controls (310.28,19.8) and (374.1,83.62) .. (374.1,162.35) .. controls (374.1,241.08) and (310.28,304.9) .. (231.55,304.9) .. controls (152.82,304.9) and (89,241.08) .. (89,162.35) -- cycle ;

% Text Node
\draw (472.27,151.23) node [anchor=north west][inner sep=0.75pt]    {$\alpha y$};
% Text Node
\draw (223.27,171.67) node [anchor=north west][inner sep=0.75pt]    {$0$};
% Text Node
\draw (284.4,100) node [anchor=north west][inner sep=0.75pt]    {$R$};
% Text Node
\draw (338.27,171.23) node [anchor=north west][inner sep=0.75pt]    {$\frac{\alpha y}{2}$};
% Text Node
\draw (405.6,102) node [anchor=north west][inner sep=0.75pt]    {$R$};

\end{tikzpicture}
\caption{Intersection of two balls. The base of the dotted line is at a distance of $R\rho$ from the origin. Cutting the intersection along the dotted line gives a ball in one dimension less and has radius $R\sqrt{1-\rho^2}$. We integrate on the parameter $\rho$.}
\label{fig:geometry}
\end{figure}

\end{proof}

\begin{lemma}
\label{le:potato_slice}
For any $z \in K_\mathbb{R}^{t}$ and with the same setting as Lemma \ref{le:lemma5_rogers}, we have 
\begin{equation}
 \int_{K_\mathbb{R}^{ t \times 2}} f(x) f(y) f(\alpha_1 x + \alpha_2 y + z ) dxdy  \le 
 \int_{K_\mathbb{R}^{ t \times 2 }} f(x) f(y) f(\alpha_1 x + \alpha_2 y ) dxdy  .
\end{equation}
\end{lemma}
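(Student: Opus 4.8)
The plan is to show that shifting by $z$ can only decrease the integral, by reducing to a one-dimensional statement about how a ball's self-convolution behaves under translation. First I would rewrite the left-hand integral, using the $\mu_K$-invariance of $f$ exactly as in the proof of Lemma \ref{le:lemma5_rogers}, in the form
\begin{equation}
\int_{K_\mathbb{R}^t} f(y)\left(\int_{K_\mathbb{R}^t} f(x)\, f(\alpha_1 x + \alpha_2 y + z)\, dx\right) dy,
\end{equation}
and absorb the unit $\alpha_1$ by the change of variables $x\mapsto \alpha_1^{-1}x$ (which preserves $f$ and Lebesgue measure), so the inner integral becomes $\int f(x) f(x + w)\,dx$ with $w = -(\alpha_2 y + z)$ up to sign, i.e. the volume $\vol(B\cap (B+w))$ of the intersection of $B$ with a translate. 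The key point is that this intersection volume is a radially decreasing function of $\|w\|$: there is a decreasing function $\phi:\mathbb{R}_{\geq 0}\to\mathbb{R}_{\geq 0}$ with $\vol(B\cap(B+w)) = \phi(\|w\|)$, supported on $[0,2R]$, given explicitly by the formula from Lemma \ref{le:lemma5_rogers}, namely $\phi(u) = 2V(N-1)R^N\int_{u/(2R)}^1 (1-\rho^2)^{(N-1)/2}d\rho$.

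Granting that, the claim becomes: for every fixed $z$,
\begin{equation}
\int_{K_\mathbb{R}^t} f(y)\,\phi(\|\alpha_2 y + z\|)\, dy \;\leq\; \int_{K_\mathbb{R}^t} f(y)\,\phi(\|\alpha_2 y\|)\, dy.
\end{equation}
Again using that $\|\alpha_2 y\| = \|y\|$ since $\alpha_2\in\mu_K$ preserves the norm, and that $f$ is the indicator of the centered ball $B$, the right-hand side is $\int_B \phi(\|y\|)\,dy$, while the left-hand side is $\int_B \phi(\|y + \alpha_2^{-1}z\|)\,dy = \int_{B - \alpha_2^{-1}z}\phi(\|y\|)\,dy$ after translating. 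So the inequality reduces to the purely geometric statement that, for a radially decreasing nonnegative function $\phi$, the integral of $\phi$ over a ball of fixed radius centered at the origin is at least the integral over any translate of that ball: $\int_{B}\phi \geq \int_{B+c}\phi$ for all $c$. This is a standard rearrangement fact; I would prove it directly by the "bathtub"/layer-cake argument, writing $\phi(\|y\|) = \int_0^\infty \ind_{\{\phi(\|y\|) > s\}}\, ds$ and noting that each superlevel set $\{y : \phi(\|y\|) > s\}$ is itself a centered ball $B_{r(s)}$, so it suffices to check $\vol(B\cap B_{r(s)}) \geq \vol((B+c)\cap B_{r(s)})$ for every $s$ — and this last inequality is again just "the intersection of two balls is largest when concentric," which one sees from the same one-dimensional slicing picture (Figure \ref{fig:geometry}) or from the Brunn--Minkowski / Riesz rearrangement inequality applied to two indicators of balls.

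The main obstacle is not any single hard estimate but making the reduction clean: one must be careful that the quadratic form on $K_\mathbb{R}^t$ makes $B$ genuinely a Euclidean ball so that "centered ball" is meaningful and so that the $\mu_K$-action is by isometries (both already guaranteed by Equation \ref{eq:norm} and used in Lemma \ref{le:poisson_term}), and one must handle the case $\|w\| > 2R$ where $\phi$ vanishes without any fuss. Once the problem is funneled down to "$\int_{B}\phi \geq \int_{B+c}\phi$ for $\phi$ radially nonincreasing and $B$ a centered ball," the proof is a one-line layer-cake argument plus the concentric-balls fact, and I would present it in that order: (1) rewrite via $\mu_K$-invariance and kill $\alpha_1$; (2) identify the inner integral with $\phi(\|\cdot\|)$; (3) translate so both sides are integrals of $\phi(\|y\|)$ over a centered versus a shifted ball; (4) layer-cake and conclude.
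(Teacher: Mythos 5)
Your proposal is correct, but it follows a genuinely different route from the paper's proof. The paper's argument is a coordinate-by-coordinate reduction: it writes $z = \pi(z)e_{1,\sigma} + z'$ for a single real coordinate $\sigma$, and proves the intermediate inequality \eqref{eq:general_claim} that zeroing out that one coordinate of $z$ can only increase the integral; iterating this over all coordinates gives the result. The key geometric step there is two-dimensional: slicing the set $(B\times B)\cap E_z$ along the plane $P$ spanned by $(e_{1,\sigma},0)$ and $(0,e_{1,\sigma})$, one sees a square intersected with a translated strip, and translating the strip to be centred can only increase the area. Your argument instead collapses the whole inner $x$-integral at once into the radially non-increasing intersection-volume function $\phi$, then uses a layer-cake decomposition together with the fact that $\vol(B_R(c_1)\cap B_r(c_2))$ is maximal when $c_1=c_2$. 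This is essentially an application of the Riesz rearrangement inequality specialized to two ball indicators, and for the case at hand (where $f$ is exactly $\ind_B$ for a Euclidean ball, so $\phi$ is genuinely radial) it is cleaner and more global than the coordinate-wise argument. The paper's coordinate-by-coordinate approach is slightly more hands-on and would adapt more readily if the body were only coordinate-wise symmetric rather than spherically symmetric, but both proofs are valid here. One small bookkeeping slip in your write-up: $\int_B\phi(\|y+\alpha_2^{-1}z\|)\,dy = \int_{B+\alpha_2^{-1}z}\phi(\|u\|)\,du$, not $\int_{B-\alpha_2^{-1}z}$; this changes nothing since the concentric-balls fact holds for either sign of the shift.
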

\begin{proof}
Let $z = (z_1,z_2,\dots,z_t) \in K_\mathbb{R}^{t}$ and let $z' = (z_1',z_2,\dots,z_t)$, where $z_1' \in K_\mathbb{R}\simeq \mathbb{R}^{\oplus r_1} \oplus \mathbb{C}^{\oplus r_2}$ is equal to $z_1$ at all embeddings except one embedding $\sigma: K \rightarrow \mathbb{R}$ or $\sigma: K \rightarrow \mathbb{C}$ where it is equal to $0$.

Thus we can write $z = \pi(z) e_{1, \sigma } + z'$ where $\pi: K_\mathbb{R}^{t} \rightarrow \mathbb{R}$ is the $\mathbb{R}$-coordinate of $z_1$ along $\sigma$ and $e_{1,\sigma}$ is an appropriate vector. 

For the statement, we will prove the following inequality and the rest will follow suit:
\begin{equation}
 \int_{K_\mathbb{R}^{t \times 2 }} f(x) f(y) f(\alpha_1 x + \alpha_2 y + z ) dxdy  \le 
 \int_{K_\mathbb{R}^{t \times 2 }} f(x) f(y) f(\alpha_1 x + \alpha_2 y + z'  ) dxdy  .
 \label{eq:general_claim}
\end{equation}

For $x,y$ we analogously define $x' = x- \pi(x) e_{1,\sigma}$ and $y' = y - \pi(y) e_{1,\sigma}$. Then the claim above will follow from the claim 
that for any $x,y \in K_\mathbb{R}^{t}$
\begin{align}
 & \int_{\mathbb{R}^{2}} f(x' + s e_{1,\sigma}) f(y' + t e_{1,\sigma}) f\left(\alpha_1 (x'+s e_{1,\sigma}) + \alpha_2 (y' + t e_{1,\sigma}) + z \right) ds  dt \\
	\le  &   \int_{\mathbb{R}^{2}} f(x' + s e_{1,\sigma}) f(y' + t e_{1,\sigma}) f\left(\alpha_1 (x'+s e_{1,\sigma}) + \alpha_2 (y' + t e_{1,\sigma}) + z' \right) ds  dt .
	\label{eq:2d_claim}
\end{align}
Indeed, we can obtain (\ref{eq:general_claim}) from (\ref{eq:2d_claim}) by integrating along $x',y'$.

To prove the last inequality, observe that if $B \subseteq K_\mathbb{R}^{t}$ is the ball whose indicator function is $f$ and if $P\subseteq K_\mathbb{R}^{t \times 2}$ is $2$-dimensional plane spanned by $(e_{1,\sigma},0)$ and $(0,e_{1,\sigma})$ then 
\begin{equation}
	(B \times B) \cap \left( (x',y') + P \right) \subseteq K_\mathbb{R}^{t \times 2}
\end{equation}
is the area within a square centered at the point $x',y'$ since 
\begin{equation}
\|x' + s e_{1,\sigma}\|^{2} =  \|x'\|^{2} + s^{2} \text{ and }
  \|y' + t e_{1,\sigma}\|^{2} =  \|y'\|^{2} + t^{2}.
\end{equation}
Furthermore, if $E_z \subseteq K_\mathbb{R}^{t \times 2}$ is the set whose indicator function is $(x,y) \mapsto f(\alpha_1 x + \alpha_2 y + z)$, then
\begin{equation}
	E_{z} \cap ( (x',y') + P)  \text{ and }
	E_{z'} \cap ( (x',y') + P)  
\end{equation}
are both $2$-dimensional areas between two parallel lines and one is a translate of the other. Since the latter area is symmetrical around $(x',y')$ and the former may not be, we can conclude geometrically 
\begin{equation}
\vol( ( B \times B ) \cap	E_{z} \cap ( (x',y') + P)   )
\le 
\vol( ( B \times B ) \cap	E_{z'} \cap ( (x',y') + P)   )
\end{equation}
This shows 
that (\ref{eq:2d_claim}) must hold.

\end{proof}

\begin{proof}[\bf Proof of Theorem \ref{th:poisson_dist}]
Set $N=t[K:\mathbb{Q}]$ as before. By Lemma \ref{le:counting_a1m} it is enough to consider the contribution of any $D \in A^{1}_m$.

For such matrices $D$, we claim that 
\begin{equation}
V(N)^{-m} \frac{1}{\mathfrak{D}(D)^{t}} \int_{x \in M_{t \times m}(K_\mathbb{R})} g(xD)dx \le 2( \tfrac{\sqrt{3}}{2} )^{N}.
\end{equation}
Indeed, recall that $D$ is an $m \times n$ matrix with entries in $\mu_K \cup \{0\}$ such that it has at least one column with more than one entry. 
Hence, without loss of generality we can assume that $D$ looks like 
\begin{equation}
 \label{eq:matrix_in_a1m}
  \begin{bmatrix}
	  1     &     &   &    &              & \mu_{1} & \dots & *\\
		&   1   &    &   &            & \mu_{2} & \dots & *\\
		& &        1 &   &              &  *   & \dots & * \\
		&    &    &\ddots&	        & \vdots &   &  \vdots\\
	            &    &    &    &      1       & * & \dots & * \\
  \end{bmatrix}.
\end{equation}
So if $f$ is the indicator function of the ball as in the statement of Lemma \ref{le:lemma5_rogers}, then we can write that for $x \in K_\mathbb{R}^{t \times n}$
\begin{align}
	f(x D )  & = f(x_1)f(x_2)\dots f(x_m) f( \mu_1x_1 + \mu_2 x_2 + \cdots) \cdots \\
		 & \le f(x_1)\dots f(x_m) f(\mu_1 x+ \mu_2 x + \cdots )
\end{align}
Then, we can invoke Lemma \ref{le:potato_slice} to get that
\begin{align}
  \int_{x \in K_\mathbb{R}^{t \times m}} f(xD) 
		 & \le  \int_{x \in K_\mathbb{R}^{t \times m }} f(x_1)\dots f(x_m) f(\mu_1 x_{1}+ \mu_2 x_{2} + \cdots ) dx\\
		 & \le  \int_{x \in K_\mathbb{R}^{t \times m }} f(x_1)\dots f(x_m) f(\mu_1 x_{1}+ \mu_2 x_{2} ) dx
\end{align}
The claim is therefore a consequence of Lemma \ref{le:lemma5_rogers}. The contribution from these terms decays exponentially as a result. The statement of the theorem then follows, given that $\card A^{1}_{m}$ grows at most polynomially in the degree $d=[K:\mathbb{Q}]$, whereas the term $(\tfrac{\sqrt{3}}{2})^{N}$ decays exponentially with $d$.

\end{proof}

\section{Upper bounds on moments using Weil heights}
\label{se:Bogosection}

We now turn to bounding the remaining terms in order to establish explicit formulas for moments of $\mathcal{O}_K$-lattices. Our aim is to establish such formulas while allowing the degree $d=[K:\mathbb{Q}]$ to vary as well. Our results will involve lower bounds on the Weil height for algebraic numbers. Such bounds are closely related to Lehmer's problem-which is still open-however suitable bounds for our purposes are known in many interesting and important cases. 
\subsection{Mahler measures and the Bogomolov property}

For an algebraic number $\alpha \in K^\times$, recall that the Mahler measure (or unnormalised exponential Weil height) is given by the product over the set of places $M_K$ of $K$:
\begin{equation}
H_W(\alpha) := \prod_{v\in M_K} \max\{1, |\alpha |_v\}.
\end{equation}
We also define, keeping only the infinite places, the closely related 
$$H_\infty(\alpha) = \prod_{\sigma: K \rightarrow \mathbb{C}} \max\{1, |\sigma(\alpha) | \}$$
which will be more directly relevant for estimates in the Euclidean space associated to $K$. The two coincide for algebraic integers and in general differ by a denominator. 
We also recall that the \textbf{absolute} Mahler measure (or exponential Weil height) of an algebraic number $\alpha$ is given by $H_W(\alpha)^{1/\deg(\alpha)}$, where $\deg(\alpha)=[\mathbb{Q}(\alpha):\mathbb{Q}]$ and the underlying product is taken over the places of $\mathbb{Q}(\alpha)$. We shall denote by
$$h(\alpha)=\log(H_W(\alpha)^{1/\deg(\alpha)})$$
the \textbf{Weil height} of an algebraic number. For non-integers, we shall also write $h_\infty(\alpha)$ for $\log(H_\infty(\alpha)^{1/\deg(\alpha)})$.

\begin{remark}
Note that the absolute Mahler measure and Weil heights are independent of the particular subfield of an algebraic closure over which one is considering an algebraic integer. That is, if $\beta \in K$ we have  $\deg \beta =\card \{\sigma: \mathbb{Q}(\beta) \rightarrow \mathbb{C}\}$ and
\begin{equation}
\frac{\log \left( \prod_{\sigma : K \rightarrow \mathbb{C}} \max\{1,|\sigma(\beta)|\} \right)}{[K: \mathbb{Q}]} 
= 
\frac{\log \left( \prod_{\sigma : \mathbb{Q}(\beta) \rightarrow \mathbb{C}} \max\{1,|\sigma(\beta)|\} \right)}{[\mathbb{Q}(\beta): \mathbb{Q}]}.
\end{equation}
Therefore, for any number field $K$, the Weil height of $\alpha\in K^\times$ may also be computed as
$$h(\alpha)=\tfrac{1}{[K: \mathbb{Q}]}\cdot\sum_{v\in M_K}\log( \max\{1, |\alpha |_v\}).$$

\end{remark}

Lehmer's famous problem asks for a uniform lower bound for $h(\alpha)\deg(\alpha)$. We shall consider algebraic numbers related to the stronger property: 
\begin{definition}
A subset $S\subset \overline{\mathbb{Q}}$ is said to satisfy the \textbf{Bogomolov property} if there exists a constant $C>0$ such that 
$$h(\alpha)\geq C$$
provided $\alpha\in S$ has infinite multiplicative order. 
\end{definition}

Throughout this section, we will therefore consider $\mathcal{O}_K$-lattices for towers of number fields inside a subset of $\overline{\mathbb{Q}}$ satisfying the Bogomolov property. In other words, we formulate the assumption:

\begin{hypothesis}\label{hyp:Lehmer}
    As $K$ varies among the number fields considered, there exist uniform constants $c_0\geq c_1>0$ such that the absolute (logarithmic) Weil heights satisfy 
    $$h(\alpha)>c_1\text{ for }\alpha\in K^\times\setminus \mu_K$$ 
    and $$h_\infty(\alpha)=h(\alpha)>c_0\text{ for }\alpha\in \mathcal{O}_K\setminus \{\mu_K, 0\},$$
    where $\mu_K$ denotes the group of roots of unity contained in $K$.
\end{hypothesis}

We now recall some important examples from the literature when the Bogomolov property is satisfied. The first result is a bound due to Schinzel \cite{Schinzel1973}: 
\begin{theorem}\label{thm:cycloheightbound}
    Assume that an algebraic number $\alpha$ of infinite multiplicative order is contained in a totally real field. Then, denoting by $\varphi=\frac{1+\sqrt{5}}{2}$ the golden ratio, we have
     $$h(\alpha)\geq \frac{1}{2}\log{\varphi}\approx 0.2406\ldots.$$
     Moreover, the same is true for $\alpha$ in a CM field provided one (and equivalently, all) of its Archimedean embeddings satisfy $\vert \alpha\vert\neq 1$. 
\end{theorem}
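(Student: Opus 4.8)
The plan is to reduce both assertions to one statement about \emph{totally positive} algebraic numbers and then prove that by an explicit one–variable inequality. Concretely, the key lemma I would prove is: if $\beta$ is a totally positive algebraic number which is not a root of unity (equivalently $\beta\neq1$), then $h(\beta)\ge\log\varphi$. Granting this, the theorem follows immediately. If $\alpha$ lies in a totally real field and has infinite multiplicative order, then $\beta:=\alpha^{2}$ is totally positive and not a root of unity, so $h(\alpha)=\tfrac12 h(\alpha^{2})\ge\tfrac12\log\varphi$. If $\alpha$ lies in a CM field $K$ with maximal totally real subfield $K^{+}$ and $|\sigma(\alpha)|\neq1$ at one (equivalently every) archimedean place, then $\beta:=\alpha\overline{\alpha}=N_{K/K^{+}}(\alpha)\in K^{+}$ is totally positive with every archimedean conjugate equal to some $|\tau(\alpha)|^{2}\neq1$, hence is not a root of unity, and $2h(\alpha)=h(\alpha)+h(\overline{\alpha})\ge h(\alpha\overline{\alpha})=h(\beta)\ge\log\varphi$, using $h(\overline{\alpha})=h(\alpha)$ and subadditivity of the height under multiplication. (The "one place $\Leftrightarrow$ every place" equivalence is itself immediate: $|\tau(\alpha)|=1$ forces the conjugate $\tau(\beta)$ of $\beta\in K^{+}$ to equal $1$, and irreducibility of the minimal polynomial of $\beta$ then forces $\beta=1$.)

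To prove the lemma, I would write the minimal polynomial of $\beta$ as $f(x)=a_{0}\prod_{i=1}^{d}(x-\beta_{i})\in\mathbb{Z}[x]$ with $a_{0}\ge1$ and all $\beta_{i}>0$, so that Mahler's formula gives $h(\beta)=\tfrac1d\log\bigl(a_{0}\prod_{i}\max(1,\beta_{i})\bigr)$. Since $f$ is irreducible and $\beta\neq0,1$, both $f(0)=\pm a_{0}\prod_{i}\beta_{i}$ and $f(1)=\pm a_{0}\prod_{i}(\beta_{i}-1)$ are nonzero rational integers, giving the arithmetic bounds $\prod_{i}\beta_{i}\ge a_{0}^{-1}$ and $\prod_{i}|\beta_{i}-1|\ge a_{0}^{-1}$. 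The analytic ingredient is the pointwise inequality $\max(1,x)\ge\varphi\,x^{p}|x-1|^{q}$ for all $x>0$, with $p=\tfrac12-\tfrac1{2\sqrt5}$ and $q=\tfrac1{\sqrt5}$; these exponents are positive and satisfy $2p+q=1$ and $p+q=1-p$. Multiplying this inequality over $i=1,\dots,d$ and inserting the two arithmetic bounds yields $\prod_{i}\max(1,\beta_{i})\ge\varphi^{d}a_{0}^{-(p+q)}=\varphi^{d}a_{0}^{-(1-p)}$, hence $a_{0}\prod_{i}\max(1,\beta_{i})\ge a_{0}^{\,p}\varphi^{d}\ge\varphi^{d}$, i.e. $h(\beta)\ge\log\varphi$.

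What remains is the pointwise inequality, which I would prove by two single–variable calculus checks. For $x\ge1$ it reads $(1-p)\log x-q\log(x-1)\ge\log\varphi$; the left-hand side tends to $+\infty$ as $x\to1^{+}$ and as $x\to\infty$, and has a unique interior critical point at $x=(1-p)/(1-p-q)=\varphi^{2}$, where (using $2p+q=1$ and $\varphi^{2}-1=\varphi$) it equals precisely $\bigl(2(1-p)-q\bigr)\log\varphi=\log\varphi$. For $0<x<1$ it reads $-p\log x-q\log(1-x)\ge\log\varphi$; the left-hand side again blows up at both endpoints and has a unique critical point at $x=p/(p+q)=\varphi^{-2}$, where (using $1-\varphi^{-2}=\varphi^{-1}$) it equals $(2p+q)\log\varphi=\log\varphi$. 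Equality thus holds exactly at $x=\varphi^{\pm2}$, the conjugates of the golden ratio.

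I expect the only real obstacle to be pinning down the exponents $(p,q)$: they are determined by requiring equality in the pointwise bound at $x=\varphi^{2}$ and $x=\varphi^{-2}$ together with the normalization $2p+q=1$, the latter being exactly what makes the leftover factor $a_{0}^{1-(p+q)}=a_{0}^{p}$ at least $1$. Once $(p,q)$ are in hand, both convexity checks and all the reductions are routine; the one point to be careful about is that $\alpha^{2}$, respectively $\alpha\overline{\alpha}$, is genuinely of infinite order (not merely nonzero) under the stated hypotheses, which is precisely where totally real, respectively CM with $|\alpha|\neq1$, enters.
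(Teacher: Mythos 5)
The paper does not prove this theorem: it simply attributes the totally real bound to Schinzel \cite{Schinzel1973} and states the CM extension without argument. So there is no proof in the paper to compare against, and the relevant question is only whether your reconstruction is correct — and it is. Your two reductions (squaring in the totally real case, taking $\alpha\bar\alpha=N_{K/K^+}(\alpha)$ in the CM case) correctly funnel everything through the totally positive lemma, the equivalence "$|\alpha|\neq 1$ at one archimedean place iff at all" follows exactly as you argue because a conjugate of $\beta$ equal to $1$ forces the minimal polynomial of $\beta$ to be $x-1$, and the height identities you invoke ($h(\alpha^2)=2h(\alpha)$, $h(\bar\alpha)=h(\alpha)$, subadditivity) are all standard. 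The arithmetic-analytic part is also right: with $p=\tfrac12-\tfrac1{2\sqrt5}$, $q=\tfrac1{\sqrt5}$ one has $2p+q=1$ and $p+q=1-p$, the two critical-point computations land at $\varphi^{\pm 2}$ with value $\log\varphi$ precisely because $\varphi^2-1=\varphi$ and $1-\varphi^{-2}=\varphi^{-1}$, and the leftover factor $a_0^{\,1-(p+q)}=a_0^{\,p}\ge 1$ takes care of the leading coefficient so that the lemma holds for algebraic numbers, not just integers. This is essentially Schinzel's original method (the weighted pointwise inequality $\max(1,x)\ge\varphi\,x^p|x-1|^q$ together with the resultant-style bounds at $0$ and $1$), so you have correctly reproduced the proof of the cited result.
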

We therefore get that Theorem \ref{thm:cycloheightbound} also applies to algebraic integers in CM fields, however there exist algebraic numbers which are not roots of unity but all of whose conjugates lie on the unit circle-in fact the bound is violated for such numbers. 
We do, however, have for abelian extensions the bound due to Amoroso--Dvornicich \cite{AMOROSO2000260}:
\begin{theorem}\label{thm:cycloheightboundnumbers}
    Assume that an algebraic number $\alpha$ of infinite multiplicative order is contained in an abelian extension of $\mathbb{Q}$. Then we have
     $$h(\alpha)\geq \frac{\log{5}}{12}\approx 0.1341\ldots.$$
\end{theorem}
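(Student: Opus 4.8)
The statement is the theorem of Amoroso and Dvornicich, and the plan is to prove it by a Frobenius-descent argument over cyclotomic fields, using the explicit Galois action there to avoid the degree losses inherent in Dobrowolski-type estimates. First I would reduce to the cyclotomic case: by the Kronecker--Weber theorem $\alpha\in\mathbb{Q}(\zeta_m)$ with $m$ the conductor of $\mathbb{Q}(\alpha)$. The central input is the Frobenius congruence: for a rational prime $p\nmid m$, the automorphism $\sigma_p\colon\zeta_m\mapsto\zeta_m^{p}$ of $\mathbb{Q}(\zeta_m)$ satisfies
\[
\sigma_p(x)\equiv x^{p}\pmod{p\,\mathcal{O}_{\mathbb{Q}(\zeta_m)}}\quad\text{for every }x\in\mathcal{O}_{\mathbb{Q}(\zeta_m)},
\]
which one sees by expanding $x$ in a power basis of $\zeta_m$, applying the freshman's dream in $\mathcal{O}/p\mathcal{O}$ together with Fermat's little theorem on the coefficients, and using that $p\nmid m$ makes $p$ unramified, so that $p\mathcal{O}=\prod_{\mathfrak{p}\mid p}\mathfrak{p}$. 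Assuming first that $\alpha$ is an algebraic integer and applying this to $\alpha$, the element $\beta_p:=\sigma_p(\alpha)-\alpha^{p}$ is divisible by $p$, and it is nonzero: $\sigma_p(\alpha)=\alpha^{p}$ would give $h(\alpha)=h(\sigma_p(\alpha))=h(\alpha^{p})=p\cdot h(\alpha)$, hence $h(\alpha)=0$, forcing $\alpha\in\mu_K$ by Kronecker's theorem.

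Then I would play two bounds for $h(\beta_p)$ against each other. On one side, $\beta_p$ is a nonzero algebraic integer divisible by $p$, so already the places above $p$ force $h(\beta_p)\geq\log p$. On the other side, Galois-invariance of the Weil height and the inequality $h(x+y)\leq h(x)+h(y)+\log 2$ give $h(\beta_p)\leq h(\sigma_p(\alpha))+h(\alpha^{p})+\log 2=(p+1)\cdot h(\alpha)+\log 2$. Together these yield $h(\alpha)\geq\log(p/2)/(p+1)$ for every prime $p\nmid m$, which already exceeds $\tfrac{\log 5}{12}$ as soon as one of $p=5,7,11$ fails to divide the conductor; the non-integral case is subsumed once we pass to local heights below, since a denominator only adds positive local contributions.

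The hard case, and the step I expect to be the real obstacle, is when every small prime ramifies in $\mathbb{Q}(\zeta_m)$, so the single-prime bound degrades with the conductor. To get a constant uniform in $m$ I would decompose the Weil height into local terms $h(\alpha)=\widehat h_\infty(\alpha)+\sum_p\widehat h_p(\alpha)$ and argue prime by prime, now including the ramified ones: using the structure of the local tower $\mathbb{Q}_p(\zeta_{p^\infty})$ (inertia acting through $\mathbb{Z}_p^{\times}$, residue field $\mathbb{F}_p$, and the assumption that $\alpha$ has infinite order) one establishes an inequality $\widehat h_p(\alpha)+c_p\,\widehat h_\infty(\alpha)\geq\epsilon_p>0$ with suitable weights $c_p$. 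Summing over a well-chosen finite set of primes and optimizing the weights should produce $\tfrac{\log 5}{12}$, with the $12$ and the $5$ emerging from balancing the contributions of $p=2$ and $p=3$. Everything up to this point is formal manipulation of the product formula and the Frobenius congruence; the difficulty is entirely in making the ramified-prime contributions quantitative and combining them optimally.
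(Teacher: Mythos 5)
The paper does not prove this theorem; it cites it from Amoroso--Dvornicich \cite{AMOROSO2000260} as known input, so there is no proof in the text to compare against and your proposal has to stand on its own.

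Your treatment of the unramified case is essentially correct. The chain $h(\beta_p)\geq\log p$ (from $p\mid\beta_p$ and the product formula), $h(\beta_p)\leq (p+1)h(\alpha)+\log 2$, the non-vanishing of $\beta_p$ via $h(\alpha^p)=ph(\alpha)$ and Kronecker, and the resulting $h(\alpha)\geq \log(p/2)/(p+1)$ for any rational prime $p\nmid m$ all check out, and indeed $p\in\{5,7,11\}$ produces a bound above $\tfrac{\log 5}{12}$. This reduces the theorem to the case where $5\cdot 7\cdot 11$ divides the conductor.

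The gap is the ramified case, which is the entire content of the Amoroso--Dvornicich result, and your proposal does not supply the missing mechanism. The actual proof does not proceed by a local-height decomposition $h=\widehat h_\infty+\sum_p\widehat h_p$ and an optimization over weights $c_p$; that framework does not by itself give you a nonzero algebraic integer forced to be divisible by a power of $p$, which is what drives the whole argument. What is really needed is the ramified analogue of the Frobenius congruence: Amoroso--Dvornicich's key lemma asserts that for $L/\mathbb{Q}$ abelian and $p$ \emph{any} prime (ramified or not), there is an explicit $\sigma\in\mathrm{Gal}(L/\mathbb{Q})$ for which a congruence of the shape $\alpha^p\equiv\sigma(\alpha)^p\pmod{p^{1+\delta}\mathcal{O}_L}$ holds uniformly in $\alpha\in\mathcal{O}_L$, produced by decomposing $\mathrm{Gal}(\mathbb{Q}(\zeta_m)/\mathbb{Q})$ along $m=p^k m'$ and exploiting the total ramification of $p$ in $\mathbb{Q}(\zeta_{p^k})$ (every residue is a rational integer mod the prime above $p$). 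One then runs the same divisibility-versus-triangle-inequality comparison, but the non-vanishing of $\alpha^p-\sigma(\alpha)^p$ is now a genuine issue (it can vanish exactly when $\alpha/\sigma(\alpha)$ is a $p$-th root of unity), and handling that possibility is where the descent and the precise constant come from. None of this is present in your sketch, and I do not see how your proposed inequality $\widehat h_p(\alpha)+c_p\widehat h_\infty(\alpha)\geq\epsilon_p$ would be established from what you've set up.

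Two smaller points. First, your conjecture that $\tfrac{\log 5}{12}$ arises from balancing $p=2$ and $p=3$ is almost certainly wrong: the factor $\log 5$ in the numerator is a strong indication that $p=5$ is the critical prime, with the $12$ coming from a degree-$p(p+1)$ or similar denominator after applying the ramified congruence. Second, the claim that the non-integral case is ``subsumed'' by positive local contributions is too quick: if $\alpha\notin\mathcal{O}_L$ then $\beta_p=\sigma_p(\alpha)-\alpha^p$ need not be integral either, so the lower bound $h(\beta_p)\geq\log p$ no longer follows from integrality plus $p$-divisibility; one has to rework the argument (e.g.\ by clearing denominators or tracking the finite places of $\alpha$ explicitly), not merely wave at the sign of the local terms.
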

We may therefore record as a special case:
\begin{corollary}\label{cor:cyclohypothesis}
    Any tower of cyclotomic fields satisfies Hypothesis \ref{hyp:Lehmer} with constants $c_0=\frac{1}{2}\log{\varphi}\approx 0.2406$ and $c_1=\frac{\log{5}}{12}\approx 0.1341$. 
\end{corollary}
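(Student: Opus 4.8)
The plan is to derive Corollary \ref{cor:cyclohypothesis} directly from the two height bounds already recorded, namely Theorem \ref{thm:cycloheightbound} (Schinzel) and Theorem \ref{thm:cycloheightboundnumbers} (Amoroso--Dvornicich), by checking that cyclotomic fields fall into the scope of both. First I would recall that a cyclotomic field $K = \mathbb{Q}(\zeta_k)$ is an abelian extension of $\mathbb{Q}$, and that any subfield of a cyclotomic field (hence any field in a tower $\bigcup_i \mathbb{Q}(\zeta_{k_i})$) is again abelian over $\mathbb{Q}$. Therefore Theorem \ref{thm:cycloheightboundnumbers} applies verbatim to every $\alpha \in K^\times \setminus \mu_K$: it has infinite multiplicative order (since the torsion of $K^\times$ is exactly $\mu_K$), so $h(\alpha) \geq \tfrac{\log 5}{12}$. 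This gives the constant $c_1 = \tfrac{\log 5}{12} \approx 0.1341$ in Hypothesis \ref{hyp:Lehmer}, uniformly over the whole tower.

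Next I would handle the stronger bound needed for algebraic \emph{integers}, i.e. the constant $c_0$. The key observation is that a cyclotomic field $\mathbb{Q}(\zeta_k)$ with $k > 2$ is a CM field: it is a totally imaginary quadratic extension of its maximal totally real subfield $\mathbb{Q}(\zeta_k)^+ = \mathbb{Q}(\zeta_k + \zeta_k^{-1})$. (The degenerate cases $k \in \{1,2\}$ give $K = \mathbb{Q}$, which is totally real, so Schinzel's totally-real bound applies there directly; in any case these contribute nothing new.) To invoke the CM part of Theorem \ref{thm:cycloheightbound} for $\alpha \in \mathcal{O}_K \setminus (\mu_K \cup \{0\})$, I must verify the hypothesis that $|\sigma(\alpha)| \neq 1$ for one (equivalently all) Archimedean embedding $\sigma$. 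Here I would use the classical Kronecker-type argument: if $\alpha$ is a nonzero algebraic integer all of whose conjugates have absolute value $1$, then $\alpha$ is a root of unity; equivalently, an algebraic integer of infinite order in a CM field cannot have all its absolute values equal to $1$. This is exactly what is needed, since $\alpha \in \mathcal{O}_K \setminus(\mu_K\cup\{0\})$ has infinite multiplicative order. Hence Theorem \ref{thm:cycloheightbound} yields $h(\alpha) \geq \tfrac12 \log \varphi \approx 0.2406$, and moreover for such integers $h_\infty(\alpha) = h(\alpha)$ as noted in the text (the two heights agree on algebraic integers since there is no denominator contribution). This establishes the constant $c_0 = \tfrac12\log\varphi$.

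Finally I would assemble the two pieces: set $c_0 = \tfrac12\log\varphi$ and $c_1 = \tfrac{\log 5}{12}$, note $c_0 \geq c_1 > 0$ as required, and observe both bounds are independent of which cyclotomic field in the tower we are in, so Hypothesis \ref{hyp:Lehmer} holds with these uniform constants. The numerical values $0.2406\ldots$ and $0.1341\ldots$ are then immediate.

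The main (and really only) obstacle is the verification that cyclotomic fields are CM and, more precisely, that the CM branch of Schinzel's theorem is applicable to \emph{every} non-torsion algebraic integer in such a field, which reduces to the Kronecker-type fact that an algebraic integer with all conjugates on the unit circle is a root of unity. Everything else is bookkeeping: abelianness of cyclotomic extensions, the coincidence $h_\infty = h$ on integers, and checking $c_0 \geq c_1$.
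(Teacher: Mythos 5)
Your proof is correct and follows exactly the route the paper intends: the corollary is stated without a separate proof precisely because it is meant to be read off from Theorem \ref{thm:cycloheightboundnumbers} (abelian case, giving $c_1$) together with Theorem \ref{thm:cycloheightbound} plus the remark immediately following it that the CM branch applies to all non-torsion algebraic integers (giving $c_0$), the latter justified by exactly the Kronecker-type argument you spell out. Your treatment of the equivalence ``one Archimedean absolute value is $1$ $\Leftrightarrow$ all are'' via complex conjugation in a CM field, and the identification $h_\infty = h$ on integers, are the same bookkeeping the paper has in mind.
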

Even in the special case of cyclotomic fields these bounds are reasonably sharp, for instance in the field $\mathbb{Q}(\zeta_{21})$ there is an algebraic number of height $\log(7)/12$. Concerning Schinzel's result, we already have exceptions in the following range (see \cite[Theorem 5.39]{MS21}): 
\begin{theorem}
Suppose that $\beta$ is a cyclotomic integer. Then the only values for $h_\infty(\beta)$ inside the interval $(0,0.27132]$ occur for $\beta =2\cos(2\pi/5)$, $2\cos(2\pi/7)$, $2 \cos(2\pi/60)$.
\label{th:lower_bound_mahler}
\end{theorem}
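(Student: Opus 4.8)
Since this is recorded as \cite[Theorem 5.39]{MS21}, I only sketch the strategy I would follow. The value $h_\infty(\beta)=0$ is excluded at once: an algebraic integer all of whose conjugates lie in the closed unit disc is $0$ or a root of unity (Kronecker). So one assumes $\beta\in\OK$ has infinite multiplicative order, and must show that $h_\infty(\beta)\le 0.27132$ forces $h_\infty(\beta)$ to equal one of the three listed values.

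First I would reduce to a totally non-negative algebraic integer. Place $\beta$ inside a cyclotomic field $L=\mathbb{Q}(\zeta_n)$ and let $\overline{\beta}$ denote its image under complex conjugation, which again lies in $\mathcal{O}_L$ since complex conjugation preserves $L$. Then $\gamma:=\beta\overline{\beta}$ lies in the maximal totally real subfield $L^{+}$ and is totally non-negative, because $\sigma(\gamma)=|\sigma(\beta)|^{2}\ge 0$ for every embedding $\sigma\colon L\to\mathbb{C}$. Writing $\log^{+}t=\max\{0,\log t\}$ and pairing the embeddings of $L$ into complex-conjugate pairs, one gets
$$\sum_{\sigma\colon L\to\mathbb{C}}\log^{+}\sigma(\gamma)\;=\;2\sum_{\sigma\colon L\to\mathbb{C}}\log^{+}|\sigma(\beta)|\;=\;2\,[L:\mathbb{Q}]\,h_\infty(\beta),$$
so $h_\infty(\beta)=\tfrac12\,h(\gamma)$, where $h(\gamma)=\tfrac1{\deg\gamma}\log\prod_{\tau}\max\{1,\tau(\gamma)\}$ is the Weil height of the totally non-negative cyclotomic integer $\gamma$. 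Also $\gamma\neq 0$ (else $\beta=0$) and $\gamma\neq 1$ (else $|\sigma(\beta)|=1$ for all $\sigma$, forcing $\beta\in\mu_K$ by Kronecker). Hence it suffices to understand the totally non-negative cyclotomic integers $\gamma\notin\{0,1\}$ with $h(\gamma)\le 2\cdot 0.27132=0.54264$.

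Such $\gamma$ are strongly constrained: its norm $\N(\gamma)=\prod_{\tau}\tau(\gamma)$ is a positive rational integer and $\tfrac1{\deg\gamma}\log\N(\gamma)\le h(\gamma)$ is small, so the conjugates of $\gamma$ cluster near the unit circle; being totally real of infinite order, $\gamma$ already satisfies $h(\gamma)\ge\tfrac12\log\varphi$ by Schinzel's Theorem \ref{thm:cycloheightbound}, which confines $h(\gamma)$ to a short window. Turning this into a classification requires an a priori bound on the conductor $n$ of the cyclotomic field $\mathbb{Q}(\zeta_n)\supseteq\mathbb{Q}(\beta)$ — that is, ruling out cyclotomic integers of small height and very large degree — which should follow by combining the effective height lower bounds in abelian number fields (Theorems \ref{thm:cycloheightbound}, \ref{thm:cycloheightboundnumbers}) with the structure theory of cyclotomic integers of small size collected in \cite{MS21}, namely the classification of sums of few roots of unity in the tradition of Cassels and Conway--Jones. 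Establishing this effective conductor bound is the step I expect to be the main obstacle; without it the remaining problem is not finite.

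Granting such a bound $n\le n_0$, the proof concludes with a finite verification: for each $n\le n_0$, Northcott's theorem in the fixed field $\mathbb{Q}(\zeta_n)$ (together with the invariance of $h_\infty$ under multiplying by roots of unity) leaves only finitely many $\beta$ to inspect, and one computes $h_\infty(\beta)$ for each, checking that the only values landing in $(0,0.27132]$ are those attained — up to conjugacy and roots of unity — by $2\cos(2\pi/5)$, $2\cos(2\pi/7)$ and $2\cos(2\pi/60)$, namely $\tfrac12\log\varphi\approx 0.2406$, $\approx 0.2699$ and $\approx 0.2709$. Since the present paper only needs the finiteness of this list and the numerical value of its smallest member, one need not push the threshold $0.27132$ to its optimal value.
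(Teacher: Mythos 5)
This theorem is quoted directly from \cite[Theorem 5.39]{MS21} and the paper supplies no proof of its own, so there is no internal argument against which to compare your sketch. Your preliminary reductions are correct: Kronecker's theorem excludes $h_\infty(\beta)=0$; and since for cyclotomic $L$ complex conjugation lies in the abelian group $\mathrm{Gal}(L/\mathbb{Q})$, it commutes with every embedding $\sigma$, so $\sigma(\beta\overline{\beta})=|\sigma(\beta)|^{2}$ and the identity $h_\infty(\beta)=\tfrac12 h(\gamma)$ for the totally non-negative $\gamma=\beta\overline{\beta}\in L^{+}$ does hold.

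However, you have accurately diagnosed that the outline stops short of a proof, and the gap you name is the mathematical substance of the theorem, not a technicality. Northcott gives finiteness only once the degree (equivalently the conductor) is bounded, which is exactly what must be established. The Cassels and Conway--Jones results you invoke control cyclotomic integers of bounded \emph{house} $\max_\sigma|\sigma(\beta)|$, not of bounded Mahler measure; a small $h_\infty(\beta)$ does not force a small house unless the degree is already controlled, which is the circularity your sketch never breaks. Note also that passing to $\gamma$ doubles the threshold to $h(\gamma)\le 2\cdot 0.27132$, while Schinzel only gives $h(\gamma)\ge\tfrac12\log\varphi\approx 0.2406$, leaving a window far too wide to close by elementary means. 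The real engine behind \cite[Theorem 5.39]{MS21} is a dedicated structure theorem for cyclotomic integers of small Mahler measure; without reproducing it, your proposal is a reasonable map of the terrain rather than a proof, and the appropriate treatment here — as the paper does — is simply to cite the result.
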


Beyond these results, the Bogomolov property is itself well-studied and we list a number of subsets of $\overline{\mathbb{Q}}$ satisfying it and leading to towers of number fields verifying Hypothesis \ref{hyp:Lehmer}. We refer the reader to \cite[Chapter 11]{MS21} and \cite{AmorosoDavidZannierB} for more details. 
\begin {itemize}
\item Generalizing the totally real case, Langevin \cite{Langevin} showed that the property holds for closed subsets of $\mathbb{C}$ which do not contain the unit circle. 
\item Totally $p$-adic numbers or (infinite) Galois extensions with bounded local degree at some rational prime $p$ satisfy Bogomolov's property (see \cite{Pottmeyer} and \cite[Theorem 2]{BombieriZannier01}).
\item Generalizing the abelian case, Habegger \cite{HabeggerEtorsion} shows that fields obtained adjoining torsion points of elliptic curves over $\mathbb{Q}$ have the Bogomolov property. Amoroso--David--Zannier show \cite[Theorem 1.5.]{AmorosoDavidZannierB} among others that infinite Galois extensions of a fixed number field with Galois group $G$ have the Bogomolov property provided that $G$ has finite exponent modulo center. 
\end{itemize}
We end our discussion with some height bounds that work for every number field, in particular we state E. Dobrowolski's asymptotic result \cite[Theorem 1]{D1979}: 
\begin{theorem}
    Let $\alpha$ be an algebraic integer of degree $d$, not zero or a root of unity, and let $\varepsilon>0$. Then for $d\geq d(\varepsilon)$ we have that 
    $$h(\alpha)\geq \frac{1-\varepsilon}{d}\cdot \left(\frac{\log\log d}{\log  d}\right)^3.$$
    Moreover, P.Voutier \cite{Voutier96} showed that for any $d\geq 2$ we may take
    $$h(\alpha)\geq \frac{1}{4d}\cdot \left(\frac{\log\log d}{\log  d}\right)^3.$$
\end{theorem}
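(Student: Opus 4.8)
This is Dobrowolski's theorem \cite{D1979}, with Voutier's explicit constant \cite{Voutier96}, and the plan is to follow the classical auxiliary-polynomial argument. First I would pass to Mahler measures: writing $M(\alpha)=\prod_{i=1}^d\max\{1,|\alpha_i|\}$ for the conjugates $\alpha=\alpha_1,\dots,\alpha_d$ of $\alpha$, one has $h(\alpha)=\tfrac{1}{d}\log M(\alpha)$, so it suffices to prove $\log M(\alpha)\ge(1-\varepsilon)\bigl(\tfrac{\log\log d}{\log d}\bigr)^3$ for $d\ge d(\varepsilon)$, respectively $\log M(\alpha)\ge\tfrac{1}{4}\bigl(\tfrac{\log\log d}{\log d}\bigr)^3$ for every $d\ge2$. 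Let $f\in\mathbb{Z}[x]$ denote the monic minimal polynomial of $\alpha$.

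The arithmetic engine is a congruence between $\alpha$ and its prime-power twists. For a rational prime $p$ one has $f(x^p)\equiv f(x)^p\pmod p$ in $\mathbb{Z}[x]$, hence $f(\alpha_i^p)=f(\alpha_i^p)-f(\alpha_i)^p\in p\,\mathbb{Z}[\alpha_i]$ for each $i$; multiplying over $i$ shows that the rational integer $\Res(f(x),f(x^p))=\prod_{i=1}^d f(\alpha_i^p)$ is divisible by $p^d$, and a refinement involving derivatives gives divisibility by higher powers of $p$. Moreover $\Res(f(x),f(x^p))\ne0$ whenever $\alpha$ is not a root of unity: vanishing would force $\alpha^p$ to be a conjugate of $\alpha$, hence by Galois invariance the map $\delta\mapsto\delta^p$ would send the conjugates of $\alpha$ into themselves, so the orbit $\alpha,\alpha^p,\alpha^{p^2},\dots$ would be eventually periodic, giving $\alpha^{p^k}=\alpha^{p^{k+m}}$ for some $k\ge0$, $m\ge1$ and hence $\alpha^{p^k(p^m-1)}=1$. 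Thus $|\Res(f(x),f(x^p))|\ge p^d$. Already a single prime, combined with the crude upper bound $\log|\Res(f(x),f(x^p))|\le pd\log M(\alpha)+O(d^2)$ coming from Mahler's inequality on the coefficients of $f$, yields a positive lower bound for $\log M(\alpha)$ — but one far too weak.

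To do better I would follow Dobrowolski in using a whole family of primes and an auxiliary polynomial to kill the exponential loss. Fix parameters: a degree bound $L$ and a range $Q$, with $p$ ranging over the primes up to $Q$. By Siegel's lemma (a pigeonhole count of integer solutions of a homogeneous linear system) one constructs a nonzero $F\in\mathbb{Z}[x]$ of degree $<L$ with controlled coefficients, vanishing to prescribed order at all the twists $\alpha^p$, $p\le Q$. The norm of the nonzero algebraic integer $\prod_{p\le Q}F(\alpha^p)$ — or, in the cleanest version, a determinant built from the numbers $f(\alpha_i^p)$ and the $p$-adic congruences above — is on one hand divisible by $\prod_{p\le Q}p^{d}$, and on the other hand bounded in terms of a controlled power of $M(\alpha)$ together with explicit binomial and factorial factors from $F$. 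Balancing the divisibility, which gives $\log\prod_{p\le Q}p^{d}\gg dQ$ by the prime number theorem, against the archimedean bound, and optimizing the interplay between the number and size of the primes (which controls the $p$-adic gain) and the degree and height of the auxiliary polynomial (controlled by Siegel's lemma), forces $\log M(\alpha)$ below by a quantity whose optimal shape is precisely $\bigl(\tfrac{\log\log d}{\log d}\bigr)^3$, with limiting constant $1-\varepsilon$. Voutier's version follows by rerunning the same argument with every constant made explicit and the elementary prime-counting and Stirling estimates tracked carefully, yielding the clean value $\tfrac{1}{4}$ valid for all $d\ge2$.

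I expect the three-way optimization in the last step to be the main obstacle: the degree and height of the auxiliary polynomial and the prime range $Q$ must be tuned simultaneously so that (i) Siegel's lemma still produces a nonzero $F$, balancing degree times height against the number of vanishing conditions; (ii) the accumulated divisibility $\prod_{p\le Q}p^d$ beats the archimedean size of the determinant; and (iii) the resulting inequality is not vacuous. One must also confirm that the determinant is not identically zero — this is exactly where the hypothesis that $\alpha$ is not a root of unity re-enters — and it is this interlocking of constraints that produces the unusual $\bigl(\tfrac{\log\log d}{\log d}\bigr)^3$ growth rate.
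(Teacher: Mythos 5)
The paper does not prove this theorem: it is recalled as a known result and attributed directly to Dobrowolski \cite{D1979} for the asymptotic version and to Voutier \cite{Voutier96} for the explicit constant $\tfrac{1}{4}$, with no proof supplied. So there is no in-paper argument to compare against. Your sketch is a correct high-level outline of the standard Dobrowolski argument (essentially in the Cantor--Straus determinant formulation): the congruence $f(x^p)\equiv f(x)^p\pmod p$, the ensuing divisibility of $\Res(f(x),f(x^p))$ by $p^{d}$ and its non-vanishing when $\alpha$ is not a root of unity, and the passage from a single prime to a family of primes combined with an auxiliary polynomial or determinant to suppress the exponential loss. These are the right ingredients and they are accurately described.

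That said, you should be aware that what you have written is a plan, not a proof. You yourself flag the three-way optimization (degree and height of the auxiliary construction against the prime range $Q$, under the Siegel-lemma and non-vanishing constraints) as the main difficulty and leave it unresolved; that optimization is precisely where the exotic exponent $\bigl(\tfrac{\log\log d}{\log d}\bigr)^{3}$ and the constants $1-\varepsilon$ and $\tfrac14$ actually emerge, and it is the bulk of the work in \cite{D1979} and \cite{Voutier96}. Two smaller points that would need care in a full write-up: (i) the $p$-adic gain one actually needs is stronger than $p^{d}$ per prime (it comes from the higher-order vanishing you allude to but do not quantify), and (ii) the archimedean upper bound must be carried out for the actual determinant rather than the crude resultant estimate $pd\log M(\alpha)+O(d^{2})$ you state, which as you note is far too lossy. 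Since the paper simply cites the theorem, relying on \cite{D1979} and \cite{Voutier96} as you implicitly do is entirely in line with its usage.
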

%An easier general bound that works better for small degree is $H_W(\alpha)\geq 1+\frac{1}{22d}$ for $\alpha\notin\mu_K\cup\{0\}$ (see \cite[Theorem 3.11]{MS21}). 
We therefore record the obvious but important remark: 

\begin{remark}
    Any fixed number field $K$ satisfies Hypothesis \ref{hyp:Lehmer} for suitable constants. 
\end{remark}
In particular, this will imply our limiting moment formulas established in this section are valid for any fixed number field and large enough rank.

\subsection{Bounds for contributions from projective space}
\label{subsec:projBogo}

Throughout this section, $M\geq 1$ is a fixed integer and we write $\alpha=(\alpha_1,\ldots,\alpha_M)\in K^M\setminus\{0\}$ as well as the height: 
$$H_\infty(\alpha)=\prod_{\sigma:K\to\mathbb{C}}\max_{1\leq j\leq M}\max(1,\vert\sigma(\alpha_j)\vert)$$
which specializes to the (exponential) Weil height when $M=1$ and $\alpha\in\OK^M$. We also write 
$$h_\infty(\alpha)= \tfrac{1}{d}\log H_\infty(\alpha)$$
and denote the norm \footnote{We slightly abuse notations by decreeing our norms of algebraic numbers are positive, ergo the norms of the ideal they generate.} of the denominator ideal generated by $\alpha$ by 
 \begin{equation}\label{def:Dalpha}
     D(\alpha):=\N(\calO_K+\alpha_1\calO_K+\cdots+\alpha_M\calO_K)^{-1}.
 \end{equation}
 Observe that the inequalities:
 $$D(\alpha)^{-1}\leq \N(\alpha_1\cdots\alpha_M)^{\frac{1}{M}}\leq H_\infty(\alpha)$$
 follow from the definitions when the $\alpha_i\neq 0$ and that we have the relation 
 \begin{equation}
	 D(\alpha ) \cdot\N(\langle 1,\alpha_1,\ldots, \alpha_M \rangle) = 1
 \end{equation}
 with the norm defined under \eqref{def:langlenorm}. 
 
 Our main goal in this subsection is to examine for $t>M\geq 1$ the sum 
 $$S_{M,t}:=\sum_{\alpha\in (K^\times)^M} D(\alpha)^{-t}\vol(B \cap \alpha_1^{-1} B\cap \dots \cap \alpha_{M}^{-1}B).$$
 This will yield upper bounds on the $A^2_m$-terms when $m=1$ or can be viewed as bounding height zeta functions for projective spaces instead of the full Grassmannian variety $\Gr(m,K^n)$. 
\begin{lemma}\label{lemma:assumenorm}
    The quantity $\N(\langle \alpha_0,\ldots,\alpha_M \rangle)^{t}\vol(\alpha_0^{-1} B \cap \alpha_1^{-1} B\cap \dots \cap \alpha_{M}^{-1}B)$ only depends on the class $[\alpha_0:\cdots:\alpha_M]$ in projective space $\mathbf{P}^M(K)$ modulo permutation of coordinates.
\end{lemma}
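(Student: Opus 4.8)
The plan is to reduce the statement to two invariances of the quantity
$Q(\alpha_0,\dots,\alpha_M):=\N(\langle\alpha_0,\dots,\alpha_M\rangle)^{t}\cdot\vol(\alpha_0^{-1}B\cap\dots\cap\alpha_M^{-1}B)$:
invariance under permuting the entries $\alpha_0,\dots,\alpha_M$, and invariance under replacing the tuple by $(\lambda\alpha_0,\dots,\lambda\alpha_M)$ for a common $\lambda\in K^\times$. Together these are precisely the assertion that $Q$ descends to $\mathbf{P}^M(K)$ modulo coordinate permutations. Throughout I use the convention $\alpha_i^{-1}B=\{x\in K_\mathbb{R}^t:\alpha_ix\in B\}$, which equals all of $K_\mathbb{R}^t$ when $\alpha_i=0$ (since $0\in B$); as $B$ is bounded and at least one $\alpha_i$ is nonzero, the intersection has finite volume and $\langle\alpha_0,\dots,\alpha_M\rangle$ is a genuine nonzero fractional $\OK$-ideal.

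Permutation invariance is immediate: both $\N(\langle\alpha_0,\dots,\alpha_M\rangle)$ and $\bigcap_i\alpha_i^{-1}B$ are manifestly symmetric in the $\alpha_i$, hence so is $Q$.

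For the scaling, fix $\lambda\in K^\times$. On the ideal side, $\langle\lambda\alpha_0,\dots,\lambda\alpha_M\rangle=(\lambda)\cdot\langle\alpha_0,\dots,\alpha_M\rangle$ as fractional ideals, so multiplicativity of the ideal norm gives $\N(\langle\lambda\alpha_0,\dots,\lambda\alpha_M\rangle)=\lvert\N_{K/\mathbb{Q}}(\lambda)\rvert\cdot\N(\langle\alpha_0,\dots,\alpha_M\rangle)$. On the geometric side, from $\lambda\alpha_ix\in B\iff\alpha_i(\lambda x)\in B$ one gets $(\lambda\alpha_i)^{-1}B=\lambda^{-1}\cdot(\alpha_i^{-1}B)$, where $\lambda^{-1}\cdot(-)$ denotes the diagonal multiplication-by-$\lambda^{-1}$ map on $K_\mathbb{R}^t$; intersecting over $i$ yields $\bigcap_i(\lambda\alpha_i)^{-1}B=\lambda^{-1}\cdot\bigl(\bigcap_i\alpha_i^{-1}B\bigr)$. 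The Lebesgue measure on a single copy of $K_\mathbb{R}\cong\mathbb{R}^{r_1}\times\mathbb{C}^{r_2}$ scales under multiplication by $\lambda$ by the factor $\prod_{\sigma\,\mathrm{real}}\lvert\sigma(\lambda)\rvert\cdot\prod_{\sigma\,\mathrm{cplx}}\lvert\sigma(\lambda)\rvert^{2}=\lvert\N_{K/\mathbb{Q}}(\lambda)\rvert$, hence on $t$ copies it scales by $\lvert\N_{K/\mathbb{Q}}(\lambda)\rvert^{t}$, so $\vol(\lambda^{-1}\cdot E)=\lvert\N_{K/\mathbb{Q}}(\lambda)\rvert^{-t}\vol(E)$. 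Multiplying the ideal-norm and volume contributions, the factors $\lvert\N_{K/\mathbb{Q}}(\lambda)\rvert^{t}$ and $\lvert\N_{K/\mathbb{Q}}(\lambda)\rvert^{-t}$ cancel, giving $Q(\lambda\alpha_0,\dots,\lambda\alpha_M)=Q(\alpha_0,\dots,\alpha_M)$.

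The only mildly technical ingredient is the measure-scaling computation for multiplication by $\lambda$ on $K_\mathbb{R}^t$, which is the standard identification of $\lvert\N_{K/\mathbb{Q}}(\lambda)\rvert$ with the modulus of that linear automorphism (one may also extract it from a change of variables as in Lemma \ref{le:volume_det}); I therefore anticipate no genuine obstacle, the remainder being routine bookkeeping. This is exactly what makes the sum $S_{M,t}$, and more generally the height zeta sums over projective space below, well defined when organised by class in $\mathbf{P}^M(K)$.
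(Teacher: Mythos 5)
Your proof is correct and is essentially the same argument as the paper's, just unpacked: the paper's one-line proof simply observes that scaling by $\lambda\in K^\times$ multiplies the volume by $\N(\lambda)^{-t}$ and the ideal norm by $\N(\lambda)$, which is precisely the cancellation you spell out. The added care about the $\alpha_i=0$ convention and the explicit modulus computation are fine but were left implicit in the paper.
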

\begin{proof}
  Multiplying by a scalar $\lambda\in K^\times$ scales the volume by $\N(\lambda)^{-t}$ whereas the index is scaled by $\N(\lambda)$. 
\end{proof}
In particular, scaling by $\alpha_i$ of maximal norm this implies that we may restrict our computations for $S_{M,t}$ to the case where $N(\alpha_i^{-1})\geq 1\forall i$. We shall use the following convex combination lemma to bound volumes of intersections of scaled balls:

\begin{lemma}
\label{le:intersection_of_ellipsoids}
Let $M \ge 1$ and suppose $\alpha_0,\alpha_1,\dots,\alpha_{M} \in K^{*}$. Let $B$ be an origin-centered ball of radius $R$ in the space $K_\mathbb{R}^{t}$ with respect to the norm in Equation (\ref{eq:norm}). Given that $K^{*}$ acts on $K_\mathbb{R}^{t}$ diagonally, we have
\begin{equation}
\vol(\alpha_0 B \cap \alpha_1 B\cap \dots \cap \alpha_{M}B) \le  \vol(B) \cdot \min_{\substack{c_i \ge 0 \\ \sum_{i} c_i = 1}} \left\{ \prod_{\sigma : K \rightarrow \mathbb{C}}\left( { \sum_{i=0}^{M}{c_i| \sigma(\alpha_i )|^{2}} }{} \right)^{-\frac{t}{2}}\right\} ,
\end{equation}
where the minimum is over any real convex combination of the $\alpha_i$.
\end{lemma}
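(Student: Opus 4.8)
The plan is to reduce the bound to the elementary fact that an intersection of origin-centred ellipsoids lies inside any ellipsoid whose defining quadratic form is a convex combination of theirs, followed by a volume computation for such an ellipsoid. First I would record how multiplication by $\alpha\in K^\ast$ interacts with the norm of Equation (\ref{eq:norm}). For $x=(x_1,\dots,x_t)\in K_\mathbb{R}^t$ and an embedding $\sigma\colon K\to\mathbb{C}$ put $\xi_\sigma(x)=\sum_{k=1}^t|\sigma(x_k)|^2\ge 0$. Since $\Tr(y\overline y)=\sum_{\sigma\colon K\to\mathbb{C}}|\sigma(y)|^2$, the form defining $B$ decomposes along the embeddings as $\|x\|^2=\Delta_K^{-2/[K:\mathbb{Q}]}\sum_\sigma\xi_\sigma(x)$, and the diagonal action of $\alpha$ multiplies $\sigma(x_k)$ by $\sigma(\alpha)$, whence $\|\alpha x\|^2=\Delta_K^{-2/[K:\mathbb{Q}]}\sum_\sigma|\sigma(\alpha)|^2\,\xi_\sigma(x)$. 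Consequently each $\alpha_iB$ is the ellipsoid $\{\,x\colon \Delta_K^{-2/[K:\mathbb{Q}]}\sum_\sigma|\sigma(\alpha_i)|^2\,\xi_\sigma(x)\le R^2\,\}$, i.e.\ the sublevel set of the quadratic form which is diagonal in the ``$\sigma$-blocks'' and carries weight $|\sigma(\alpha_i)|^2$ on the block of $\sigma$.

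For the averaging step, fix $c_0,\dots,c_M\ge 0$ with $\sum_{i=0}^Mc_i=1$. If $x\in\alpha_0B\cap\dots\cap\alpha_MB$, then the $M+1$ inequalities above hold simultaneously, and forming their $c_i$-weighted sum gives $\Delta_K^{-2/[K:\mathbb{Q}]}\sum_\sigma\bigl(\sum_{i=0}^Mc_i|\sigma(\alpha_i)|^2\bigr)\xi_\sigma(x)\le R^2$. Thus $x$ lies in the ellipsoid $E_c:=\{\,x\colon \Delta_K^{-2/[K:\mathbb{Q}]}\sum_\sigma w_\sigma\,\xi_\sigma(x)\le R^2\,\}$ with $w_\sigma:=\sum_{i=0}^Mc_i|\sigma(\alpha_i)|^2$, and each $w_\sigma>0$ because every $\alpha_i\ne 0$ and the $c_i$ sum to $1$. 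Hence $\alpha_0B\cap\dots\cap\alpha_MB\subseteq E_c$, so $\vol(\alpha_0B\cap\dots\cap\alpha_MB)\le\vol(E_c)$ for every admissible $(c_i)$.

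It remains to evaluate $\vol(E_c)$. Since $w_{\overline\sigma}=w_\sigma$ for conjugate embeddings, there is a well-defined element $\beta\in K_\mathbb{R}$ with $\sigma(\beta)=w_\sigma^{1/2}$ for every $\sigma$, and as $\|\beta x\|^2=\Delta_K^{-2/[K:\mathbb{Q}]}\sum_\sigma w_\sigma\,\xi_\sigma(x)$, componentwise multiplication by $\beta$ is an $\mathbb{R}$-linear automorphism of $K_\mathbb{R}^t$ carrying $E_c$ bijectively onto $B$. Its determinant is the $t$-th power of the determinant of multiplication by $\beta$ on a single copy of $K_\mathbb{R}$, namely $\bigl(\prod_{\sigma\colon K\to\mathbb{C}}\sigma(\beta)\bigr)^t=\prod_\sigma w_\sigma^{t/2}$, so $\vol(E_c)=\vol(B)\cdot\prod_{\sigma\colon K\to\mathbb{C}}w_\sigma^{-t/2}$. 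Combining with the previous step and taking the infimum over the compact simplex $\{\,c_i\ge0,\ \sum_ic_i=1\,\}$ --- attained, since the integrand is continuous and positive there --- yields $\vol(\alpha_0B\cap\dots\cap\alpha_MB)\le\vol(B)\cdot\min_c\prod_{\sigma\colon K\to\mathbb{C}}\bigl(\sum_{i=0}^Mc_i|\sigma(\alpha_i)|^2\bigr)^{-t/2}$, which is the assertion.

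I expect the only point needing genuine care to be the volume computation for $E_c$: one has to track the discriminant normalisation and, in particular, the fact that multiplication by $\beta$ on $K_\mathbb{R}$ scales real directions by $w_v^{1/2}$ but complex directions effectively by $w_v$, so as to land precisely on the exponent $-t/2$ in the product over all $[K:\mathbb{Q}]$ embeddings. The block decomposition of the norm along the embeddings and the convexity argument for the ellipsoid containment are routine.
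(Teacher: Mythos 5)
Your proof is correct and follows essentially the same route as the paper's: write each $\alpha_iB$ as a sublevel set of a quadratic form decomposed along the embeddings, take a $c_i$-weighted convex combination of the defining inequalities to place the intersection inside a single ellipsoid, and then compute that ellipsoid's volume. The only presentational difference is that you supply the details of the volume computation (via multiplication by $\beta\in K_\mathbb{R}$ with $\sigma(\beta)=w_\sigma^{1/2}$, tracking the factor-of-two at complex places) where the paper states the volume formula without elaboration, but this is the same calculation, not a different method.
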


\begin{proof}
We are calculating the volume of the intersections of the following ellipsoids (see Equation (\ref{eq:norm}):
\begin{equation}
\left\{ x \in K_\mathbb{R}^{t} \mid  \Delta_K^{-\frac{2}{[K:\mathbb{Q}]}} \sum_{j=1}^{t}\Tr\left( \alpha_i x_j \overline{ \alpha_i x_j}  \right) \le R^{2}\right\}, \text{ as } i \in \{0,1,\dots,M\}
\end{equation}
Observe that for any $\{c_0,\dots,c_{M}\} \in \mathbb{R}_{\ge 0}$ such that $\sum_{i} c_i = 1$, we have that 
\begin{equation}
\bigcap_{i =0}^{M}
\left\{ x \in K_\mathbb{R}^{t} \mid  \Delta_K^{-\frac{2}{[K:\mathbb{Q}]}} \sum_{j=1}^{t}\Tr\left( \alpha_i x_j \overline{ \alpha_i x_j}  \right) \le R^{2}\right\}
\subseteq 
\left\{ x \in K_\mathbb{R}^{t} \mid  \Delta_K^{-\frac{2}{[K:\mathbb{Q}]}} \sum_{j=1}^{t}\Tr\left( \left( \sum_{i=0}^{M}c_i \alpha_i \overline{ \alpha_i } \right) x_j \overline{  x_j}  \right) \le R^{2}\right\}
\end{equation}

The ellipsoid defined on the right side has volume given by 
$$\vol(B) \prod_{\sigma: K \rightarrow \mathbb{C}}\left( \sum_{i} c_i | \sigma ( \alpha_i) |^{2} \right)^{-t/2}.$$
\end{proof}
We are now in a position to connect the intersection volumes to Weil heights:

\begin{lemma}\label{lemma:genvolumeratio}
Let $\alpha_1,\ldots, \alpha_M\in K^\times$. We have the bound:
$$
\frac{\vol(B \cap \alpha_1 B\cap \dots \cap \alpha_{M}B)}{\vol(B)}\le\left(\frac{H_\infty(\alpha)^{\frac{2}{d}}+M\cdot 
H_\infty(\alpha)^{-\frac{2}{dM}}\cdot \N(\alpha_1\cdots\alpha_M)^{\frac{2}{dM}}}{M+1}\right)^{-dt/2}.$$
Moreover, under the assumption that $\N(\alpha_1\cdots\alpha_M)\geq 1$ we have for any $k\geq 2$:
$$ \frac{\vol(B \cap \alpha_1 B\cap \dots \cap \alpha_{M}B)}{\vol(B)}\le\N(\alpha_1\cdots\alpha_M)^{\frac{-t}{kM}}\cdot \left(\frac{H_\infty(\alpha)^\frac{2(k-1)}{kd}+M\cdot H_\infty(\alpha)^\frac{-2(k-1)}{kMd}}{M+1}\right)^{-dt/2}$$

\end{lemma}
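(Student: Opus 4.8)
The plan is to derive both inequalities from the convex–combination estimate of Lemma~\ref{le:intersection_of_ellipsoids}, applied to the $M+1$ scalars $(\alpha_0,\alpha_1,\dots,\alpha_M)=(1,\alpha_1,\dots,\alpha_M)$ with the uniform weights $c_0=\dots=c_M=\tfrac{1}{M+1}$. Writing $d=[K:\mathbb{Q}]$, this immediately gives
\[
\frac{\vol(B\cap\alpha_1 B\cap\dots\cap\alpha_M B)}{\vol(B)}\ \le\ (M+1)^{dt/2}\Bigl(\textstyle\prod_{\sigma:K\to\mathbb{C}}\bigl(1+\sum_{j=1}^{M}|\sigma(\alpha_j)|^2\bigr)\Bigr)^{-t/2},
\]
so everything reduces to a good lower bound on $P:=\prod_{\sigma}\bigl(1+\sum_{j}|\sigma(\alpha_j)|^2\bigr)$.

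For the first inequality I would treat each Archimedean factor separately. Set $m_\sigma=\max\bigl(1,\max_j|\sigma(\alpha_j)|\bigr)$ and $q_\sigma=|\sigma(\alpha_1\cdots\alpha_M)|=\prod_j|\sigma(\alpha_j)|$. Then $m_\sigma^2$ is the largest of the $M+1$ numbers $1,|\sigma(\alpha_1)|^2,\dots,|\sigma(\alpha_M)|^2$, and the product of the remaining $M$ of them equals $q_\sigma^2/m_\sigma^2$; applying AM--GM to those $M$ numbers yields $1+\sum_j|\sigma(\alpha_j)|^2\ge m_\sigma^2+M q_\sigma^{2/M}m_\sigma^{-2/M}$. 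Now I would run over $\sigma$ the superadditivity of the geometric mean $\prod_\sigma(u_\sigma+v_\sigma)\ge\bigl((\prod_\sigma u_\sigma)^{1/d}+(\prod_\sigma v_\sigma)^{1/d}\bigr)^{d}$ (the same Hölder-type inequality used in the proof of Lemma~\ref{le:lower_heigh_bound}), with $u_\sigma=m_\sigma^2$ and $v_\sigma=Mq_\sigma^{2/M}m_\sigma^{-2/M}$. Using $\prod_\sigma m_\sigma=H_\infty(\alpha)$ and $\prod_\sigma q_\sigma=\N(\alpha_1\cdots\alpha_M)$ this gives $P\ge\bigl(H_\infty(\alpha)^{2/d}+M\,\N(\alpha_1\cdots\alpha_M)^{2/(dM)}H_\infty(\alpha)^{-2/(dM)}\bigr)^{d}$, which is exactly the first displayed bound after taking $-t/2$-th powers.

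For the second inequality I would show that, under $\N(\alpha_1\cdots\alpha_M)\ge1$, the first bound already dominates the second. Put $a=H_\infty(\alpha)^{2/d}$ and $b=\N(\alpha_1\cdots\alpha_M)^{2/(dM)}$; then $1\le b\le a$, the lower estimate coming from $\N(\alpha_1\cdots\alpha_M)\ge1$ and the upper one from the inequality $\N(\alpha_1\cdots\alpha_M)^{1/M}\le H_\infty(\alpha)$ recorded at the start of this subsection. Rewriting the second bound as $\bigl(b^{1/k}\cdot\tfrac{a^{(k-1)/k}+Ma^{-(k-1)/(kM)}}{M+1}\bigr)^{-dt/2}$ and using that $x\mapsto x^{-dt/2}$ is decreasing, it suffices to prove
\[
a+M a^{-1/M}b\ \ge\ b^{1/k}\bigl(a^{(k-1)/k}+M a^{-(k-1)/(kM)}\bigr),\qquad 1\le b\le a,\ k\ge2 .
\]
Substituting $a=c^{kM}$, $b=d^{kM}$ with $1\le d\le c$ and clearing denominators, this turns into the polynomial inequality $c^{M(k-1)+k}(c^M-d^M)\ge M d^M\bigl(c-d^{M(k-1)}\bigr)$, which I would establish by the chain $c^M-d^M\ge Md^{M-1}(c-d)$ (factor $c^M-d^M$ and use $c\ge d$), then $c^{M(k-1)+k}\ge c\ge d$ (using $c\ge1$), then $d\le d^{M(k-1)}$ (using $d\ge1$). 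Chaining ratio $\le$ first bound $\le$ second bound completes the proof.

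The only real obstacle is this second inequality: it is \emph{not} true term by term — the ``arithmetic'' summand $Ma^{-1/M}b$ on the left can be strictly smaller than its counterpart on the right when $\N(\alpha_1\cdots\alpha_M)$ is small — so one must exploit the slack between $a$ and $b^{1/k}a^{(k-1)/k}$, which is exactly what the polynomial inequality above packages and where the hypothesis $\N(\alpha_1\cdots\alpha_M)\ge1$ (equivalently $d\ge1$, i.e.\ $b\ge1$) is used.
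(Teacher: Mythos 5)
Your proof is correct and follows the same overall architecture as the paper: reduce to a lower bound on $\prod_\sigma\bigl(1+\sum_j|\sigma(\alpha_j)|^2\bigr)$ via the equal-weight case of Lemma~\ref{le:intersection_of_ellipsoids}, obtain the first displayed bound by comparing heights, and then show the first bound is dominated by the second. For the first bound, you re-derive Lemma~\ref{le:lower_heigh_bound} inline with the order of AM--GM and the H\"older/Mahler step over embeddings swapped (AM--GM on the $M$ non-maximal terms at each place first, then the product superadditivity over $\sigma$), which lands at the identical estimate; the paper instead simply cites that lemma. The genuine difference is in how you verify that the first bound implies the second. After reducing to the scalar inequality $a + Ma^{-1/M}b \ge b^{1/k}\bigl(a^{(k-1)/k}+Ma^{-(k-1)/(kM)}\bigr)$ for $1\le b\le a$, you substitute $a=c^{kM}$, $b=d^{kM}$, clear denominators, and prove the resulting polynomial inequality directly; the paper instead factors out $\N^{2/(kdM)}$, recognizes the function $g_M(x)=\tfrac{x+Mx^{-1/M}}{M+1}$, and finishes by monotonicity of $g_M$ for $x\geq 1$ together with $\N(\alpha)^{1/M}\le H_\infty(\alpha)$. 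Your polynomial chain is correct (the cases $c\le d^{M(k-1)}$ and $c> d^{M(k-1)}$ both check out), and your observation that the comparison is not term-by-term is precisely where the hypothesis $\N\ge 1$ bites -- the paper's $g_M$-monotonicity argument packages that same slack more concisely, but the two verifications are logically equivalent. One cosmetic caveat: you reuse the letter $d$ both for $[K:\mathbb{Q}]$ and for the new variable in the substitution $b=d^{kM}$; this should be renamed before it goes into print.
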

\begin{proof}
    Lemma \ref{le:intersection_of_ellipsoids} together with Lemma \ref{le:lower_heigh_bound} comparing heights yields the inequality
$$
 \frac{\vol(B \cap \alpha_1 B\cap \dots \cap \alpha_{M}B)}{\vol(B)} \leq \left(\frac{H_\infty(\alpha)^{\frac{2}{d}}+
 M\cdot H_\infty(\alpha)^{-\frac{2}{dM}}\cdot \N(\alpha_1\cdots\alpha_M)^{\frac{2}{dM}}}{M+1}\right)^{-\frac{1}{2}dt},
$$
 where we also applied the bound in Lemma \ref{le:about_Mx}. To obtain the second formulation we now factor out $\N(\alpha_1\cdots\alpha_M)^{\frac{2}{kdM}}$ and  writing  $$g_M(x)=\tfrac{x+Mx^{-\frac{1}{M}}}{M+1}$$ we have the bound 
     \begin{align}
     \N(\alpha_1\cdots\alpha_M)^{\frac{t}{kM}}\cdot\left(\frac{H_\infty(\alpha)^{\frac{2}{d}}+M\cdot H_\infty(\alpha)^{-\frac{2}{dM}}\cdot \N(\alpha_1\cdots\alpha_M)^{\frac{2}{d(M+1)}}}{M+1}\right)^{-dt/2}
     \\
     \leq 
     g_M\left(\frac{H_\infty(\alpha)^{\frac{2}{d}}}{\N(\alpha_1\cdots\alpha_M)^{\frac{2}{kdM}}}\right)^{-dt/2},
     \end{align}
    using that $\N(\alpha_1\cdots\alpha_M)\geq 1$. Now observe that $g_M(x)$ is increasing for $x,M\geq 1$ so that using the inequality $\N(\alpha_1\cdots\alpha_M)^{1/M}\leq H_\infty(\alpha)$ we can bound 
    $$g_M\left(\frac{H_\infty(\alpha)^{\frac{2}{d}}}{\N(\alpha_1\cdots\alpha_M)^{\frac{2}{kdM}}}\right)\geq g_M(H_\infty(\alpha)^\frac{2(k-1)}{kd}).$$
    The claim follows.

\end{proof}
\begin{remark}
    The role of $k$ in Lemma \ref{lemma:genvolumeratio} and ensuing results is slightly artificial, but it allows us in later results to take $k$ large enough so that we can control the sum of volume ratios over units for small $t$ while the additional factor $\N(\alpha)^{\frac{-t}{kM}}$ allows us to relate the sum $S_{M,t}$ to a Dedekind zeta value. This leads to slightly better results for small moments.  
\end{remark}
The following lemmas provide upper bounds for point counts in the unit lattice: 
\begin{lemma}\label{lemma:unitcountonecoord}
    Assume Hypothesis \ref{hyp:Lehmer} and its notations. Consider the canonical log embedding: 
$L:K^\times\to \mathbb{R}^{r_1+r_2}$ defined by mapping
$$\alpha\mapsto (\log \vert\sigma_1(\alpha)\vert,\ldots,2\log \vert\sigma_{r_1+r_2}(\alpha)\vert),$$
as well as the function
\begin{align}
    h: \mathbb{R}^{r_1+r_2}&\to \mathbb{R}_{\geq 0} \\
    x&\mapsto \tfrac{1}{[K:\mathbb{Q}]}\cdot\sum_{j=1}^{r_1+r_2}\max(0,x_{j}).
\end{align}
    Then for any $\eta\in \mathbb{R}^{r_1+r_2}$ with $\sum_{j=1}^{r_1+r_2}\eta_j=Y$ and any $B\geq 0$ we have that
    $$\card \{\beta\in\mathcal{O}_K^\times~\vert~ h(\eta+L(\beta))\leq B\}\leq \omega_K\cdot \left(\frac{B+c_0/2+\max(0,\tfrac{-Y}{d})}{c_0/2}\right)^{r_1+r_2-1}.$$
\end{lemma}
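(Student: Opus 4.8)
The plan is to reduce the count to one about the unit lattice and then run a volume‑packing argument, exploiting that by Hypothesis~\ref{hyp:Lehmer} every nonzero vector of the unit lattice has $h$‑value at least $c_0$.

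First I would set up the reduction. Since an algebraic integer all of whose archimedean absolute values equal $1$ is a root of unity (Kronecker), the kernel of $L$ restricted to $\mathcal{O}_K^\times$ is exactly $\mu_K$; so, writing $r:=r_1+r_2-1$ and $H_0:=\{x\in\mathbb{R}^{r_1+r_2}:\sum_j x_j=0\}$, Dirichlet's unit theorem tells us that $\Lambda_K:=L(\mathcal{O}_K^\times)$ is a lattice of rank $r$ in $H_0$ and that $L\colon\mathcal{O}_K^\times\to\Lambda_K$ is surjective with all fibres of size $\omega_K$. As the condition $h(\eta+L(\beta))\le B$ depends only on $L(\beta)$, it suffices to prove
$$\card\{v\in\Lambda_K:h(\eta+v)\le B\}\ \le\ \Bigl(\tfrac{B+c_0/2+\max(0,-Y/d)}{c_0/2}\Bigr)^{r}$$
and then multiply by $\omega_K$. (When $r=0$, i.e.\ $K=\mathbb{Q}$ or $K$ imaginary quadratic, the bound is immediate since $\mathcal{O}_K^\times$ has at most $\omega_K$ elements, so I would assume $r\ge 1$ below.)

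Next I would record the three facts about $h$ that the argument uses. The function $h$ is sublinear: it is positively homogeneous and, since $\max(0,a+b)\le\max(0,a)+\max(0,b)$, subadditive. On $H_0$ one has $\sum_j\max(0,x_j)=\tfrac{1}{2}\|x\|_1$ for the $\ell^1$‑norm $\|x\|_1=\sum_j|x_j|$ (because $\sum_j\max(0,x_j)-\sum_j\max(0,-x_j)=\sum_j x_j=0$), so $h|_{H_0}=\tfrac{1}{2d}\|\cdot\|_1$ is an honest norm; and on the affine hyperplane $A_Y:=\{x:\sum_j x_j=Y\}$ the same identity gives $h(x)=\tfrac{\|x\|_1+Y}{2d}$. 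Finally, if $v\in\Lambda_K\setminus\{0\}$ then $v=L(\beta)$ for a unit $\beta$ of infinite order, so $h(v)=h_\infty(\beta)=h(\beta)>c_0$ by Hypothesis~\ref{hyp:Lehmer}. Now the packing argument: to each $v$ with $h(\eta+v)\le B$ I attach the open set $\mathcal{B}(v):=(\eta+v)+\{w\in H_0:h(w)<c_0/2\}\subseteq A_Y$, a translate inside $A_Y$ of a fixed set of $r$‑dimensional volume $(dc_0)^{r}V_1$, where $V_1:=\vol_r(\{w\in H_0:\|w\|_1<1\})$. These sets are pairwise disjoint: if $x\in\mathcal{B}(v)\cap\mathcal{B}(v')$ with $v\ne v'$, writing $v-v'=\bigl((\eta+v)-x\bigr)+\bigl(x-(\eta+v')\bigr)$ as a sum of two elements of $H_0$ and using subadditivity together with the symmetry of $h$ on $H_0$ yields $h(v-v')<c_0$, contradicting the lower bound above. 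By subadditivity each $\mathcal{B}(v)$ lies in $U:=\{x\in A_Y:h(x)<B+c_0/2\}$, which by the identity for $h$ on $A_Y$ equals $\{x\in A_Y:\|x\|_1<\rho\}$ with $\rho:=2dB+dc_0-Y$. By Brunn's theorem the $r$‑th root of the volume of a slice of the centrally symmetric body $\{x\in\mathbb{R}^{r_1+r_2}:\|x\|_1<\rho\}$ by a hyperplane $\{\sum_j x_j=\mathrm{const}\}$ is an even concave function of the constant, hence maximised at the central slice, so $\vol_r(U)\le\vol_r(\{x\in H_0:\|x\|_1<\rho\})=\rho^{r}V_1$ (and $U=\emptyset$ when $\rho\le 0$). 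Disjointness and containment then give
$$\card\{v\in\Lambda_K:h(\eta+v)\le B\}\ \le\ \frac{\vol_r(U)}{(dc_0)^{r}V_1}\ \le\ \Bigl(\frac{\rho}{dc_0}\Bigr)^{r}=\Bigl(\frac{B+c_0/2-Y/(2d)}{c_0/2}\Bigr)^{r},$$
and since $-Y/(2d)\le\max(0,-Y/d)$ whether $Y\ge 0$ or $Y<0$, this is at most the claimed bound; multiplying by $\omega_K$ finishes.

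The only step I expect to require genuine care is the slice‑volume comparison $\vol_r(\{x\in A_Y:\|x\|_1<\rho\})\le\rho^{r}V_1$: the naive bound coming from subadditivity alone, namely $U\subseteq x_0+\{w\in H_0:h(w)\le 2B+c_0-Y/d\}$ for $x_0\in U$, loses a factor of about $2$ in the $B$‑dependence and is not sharp enough, so one really wants Brunn's theorem (equivalently, log‑concavity of the slice volumes of a centrally symmetric convex body) to land the clean constant $\tfrac{B+c_0/2}{c_0/2}$. The remaining ingredients — the Kronecker/Dirichlet reduction, the two elementary identities for $h$, and the disjointness computation — are routine.
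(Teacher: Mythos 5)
Your proof is correct and follows the same high-level packing strategy as the paper: use the triangle inequality for the semi-norm $h$ to show the translates of the $h$-ball of radius $c_0/2$ centered at the admissible points are pairwise disjoint (via the Bogomolov lower bound $h(L(\beta))>c_0$), contain them all in a single larger region, and divide volumes. The one place you genuinely diverge is the volume comparison between the affine slice $A_Y=\{\sum_j x_j=Y\}$ and the central hyperplane $H_0$: you observe that on $A_Y$ one has $h(x)=\tfrac{\|x\|_1+Y}{2d}$, so the admissible region is a slice of the $\ell^1$-ball, and then invoke Brunn's theorem on slice volumes of the cross-polytope; this cleanly handles both signs of $Y$ at once and in fact yields the slightly sharper factor $(B+\tfrac{c_0}{2}-\tfrac{Y}{2d})/(\tfrac{c_0}{2})$, which you then coarsen. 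The paper instead treats the two cases $Y\ge 0$ and $Y<0$ separately: for $Y\ge 0$ it translates $\eta$ by $-\tfrac{Y}{r_1+r_2}\mathbf{1}$ into $H_0$ and uses only the coordinate-wise monotonicity of $x_j\mapsto\max(0,x_j)$ (no Brunn needed), and for $Y<0$ it absorbs the shift into $B$ via subadditivity, producing the stated $\max(0,-Y/d)$ term. So the paper's route is more elementary while yours is a bit slicker and sharper; both are valid, and the final bound you obtain dominates the paper's, so either can be spliced into the downstream estimates.
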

\begin{proof}
Note that the factor of $2$ at complex places in the definition of $L$ ensures that $L(\mathcal{O}_K^\times)$ is contained in the hyperplane $H:=\{x\in \mathbb{R}^{r_1+r_2}: \sum_{j=1}^{r_1+r_2}x_{j}=0\}$. 
Observe that $h$ satisfies the triangle inequality and in fact satisfies the properties of a semi-norm on $H$. 
Now by Hypothesis \ref{hyp:Lehmer} we obtain for any $\beta\in \mathcal{O}_K^\times\setminus \mu_K$ that 
$$h(L(\beta))=h_\infty(\beta)\geq c_0.$$
Let now $P=\{\xi\in H: h(\xi)\leq c_0/2\}$. We claim that for $\eta\in \mathbb{R}^{r_1+r_2}$ and $\beta_1,\beta_2\in \mathcal{O}_K^\times$: 
$$(L(\beta_1)+\eta+P)\cap (L(\beta_2)+\eta+P)=\begin{cases}
    L(\beta_1)+\eta+P& \text{ if }\beta_1\beta_2^{-1}\in \mu_K \\
    \emptyset & \text{ else.}
\end{cases}$$

To prove the claim, let $y$ be in the intersection. Then by the triangle inequality we have that
$$h(L(\beta_1^{-1}\beta_2))\leq h(y-L(\beta_1)-\eta)+h(L(\beta_2)+\eta-y)\leq c_0$$
and therefore $\beta_1\beta_2^{-1}\in \mu_K$. Since $L$ is a homomorphism to the additive group whose kernel is $\mu_K$ the claim follows.

Moreover, if for $\beta\in \mathcal{O}_K^\times$ we have that $h(\eta+L(\beta))\leq B$, then $L(\beta)+\eta+P$ is contained in the set 
$$Q=\{\xi \in \mathbb{R}^{r_1+r_2}:  \sum_{j=1}^{r_1+r_2}\xi_{j}=Y, h(\xi)\leq B+c_0/2 \}.$$
For any fixed $\eta$, we thus obtain by the claim that 
$$\card \left\{\beta\in\mathcal{O}_K^\times\vert h(\eta+L(\beta))\leq B\right\}\leq \omega_K\cdot\frac{\vol(Q)}{\vol(P)},$$
where the volumes are computed with respect to the natural measure identifying the hyperspaces $P$ and $Q$ are in with $\mathbb{R}^{r_1+r_2-1}$. \par 
For $\eta$ such that $Y=\sum_{j=1}^{r_1+r_2}\eta_j\geq 0 $, it is easy to see that the volume of $Q$ is bounded by the volume of $P_B= \{\xi\in H: h(\xi)\leq c_0/2+B\}$. 
Thus we bound the desired unit count by 
$$\omega_K\cdot\frac{\vol(P_B)}{\vol(P)}=\omega_K^M\cdot\frac{\vol(\{\xi\in H: h(\xi)\leq c_0/2+B\})}{\vol(\{\xi\in H: h(\xi)\leq c_0/2\})}.$$
Since $H$ is an $\mathbb{R}$-vector space of dimension $r_1+r_2-1$ and $h$ a semi-norm on that vector space, the result follows for $Y\geq 0$. \par
When $Y<0$, observe that $\tilde{\eta}$ defined by $\tilde{\eta}_j=\eta_j-\tfrac{Y}{r_1+r_2}$ satisfies 
$$\sum_{j=1}^{r_1+r_2}\tilde{\eta}_j=0 \text{ and }h(\tilde{\eta})\leq h(\eta)+h(-\tfrac{Y}{r_1+r_2}).$$
Therefore, given that $h(\eta+L(\beta))\leq B$ implies $h((\tilde{\eta})+L(\beta))\leq B+h(-\tfrac{Y}{r_1+r_2})$,
we may obtain an upper bound by running the same argument as in the first part of the proof with $\tilde{\eta}$ instead of $\eta$ and $B+h(-\tfrac{Y}{r_1+r_2})$ instead of $B$. This settles the case of $Y<0$ since $h(-\tfrac{Y}{r_1+r_2})=-Y/d$. 
\end{proof}
Note that we also have the following inequality by definition: 
\begin{lemma}\label{le:heightineqcoord}
    Let $\alpha\in (\overline{K}^\times)^M$ be an $M$-tuple of algebraic numbers. Then $h_\infty(\alpha)\geq \max_{1\leq i\leq M}h_\infty(\alpha_i)$. 
\end{lemma}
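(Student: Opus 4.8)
The statement is essentially a formal consequence of the definition of $H_\infty$, so the plan is short. First I would fix a number field $L$ containing $K$ together with all the coordinates $\alpha_1,\dots,\alpha_M$, and recall (this is the content of the Remark noting that absolute Mahler measures and Weil heights are independent of the field of definition) that $h_\infty$ is insensitive to the choice of such $L$; hence I may recompute both $h_\infty(\alpha)$ and each $h_\infty(\alpha_i)$ over $L$. Setting $d_L=[L:\mathbb{Q}]$, this gives
\[
H_\infty(\alpha)=\prod_{\sigma:L\to\mathbb{C}}\max_{1\leq j\leq M}\max\bigl(1,|\sigma(\alpha_j)|\bigr),
\qquad
H_\infty(\alpha_i)=\prod_{\sigma:L\to\mathbb{C}}\max\bigl(1,|\sigma(\alpha_i)|\bigr).
\]

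The key step is then the trivial pointwise inequality: for each embedding $\sigma:L\to\mathbb{C}$ and each fixed index $i\in\{1,\dots,M\}$ one has $\max_{1\leq j\leq M}\max(1,|\sigma(\alpha_j)|)\geq \max(1,|\sigma(\alpha_i)|)$, since the left-hand side is a maximum over a finite nonempty set of reals one of which is the right-hand side. Multiplying these inequalities over all $\sigma$ yields $H_\infty(\alpha)\geq H_\infty(\alpha_i)$, and applying the monotone map $\tfrac{1}{d_L}\log(\cdot)$ gives $h_\infty(\alpha)\geq h_\infty(\alpha_i)$. Finally, since $i$ was arbitrary, I take the maximum over $i\in\{1,\dots,M\}$ to conclude $h_\infty(\alpha)\geq\max_{1\leq i\leq M}h_\infty(\alpha_i)$.

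There is no genuine obstacle in this argument; the only point requiring a sentence of justification rather than being literally automatic is the base-field independence of $h_\infty$, which holds because passing from $L$ to a finite extension $L'$ multiplies $d_{L'}$ and the multiplicity of each Archimedean absolute value by the same factor $[L':L]$, leaving the normalized quantity $\tfrac1{d_L}\log H_\infty$ unchanged. Everything else is elementary monotonicity, so I expect the proof to be a couple of lines.
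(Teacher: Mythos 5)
Your proof is correct and matches the paper's (implicit) reasoning: the paper states this lemma without proof, introducing it with "we also have the following inequality by definition," and your argument simply spells that out — pass to a common field $L$ of definition (observing that $h_\infty$ is normalization-invariant under base change), note the pointwise inequality $\max_{j}\max(1,|\sigma(\alpha_j)|)\geq\max(1,|\sigma(\alpha_i)|)$ at each embedding $\sigma$, multiply, take $\tfrac{1}{d_L}\log$, and maximize over $i$. Nothing is missing and nothing differs from what the paper intends.
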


\begin{lemma}\label{lemma:unitcount}
    Assume Hypothesis \ref{hyp:Lehmer} and its notations. 
    Let $\alpha\in ({K}^\times)^M$ and $B\geq 0$. Then 
    $$\card \{\beta\in(\mathcal{O}_K^\times)^M~\vert~ h_\infty(\alpha\beta)\leq B\}\leq \omega_K^M\cdot \prod_{1\leq i\leq M}\left(\frac{B+c_0/2+\max(0, \tfrac{-\log \N(\alpha_i)}{d})}{c_0/2}\right)^{(r_1+r_2-1)}.$$
\end{lemma}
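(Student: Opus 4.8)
The plan is to reduce the count over $M$-tuples to $M$ independent one-coordinate counts and then apply Lemma \ref{lemma:unitcountonecoord}. First I would note that Lemma \ref{le:heightineqcoord}, applied to the tuple $\alpha\beta=(\alpha_1\beta_1,\ldots,\alpha_M\beta_M)$, gives $h_\infty(\alpha\beta)\geq \max_{1\leq i\leq M}h_\infty(\alpha_i\beta_i)$, so that the condition $h_\infty(\alpha\beta)\leq B$ forces $h_\infty(\alpha_i\beta_i)\leq B$ for every $i$ simultaneously. Hence
\[
\{\beta\in(\mathcal{O}_K^\times)^M : h_\infty(\alpha\beta)\leq B\}\subseteq \prod_{i=1}^{M}\{\beta_i\in\mathcal{O}_K^\times : h_\infty(\alpha_i\beta_i)\leq B\},
\]
and it suffices to bound each factor on the right separately and multiply the bounds together.

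Second, for a fixed coordinate $i$ I would match the one-coordinate count with the hypotheses of Lemma \ref{lemma:unitcountonecoord}. With $L$ and $h$ as in that lemma, the weight $2$ placed on the complex coordinates is exactly what makes the bookkeeping identity $h(L(\gamma))=h_\infty(\gamma)$ hold for every $\gamma\in K^\times$ (a direct comparison with the definition of $H_\infty$); since $L$ is a group homomorphism, $h(L(\alpha_i)+L(\beta_i))=h(L(\alpha_i\beta_i))=h_\infty(\alpha_i\beta_i)$. Taking $\eta=L(\alpha_i)$, its coordinate sum is $Y=\sum_{j=1}^{r_1+r_2}(L(\alpha_i))_j=\log\prod_{\sigma:K\to\mathbb{C}}|\sigma(\alpha_i)|=\log\N(\alpha_i)$ by the product formula for the field norm (with complex places counted twice). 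Lemma \ref{lemma:unitcountonecoord} applied with this $\eta$, this $Y$, and the given $B$ then yields
\[
\card\{\beta_i\in\mathcal{O}_K^\times : h_\infty(\alpha_i\beta_i)\leq B\}\leq \omega_K\cdot\left(\frac{B+c_0/2+\max(0,-\tfrac{\log\N(\alpha_i)}{d})}{c_0/2}\right)^{r_1+r_2-1}.
\]

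Finally, substituting these $M$ estimates into the inclusion of the first paragraph gives the claimed inequality. There is essentially no obstacle in this argument: the only points requiring care are verifying the identity $h\circ L=h_\infty$ (where the factor $2$ on the complex coordinates plays the decisive role) and identifying the coordinate sum of $L(\alpha_i)$ with $\log\N(\alpha_i)$, after which the statement follows immediately from Lemmas \ref{le:heightineqcoord} and \ref{lemma:unitcountonecoord}.
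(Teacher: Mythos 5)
Your proof is correct and follows the same route as the paper: reduce via Lemma \ref{le:heightineqcoord} to a product of $M$ one-coordinate counts, then apply Lemma \ref{lemma:unitcountonecoord} to each with $\eta=L(\alpha_i)$. The paper states this in two lines; you usefully spell out the two verifications it leaves implicit (that $h\circ L=h_\infty$ because of the weight $2$ at complex places, and that the coordinate sum of $L(\alpha_i)$ is $\log\N(\alpha_i)$), and both checks are accurate.
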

\begin{proof}
    By Lemma \ref{le:heightineqcoord}, we have that 
     $$\card \{\beta\in(\mathcal{O}_K^\times)^M~\vert~ h_\infty(\alpha\beta)\leq B\}\leq\prod_{1\leq i\leq M}\card \{\beta\in\mathcal{O}_K^\times~\vert~ h_\infty(\alpha_i\beta)\leq B\}$$
    We conclude by Lemma \ref{lemma:unitcountonecoord}. 
\end{proof}

\begin{proposition}\label{prop:mintersections}
    Assume Hypothesis \ref{hyp:Lehmer} and its notations and fix $k\geq 2$. There exist positive constants $C,\varepsilon_1>0$ uniformly bounded in $d,t$ such that for all $\alpha\in (K^\times)^M\setminus \mu_K^M$ with $N(\alpha_i)\geq 1$ for $i\in [1,\ldots, M]$ the following holds:  write
     $$t_0=\frac{2r_K\cdot M}{d}\cdot\frac{ \log(2+\tfrac{1}{2k})}{\log(f_M(c_0(1-\frac{1}{k}))} ,$$
     where $f_M(x):=\frac{\exp(x)+M\exp(-\frac{x}{M})}{M+1}$ and $r_K$ is the rank of the unit group.
Then we have for any $t> t_0$ and any $d\geq 1$ that
\begin{align}
 \sum_{\substack{\beta\in(\mathcal{O}_K^\times)^M\\ \alpha\beta\notin \mu_K^M}}\frac{\vol(B\cap(\alpha_1\beta_1)^{-1}B\cap\cdots \cap(\alpha_M\beta_M)^{-1}B)}{\vol(B)} 
 \leq 
 C\cdot\omega_K^M\cdot \N(\alpha)^\frac{-t}{kM}\cdot D(\alpha)^{\frac{t}{4}}\cdot  e^{-\varepsilon_1\cdot d\cdot (t-t_0)}.
\end{align}
Moreover, the constants can be made explicit. We may for instance take 
$$\varepsilon_1=\tfrac{1}{2}\min\left\{\tfrac{c_1}{8}, \log(f_M(\tfrac{3}{4}c_1)), \alpha_M\cdot \tfrac{c_0(k-1)}{k}))\right\}$$ and $C=1+\tfrac{1}{1-e^{-\alpha_M\cdot c_0\cdot d(t-t_0)(k-1)/(4k^2)}}$, where $\alpha_M>0$ is small enough so that $f_M(x)\geq e^{\alpha_M\cdot x}$ for $x\geq c_0/2$. 
 
\end{proposition}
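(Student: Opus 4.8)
The plan is to bound a single volume ratio in the sum by a quantity decaying exponentially in the Weil height of a suitable rescaling of the tuple $(\alpha_1\beta_1,\dots,\alpha_M\beta_M)$, to reparametrise the sum over units so that it becomes a clean sum over one copy of $(\mathcal O_K^\times)^M$, to dominate that sum by a geometric series via the unit‑lattice point count of Lemma~\ref{lemma:unitcount}, and finally to run a short case analysis on the size of $D(\alpha)$ to absorb the surviving factors. For the first step, by Lemma~\ref{lemma:assumenorm} I may reorder the coordinates so that $\N(\alpha_1)=\max_i\N(\alpha_i)$; writing $\gamma_i=\alpha_i\beta_i$ (so $\N(\gamma_i)=\N(\alpha_i)\ge 1$) and scaling the ellipsoid intersection $B\cap\gamma_1^{-1}B\cap\cdots\cap\gamma_M^{-1}B$ by $\gamma_1$, which multiplies volumes by $\N(\alpha_1)^t$, turns it into $\N(\alpha_1)^{-t}\vol(B\cap\eta_1B\cap\cdots\cap\eta_MB)$ with $\eta_1=\gamma_1$, $\eta_i=\gamma_1/\gamma_i$ for $i\ge 2$. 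By maximality of $\N(\alpha_1)$ all $\N(\eta_i)\ge 1$ and $\N(\eta_1\cdots\eta_M)=\N(\alpha_1)^{M+1}/\N(\alpha)\ge 1$, so the $k$‑form of Lemma~\ref{lemma:genvolumeratio} applies to the $M$ scalings $\eta_i$; combining it with $\N(\alpha)\le\N(\alpha_1)^M$ (which gives $\N(\alpha_1)^{-t}\cdot\N(\eta_1\cdots\eta_M)^{-t/(kM)}\le\N(\alpha)^{-t/(kM)}$, since $\N(\alpha_1)^{kM+M+1}\ge\N(\alpha)^2$) yields
\[
\frac{\vol\bigl(B\cap(\alpha_1\beta_1)^{-1}B\cap\cdots\cap(\alpha_M\beta_M)^{-1}B\bigr)}{\vol(B)}\ \le\ \N(\alpha)^{-t/(kM)}\cdot f_M\!\Bigl(\tfrac{2(k-1)}{k}\,h_\infty(\eta)\Bigr)^{-dt/2},
\]
where $\eta=(\eta_1,\dots,\eta_M)$ depends on $\beta$, $h_\infty(\eta)=\tfrac1d\log H_\infty(\eta)$, and I used the identity $\tfrac{1}{M+1}(H_\infty(\eta)^{2(k-1)/(kd)}+M\,H_\infty(\eta)^{-2(k-1)/(kMd)})=f_M(\tfrac{2(k-1)}{k}h_\infty(\eta))$.

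Next, the map $(\beta_1,\dots,\beta_M)\mapsto(\beta_1,\beta_1\beta_2^{-1},\dots,\beta_1\beta_M^{-1})$ is a bijection of $(\mathcal O_K^\times)^M$, so with $a=(\alpha_1,\alpha_1/\alpha_2,\dots,\alpha_1/\alpha_M)$ the tuple $\eta$ above runs exactly over $a\tilde\beta$ as $\tilde\beta$ ranges over $(\mathcal O_K^\times)^M$, and one checks directly that $\alpha\beta\in\mu_K^M\iff a\tilde\beta\in\mu_K^M$. Maximality of $\N(\alpha_1)$ gives $\N(a_i)\ge 1$ for all $i$, so Lemma~\ref{lemma:unitcount} simplifies to $\#\{\tilde\beta:h_\infty(a\tilde\beta)\le Y\}\le\omega_K^M(1+2Y/c_0)^{Mr_K}$ with $r_K=r_1+r_2-1$. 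Moreover, if $a\tilde\beta\notin\mu_K^M$ then some coordinate $a_{i_0}\tilde\beta_{i_0}$ lies in $K^\times\setminus\mu_K$, and combining Hypothesis~\ref{hyp:Lehmer}, the identity $h(x)-h_\infty(x)=\tfrac1d\log D(x)$, the fact that units leave the denominator ideal unchanged together with $D(a_{i_0})\le D(a)$, and Lemma~\ref{le:heightineqcoord}, one obtains
\[
h_\infty(a\tilde\beta)\ \ge\ h_\ast:=\max\!\bigl(0,\ c_1-\tfrac1d\log D(a)\bigr)\qquad\text{whenever }a\tilde\beta\notin\mu_K^M,
\]
and in particular $h_\ast\le c_1\le c_0$.

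For the geometric series, I would decompose the range of $h_\infty(a\tilde\beta)$ into shells $[h_\ast+j\tfrac{c_0}{2},h_\ast+(j+1)\tfrac{c_0}{2})$, $j\ge 0$, the $j$‑th shell carrying at most $\omega_K^M(j+4)^{Mr_K}$ points; on it $f_M(\tfrac{2(k-1)}{k}h_\infty(a\tilde\beta))^{-dt/2}\le f_M(\tfrac{2(k-1)}{k}h_\ast)^{-dt/2}\cdot f_M(c_0(1-\tfrac1k))^{-jdt/2}$, using that $\log f_M$ is convex with $\log f_M(0)=0$ (hence super‑additive on $[0,\infty)$, giving $f_M(jx)\ge f_M(x)^j$) and that $c_0(1-\tfrac1k)\ge\tfrac{c_0}{2}$. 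Summing $\sum_j(j+4)^{Mr_K}f_M(c_0(1-\tfrac1k))^{-jdt/2}$, the series is dominated by its leading term exactly when $t$ exceeds the value $t_0=\tfrac{2r_KM}{d}\cdot\tfrac{\log(2+1/(2k))}{\log f_M(c_0(1-1/k))}$ balancing the polynomial growth of the shell counts against the geometric decay; its tail then produces a factor $e^{-\varepsilon_1 d(t-t_0)}$ and a closed‑form constant $C=1+(1-e^{-\alpha_M c_0 d(t-t_0)(k-1)/(4k^2)})^{-1}$, so that $\Sigma:=\sum_{\tilde\beta:\,a\tilde\beta\notin\mu_K^M}f_M(\tfrac{2(k-1)}{k}h_\infty(a\tilde\beta))^{-dt/2}\le C\,\omega_K^M\, f_M(\tfrac{2(k-1)}{k}h_\ast)^{-dt/2}\,e^{-\varepsilon_1 d(t-t_0)}$.

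Combining the three steps bounds the left side of the Proposition by $C\,\omega_K^M\,\N(\alpha)^{-t/(kM)}\,f_M(\tfrac{2(k-1)}{k}h_\ast)^{-dt/2}\,e^{-\varepsilon_1 d(t-t_0)}$, and it remains to absorb the factor $f_M(\tfrac{2(k-1)}{k}h_\ast)^{-dt/2}$ into $D(\alpha)^{t/4}$ by a case split. If $D(a)\le e^{dc_1/4}$ then $h_\ast\ge\tfrac34 c_1$, and since $\tfrac{2(k-1)}{k}\cdot\tfrac34\ge\tfrac34$ for $k\ge 2$ this factor is $\le f_M(\tfrac34c_1)^{-dt/2}=e^{-\frac{dt}{2}\log f_M(3c_1/4)}\le 1$, harmless (and the source of the $\log f_M(3c_1/4)$ term in $\varepsilon_1$). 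If $D(a)>e^{dc_1/4}$, an elementary estimate comparing $\mathfrak p$‑adic valuations gives $D(a)\le\N(\alpha)\,D(\alpha)^{O(M)}$, so either $\N(\alpha)\ge e^{dc_1/8}$ — in which case $\N(\alpha)^{-t/(kM)}$ already supplies the decay (source of the $c_1/8$ term in $\varepsilon_1$) — or $D(\alpha)\ge e^{dc_1/(8(M+1))}$, in which case $D(\alpha)^{t/4}$ dominates $f_M(\tfrac{2(k-1)}{k}h_\ast)^{-dt/2}$ once the constant $\alpha_M$ in "$f_M(x)\ge e^{\alpha_M x}$ for $x\ge c_0/2$" is fixed small enough. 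In every case one lands on the claimed bound with the stated $C,\varepsilon_1$. The main obstacle is precisely this last interplay: the unit‑lattice point count in Lemma~\ref{lemma:unitcount} is genuinely exponential in $d$ (of order $2^{Mr_K}$) already at heights of size $O(1)$, so the $f_M$‑decay alone cannot control the low shells unless $t$ is bounded below — this is what pins down the shape of $t_0$ — and whenever $h_\ast$ is too small for the leading term to decay one must extract the compensating exponential factor from $D(\alpha)$ or $\N(\alpha)$, which forces the appearance of the $D(\alpha)^{t/4}$ factor; getting all these bookkeeping inequalities (in particular $D(a)\le\N(\alpha)D(\alpha)^{O(M)}$ and its interaction with the $\N(\alpha)^{-t/(kM)}$ already extracted in the first step) to line up is the bulk of the work.
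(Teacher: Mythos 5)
There is a genuine gap in the geometric‑series step, and it propagates to an incorrect intermediate bound.

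You decompose the height range into shells $[h_\ast + j\tfrac{c_0}{2},\,h_\ast+(j+1)\tfrac{c_0}{2})$ for $j\ge 0$ and bound the count in each shell by $\omega_K^M(j+4)^{Mr_K}$ via Lemma~\ref{lemma:unitcount}. The $j=0$ shell then contributes up to $\omega_K^M\cdot 4^{Mr_K}$ units with no accompanying decay from the geometric factor $f_M(c_0(1-\tfrac1k))^{-jdt/2}$, and $4^{Mr_K}$ grows \emph{exponentially} in $d$. Consequently the claimed intermediate estimate
$\Sigma \le C\,\omega_K^M\, f_M\bigl(\tfrac{2(k-1)}{k}h_\ast\bigr)^{-dt/2}\, e^{-\varepsilon_1 d(t-t_0)}$
cannot be correct: any single unit attaining $h_\infty(a\tilde\beta)=h_\ast$ already gives $\Sigma \ge f_M(\tfrac{2(k-1)}{k}h_\ast)^{-dt/2}$, so the inequality forces $C\omega_K^M e^{-\varepsilon_1 d(t-t_0)}\ge 1$, which fails as $t\to\infty$. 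The factor $e^{-\varepsilon_1 d(t-t_0)}$ is being double‑counted: in the favourable case $D(a)\le e^{dc_1/4}$ the exponential decay in $t$ must be extracted from $f_M(\tfrac34 c_1)^{-dt/2}$ itself (exactly as you note in the final case split), so it cannot also appear as a separate factor produced by the series. Attempting to make the series absorb $4^{Mr_K}$ would instead force a threshold $t_0$ depending on $c_1$, which does not match the claimed $t_0$ depending only on $c_0$.

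The paper avoids this by splitting the sum into two pieces that are estimated separately and then added. For the units with $h_\infty(\alpha\beta)<c_0/2$, one uses the triangle inequality in the log‑unit hyperplane (as in Lemma~\ref{lemma:unitcountonecoord}) to show there are at most $\omega_K^M$ of them — \emph{not} $\omega_K^M(\text{const})^{Mr_K}$; this piece is then bounded by $\omega_K^M f_M(\tfrac34 c_1)^{-dt/2}$ when $D(\alpha)$ is small, and trivially by $\omega_K^M$ (absorbed by $D(\alpha)^{t/4}$) when $D(\alpha)\ge e^{dc_1/4}$. The tail with $h_\infty\ge c_0/2$ is treated with shells of width $c_0/(2S)$, $S=2k$; there the polynomial shell counts can be traded against the $f_M$‑decay precisely because every shell sits at height at least $c_0/2$, and this produces the threshold $t_0=\tfrac{2r_KM}{d}\cdot\tfrac{\log(2+\tfrac{1}{2k})}{\log f_M(c_0(1-\tfrac1k))}$ and the geometric‑series constant $C$. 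In short, you need to separate the sub‑$c_0/2$ units (where the $\omega_K^M$ count is exact) from the tail \emph{before} applying the shell/point‑count argument, rather than after; lumping them into a single shell decomposition starting at $h_\ast$ loses exactly the exponential‑in‑$d$ factor that makes the claim fail. (Your rescaling by $\gamma_1$ at the outset is an unnecessary detour — the hypothesis $\N(\alpha_i)\ge 1$ is already in force and lets one apply Lemma~\ref{lemma:genvolumeratio} directly — but it is not itself an error.)
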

\begin{proof}
    We consider the function $f_M(x):=\frac{\exp(x)+M\exp(-x/M)}{M+1}$
    satisfying $f_1(x)=\cosh(x)$, $f_M(0)=1$ and increasing exponentially for $x>0$. We also abbreviate $\N(\alpha)=\prod_i \N(\alpha_i)$. Then by Lemma \ref{lemma:genvolumeratio} our task is reduced to bounding for suitably chosen $k\geq 2$: 
    \begin{equation}\label{eq:projtobound}
        \sum_{\substack{\beta\in(\mathcal{O}_K^\times)^M \\ \alpha\beta \notin (\mu_K)^{M}}}\N(\alpha)^\frac{-t}{kM}\cdot f_M\left( h_\infty(\alpha\beta)\left( 2(1-\tfrac{1}{k})\right)\right)^{-dt/2},
    \end{equation}
    where $h_\infty(\alpha\beta)=\tfrac{1}{d}\cdot \log(H_\infty(\alpha\beta))$ reduces to the log Weil height for $M=1$. 
    Recall the $c_0$ defined in Hypothesis \ref{hyp:Lehmer}. Since $f_M$ is increasing it suffices to bound, for any $S \in \mathbb{Z}_{>0}$, 
    the sum
$$\Sigma_{M,k}^\infty:=
\sum_{n=S}^{\infty}\card\left\{\beta\in(\mathcal{O}_K^\times)^M\mid h_\infty(\alpha\beta)\in \Big[\tfrac{nc_0}{2S}, \tfrac{(n+1)c_0}{2S}\Big[~\right\} \cdot 
f_M\left(\frac{nc_0(1-\tfrac{1}{k})}{S}\right)^{-dt/2}$$
    together with the term with the contribution of the remaining units satisfying $h_\infty(\alpha\beta)< c_0/2$:
   
	    \begin{equation}
	      \Sigma_{M,k}^{c_0} := 
\sum_{\substack{  \beta\in(\mathcal{O}_K^\times)^M \\ h_\infty(\alpha\beta)< \tfrac{c_0}{2}~ }} 
f_M\left(  h_{\infty}(\alpha \beta)  \cdot  2 (1-\tfrac{1}{k})\right)^{-dt/2}.
	    \end{equation}
So our goal is to show for appropriate constants $C,\varepsilon_1$ that 
\begin{equation}
  \Sigma_{M,k}^{c_0} + \Sigma_{M,k}^{\infty} \leq C \cdot D(\alpha)^{\frac{t}{4}} \cdot e^{-\varepsilon_1 \cdot d \cdot (t-t_0)}.
\end{equation}

	    Let us examine the term $\Sigma_{M,k}^{c_0}$ first.
   We have the bound on the number of units 
    $$\card \{\beta\in(\mathcal{O}_K^\times)^M\vert h_\infty(\alpha\beta)< c_0/2\}\leq \omega_K^M.$$
	    Indeed, let $\beta_1 = (\beta_{11},\dots,\beta_{1M})$ and $\beta_2 = (\beta_{21},\dots,\beta_{2M})$ be in $( \mathcal{O}_K^{\times} )^{M}$. Then, for we know that for each $j=1,2$ and each $i=1,\dots,M$ we have $h_{\infty}(\alpha_{i}\beta_{ji}) < \tfrac{c_0}{2}$. Then by the triangle inequality (c.f. Lemma \ref{lemma:unitcountonecoord}), we can show that $h_{\infty}(\beta_{1j}^{-1}\beta_{2j}) < c_0 $ so that $ \beta_{1} \beta_2^{-1} \in (\mu_K)^{M}$.\par
    By assumption, since $\alpha\beta\notin ( \mu_K )^M$ there exists a constant $c_1$ such that 
    $$h(\alpha\beta)=h_\infty(\alpha\beta)+ \tfrac{1}{d}\cdot \log D(\alpha)\geq c_1>0.$$
    
	    Let us first assume that $D(\alpha)<\exp(dc_1/4)$. Then
$\Sigma_{M,k}^{c_0}$ is bounded by 
    
	    $$\omega_K^M \cdot f_M\left(  2(1-\tfrac{1}{k}) \cdot ( c_1-\tfrac{1}{d}\cdot \log D(\alpha) )\right)^{-dt/2}
	    \leq 
    \omega_K^M \cdot f_M\left( 2 \left(1-\tfrac{1}{k}\right)\tfrac{3}{4}c_1\right)^{-dt/2}
	    \leq 
    \omega_K^M \cdot f_M\left(  \tfrac{3}{4}c_1\right)^{-dt/2}.$$

    In the case when $D(\alpha)\geq \exp(dc_1/4)$, 
    we simply bound the contribution
    $\Sigma_{M,k}^{c_0}$ by observing that 
    $D(\alpha)^{-\frac{t}{4}}\leq e^{-\tfrac{dtc_1}{16}}$, so that these terms satisfy the bound claimed in the proposition. 
    
    \par 
    We may now therefore turn to the remaining terms $\Sigma_{M,k}^{\infty}$. 
    Since, we asummed that $\N(\alpha_i) \geq 0$, we know that $\max(0,-\tfrac{1}{d} \log \N(\alpha_i)) = 0$.
    Lemma \ref{lemma:unitcount} therefore yields:    
	    \begin{align}
		    \Sigma_{M,k}^\infty
		    &\leq \omega_K^M\cdot \sum_{n=S}^{\infty}\left(\tfrac{n+S+1}{S}\right)^{(r_1+r_2-1)M}\cdot f_M(\tfrac{n}{S}\cdot c_0(1-\tfrac{1}{k}))^{-dt/2}\\
		    &= \omega_K^M\cdot \sum_{n=1}^{\infty}\left(\tfrac{n}{S}+2\right)^{(r_1+r_2-1)M}\cdot f_M(\tfrac{n+S-1}{S}\cdot c_0(1-\tfrac{1}{k}))^{-dt/2}\\
				       &\leq \omega_K^M\cdot \sum_{n=1}^{\infty} f_M(\tfrac{S+n-1}{S}\cdot c_0(1-\tfrac{1}{k}))^{\frac{-d(t-t_0)}{2}}\\
				       &\leq \omega_K^M\cdot \sum_{n=S}^{\infty} \exp\left(-n\cdot{\frac{\alpha_M\cdot c_0(k-1)\cdot d(t-t_0)}{2kS}}\right)
				       \label{eq:geom_series}
	    \end{align}
    where $\alpha_M>0$ is a constant small enough so that $f_M(x)\geq e^{\alpha_M\cdot x}$ if $x\geq c_0/2$ and where
    $$t_0\geq\frac{2(r_1+r_2-1)M}{d}\sup_{n\in\mathbb{N}_{\geq 1}}\frac{ \log(\frac{n}{S}+2)}{\log \left(f_M(\frac{S+n-1}{S}\cdot c_0 \cdot (1-\frac{1}{k}) \right)}$$
    for suitable $k$. The logarithm ratio decays as $n$ increases and therefore it suffices to take 
     $$t_0\geq\frac{2(r_1+r_2-1)M}{d}\cdot \frac{ \log(\frac{1}{S}+2)}{\log(f_M(c_0(1-\frac{1}{k}))}.$$
	     Summing up the geometric series in (\ref{eq:geom_series}) gives us 
\begin{align}
	\Sigma_{M,k}^{\infty}
				       &\leq \omega_K^M\cdot \frac{ \exp\left(-{\frac{\alpha_M\cdot c_0(k-1)\cdot d(t-t_0)}{2k}}\right)}{ 1-
 \exp\left(-{\frac{\alpha_M\cdot c_0(k-1)\cdot d(t-t_0)}{2kS}}\right)
				       }.
\end{align}

    % The explicit constants follow from evaluating this geometric sum. 
    We chose to present the results for the choice of $S=2k$.
   
\end{proof}
\subsection{Summing over  ideals}

It remains to sum the contributions in Proposition \ref{prop:mintersections} over principal ideals. To that end, we have: 
\begin{lemma} \label{lemma:partialDedekindzeta}
Let $\mathcal{J}\subset\mathcal{O}_K$ be an integral ideal. Then for any real $t> 1$ we have: 
$$\sum_{\substack{\mathcal{I}\subset \mathcal{J}\\ I \text{integral ideal}}}\N(\mathcal{I})^{-t}=\N(\mathcal{J})^{-t}\cdot\zeta_K(t)$$
\end{lemma}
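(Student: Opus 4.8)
The plan is to use the standard dictionary between containment and divisibility of ideals in the Dedekind domain $\mathcal{O}_K$. First I would recall that for nonzero integral ideals $\mathcal{I},\mathcal{J}$ one has $\mathcal{I}\subseteq\mathcal{J}$ if and only if $\mathcal{J}\mid\mathcal{I}$, i.e. there is a (necessarily unique) integral ideal $\mathcal{A}$ with $\mathcal{I}=\mathcal{J}\mathcal{A}$; concretely $\mathcal{A}=\mathcal{I}\mathcal{J}^{-1}$, which is an integral ideal precisely because $\mathcal{I}\subseteq\mathcal{J}$. This gives a bijection between the set of integral ideals $\mathcal{I}\subseteq\mathcal{J}$ and the set of all nonzero integral ideals $\mathcal{A}\subseteq\mathcal{O}_K$, via $\mathcal{I}\mapsto\mathcal{I}\mathcal{J}^{-1}$ with inverse $\mathcal{A}\mapsto\mathcal{J}\mathcal{A}$ (the case $\mathcal{A}=\mathcal{O}_K$ corresponding to $\mathcal{I}=\mathcal{J}$).

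Next I would invoke multiplicativity of the ideal norm, $\N(\mathcal{J}\mathcal{A})=\N(\mathcal{J})\,\N(\mathcal{A})$, and re-index the sum along this bijection:
\[
\sum_{\substack{\mathcal{I}\subseteq\mathcal{J}\\ \mathcal{I}\text{ integral}}}\N(\mathcal{I})^{-t}
=\sum_{\substack{\mathcal{A}\subseteq\mathcal{O}_K\\ \mathcal{A}\text{ integral}}}\N(\mathcal{J}\mathcal{A})^{-t}
=\N(\mathcal{J})^{-t}\sum_{\substack{\mathcal{A}\subseteq\mathcal{O}_K\\ \mathcal{A}\text{ integral}}}\N(\mathcal{A})^{-t}
=\N(\mathcal{J})^{-t}\,\zeta_K(t),
\]
the last equality being the definition of the Dedekind zeta function. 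The rearrangement of terms is legitimate since every term is positive and the Dedekind zeta series converges (absolutely) for real $t>1$, which is exactly the hypothesis and is what makes both sides finite.

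There is no real obstacle here: the statement is a bookkeeping identity, and the only points to state carefully are the convention that ``$\mathcal{I}\subset\mathcal{J}$'' denotes set-theoretic containment $\mathcal{I}\subseteq\mathcal{J}$ (allowing equality), that the zero ideal is excluded so that all norms are positive integers, and the ``to contain is to divide'' principle together with multiplicativity of $\N$. If desired, one can note that the identity also records the elementary fact that the local factor of $\zeta_K$ is unaffected by fixing a finite divisibility constraint, but nothing beyond the above is needed for the proof.
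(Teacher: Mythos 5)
Your proof is correct and follows essentially the same approach as the paper: both reindex the sum over $\mathcal{I}\subseteq\mathcal{J}$ by writing $\mathcal{I}=\mathcal{J}\mathcal{A}$ and then pull out $\N(\mathcal{J})^{-t}$ by multiplicativity of the norm. The paper phrases this via the prime factorization of $\mathcal{J}$, while you invoke the ``to contain is to divide'' bijection directly, but the underlying argument is identical.
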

\begin{proof}
The proof follows from the definitions since we are in a Dedekind domain: for instance writing the prime decomposition $\mathcal{J}=\prod_i\mathcal{P}_i$, the left hand side becomes 
$$\sum_{\substack{\mathcal{I}\subset \calO_K\\ \mathcal{I} \text{integral ideal}}}\N(\mathcal{I}\cdot \prod_i\mathcal{P}_i)^{-t}= \N(\prod_i\mathcal{P}_i)^{-t}\cdot \sum_{\substack{I\subset \calO_K\\ \mathcal{I} \text{ integral ideal}}}\N(\mathcal{I}).$$
\end{proof}
We can now reformulate the $m=1$ term for the $n$-th moment: 
\begin{proposition}\label{prop:reformulation}
Let $\alpha_1,\ldots, \alpha_M \in K^\times$. Then the $m=1$ term in Theorem \ref{th:weil} is given for indicator functions of balls by 
$$\sum_{\alpha_1,\ldots\alpha_M\in K^\times}D(\alpha)^{-t}\cdot \vol(B\cap \alpha_1B\cap\cdots\cap \alpha_MB),$$
where $D(\alpha)$ is as defined in \eqref{def:Dalpha}. Moreover, for any function 
$f_M:K^{\times,M}\to \mathbb{R}$ and any $T\in \mathbb{R}_{> 1}$, the sum 
$$\sum_{\alpha_1,\ldots\alpha_M\in K^\times}D(\alpha)^{-T}\cdot \vol(B\cap \alpha_1B\cap\cdots\cap \alpha_MB)\cdot f_M(\alpha_1,\ldots,\alpha_M)$$equals: 
$$\zeta_K(T)^{-1}\cdot \sum_{\substack{\mathcal{I}\subset \calO_K\\ \mathcal{I} \text{integral ideal}}}\N(\mathcal{I})^{-T}\sum_{\alpha_1,\ldots,\alpha_M\in \mathcal{I}^{-1}\setminus\{0\}} \vol(B\cap \alpha_1B\cap\cdots\cap \alpha_MB)\cdot f_M(\alpha_1,\ldots,\alpha_M).$$
\end{proposition}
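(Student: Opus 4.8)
The plan is to prove the two assertions in turn, both by unwinding definitions; the only genuinely new ingredient is the bookkeeping with fractional ideals in the ``moreover'' part.

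For the first assertion, I would specialize the integral formula of Theorem~\ref{th:weil} (equivalently Corollary~\ref{re:non-zero_terms}) to $g=\ind_B\otimes\cdots\otimes\ind_B$ and extract the summands with $m=1$. A rank-one row-reduced echelon matrix $D=(D_1,\dots,D_n)\in M_{1\times n}(K)$ has a $1$ in its pivot column and zeros before it; deleting the zero columns and moving the pivot only reinserts factors $\ind_B$ that are identically $1$ on the ball (since $B$ is origin-centred), so each such term is governed by a matrix of the shape $D=(1,\alpha_1,\dots,\alpha_M)$ with $\alpha_i\in K^\times$, and these are in bijection with tuples $\alpha\in(K^\times)^M$ (the general $m=1$ contribution being a finite $\mathbb{N}$-linear combination of such sums with various $M$, the binomial weights being harmless). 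For such a $D$ one computes directly, using $\ind_B(\alpha_i x)=\ind_{\alpha_i^{-1}B}(x)$, that $\int_{K_\mathbb{R}^{t}} g(xD)\,dx=\vol\bigl(B\cap\alpha_1^{-1}B\cap\cdots\cap\alpha_M^{-1}B\bigr)$, and that the sublattice defining $\mathfrak{D}(D)$ is $\{C\in\OK : C\alpha_i\in\OK\ \forall i\}=(\OK+\alpha_1\OK+\cdots+\alpha_M\OK)^{-1}$, an integral ideal of index $\N(\OK+\alpha_1\OK+\cdots+\alpha_M\OK)=D(\alpha)^{-1}$ in $\OK$, so $\mathfrak{D}(D)=D(\alpha)$. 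Summing these identities over all admissible $D$ gives the claimed formula for the $m=1$ term.

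For the ``moreover'' part, write $\mathfrak{d}(\alpha)=\OK+\alpha_1\OK+\cdots+\alpha_M\OK$ and $\mathfrak{b}(\alpha)=\mathfrak{d}(\alpha)^{-1}$, which is an integral ideal with $\N(\mathfrak{b}(\alpha))=\N(\mathfrak{d}(\alpha))^{-1}=D(\alpha)$. The key observation is that, for $\alpha\in(K^\times)^M$ and an integral ideal $\mathcal{I}$, the condition $\alpha_i\in\mathcal{I}^{-1}\setminus\{0\}$ for all $i$ is equivalent to $\mathfrak{d}(\alpha)\mathcal{I}\subseteq\OK$, i.e.\ to $\mathcal{I}\subseteq\mathfrak{b}(\alpha)$. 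Setting $G(\alpha)=\vol\bigl(B\cap\alpha_1^{-1}B\cap\cdots\cap\alpha_M^{-1}B\bigr)\,f_M(\alpha)$, I would then swap the order of summation on the right-hand side:
\[
\zeta_K(T)^{-1}\sum_{\mathcal{I}}\N(\mathcal{I})^{-T}\sum_{\alpha\in(\mathcal{I}^{-1}\setminus\{0\})^M} G(\alpha)
=\zeta_K(T)^{-1}\sum_{\alpha\in(K^\times)^M} G(\alpha)\sum_{\mathcal{I}\subseteq\mathfrak{b}(\alpha)}\N(\mathcal{I})^{-T}.
\]
Lemma~\ref{lemma:partialDedekindzeta} applied with $\mathcal{J}=\mathfrak{b}(\alpha)$ evaluates the inner sum as $\N(\mathfrak{b}(\alpha))^{-T}\zeta_K(T)=D(\alpha)^{-T}\zeta_K(T)$, the factors $\zeta_K(T)$ cancel, and we are left with $\sum_{\alpha\in(K^\times)^M}D(\alpha)^{-T}G(\alpha)$, which is the left-hand side.

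I do not expect a serious obstacle here; the only point needing a word of care is the interchange of summations, which is legitimate as soon as the double series converges absolutely. For $f_M\ge 0$ this is automatic by Tonelli, and in every situation where the identity is applied the volume factor $\vol\bigl(B\cap\alpha_1^{-1}B\cap\cdots\bigr)$ decays fast enough in the height of $\alpha$ (Lemma~\ref{lemma:genvolumeratio}) to dominate the polynomially-controlled weights $f_M$ occurring there, while $\zeta_K(T)$ converges for $T>1$. Once the ideal-theoretic identification $\{\mathcal{I}:\alpha\in(\mathcal{I}^{-1}\setminus\{0\})^M\}=\{\mathcal{I}:\mathcal{I}\subseteq\mathfrak{b}(\alpha)\}$ and the norm computation $\N(\mathfrak{b}(\alpha))=D(\alpha)$ are in hand, both statements are immediate.
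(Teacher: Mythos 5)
Your proposal is essentially correct and follows the same route as the paper's own proof: identify the ideal $\mathcal{J}=\mathfrak{b}(\alpha)=(\OK+\alpha_1\OK+\cdots+\alpha_M\OK)^{-1}$, observe that $\alpha\in(\mathcal{I}^{-1}\setminus\{0\})^M$ iff $\mathcal{I}\subseteq\mathcal{J}$, exchange the order of summation, and apply Lemma~\ref{lemma:partialDedekindzeta}. The only additional content you supply is the explicit remark about Tonelli/Fubini justifying the interchange, which the paper leaves implicit; this is a helpful clarification. Two small slips worth fixing: (i) you write that the ideal $\{C\in\OK:C\alpha_i\in\OK\}$ has ``index $\N(\OK+\alpha_1\OK+\cdots+\alpha_M\OK)=D(\alpha)^{-1}$'' in $\OK$, but the index of that integral ideal in $\OK$ is its norm $\N(\OK+\cdots)^{-1}=D(\alpha)$, which is what your conclusion $\mathfrak{D}(D)=D(\alpha)$ requires; (ii) your direct computation gives $\vol(B\cap\alpha_1^{-1}B\cap\cdots\cap\alpha_M^{-1}B)$ while the Proposition's display has $\alpha_i B$ in place of $\alpha_i^{-1}B$; the two sums do agree by the substitution $\alpha\mapsto\alpha^{-1}$ together with the identity $D(\alpha^{-1})=\N(\alpha)\,D(\alpha)$ and the scaling $\vol(B\cap\alpha B)=\N(\alpha)^t\,\vol(B\cap\alpha^{-1}B)$, but it would be worth saying so explicitly (and in any case the ``moreover'' identity is agnostic to this choice, since the volume factor just plays the role of an arbitrary weight).
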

\begin{proof}
For the first expression, it suffices to see that the index of $\{c\in\calO_K: c\cdot\alpha_i\in\calO_K\forall i\}$ in $\calO_K$ is equivalent to the index of $(\alpha_1,\ldots,\alpha_M)^{-1}\cap \calO_K$ in $\calO_K$, where $(\alpha_1,\ldots,\alpha_M)$ denotes the fractional ideal generated by the $\alpha_i$.  
Let now $\mathcal{J}$ denote the integral ideal $(\alpha_1,\ldots,\alpha_M)^{-1}\cap \calO_K$. To establish the equivalence of the second expression, observe that for an integral ideal $\mathcal{I}\subset\calO_K$ we have
$$\alpha_1,\ldots,\alpha_M\in \mathcal{I}^{-1} \Leftrightarrow\mathcal{I}\subset (\alpha_1,\ldots,\alpha_M)^{-1}\cap \calO_K=\mathcal{J}.$$
Thus in the second expression every tuple $\alpha_1,\ldots, \alpha_M$ contributes 
$$\zeta_K(T)^{-1}\cdot\sum_{\substack{\mathcal{I}\subset \mathcal{J}\\ \mathcal{I} \text{ integral ideal}}}\N(\mathcal{I})^{-T}\cdot \vol(B\cap \alpha_1B\cap\cdots\cap \alpha_MB)\cdot f_M(\alpha_1,\ldots,\alpha_M).$$
In the first expression the contribution is $\N(\mathcal{J})^{-T}\cdot\vol(B\cap \alpha_1B\cap\cdots\cap \alpha_MB)\cdot f_M(\alpha_1,\ldots,\alpha_M)$. 
We conclude by Lemma \ref{lemma:partialDedekindzeta} that the two expressions are equal.
\end{proof}
We can now put everything together: 

\begin{proposition}\label{prop:projsumoverideals}
Assume Hypothesis \ref{hyp:Lehmer} and its notations and fix $k\geq 2$. There exist positive constants $C_M,\varepsilon_M>0$ uniformly bounded in $d,t$ such that the following holds:  write
$$t_0=\sup_{K\in \mathcal{S}}\left\{kM+\frac{1}{2},\frac{2r_K\cdot M}{d}\cdot \frac{ \log(2+\tfrac{1}{2k})}{\log(f_M(c_0(1-\frac{1}{k}))}\right\},$$
where $f_M(x):=\frac{\exp(x)+M\exp(-\frac{x}{M})}{M+1}$ and $r_K$ is the rank of the unit group. For any $t> t_0$ we then have:
$$\sum_{\alpha\in (K^\times)^M\setminus\mu_K^M} D(\alpha)^{-t}\vol(B \cap \alpha_1^{-1} B\cap \dots \cap \alpha_{M}^{-1}B)\leq C_M\cdot\omega_K^M\cdot\frac{\zeta_K(t(\frac{3}{4}-\frac{1}{k}))\cdot \zeta_K(\frac{t}{kM})^M}{\zeta_K(\frac{3t}{4})}\cdot e^{-\varepsilon_M\cdot d\cdot(t-t_0)}\cdot \vol(B). $$
 We may moreover take 
$$\varepsilon_M=\tfrac{1}{2}\min\left\{\tfrac{c_1}{8}, \log(f_M(\tfrac{3}{4}c_1)), \alpha_M\cdot \tfrac{c_0(k-1)}{k}))\right\}$$ and $C_M=(2M+1)(1+\tfrac{1}{1-e^{-\alpha_M\cdot c_0\cdot d(t-t_0)(k-1)/(4k^2)}})$, where $\alpha_M>0$ is small enough so that $f_M(x)\geq e^{\alpha_M\cdot x}$ for $x\geq c_0/2$. 
 
\end{proposition}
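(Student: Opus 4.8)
The plan is to combine the volume-height estimate of Proposition~\ref{prop:mintersections} with the ideal-reformulation of Proposition~\ref{prop:reformulation} and the partial Dedekind zeta identity of Lemma~\ref{lemma:partialDedekindzeta}. First I would invoke Proposition~\ref{prop:reformulation} with $T=t$ and with the function $f_M(\alpha)=\mathbf{1}[\alpha\notin\mu_K^M]$ (absorbing the indicator harmlessly, since the $m=1$ term of the moment formula involves $\alpha_i^{-1}$ rather than $\alpha_i$, so I would first apply Lemma~\ref{lemma:assumenorm} to pass to representatives with $\N(\alpha_i)\geq 1$ after scaling). This rewrites the sum $S_{M,t}$ over $(K^\times)^M\setminus\mu_K^M$ as
\begin{equation}
\zeta_K(t)^{-1}\sum_{\substack{\mathcal{I}\subseteq\mathcal{O}_K\\ \text{integral}}}\N(\mathcal{I})^{-t}\sum_{\substack{\alpha\in(\mathcal{I}^{-1}\setminus\{0\})^M\\ \alpha\notin\mu_K^M}}\frac{\vol(B\cap\alpha_1^{-1}B\cap\cdots\cap\alpha_M^{-1}B)}{\vol(B)}\cdot\vol(B).
\end{equation}

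Next I would split the inner sum over $\alpha\in(\mathcal{I}^{-1})^M$ into a sum over the $(\mathcal{O}_K^\times)^M$-orbits: fixing a set of orbit representatives $\gamma\in(\mathcal{I}^{-1})^M/(\mathcal{O}_K^\times)^M$ and summing $\beta\in(\mathcal{O}_K^\times)^M$ inside each orbit, and then after applying Lemma~\ref{lemma:assumenorm} to normalize each representative so that $\N(\gamma_i)\geq 1$, apply Proposition~\ref{prop:mintersections} to the orbit sum. This bounds each orbit contribution by $C\cdot\omega_K^M\cdot\N(\gamma)^{-t/(kM)}\cdot D(\gamma)^{t/4}\cdot e^{-\varepsilon_1 d(t-t_0)}$. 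The factor $D(\gamma)^{t/4}$ is at most $\N(\mathcal{I})^{t/4}$ when $\gamma\in(\mathcal{I}^{-1})^M$ has denominator ideal dividing $\mathcal{I}$ — this is where the ``$\N(\mathcal{I})^{-t}$'' weight in the zeta-sum gets partially eaten, leaving an effective exponent $3t/4$. I would then count orbit representatives $\gamma$ of bounded norm: the number of $\gamma\in(\mathcal{I}^{-1})^M/(\mathcal{O}_K^\times)^M$ with $\N(\gamma_i)\le X_i$ is controlled by $\prod_i(\text{count of ideals of norm }\le X_i\,\text{divisible appropriately})$, which after summing the weight $\prod_i\N(\gamma_i)^{-t/(kM)}$ against these counts produces the factor $\zeta_K(t/(kM))^M$ (one zeta factor per coordinate), while the combined $\N(\mathcal{I})^{-3t/4}$ weight summed over integral ideals gives, via Lemma~\ref{lemma:partialDedekindzeta}, a factor $\zeta_K(3t/4)$ that cancels against the $\zeta_K(t)^{-1}$ only after the additional $\vol(B\cap\alpha_1^{-1}B\cap\cdots)$ decay has been harvested. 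Tracking the exponents carefully, the residual ideal sum runs as $\sum_{\mathcal{I}}\N(\mathcal{I})^{-t(3/4-1/k)}=\zeta_K(t(3/4-1/k))$, yielding the stated product $\zeta_K(t(\tfrac34-\tfrac1k))\zeta_K(\tfrac{t}{kM})^M/\zeta_K(\tfrac{3t}{4})$.

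For the constants: $\varepsilon_M$ is inherited directly from $\varepsilon_1$ in Proposition~\ref{prop:mintersections} (up to the same minimum of $\tfrac{c_1}{8}$, $\log f_M(\tfrac34 c_1)$, $\alpha_M\tfrac{c_0(k-1)}{k}$), and $C_M$ picks up a bounded combinatorial factor $(2M+1)$ over the $C$ of Proposition~\ref{prop:mintersections} coming from bounding the finitely many boundary/small-norm terms and the geometric-series tails uniformly. The condition $t>kM+\tfrac12$ guarantees convergence of $\zeta_K(\tfrac{t}{kM})$ and of the normalizing integrals, and $t_0$ is taken as the supremum over $K\in\mathcal{S}$ of this bound and the one from Proposition~\ref{prop:mintersections}.

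The main obstacle I expect is the bookkeeping in the orbit decomposition: one must verify that $D(\gamma)\mid\mathcal{I}$ for orbit representatives $\gamma\in(\mathcal{I}^{-1})^M$ so that $D(\gamma)^{t/4}\le\N(\mathcal{I})^{t/4}$ is a legitimate bound, and simultaneously that summing $\N(\gamma)^{-t/(kM)}$ over representatives with denominator exactly some divisor of $\mathcal{I}$ (rather than over all of $\mathcal{I}^{-1}$) does not lose the clean $\zeta_K(\tfrac{t}{kM})^M$ shape — this requires a Möbius-type or direct multiplicativity argument over the Dedekind domain, essentially reorganizing the double sum over $(\mathcal{I},\gamma)$ into independent geometric/zeta sums. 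The normalization step via Lemma~\ref{lemma:assumenorm} (choosing which coordinate to scale by, and checking the projective-invariance of $D(\alpha)^t\vol(\cap\alpha_i^{-1}B)$ is compatible with the orbit structure) also needs care but is routine once set up correctly.
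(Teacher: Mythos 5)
Your plan has the right ingredients — Lemma~\ref{lemma:assumenorm} to normalize representatives, Proposition~\ref{prop:mintersections} for the orbit sums, the bijection $\alpha\mapsto(\alpha)\mathcal{I}$, and Lemma~\ref{lemma:partialDedekindzeta}/Proposition~\ref{prop:reformulation} for the ideal sum — but the order in which you chain them introduces a numerical error that prevents you from reaching the stated zeta factor $Z(K,t,M,k)=\zeta_K(t(\tfrac34-\tfrac1k))\zeta_K(\tfrac{t}{kM})^M/\zeta_K(\tfrac{3t}{4})$.

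You propose to apply Proposition~\ref{prop:reformulation} with $T=t$ \emph{first}, producing the prefactor $\zeta_K(t)^{-1}$, and then apply Proposition~\ref{prop:mintersections} inside each $(\calO_K^\times)^M$-orbit and crudely bound $D(\gamma)^{t/4}\le\N(\mathcal{I})^{t/4}$. Chasing exponents through that route gives $\zeta_K(t)^{-1}\cdot\zeta_K(t(\tfrac34-\tfrac1k))\cdot\zeta_K(\tfrac{t}{kM})^M$. Your text then asserts that a $\zeta_K(3t/4)$ factor ``cancels against the $\zeta_K(t)^{-1}$,'' but these are unequal values and do not cancel; you are left with $\zeta_K(t)^{-1}$, not $\zeta_K(3t/4)^{-1}$. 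Since $\zeta_K$ is decreasing, $\zeta_K(t)^{-1}>\zeta_K(3t/4)^{-1}$, so your bound is strictly worse than the statement by a factor $\zeta_K(3t/4)/\zeta_K(t)>1$, which depends on $K$ and $t$ and cannot be swallowed into the $K$-uniform constant $C_M$.

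The fix — and this is the paper's actual order of operations — is to apply Proposition~\ref{prop:mintersections} to the full sum \emph{before} touching the ideal reformulation: for orbit representatives one then has the weight
\begin{equation}
D(\alpha)^{-t}\cdot D(\alpha)^{t/4}\,\N(\alpha)^{-\frac{t}{kM}} = D(\alpha)^{-\frac{3t}{4}}\,\N(\alpha)^{-\frac{t}{kM}},
\end{equation}
where the $D(\alpha)^{t/4}$ from Proposition~\ref{prop:mintersections} has been combined exactly (not estimated) into the denominator exponent. Only now does one invoke Proposition~\ref{prop:reformulation}/Lemma~\ref{lemma:partialDedekindzeta} with $T=\tfrac{3t}{4}$, and the identity $\sum_{\mathcal{I}\subset\mathcal{J}}\N(\mathcal{I})^{-T}=\N(\mathcal{J})^{-T}\zeta_K(T)$ produces the correct prefactor $\zeta_K(\tfrac{3t}{4})^{-1}$. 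Everything else you describe — the norm bijection to integral ideals in the class of $\mathcal{I}$, the product of $\zeta_K(\tfrac{t}{kM})$ factors, the residual ideal sum $\zeta_K(t(\tfrac34-\tfrac1k))$ — then goes through exactly as you intend.
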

\begin{proof}

   After multiplying by a constant $2M$, we may in addition assume that $\N(\alpha_i)\geq 1$. Indeed we may cover  $(K^\times)^M\setminus\mu_K^M$ by $M$ sets on which $N(\alpha_i)$ is smallest for some fixed $1\leq i\leq M$. By Lemma \ref{lemma:assumenorm} and the ensuing remark, for each such set the contribution is bounded by twice the contribution of $\{\alpha\in(K^\times)^M\setminus\mu_K^M: \N(\alpha_i)\geq 1 \forall i\}$. 
	   Using Proposition \ref{prop:mintersections}, we immediately obtain (constants $C,\varepsilon_1$ as in \ref{prop:mintersections}):
  $$\sum_{\substack{\alpha\in (\OK^\times)^M\setminus(\mu_K^\times)^M}} D(\alpha)^{-t}\vol(B \cap \alpha_1^{-1} B\cap \dots \cap \alpha_{M}^{-1}B)\cdot \vol(B)^{-1} \leq C \cdot \omega_K^{M} \cdot e^{-\varepsilon_1 \cdot d \cdot (t-t_{0})} .$$
   
   It remains to bound:
  $$\sum_{\substack{\alpha\in (K^\times)^M\setminus(\calO_K^\times)^M\\ \N(\alpha_i)\geq 1}} D(\alpha)^{-t}\vol(B \cap \alpha_1^{-1} B\cap \dots \cap \alpha_{M}^{-1}B)\cdot \vol(B)^{-1}.$$
  We 
  % again 
  apply Proposition \ref{prop:mintersections} and deal with bounding (constants $C,\varepsilon_1$ as in \ref{prop:mintersections}) the sum
  $$ \sum_{\substack{\alpha\in (K^\times)^M\setminus(\calO_K^\times)^M\\ \N(\alpha_i)\geq 1}} C\cdot D(\alpha)^{-t}\N(\alpha)^\frac{-t}{kM}\cdot D(\alpha)^{\frac{t}{4}}\cdot e^{-\varepsilon_1\cdot d\cdot(t-t_0)}.$$
   Using the Proposition \ref{prop:reformulation} with $T=\tfrac{3}{4}t$, it therefore suffices to bound 

$$\zeta_K(\tfrac{3t}{4})^{-1}\cdot \sum_{\substack{\mathcal{I}\subset \calO_K\\ \mathcal{I} \text{ integral ideal}}}\N(\mathcal{I})^{\frac{3t}{4}}\sum_{\substack{\alpha_i\in (\mathcal{I}^{-1}\setminus\{0\})/\calO_K^\times\\\\ \N(\alpha_i)\geq 1}}\N(\alpha)^\frac{-t}{kM}\cdot e^{-\varepsilon_1\cdot d\cdot(t-t_0)} .$$
Now observe that the map $\alpha_i\mapsto(\alpha_i)\cdot \mathcal{I}$ gives a bijection between $(\mathcal{I}^{-1}\setminus\{0\})/\calO_K^\times$ and integral ideals $\mathcal{J}\subset \calO_K$ in the ideal class of $\mathcal{I}$. We may therefore bound this expression by: 

$$\zeta_K(\tfrac{3t}{4})^{-1}\cdot\sum_{\substack{\mathcal{I} \subset \calO_K\\ \mathcal{I} \text{ integral ideal}}}\N(\mathcal{I})^{\frac{3t}{4}}\prod_{1\leq i\leq M}\sum_{\substack{\mathcal{J}\subset \calO_K\\ \N(\mathcal{I})\leq \N(\mathcal{J})}}\N(\mathcal{J}\mathcal{I}^{-1})^{-\frac{t}{kM}}\cdot e^{-\varepsilon_1\cdot d\cdot(t-t_0+1)}$$
and therefore as claimed by 
$$ \frac{\zeta_K(t(\frac{3}{4}-\frac{1}{k}))\cdot \zeta_K(\frac{t}{kM})^M}{\zeta_K(\frac{3t}{4})}\cdot e^{-\varepsilon_1\cdot d\cdot(t-t_0)}.$$
We see that in particular taking $k\geq 2$ and $t\geq kM+1/2$ suffices for convergence of the zeta factors for any given $d$ and we obtain the explicit constants by setting $C_M=(2M+1)\cdot C$ and $\varepsilon_M=\varepsilon_1$.
  
\end{proof}
We therefore find: 
\begin{theorem}\label{thm:mainballintersection}
Let $\mathcal{S}$ denote any set of number fields satisfying Hypothesis \ref{hyp:Lehmer} and let $c_0, c_1$ denote the resulting uniform constants. 
For any choice of $k\geq 2$ there exist positive constants $C_M,\varepsilon_M>0$ uniformly bounded in $d,t$ such that the following holds: write
     $$t_0=\sup_{K\in \mathcal{S}}\left(kM+\frac{1}{2},\frac{2r_K\cdot M}{d}\cdot \frac{ \log(2+\tfrac{1}{2k})}{\log(f_M(c_0(1-\frac{1}{k}))}\right),$$
where $$f_M(x)=\tfrac{\exp(x)+M\exp(-x/M)}{M+1}$$ and $r_K$ is the rank of the unit group. 
We then have for any $t> t_0$ and for any $K\in \mathcal{S}$ of degree $d$:
    $$\sum_{\alpha\in (K^\times)^M} D(\alpha)^{-t}\vol(B \cap \alpha_1^{-1} B\cap \dots \cap \alpha_{M}^{-1}B)=\vol(B)\cdot \omega_K^M\left( 1+ C_M \cdot Z(K,t,M,k)\cdot e^{-\varepsilon_M\cdot d\cdot (t-t_0)}\right),$$
    where $$0\leq Z(K,t,M,k)\leq\zeta_K\big( t(\tfrac{3}{4}-\tfrac{1}{k})\big)\cdot \zeta_K(\tfrac{t}{kM})^M\cdot\zeta_K(\tfrac{3t}{4})^{-1}.$$ 
    We may moreover take $\varepsilon_M$ and $C_M$ as in Proposition \ref{prop:projsumoverideals}.
\end{theorem}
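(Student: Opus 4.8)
The statement is a clean repackaging of Proposition~\ref{prop:projsumoverideals} together with an elementary evaluation of the main term, so the plan is short. The first step is to split the sum over $(K^\times)^M$ according to whether the tuple $\alpha=(\alpha_1,\dots,\alpha_M)$ lies in $\mu_K^M$ or not. For the main part, suppose $\alpha\in\mu_K^M$. Then each $\alpha_i\in\OK^\times$, so $\OK+\alpha_1\OK+\cdots+\alpha_M\OK=\OK$ and hence $D(\alpha)=1$; moreover, by the choice of quadratic form in Equation~(\ref{eq:norm}) the ball $B\subseteq K_\mathbb{R}^{t}$ is invariant under the diagonal action of $\mu_K$ (the same invariance already used in the proof of Lemma~\ref{le:poisson_term}), so $\alpha_i^{-1}B=B$ for every $i$ and $\vol(B\cap\alpha_1^{-1}B\cap\cdots\cap\alpha_M^{-1}B)=\vol(B)$. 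As $\card\mu_K^M=\omega_K^M$, these terms contribute exactly $\omega_K^M\vol(B)$.

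The second step is to bound the remaining terms, those with $\alpha\in(K^\times)^M\setminus\mu_K^M$, by invoking Proposition~\ref{prop:projsumoverideals} verbatim: since the threshold $t_0$ and the hypothesis $k\ge2$ in the present statement are exactly those of that proposition, for $t>t_0$ we obtain
$$0\le\sum_{\alpha\in(K^\times)^M\setminus\mu_K^M}D(\alpha)^{-t}\vol(B\cap\alpha_1^{-1}B\cap\cdots\cap\alpha_M^{-1}B)\le C_M\,\omega_K^M\,\frac{\zeta_K\!\big(t(\tfrac{3}{4}-\tfrac{1}{k})\big)\,\zeta_K(\tfrac{t}{kM})^M}{\zeta_K(\tfrac{3t}{4})}\,e^{-\varepsilon_M d(t-t_0)}\vol(B),$$
the lower bound being trivial since all the volumes involved are nonnegative and the convergence of the zeta factors being part of the content of Proposition~\ref{prop:projsumoverideals}.

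The final step is to combine the two contributions: adding them and defining $Z(K,t,M,k)$ to be the error sum above divided by $C_M\,\omega_K^M\,e^{-\varepsilon_M d(t-t_0)}\vol(B)$, we obtain a nonnegative quantity which by the displayed inequality is at most $\zeta_K\big(t(\tfrac{3}{4}-\tfrac{1}{k})\big)\zeta_K(\tfrac{t}{kM})^M\zeta_K(\tfrac{3t}{4})^{-1}$, exactly the claimed bound on $Z$. With this definition the total sum equals $\vol(B)\,\omega_K^M\big(1+C_M Z(K,t,M,k)e^{-\varepsilon_M d(t-t_0)}\big)$, and $C_M,\varepsilon_M$ may be taken as in Proposition~\ref{prop:projsumoverideals}. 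I anticipate no genuine obstacle here: all the substantive work has already been carried out in Propositions~\ref{prop:mintersections}, \ref{prop:reformulation} and \ref{prop:projsumoverideals}, and the only thing to verify is the bookkeeping that the occurrences of $t_0$ and of the constants $C_M,\varepsilon_M$ match across the two statements.
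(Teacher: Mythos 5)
Your proposal is correct and follows exactly the paper's own route: split off the $\mu_K^M$ terms (which contribute $\omega_K^M\vol(B)$ since $D(\alpha)=1$ and $B$ is $\mu_K$-invariant) and apply Proposition~\ref{prop:projsumoverideals} verbatim for the remainder. The paper's proof is the same two observations stated in one line, so the bookkeeping you flag at the end is indeed all that remains, and it checks out.
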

\begin{proof}
    This follows from the previous proposition and the fact that $\vol(B \cap \alpha^{-1} B)=\vol(B)$ if $\alpha\in \mu_K$.
\end{proof}
\begin{remark}
    If we consider cyclotomic fields of increasing degree, we may take $c_0=0.24$ and for $M=1$ the condition on $t$ is satisfied for $t_0<27,k=26$. For $M=2,3,4,5$ we get $t_0<97,213,372,576$ and $k=48,70,92,115$. 
    As a function of $M$, a calculation shows that we have $t_0\leq CM^2$ for $C\approx 22.18\ldots$ as $M$ grows.
\end{remark}

Note that $\omega_K^M=o(d^{M+1})$ so that we indeed obtain exponential decay of the error term and in particular deduce a result for the second moment: 
\begin{corollary}\label{cor:mainsecondmoment}
    Let $\mathcal{S}$ denote any set of number fields satisfying Hypothesis \ref{hyp:Lehmer} and let $c_0,c_1$ denote the resulting uniform constant. Then for any choice of $k\geq 2$ there exist positive uniformly bounded constants $C,\varepsilon>0$ such that the following holds: write
    $$t_0=\sup_{K\in \mathcal{S}}\left\{k+\frac{1}{2},\frac{2r_K}{d}\cdot \frac{ \log(2+\tfrac{1}{2k})}{\log(\cosh(c_0(1-\tfrac{1}{k}))} \right\},$$
where $r_K$ is the rank of the unit group. We then have for any $t> t_0$ and for any $K\in \mathcal{S}$ of degree $d$ that the second moment $ \mathbb{E}[\rho(\Lambda)^2]$ of the number of nonzero $\calO_K$-lattice points in a fixed origin-centered ball of volume $V$ in $K_\mathbb{R}^t$ satisfies: 
\begin{align}
       V^2+\omega_K\cdot V&\leq \mathbb{E}[\rho(\Lambda)^2]\\
       &\leq   V^2+\omega_K\cdot V+\omega_K^2\cdot C \cdot Z(K,t,k)\cdot e^{-\varepsilon\cdot d\cdot (t-t_0)}\cdot V,\\
    \end{align}
    where $0\leq Z(K,t,k)\leq\zeta_K\big(t(\frac{3}{4}-\frac{1}{k})\big)\cdot \zeta_K(\frac{t}{k})\cdot\zeta_K(\frac{3t}{4})^{-1}$. \par
    We may moreover take $\varepsilon=\tfrac{1}{2}\min(\tfrac{c_1}{8}, \log(\cosh(3c_1/4))), \tfrac{2c_0(k-1)}{5k}))$ and $C=3+\tfrac{3}{1-e^{-c_0\cdot d(t-t_0)(k-1)/(10k^2)}}$.
\end{corollary}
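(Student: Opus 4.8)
The plan is to read the second moment off the Rogers integral formula and then reduce all the remaining work to the case $M=1$ of Theorem~\ref{thm:mainballintersection}. First I would apply Corollary~\ref{re:non-zero_terms} with $n=2$ and $g=\ind_B\otimes\ind_B$, the two-fold indicator of the fixed ball $B$ of volume $V$ in $K_\mathbb{R}^t$, which gives
\[
\mathbb{E}[\rho(\Lambda)^2]=\sum_{m=1}^{2}\ \sum_{\substack{D\in M_{m\times 2}(K)\\ \rank(D)=m,\ D\text{ row reduced echelon}\\ D\text{ has no zero columns}}}\mathfrak{D}(D)^{-t}\int_{K_\mathbb{R}^{t\times m}}g(xD)\,dx .
\]
Both inner sums are tiny to enumerate. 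For $m=2$ the only row-reduced echelon rank-$2$ matrix in $M_{2\times 2}(K)$ is $I_2$, with $\mathfrak{D}(I_2)=1$ and $\int g(x)\,dx=\vol(B)^2=V^2$. For $m=1$ the matrices with no zero column are exactly $D=(1,\alpha)$ with $\alpha\in K^\times$.

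Next I would match the two quantities attached to $D=(1,\alpha)$ with the notation of Section~\ref{subsec:projBogo}: on one hand $\mathfrak{D}((1,\alpha))$ is the index of $\{c\in\OK:\ c\alpha\in\OK\}=\OK\cap\alpha^{-1}\OK$ in $\OK$, which equals $\N(\OK+\alpha\OK)^{-1}=D(\alpha)$ in the sense of \eqref{def:Dalpha} with $M=1$; on the other hand, writing $xD=(x,\alpha x)$ gives $\int_{K_\mathbb{R}^t}\ind_B(x)\ind_B(\alpha x)\,dx=\vol(B\cap\alpha^{-1}B)$. Hence the $m=1$ contribution is precisely $\sum_{\alpha\in K^\times}D(\alpha)^{-t}\vol(B\cap\alpha^{-1}B)$, the left-hand side of Theorem~\ref{thm:mainballintersection} with $M=1$ (and it agrees with Proposition~\ref{prop:reformulation} in that case), so that $\mathbb{E}[\rho(\Lambda)^2]=V^2+\sum_{\alpha\in K^\times}D(\alpha)^{-t}\vol(B\cap\alpha^{-1}B)$.

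Now I would invoke Theorem~\ref{thm:mainballintersection} with $M=1$, for which $f_1(x)=\cosh(x)$ and $r_K$ is the rank of the unit group, so that its threshold coincides with the $t_0$ of the present statement. For $t>t_0$ it gives
\[
\sum_{\alpha\in K^\times}D(\alpha)^{-t}\vol(B\cap\alpha^{-1}B)=\vol(B)\,\omega_K\Bigl(1+C_1\,Z(K,t,1,k)\,e^{-\varepsilon_1\, d(t-t_0)}\Bigr),
\]
with $0\le Z(K,t,1,k)\le \zeta_K\!\bigl(t(\tfrac34-\tfrac1k)\bigr)\zeta_K(\tfrac tk)\zeta_K(\tfrac{3t}{4})^{-1}$ and $C_1,\varepsilon_1$ the constants of Proposition~\ref{prop:projsumoverideals} specialised to $M=1$. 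Since $\vol(B)=V$, combining with the $m=2$ term yields
\[
\mathbb{E}[\rho(\Lambda)^2]=V^2+\omega_K V+\omega_K V\,C_1\,Z(K,t,1,k)\,e^{-\varepsilon_1\, d(t-t_0)} .
\]
The lower bound $\mathbb{E}[\rho(\Lambda)^2]\ge V^2+\omega_K V$ is immediate because the error term is nonnegative; here $V^2+\omega_K V=\omega_K^2 m_2(V/\omega_K)$ is exactly the combined $A_2$- and $A_1$-contribution of Lemma~\ref{le:poisson_term} for $n=2$. For the upper bound I would set $Z(K,t,k):=Z(K,t,1,k)$, $\varepsilon:=\varepsilon_1$, use $\omega_K\ge 1$ to pass from $\omega_K V C_1$ to $\omega_K^2 V C$, and read off the explicit $C,\varepsilon$ from the $M=1$ form of Proposition~\ref{prop:projsumoverideals} (with $f_1=\cosh$, i.e. $\alpha_M$ replaced by a constant $\alpha_1$ with $\cosh(x)\ge e^{\alpha_1 x}$ on the range relevant in the proof of Proposition~\ref{prop:mintersections}).

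I would stress that no genuinely new difficulty appears: all the analytic content---bounding intersection volumes by Weil heights (Lemma~\ref{lemma:genvolumeratio}), the unit-lattice point counts, the exponential decay, and the summation over principal ideals---is already contained in Propositions~\ref{prop:mintersections} and \ref{prop:projsumoverideals} and in Theorem~\ref{thm:mainballintersection}. The only steps needing care are (i) the combinatorial reading of the $m=1,2$ terms of the integral formula together with the normalisation identities $\mathfrak{D}((1,\alpha))=D(\alpha)$ and $\int_{K_\mathbb{R}^t}\ind_B(x)\ind_B(\alpha x)\,dx=\vol(B\cap\alpha^{-1}B)$, and (ii) rewriting $t_0,\varepsilon,C,Z$ in their simplified $M=1$ shape. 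If anything deserves to be called the main obstacle, it is merely keeping these normalisations consistent across the integral formula and the height estimates.
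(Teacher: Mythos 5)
Your proposal is correct and follows essentially the same route as the paper: the paper's own proof of this corollary is simply the one-liner ``This follows from Theorem~\ref{thm:mainballintersection} for $M=1$,'' together with the bound $\cosh(x)>e^{\frac{2}{5}\min(x,x^2)}$ that fixes $\alpha_1=\tfrac{2}{5}$ and thereby recovers the stated $\varepsilon$ and $C$ from Proposition~\ref{prop:projsumoverideals}. Your additional bookkeeping — reading off the $m=1,2$ terms of Corollary~\ref{re:non-zero_terms}, identifying $\mathfrak{D}((1,\alpha))=D(\alpha)$ and $\int\ind_B(x)\ind_B(\alpha x)\,dx=\vol(B\cap\alpha^{-1}B)$, and noting $\omega_K\ge 1$ to absorb the extra power of $\omega_K$ — just makes explicit what the paper leaves implicit.
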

\begin{proof}
    This follows from Theorem \ref{thm:mainballintersection} for $M=1$. The explicit constants can be obtained by bounding $\cosh(x)>e^{\tfrac{2}{5}\min(x,x^2)}$. 
\end{proof}
See Corollary \ref{cor:introcyclo} for the ensuing second moment result for cyclotomic fields. 
To go beyond the second and third moments we shall extend this approach in the next section. 

%%%%%%%%%%%%%%%%%%%%%%%%%%%%%%%%%%%%%%%%%%%%%%%%%%%%%%%%%

\subsection{General error estimates for  \texorpdfstring{$A^2_m$}{A2m}-type terms}
\label{subse:a2m}

In this section, we estimate the contributions of more general subspaces of dimension $m$ to the integral formula by reducing to our previous considerations for projective space. Recall from Section \ref{se:poisson} the set of matrices $$A^2_{m} = \left\{ D\in  M_{m \times n}( K) \ {\Big| } \substack{ \ D_{ij} \in  K  ,\\  D \text{ is in row-reduced echelon form of }\rank(D) = m \\ D \text{ has at least one entry }\not\in \mu_K\cup\{0\}
} 
\right\}. $$

The main result of this section is the following: 
\begin{theorem}\label{thm:generalBogobound}
   Let $\mathcal{S}$ denote any set of number fields satisfying Hypothesis \ref{hyp:Lehmer} and let $c_0, c_1$ denote the resulting uniform constants. Fix $n$ and $2\leq m<n$. There exist explicit positive uniform constants $C_\mathcal{S},\varepsilon_\mathcal{S}>0$ such that the following holds: write $t_0$ for
    $$2(n-m)\cdot \sup_{K\in\mathcal{S}}
    \left\{ m^2+m,\frac{r_K(m^2+m)}{d}\cdot
    \frac{\log(2+12c_0^{-1}+2\log(n-m)\cdot c_0^{-1})}{\log \left( \min\{ \frac{64}{27},e^{\frac{1}{3}c_1},\cosh^3(c_1)\} \right) }, \frac{r_K}{d}\cdot\log_2^{-1}(f_{n-m}(\tfrac{3}{4}c_0))\right\},$$
where $r_K$ is the rank of the unit group and $f_{n-m}(x)=\tfrac{e^{x}+(n-m)e^{-\frac{x}{n-m}}}{1+n-m}$. We then have for any $t> t_0$ and for any $K\in \mathcal{S}$:
$$  \frac{1}{V(td)^m R^{mtd}} \sum_{D\in A^2_m} \frac{1}{\mathfrak{D}(D)^{t}}\int_{K_\mathbb{R}^{m \times t }}   f(xD) dx  \leq  C_\mathcal{S}\cdot \omega_K^{m(n-m)}(td)^{\tfrac{m-1}{2}} \cdot Z(K,t,n,m)\cdot e^{-\varepsilon_\mathcal{S}\cdot d\cdot (t-t_0)},$$
    with the zeta factor 
$$Z(K,t,n,m)=\frac{\zeta_K(\frac{1}{2(m+1)}t-\frac{1}{e}m(n-m))\cdot\zeta_K(\frac{t}{4m(n-m)})^{m(n-m)}}{\zeta_K(t-1)}.$$
Moreover, we may take 
$$\varepsilon_\mathcal{S}=\tfrac{1}{2}\log(\min\{ \tfrac{4}{3}, e^{\frac{c_1}{3(m+1)}},f_{n-m}(3c_1/4)\}).$$ The constant $C_\mathcal{S}$ may also be chased down as a function depending only $m,n,c_0,c_1$. 
\end{theorem}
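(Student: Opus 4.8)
The plan is to reduce the sum over $A^2_m$ to the projective-space estimates of Section~\ref{subsec:projBogo}, at the cost of a bounded combinatorial factor counting the $\binom{n}{m}$ shapes of an $m\times n$ row-reduced echelon matrix, and to peel off separately the finitely many matrices lying ``close'' to the main-term set $A_m$. First I would fix $D\in A^2_m$ with pivots at positions $p_1<\dots<p_m$, so that $f(xD)=\prod_{i=1}^m\ind_B(x_i)\cdot\prod_{j\notin\{p_i\}}\ind_B(\sum_i D_{ij}x_i)$ as in Lemma~\ref{le:poisson_term}, and integrate out $x_1,\dots,x_m$ one at a time, applying at each stage the translation-monotonicity of Lemma~\ref{le:potato_slice} in the multi-slab form of Lemma~\ref{lemma:convexgen} to recentre the ellipsoids, the convex-combination bound of Lemma~\ref{le:intersection_of_ellipsoids}, and the volume/height dictionary of Lemmas~\ref{le:volume_det}, \ref{le:height_relation}, \ref{le:equivalence_of_height} and~\ref{le:gamma_ratio}. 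This should yield (it is the content of Lemmas~\ref{lemma:grtoproj} and~\ref{lemma:grtoprojallcols}) a bound
\begin{equation*}
\frac{1}{\mathfrak{D}(D)^{t}\,V(td)^m R^{mtd}}\int_{K_\mathbb{R}^{m\times t}}f(xD)\,dx\ \le\ (td)^{\frac{m-1}{2}}\prod_{i=1}^{m}\frac{\vol(B\cap\bigcap_{j}(\gamma_{ij})^{-1}B)}{\vol(B)},
\end{equation*}
where for each row $i$ the intersection runs over the non-pivot columns $j$ with $D_{ij}\neq0$, the $\gamma_{ij}\in K^\times$ are built from the $D_{ij}$ after distributing the denominator ideal $\mathfrak{D}(D)$ among the rows, and $(td)^{(m-1)/2}$ is the Gamma ratio of Lemma~\ref{le:gamma_ratio}. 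Each row $i$ then contributes a quantity of the type $\vol(B\cap\alpha_1^{-1}B\cap\dots\cap\alpha_{M_i}^{-1}B)/\vol(B)$ with $M_i\le n-m$ as in Lemma~\ref{lemma:genvolumeratio}, and $\sum_i M_i\le m(n-m)$.

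Next I would split $A^2_m$, with $h_0\approx\tfrac12\log n$ a threshold, into three families: (i) matrices with some row of height $>h_0$, for which the factor $f_{M_i}(\,\cdot\,)^{-dt/2}$ of Lemma~\ref{lemma:genvolumeratio} already beats the combinatorial and zeta losses whenever $t>t_0$, exactly as in Proposition~\ref{prop:mintersections}; (ii) matrices whose subspace $S_D\in\Gr(m,K^n)$ has many non-vanishing Pl\"ucker coordinates, hence large $H(S_D)$, whose total contribution is controlled through the height zeta function $Z(t;\Gr(m,K^n),H)$ of Lemma~\ref{le:height_zeta} with Schmidt's count (Theorem~\ref{th:schmidt}) and Corollary~\ref{co:abel_sum}, once the height-$1$ coordinate subspaces---accounting precisely for $A_m\sqcup A_m^1$---are removed; and (iii) the remaining \emph{finite} set (modulo the diagonal $\mu_K$-action on the entries) of matrices with all entries of height $\le h_0$ and few non-vanishing $m\times m$ minors, but at least one non-pivot column that is not a root of unity times a standard basis vector. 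For family (iii) the genuine upper bounds of Lemmas~\ref{lemma:unitcountonecoord} and~\ref{lemma:unitcount} on unit translates of bounded height feed into the row-wise bound above; summing first over the unit group---the Grassmannian analogue of Proposition~\ref{prop:mintersections}, i.e.\ Proposition~\ref{prop:GrWeilunitsum}---and then over the principal ideals via Proposition~\ref{prop:reformulation} and Lemma~\ref{lemma:partialDedekindzeta} (the analogue of Proposition~\ref{prop:projsumoverideals}) produces a bound $C\cdot\omega_K^{m(n-m)}(td)^{(m-1)/2}Z(K,t,n,m)e^{-\varepsilon d(t-t_0)}$. Here the weights $\tfrac{1}{2(m+1)}$, $\tfrac1e m(n-m)$ and $\tfrac{1}{4m(n-m)}$ appearing in $Z(K,t,n,m)$, the extra shift $12c_0^{-1}+2\log(n-m)c_0^{-1}$ in $t_0$, and the normalisation $\zeta_K(t-1)$ come, respectively, from the choice $k\asymp m(n-m)$ in Proposition~\ref{prop:mintersections}, from Lemma~\ref{lemma:unitcountonecoord} applied to $\alpha\beta$ in place of $\beta$, and from the Pl\"ucker/Schmidt count on $\Gr(m,K^n)$; the $m(n-m)$ in the $\omega_K$ and $\zeta_K$ exponents is $\sum_i M_i\le m(n-m)$ from the reduction above. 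Summing the three families over the $\binom{n}{m}$ pivot patterns and absorbing the polynomial-in-$(n,m)$ constant into $C_\mathcal{S}$ yields the statement, with $\varepsilon_\mathcal{S}=\tfrac12\log\min\{\tfrac43,\,e^{c_1/3(m+1)},\,f_{n-m}(\tfrac34 c_1)\}$ whose three arguments correspond to the three regimes ($D(\alpha)$ large, $D(\alpha)$ small, surviving geometric series) of the minimum in Proposition~\ref{prop:mintersections}.

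The hard part will be family (iii) and the summation. One must verify that after removing $A_m\sqcup A_m^1$ no surviving configuration is ``accidentally'' volume-neutral, so that Lemma~\ref{lemma:genvolumeratio} is always evaluated at an argument $\ge c_0>0$, where $f_{n-m}>1$ and genuine exponential decay ensues; and one must control, for $t$ only slightly above $t_0$, the competition between the decay $e^{-\varepsilon d(t-t_0)}$ and the simultaneous growth of $\omega_K^{m(n-m)}$, of the Dedekind zeta factors, and of the polynomial unit-lattice counts of Lemma~\ref{lemma:unitcount}, all while keeping every constant uniform over $\mathcal{S}$. By contrast the reduction of the first step is routine once Lemma~\ref{lemma:convexgen} is available, and families (i)--(ii) reproduce the arguments of Theorem~\ref{th:poisson_dist} and Corollary~\ref{co:abel_sum}.
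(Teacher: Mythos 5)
Your outline correctly identifies most of the machinery (the three-way split by threshold height, the convex-combination lemmas, the unit-lattice counts, Proposition~\ref{prop:GrWeilunitsum}, and the passage to Dedekind zeta values via Proposition~\ref{prop:reformulation}), but there are two genuine gaps in the reduction.

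First, the claimed intermediate bound
$$
\frac{1}{\mathfrak{D}(D)^{t}\,V(td)^m R^{mtd}}\int_{K_\mathbb{R}^{m\times t}}f(xD)\,dx\ \le\ (td)^{\frac{m-1}{2}}\prod_{i=1}^{m}\frac{\vol\bigl(B\cap\bigcap_{j}(\gamma_{ij})^{-1}B\bigr)}{\vol(B)}
$$
does not hold, and it is not what Lemmas~\ref{lemma:grtoproj} or~\ref{lemma:grtoprojallcols} give. A row-wise factorization of the left-hand side would require the constraints $\|\sum_i D_{ij}x_i\|\le R$ to decouple into independent constraints on individual $x_i$. This happens only when each non-pivot column of $D$ has a single non-zero entry (that is, exactly on $A^{2,0}_m$, which the paper treats by separation of variables and Proposition~\ref{prop:projsumoverideals}). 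For a column with two or more non-zero entries the constraint genuinely couples several variables, and moreover your formula would count that column's constraint once in each of the corresponding row-factors, which is a spurious improvement, not a bound. The paper's Lemmas~\ref{lemma:grtoproj} and~\ref{lemma:grtoprojallcols} instead compress the whole $m$-variable integral into a \emph{single} height factor, either by cherry-picking one favourable column or by aggregating all columns; neither yields a product over rows.

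Second, family~(ii) as you describe it is controlled via Schmidt's count (Theorem~\ref{th:schmidt}) and the Abel-summation argument of Corollary~\ref{co:abel_sum}. The constants $C_1, C_2$ in Theorem~\ref{th:schmidt} depend on $K$, and more fundamentally Schmidt's result is an asymptotic for points of \emph{growing} height, whereas the dominant contributions here come from points of \emph{small} height. Invoking it here would sacrifice exactly the uniformity over $\mathcal S$ and the explicit exponential decay that Theorem~\ref{thm:generalBogobound} asserts. In the paper, Schmidt's theorem is used only to establish convergence in Section~\ref{se:convergence}; the explicit error terms in Section~\ref{se:Bogosection} never appeal to it and instead rely exclusively on the honest upper bounds of Lemmas~\ref{lemma:unitcountonecoord} and~\ref{lemma:unitcount} together with Hypothesis~\ref{hyp:Lehmer}. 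Your family~(ii) should instead be subsumed into the $A_m^{2,\infty}$ treatment of Proposition~\ref{prop:GrWeilunitsum}, where the large Plücker/height cases decay because $f_{n-m}$ is applied far from its minimum, with unit counting handled by Lemma~\ref{lemma:unitcount}, not Schmidt.

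A third, smaller, point: the denominator $\mathfrak{D}(D)$ does not distribute across rows. The paper controls it via a single inequality $\mathfrak{D}(D)\geq D(\alpha)$ (proved by a local CRT argument over primes dividing the denominator ideal), and the factor $\zeta_K(t-1)^{-1}$ and the shift $\tfrac{t}{2(m+1)}+\tfrac{r_K M}{ed}$ in the exponent of $D(\alpha)$ both come from this single-bound-plus-ideal-sum mechanism rather than from any row-distribution.
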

\begin{remark}
    Note that despite the relatively ugly expression for the minimal rank $t_0$, we have that $t_0(n)=O(n^3\log\log n)$ as the moment $n$ increases with a constant only depending on the choice of number fields $\mathcal{S}$. 
\end{remark}
In order to prove the theorem, we will actually subdivide the $A^2_m$-terms as follows: write 
\begin{align}
A^{2,0}_{m}& = \left\{ D\in  M_{m \times n}( K) \ {\Big| } \substack{ \ D_{ij} \in  K  ,\\  D \text{ is in row-reduced echelon form of }\rank(D) = m \\ 
D \text{ has exactly one non-zero entry per column}\\D \text{ has at least one entry }\not\in \mu_K\cup\{0\}.} 
\right\},\\
A^{2,h_0}_{m} & = \left\{ D\in  M_{m \times n}( K) \ {\Big| } \substack{ \ D_{ij} \in  K  ,\\  D \text{ is in row-reduced echelon form of }\rank(D) = m \\ 
D \text{ has all entries of Weil height less than } h_0\\D \text{ has at least one entry }\not\in \mu_K\cup\{0\}.} 
\right\} 
\setminus A^{2,0}_{m},\\
A^{2,\infty}_{m} & = \left\{ D\in  M_{m \times n}( K) \ {\Big| } \substack{ \ D_{ij} \in  K  ,\\  D \text{ is in row-reduced echelon form of }\rank(D) = m \\ 
D \text{ has at least one entry of Weil height larger than } h_0} 
\right\}\setminus A^{2,0}_{m}
\end{align}
for a suitable choice of threshold height $h_0>0$. These sets clearly cover $A^2_m$ and we show that the contribution of each term decays exponentially. \par
Consider first the $A^{2,0}_{m}$-type terms. In this case, the contributions can via a separation of variables be reduced to products of intersections of shifted balls as in subsection \ref{subsec:projBogo}. We have thus already done all the work and the results follow from Theorem \ref{thm:mainballintersection}. This in turn allows us to assume that $D$ in $A^{2,h_0}_{m}$ has at least one column with multiple entries. We prove the contributions of such terms decay similarly to Lemma \ref{le:lemma5_rogers} even for relatively small height by cherry-picking a particular column of $D$ to which to apply estimates (see Lemma \ref{lemma:grtoproj}). Finally, $h_0$ is chosen large enough so that the terms in $A_m^{2,\infty}$ have exponentially decaying contributions purely for height reasons. \par
The following convex combination lemmas will allows us to handle the $A^{2,h_0}_{m}$ and $A_m^{2,\infty}$-type terms.

\begin{lemma}
\label{lemma:convexgen}
Let $f:K_\mathbb{R}^{t}\rightarrow \mathbb{R}$ be the indicator function of a ball of radius $R > 0$ and assume $n>  m\geq 2$. Then, for any $(\alpha_{i,j})\in M_{(n-m)\times m}(K)$, we have that   
\begin{align}
&\frac{ \int_{K_\mathbb{R}^{m \times t }} f(x_1)\cdots f(x_m)\prod_{j=1}^{n-m} f(\sum_{i=1}^m \alpha_{i,j}x_i) dx_1\cdots dx_m  }{ V(mt[K:\mathbb{Q}]) R^{mt[K:\mathbb{Q}]} } \\
& \le (m+1)^{mtd/2}\cdot \min_{1\leq k\leq n-m} \min_{J \in {\binom{[n-m]}{k}}}   \prod_{\sigma: K \rightarrow \mathbb{C} } \left(1 + \frac{1}{k}\sum_{j\in  J}\sum_{i=1}^m|\sigma(\alpha_{i,j})| ^{2}\right)^{-\frac{t}{2}}.
\end{align}
\end{lemma}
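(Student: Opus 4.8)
The plan is to run, in this multi-variable setting, the same convex-combination argument already used in Lemma~\ref{le:intersection_of_ellipsoids}. First I would fix any cardinality $k\in\{1,\dots,n-m\}$ and any subset $J\subseteq\{1,\dots,n-m\}$ with $\card J=k$, and simply discard the factors $f\big(\sum_{i=1}^m\alpha_{i,j}x_i\big)$ for $j\notin J$; since $f=\ind_B$ is an indicator, this only enlarges the domain of integration, so the integral in the statement is at most $\vol(\Omega_J)$, where
\[
\Omega_J=\Big\{(x_1,\dots,x_m)\in K_\mathbb{R}^{m\times t}\ :\ \|x_i\|\le R\ (1\le i\le m),\ \ \big\|\textstyle\sum_{i=1}^m\alpha_{i,j}x_i\big\|\le R\ (j\in J)\Big\}.
\]
Writing $d=[K:\mathbb{Q}]$, I then take the convex combination with weight $\tfrac{1}{m+1}$ on each of the $m$ constraints $\|x_i\|\le R$ and weight $\tfrac{1}{k(m+1)}$ on each of the $k$ constraints indexed by $J$; these weights are nonnegative and sum to $1$, so every point of $\Omega_J$ satisfies $Q(x)\le R^2$, where $Q$ is the positive-definite quadratic form on $K_\mathbb{R}^{m\times t}$ given by
\[
Q(x)=\tfrac{1}{m+1}\Big(\sum_{i=1}^m\|x_i\|^2+\tfrac1k\sum_{j\in J}\big\|\textstyle\sum_{i=1}^m\alpha_{i,j}x_i\big\|^2\Big).
\]
Hence $\vol(\Omega_J)\le\vol(\{Q\le R^2\})=\det(Q)^{-1/2}\,V(mtd)\,R^{mtd}$, where $\det(Q)$ denotes the Gram determinant of $Q$ with respect to the Euclidean structure of \eqref{eq:norm} (equivalently the one making $\mathcal{O}_K^{m\times t}$ unimodular), and it remains to bound $\det(Q)$ from below.

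Second, I would exploit that $Q$ block-diagonalizes. Grouping, for each $\ell\in\{1,\dots,t\}$, the $\ell$-th $K_\mathbb{R}$-coordinate of $x_1,\dots,x_m$, the form $Q$ is the orthogonal direct sum of $t$ copies of the analogous form $Q_0$ on a single $K_\mathbb{R}^m$, so $\det(Q)=\det(Q_0)^t$. In turn $Q_0$ is invariant under the decomposition of $K_\mathbb{R}^m$ over the Archimedean places of $K$, because multiplication by $\alpha_{i,j}\in K$ respects that decomposition. At a real place $\sigma$ the corresponding block, after passing to an orthonormal basis (the discriminant scaling $\Delta_K^{-2/[K:\mathbb{Q}]}$ of \eqref{eq:norm} multiplies both summands of $Q_0$ by the same constant and so cancels), is the $m\times m$ quadratic form $\tfrac{1}{m+1}\big(I_m+\tfrac1k\sum_{j\in J}a_{\sigma,j}a_{\sigma,j}^{\top}\big)$ with $a_{\sigma,j}=(\sigma(\alpha_{1,j}),\dots,\sigma(\alpha_{m,j}))^{\top}$, while at a complex place the block is the Hermitian analogue, whose \emph{real} Gram determinant is the square of its complex determinant. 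Using the elementary inequality $\det(I+N)\ge1+\operatorname{tr}N$ valid for any positive-semidefinite $N$, each real block has determinant at least $(m+1)^{-m}\big(1+\tfrac1k\sum_{j\in J}\sum_{i=1}^m|\sigma(\alpha_{i,j})|^2\big)$ and each complex block at least the square of the same quantity; multiplying over all places (a complex place thereby contributing the product over its two embeddings) gives
\[
\det(Q_0)\ \ge\ (m+1)^{-m[K:\mathbb{Q}]}\prod_{\sigma:K\to\mathbb{C}}\Big(1+\tfrac1k\sum_{j\in J}\sum_{i=1}^m|\sigma(\alpha_{i,j})|^2\Big).
\]

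Combining the two steps, for the fixed pair $(k,J)$ we obtain
\[
\frac{1}{V(mtd)R^{mtd}}\int_{K_\mathbb{R}^{m\times t}}f(x_1)\cdots f(x_m)\prod_{j=1}^{n-m}f\big(\textstyle\sum_{i=1}^m\alpha_{i,j}x_i\big)\,dx\ \le\ (m+1)^{mtd/2}\prod_{\sigma:K\to\mathbb{C}}\Big(1+\tfrac1k\sum_{j\in J}\sum_{i=1}^m|\sigma(\alpha_{i,j})|^2\Big)^{-t/2},
\]
and taking the minimum over all $k\in\{1,\dots,n-m\}$ and all $J\in\binom{[n-m]}{k}$ yields exactly the claimed bound.

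I do not expect a conceptual obstacle here: the heart of the argument is the dilated-ellipsoid volume formula together with the rank-one trace inequality $\det(I+N)\ge 1+\operatorname{tr}N$, both routine. The point requiring the most care is the bookkeeping in the place-by-place decomposition — the factor of $2$ attached to complex places in the trace form, the discriminant normalization in \eqref{eq:norm}, and the identification of the unimodular Lebesgue measure with the one associated to $\langle\cdot,\cdot\rangle$ — but all of these are inert, since they rescale the two summands of $Q_0$ by a common constant and hence disappear from each block determinant, leaving the clean inequality above.
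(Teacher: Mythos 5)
Your argument is correct and is essentially the same as the paper's: both pass to the enclosing ellipsoid obtained from the convex combination with weights $\tfrac{1}{m+1}$ on the pivot constraints and $\tfrac{1}{k(m+1)}$ on the $k$ chosen non-pivot constraints (the paper achieves the "discarding" of $j\notin J$ by simply assigning those constraints zero weight, which is the same thing), then block-diagonalize over the embeddings of $K$ and bound each block determinant from below. The only cosmetic difference is that you invoke $\det(I+N)\ge 1+\operatorname{tr}N$ directly, while the paper phrases the same fact via the expansion of $\det(z\,\Id+X)$ in principal minors and truncates after the $z^{m-1}$ term; these are identical.
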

\begin{proof}
We will again use the idea of convex combinations, see Lemma \ref{le:convex_combinations} of the appendix for a slightly more general result and an alternative derivation. 
Let $c_k \in [0,1]$ for $1\leq k\leq n$ be any coefficients satisfying $\sum_{k=1}^n c_k= 1$. 
Then for $(x_1,\ldots,x_m)\in K_\mathbb{R}^{t \times m}$ the conditions $\|x_1\| \le R,\cdots, \| x_m\| \le R$ and $\|\sum_{i=1}^m \alpha_{i,j}x_i \| \le R$ for $1\leq j\leq n-m$ imply that 
\begin{equation}
 c_1 \|x_1\|^{2} +\cdots +c_m \| x_m\|^2 + \sum_{j=1}^{n-m}c_{j+m}\cdot  \|\sum_{i=1}^m \alpha_{i,j}x_i  \|^{2} \le R^{2}.
 \label{eq:defines_an_ellipsoid}
 \end{equation}
Equation (\ref{eq:defines_an_ellipsoid}) then defines an ellipsoid in $K_\mathbb{R}^{t \times m }$. The relevant quadratic form is scaled by a symmetric matrix that in each copy of $\mathbb{R}^m$ looks like (after fixing one of the $t$ copies and an embedding $\sigma:K\to \IC$): 
\begin{equation}
A_\sigma:=\begin{bmatrix} c_1 + \sum_{j=1}^{n-m}c_{j+m}\sigma(\alpha_{1,j})\overline{\sigma(\alpha_{1,j})} &\cdots&\sum_{j=1}^{n-m}c_{j+m}\sigma(\alpha_{1,j})\overline{\sigma(\alpha_{m,j})}  \\
\vdots & \ddots &\vdots \\
\sum_{j=1}^{n-m}c_{i+m}\sigma(\alpha_{i,m})\overline{\sigma(\alpha_{i,1})}  & & c_m + \sum_{j=1}^{n-m}c_{j+m}\sigma(\alpha_{m,j})\overline{\sigma(\alpha_{m,j})}
\end{bmatrix}
\end{equation}
 It therefore suffices to give a lower bound on $\det(A_\sigma)$, since the volume ratio equals 
 $$ \prod_{\sigma: K \rightarrow \mathbb{C}}\sqrt{ \det(A_\sigma)}^{-t}.$$
We now make the choice of $c_1=\cdots=c_m=\tfrac{1}{m+1}$ and may for $j\in J$ set $ c_{m+j}=\tfrac{1}{k(m+1)}$ and take the remaining convex coefficients to be zero. A bound on $\det(A_\sigma)$ may now be deduced from combinatorics. For instance, it is known that the coefficient of $z^{m-k}$ in $\det (z\cdot\Id +X)$ is the sum of the $k\times k$ principal minors of a square matrix $X$. Writing $A_\sigma= z\cdot \Id +X$ for $z=1/(m+1)$ we see that $X$ is a positive semidefinite Hermitian matrix and therefore its principal minors are nonnegative. We thus obtain a lower bound by keeping the terms in $z^m$ and $z^{m-1}$ resulting in the bound 
$$\det(A_\sigma)\geq (m+1)^{-m} (1+\frac{1}{k}\sum_{j\in  J}\sum_{i=1}^m|\sigma(\alpha_{i,j})| ^{2}).$$
The result follows since this is valid for any choice of $k$ non-pivot columns $J$. 
 
\end{proof}
Recall now that we write for nonzero $\alpha\in (K^\times)^M$ and for some integer $M>0$ the height: 
$$H_\infty(\alpha)=
\prod_{\sigma:K\to\mathbb{C}}\max_{1\leq j\leq M}\max(1,\vert\sigma(\alpha_j)\vert).
$$

\begin{lemma}\label{lemma:grtoproj}
Let $D\in M_{m\times n}(K)$ be a row-echelon matrix of rank $m$ written as $D=(\Id_m\mid \alpha)$ for entries $\alpha_{i,j}\in K$. Let $f:K_\mathbb{R}^{t}\rightarrow \mathbb{R}$ be the indicator function of a ball of unit radius and assume $n> m\geq 1$. For any fixed column of $(\alpha)_{ij}$ with $\alpha_1,\ldots, \alpha_M\in K^\times$ denoting its non-zero entries we have the bound:
\begin{align}
&\frac{ \int_{K_\mathbb{R}^{m \times t }} f(x_1)\cdots f(x_m)\prod_{j=1}^{n-m} f(\sum_{i=1}^m \alpha_{i,j}x_i) dx_1\cdots dx_m  }{ V(t[K:\mathbb{Q}])^m } \\
&\le  (M+1)^{Mtd/2}\cdot \frac{V(t[K:\mathbb{Q}]M)}{V(t[K:\mathbb{Q}])^M} \left(H_\infty(\alpha)^{\frac{2}{d}}+M \N(\alpha)^{2/(dM)}\cdot H_\infty(\alpha)^{\frac{-2}{dM}}\right)^{-\frac{dt}{2}},
\end{align}
where we abbreviate $\N(\alpha)$ for $\N(\alpha_1\cdots\alpha_{M})$. 
\end{lemma}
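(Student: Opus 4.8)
The plan is to discard all but one of the non-pivot constraints, integrate out the rows of $x$ that no longer appear, and estimate the resulting $M$-variable integral by an ellipsoid volume which is in turn controlled by the height inequality of Lemma \ref{le:lower_heigh_bound}. Concretely, let $i_1<\dots<i_M$ be the rows in which the chosen column $j_0$ of $(\alpha_{i,j})$ is non-zero and write $\alpha_l:=\alpha_{i_l,j_0}\in K^\times$. Since $0\le f\le 1$, discarding the other $n-m-1$ factors $f(\sum_i\alpha_{i,j}x_i)$ can only increase the integral, so it suffices to bound $\int_{K_\mathbb{R}^{m\times t}}f(x_1)\cdots f(x_m)\,f\!\big(\sum_{l=1}^M\alpha_l x_{i_l}\big)\,dx$. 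The $m-M$ variables $x_i$ with $i\notin\{i_1,\dots,i_M\}$ occur only through $f(x_i)$; integrating them out produces a factor $\big(\int_{K_\mathbb{R}^t}f\big)^{m-M}=V(td)^{m-M}$ (recall $R=1$), so after dividing by $V(td)^m$ we are reduced to bounding $V(td)^{-M}\cdot I$, where $I:=\int_{K_\mathbb{R}^{M\times t}}f(y_1)\cdots f(y_M)\,f\!\big(\sum_{l=1}^M\alpha_l y_l\big)\,dy$.

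Next I would bound $I$ by an ellipsoid volume. Averaging the $M+1$ inequalities $\|y_l\|\le 1$ ($1\le l\le M$) and $\|\sum_l\alpha_l y_l\|\le 1$ shows that the domain of integration of $I$ is contained in $\{\,\tfrac{1}{M+1}\big(\sum_l\|y_l\|^2+\|\sum_l\alpha_l y_l\|^2\big)\le 1\,\}$. Exactly as in the proof of Lemma \ref{le:intersection_of_ellipsoids} (this is the $n=M+1$, $k=1$ instance of the argument in Lemma \ref{lemma:convexgen} when $M\ge 2$), the corresponding quadratic form restricts on each of the $t$ copies of $K_\mathbb{R}^M$, at the embedding $\sigma$, to $\tfrac{1}{M+1}\big(\Id_M+v_\sigma v_\sigma^{*}\big)$ with $v_\sigma=(\sigma(\alpha_1),\dots,\sigma(\alpha_M))$, of determinant $\tfrac{1}{(M+1)^M}\big(1+\sum_l|\sigma(\alpha_l)|^2\big)$. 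Hence
\[
I\ \le\ (M+1)^{Mtd/2}\,V(Mtd)\prod_{\sigma:K\to\mathbb{C}}\Big(1+\textstyle\sum_{l=1}^M|\sigma(\alpha_l)|^2\Big)^{-t/2}.
\]

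Finally I would convert the product over embeddings into Weil-height data. Consider the point $x=[1:\alpha_1:\cdots:\alpha_M]\in\mathbf{P}^M(K)$. Because the coordinate $1$ generates $\OK$ we have $\N(\langle 1,\alpha_1,\dots,\alpha_M\rangle)=1$, so in the notation of Section \ref{ss:heights}, $H_W(x)=\prod_\sigma\max(1,|\sigma(\alpha_1)|,\dots,|\sigma(\alpha_M)|)=H_\infty(\alpha)$, $H(x)^2=\prod_\sigma\big(1+\sum_l|\sigma(\alpha_l)|^2\big)$ and $M(x)=\N(\alpha_1\cdots\alpha_M)$. Lemma \ref{le:lower_heigh_bound} applied with $N=M+1$ then gives
\[
\prod_{\sigma:K\to\mathbb{C}}\Big(1+\textstyle\sum_{l}|\sigma(\alpha_l)|^2\Big)\ \ge\ \Big(H_\infty(\alpha)^{2/d}+M\,\N(\alpha_1\cdots\alpha_M)^{2/(dM)}H_\infty(\alpha)^{-2/(dM)}\Big)^{d};
\]
raising this to the power $-t/2$, substituting into the bound for $I$, and using $V(td)^{m-M}/V(td)^m=V(td)^{-M}$ so that the ball-volume factors assemble into $V(tdM)/V(td)^M$, yields precisely the claimed inequality. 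There is no essential difficulty here: the only points needing care are performing the dimension reduction \emph{before} invoking convexity — so that the prefactor is $(M+1)^{Mtd/2}V(tdM)$ rather than $(m+1)^{mtd/2}V(mtd)$ — and checking the normalizations (the form \eqref{eq:norm}, the convention $\prod_{\sigma:K\to\mathbb{C}}$, and the identity $H_\infty(\alpha)=H_W([1:\alpha_1:\cdots:\alpha_M])$) so that Lemma \ref{le:lower_heigh_bound} applies verbatim.
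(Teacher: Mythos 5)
Your proposal is correct and follows essentially the same route as the paper's proof: drop all but one non-pivot constraint, separate variables to eliminate the $m-M$ unused rows, bound the resulting $M$-variable integral by an ellipsoid via a uniform convex combination (the $k=1$, single-column instance of Lemma \ref{lemma:convexgen}), and convert the resulting product over embeddings to height data via Lemma \ref{le:lower_heigh_bound} applied to $[1:\alpha_1:\cdots:\alpha_M]$. The paper organizes this as an induction on $m$ (base case $m=M=1$ via Lemma \ref{lemma:genvolumeratio}, inductive step via the variable separation you describe) while you state it directly; the only point genuinely requiring care — performing the dimension reduction \emph{before} the convexity step so that the prefactor is $(M+1)^{Mtd/2}V(Mtd)/V(td)^M$ rather than the $m$-version — you identify and handle correctly.
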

\begin{proof}
We induct on $m$. For any column $j$ of $(\alpha)_{ij}$, first observe that we have the trivial bound 
$$\int_{K_\mathbb{R}^{m \times t }} f(x_1)\cdots f(x_m)\prod_{j=1}^{n-m} f(\sum_{i=1}^m \alpha_{i,j}x_i) dx_1\cdots dx_m\leq \int_{K_\mathbb{R}^{m \times t }} f(x_1)\cdots f(x_m)f(\sum_{i=1}^m \alpha_{i,j}x_i) dx_1\cdots dx_m.$$
We shall prove by induction that the right hand side is bounded. When $m=M=1$, the claimed bound is a special case of Lemma \ref{lemma:genvolumeratio}. Let now $m\geq 2$ arbitrary. If $M=m$
we apply Lemma \ref{lemma:convexgen} and reduce to a term that looks like the height of the class $(1:\alpha_1:\cdots:\alpha_m)$ in projective space. 
Comparing heights as in Lemma \ref{le:lower_heigh_bound} we then obtain the claimed bound. Finally, if $M<m$,  writing $x_1,\ldots, x_M$ for the variables corresponding to rows with non-zero entries in the $j$-th column we have: 
$$\frac{\int_{K_\mathbb{R}^{m \times t }} f(x_1)\cdots f(x_m)f(\sum_{i=1}^m \alpha_{i,j}x_i) dx_1\cdots dx_m}{V(t[K:\mathbb{Q}])^m}=\frac{\int_{K_\mathbb{R}^{m \times t }} f(x_1)\cdots f(x_M)f(\sum_{i=1}^M \alpha_{i}x_i) dx_1\cdots dx_M}{V(t[K:\mathbb{Q}])^M}$$
by separating variables. But the latter is bounded by exactly the desired term by induction. 
\end{proof}
We also record the result taking into account all of the columns: 
\begin{lemma}\label{lemma:grtoprojallcols}
Let $D\in M_{m\times n}(K)$ be a row-echelon matrix of rank $m$ written as $D=(\Id_m\mid \alpha)$ for entries $\alpha_{i,j}\in K$, exactly $M$ entries $\alpha_1,\ldots, \alpha_M$ of them non-zero. Let $f:K_\mathbb{R}^{t}\rightarrow \mathbb{R}$ be the indicator function of a ball of unit radius and assume $n> m\geq 1$. Then 
\begin{align}
&\frac{ \int_{K_\mathbb{R}^{m \times t }} f(x_1)\cdots f(x_m)\prod_{j=1}^{n-m} f(\sum_{i=1}^m \alpha_{i,j}x_i) dx_1\cdots dx_m  }{ V(t[K:\mathbb{Q}])^m } \\
&\le  (m+1)^{mtd/2}\cdot \frac{V(t[K:\mathbb{Q}]m)}{V(t[K:\mathbb{Q}])^m} \left(e^{2\cdot h_\infty(\sqrt{\tfrac{1}{n-m}}\alpha)}+\tfrac{M}{n-m} \N(\sqrt{\tfrac{1}{n-m}}\alpha)^{2/(dM)}\cdot e^{\tfrac{-2}{M}\cdot h_\infty(\sqrt{\tfrac{1}{n-m}}\alpha)}\right)^{-\frac{dt}{2}},
\end{align}
where we abbreviate $\N(\alpha)$ for $\N(\alpha_1\cdots\alpha_{M})$. 
\end{lemma}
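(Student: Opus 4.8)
The plan is to mimic the proof of Lemma \ref{lemma:grtoproj}, but instead of cherry-picking a single column of $\alpha$ to which the convex-combination estimate is applied, one exploits \emph{all} $n-m$ columns simultaneously. First I would apply Lemma \ref{lemma:convexgen} with the choice $k=n-m$, i.e.\ $J=[n-m]$ the full index set of non-pivot columns. This gives
$$\frac{ \int_{K_\mathbb{R}^{m \times t }} f(x_1)\cdots f(x_m)\prod_{j=1}^{n-m} f(\sum_{i=1}^m \alpha_{i,j}x_i)\, dx }{ V(t[K:\mathbb{Q}])^m\cdot V(t[K:\mathbb{Q}])^{-m}R^{mtd}} \le (m+1)^{mtd/2}\prod_{\sigma:K\to\IC}\Bigl(1+\tfrac{1}{n-m}\sum_{j=1}^{n-m}\sum_{i=1}^m |\sigma(\alpha_{i,j})|^2\Bigr)^{-t/2}.$$
Writing $\gamma:=\sqrt{\tfrac{1}{n-m}}\,\alpha$, the inner sum becomes $\sum_{i,j}|\sigma(\gamma_{i,j})|^2 = \sum_{l=1}^M |\sigma(\gamma_l)|^2$ where $\gamma_1,\dots,\gamma_M$ enumerate the non-zero rescaled entries. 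So the right-hand side reads $(m+1)^{mtd/2}\prod_\sigma (1+\sum_l |\sigma(\gamma_l)|^2)^{-t/2}$, which is exactly $(m+1)^{mtd/2}$ times the $t$-th power of the $l^2$-height $H$ of the point $[1:\gamma_1:\cdots:\gamma_M]\in\Pro(K^{M+1})$ — more precisely $\prod_\sigma(\sum_{j=0}^{M}|\sigma(x_j)|^2)^{-t/2}$ with $x_0=1$, $x_l=\gamma_l$, which up to the denominator-norm factor is $H([1:\gamma])^{-td}$ in the notation of Equation (\ref{eq:l2_heigh_on_pro}).

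The second step is to lower-bound this $l^2$-quantity by the $l^\infty$-type expression appearing in the statement, which is precisely the content of Lemma \ref{le:lower_heigh_bound} applied with $N=M+1$ to the point $x=[1:\gamma_1:\cdots:\gamma_M]$. That lemma gives
$$\prod_\sigma\Bigl(\sum_{j=0}^{M}|\sigma(x_j)|^2\Bigr) \ge \N(\langle 1,\gamma\rangle)^2\Bigl(H_W(x)^{2/d}+ M\,\frac{M(x)^{2/(dM)}}{H_W(x)^{2/(dM)}}\Bigr)^{d},$$
and since $x_0=1$ forces $\N(\langle 1,\gamma\rangle)=1$ and $H_W(x)=H_\infty(\gamma)$ (the finite places contribute trivially because of the coordinate $1$, exactly as in the $M=1$ discussion preceding Lemma \ref{lemma:assumenorm}), while $M(x)=\N(\gamma_1\cdots\gamma_M)$ by Lemma \ref{le:about_Mx} with $\N(\langle 1,\gamma\rangle)=1$. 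Substituting $H_\infty(\gamma)=e^{d\,h_\infty(\gamma)}$ and $\N(\gamma_1\cdots\gamma_M)=\N(\sqrt{\tfrac{1}{n-m}}\alpha)$ and raising to the $-t/2$ power yields exactly the claimed bound, after also multiplying and dividing by $V(t[K:\mathbb{Q}]m)/V(t[K:\mathbb{Q}])^m$ to match the normalization (this factor is harmless and simply recorded, as in Lemma \ref{lemma:grtoproj}).

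I do not expect any serious obstacle here: the result is a bookkeeping variant of Lemma \ref{lemma:grtoproj} where the single-column estimate is replaced by the all-columns estimate with $J$ full, and the passage from the $l^2$-height to the mixed $l^\infty$/norm expression is a direct invocation of Lemma \ref{le:lower_heigh_bound}. The only mild care needed is (i) tracking the rescaling by $\sqrt{1/(n-m)}$ consistently through the heights and the norm $\N(\cdot)$, noting that $\N$ of a scalar multiple scales by the power of that scalar's norm, and (ii) checking the degenerate case $M<m$ (some rows of $\alpha$ entirely zero), which is handled exactly as in Lemma \ref{lemma:grtoproj} by separating the irrelevant variables and observing the bound only involves the non-zero entries. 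Since the statement is used downstream only as one of several estimates for $A^{2,h_0}_m$- and $A^{2,\infty}_m$-type matrices, a clean constant is not needed — the displayed inequality is what is required.
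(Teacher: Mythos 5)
Your approach matches the paper's (which is deliberately terse here): apply Lemma \ref{lemma:convexgen} with $k=n-m$ and $J=[n-m]$, recognize the resulting product as the $l^2$-height of $[1:\gamma_1:\cdots:\gamma_M]$ with $\gamma=\sqrt{1/(n-m)}\,\alpha$, and pass to the $l^\infty$/norm expression via Lemma \ref{le:lower_heigh_bound}. That is the right route, and the outcome is correct.

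However, one of your intermediate assertions is wrong, and you should fix it. You claim that ``$x_0=1$ forces $\N(\langle 1,\gamma\rangle)=1$'' and hence $H_W(x)=H_\infty(\gamma)$ and $M(x)=\N(\gamma_1\cdots\gamma_M)$. This is false unless every $\gamma_i$ is an algebraic integer: in general $\N(\langle 1,\gamma\rangle)=D(\gamma)^{-1}\le 1$, with strict inequality exactly when some $\gamma_i$ has a nontrivial denominator (as explained around Equation~\eqref{def:Dalpha}). The point is that the final bound is nevertheless \emph{denominator-free}, but this is because of a cancellation you did not identify rather than because the factor is absent. Writing $\mathfrak n=\N(\langle 1,\gamma\rangle)$, one has $H_W(x)=H_\infty(\gamma)/\mathfrak n$ and $M(x)=\N(\gamma_1\cdots\gamma_M)/\mathfrak n^{M+1}$. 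Substituting into Lemma \ref{le:lower_heigh_bound} with $N=M+1$, both $H_W(x)^{2/d}$ and $M(x)^{2/(dM)}/H_W(x)^{2/(dM)}$ carry the same factor $\mathfrak n^{-2/d}$; raising to the $d$-th power produces $\mathfrak n^{-2}$, which cancels exactly against the $\mathfrak n^2$ relating $\prod_\sigma\bigl(1+\sum_l|\sigma(\gamma_l)|^2\bigr)$ to $H(x)^2$. Include this one-line verification; as written the proof relies on an incorrect claim that happens not to change the answer, and a similar shortcut in a nearby computation would not be so forgiving.

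A small side observation: the computation sketched above yields the coefficient $M$ in front of $\N(\gamma)^{2/(dM)}H_\infty(\gamma)^{-2/(dM)}$, whereas the lemma states $\tfrac{M}{n-m}$. Since $n-m\ge 1$, a larger coefficient inside the parenthesis makes the right-hand side smaller after raising to the power $-dt/2$, so your bound is at least as strong as (indeed strictly stronger than, when $n-m\ge 2$) the one claimed, and the lemma follows a fortiori. There is no issue of correctness here, but it is worth being aware of the mismatch so you do not spend time trying to reproduce the weaker constant.
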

Note that we use absolute heights in the statement to obtain the right result independently of whether $\sqrt{n-m}\in K$.
\begin{proof}
    We apply Lemma  \ref{lemma:convexgen} for the full number of columns. This yields a term that looks like the height of the class $(1:\sqrt{\tfrac{1}{n-m}}\alpha_1:\cdots:\sqrt{\tfrac{1}{n-m}}\alpha_M)$ in projective space. Comparing heights as in Lemma \ref{le:lower_heigh_bound} we then obtain the claimed bound. 
\end{proof}

We may now sum up these contributions over units to obtain similarly to Proposition \ref{prop:mintersections}:

\begin{proposition}\label{prop:GrWeilunitsum}
    Assume Hypothesis \ref{hyp:Lehmer} and its notations. Assume $n>m\geq 2$. There exist explicit positive constants $C,\varepsilon_1>0$ uniform in $d,t$ such that for all $D=(\Id_m\mid \alpha_{ij})$ in $A_m^2\setminus A_m^{2,0}$ the following holds:  write
     $$t_0=\frac{2r_K \cdot m(m+1)(n-m)\cdot\log(2+12c_0^{-1}+2\log
   (n-m)\cdot c_0^{-1})}{d\log (s)},$$
where $r_K$ is the rank of the unit group and $s= \min(\tfrac{64}{27}, e^{\tfrac{c_1}{3}}, \cosh^3(c_1))>1$ is a constant depending only on the choice of number fields. Let $f$ denote the indicator function of a ball of radius $R$ and let $\alpha_1,\ldots,\alpha_M$ for $n-m+1\leq M\leq m(n-m)$ denote the nonzero entries of $(\alpha)_{ij}$. 
Write $D_\beta=(\Id_m\mid \beta\alpha)$ for $\beta\in (\calO_K^\times)^M$, where we scale the nonzero entries $\alpha_i\mapsto\beta_i\alpha_i$ and $D(\alpha)$ is as defined in \ref{def:Dalpha}. Then for any $t> t_0$ we have the bound:

\begin{align}
&\frac{1}{ V(td)^m R^{mtd} }\sum_{\beta\in(\calO_K^\times)^M}  \int_{K_\mathbb{R}^{m \times t }} f(xD_\beta) dx_1\cdots dx_m  \\
&\le  C\cdot \omega_K^M \cdot(td\pi)^{m/2}\cdot \max (N(\alpha)^\frac{-t}{4M}, N(\alpha)^\frac{-tm}{(m+1)M})\cdot D(\alpha)^{\tfrac{t}{2(m+1)}+\tfrac{r_kM}{ed}}\cdot  e^{-\varepsilon_1\cdot d\cdot (t-t_0)}.
\end{align}
Moreover, we may e.g. choose $\varepsilon_1= \tfrac{1}{2}\log(\min\{ \tfrac{4}{3}, e^{\frac{c_1}{3(m+1)}}, \cosh\left(c_1\right)\})$ and $C=\frac{4}{1-e^{d(t_0-t)/2}}$.

\end{proposition}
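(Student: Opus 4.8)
The overall strategy mirrors the proof of Proposition \ref{prop:mintersections}, but with the volume-ratio input coming from Lemmas \ref{lemma:grtoproj} and \ref{lemma:grtoprojallcols} instead of Lemma \ref{lemma:genvolumeratio}. First I would fix a column of $(\alpha)_{ij}$ that contains a non-unit entry (such a column exists since $D\notin A_m^{2,0}$) and apply Lemma \ref{lemma:grtoproj} to that column, so that the integral $\int f(xD_\beta)$ is bounded by $(M'+1)^{M'td/2}\cdot \tfrac{V(tdM')}{V(td)^{M'}}\cdot (H_\infty(\beta'\alpha')^{2/d}+M'\N(\beta'\alpha')^{2/(dM')}H_\infty(\beta'\alpha')^{-2/(dM')})^{-dt/2}$ for the $M'$ non-zero entries $\alpha'$ of that column — but this alone loses control of the other columns, so in parallel I would also keep Lemma \ref{lemma:grtoprojallcols}, which bounds the same integral by a term involving $h_\infty$ of all $M$ scaled entries simultaneously. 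Taking the geometric mean (or minimum) of the two bounds lets me simultaneously (a) extract a Dedekind-zeta-summable factor $\N(\alpha)^{-\theta t}$ for an appropriate exponent $\theta$ (this is what the $\max(\N(\alpha)^{-t/4M},\N(\alpha)^{-tm/((m+1)M)})$ in the statement records), and (b) retain exponential decay in the \emph{Weil height} $h_\infty(\alpha\beta)$ of the full tuple, which is what drives the sum over units. As in Proposition \ref{prop:mintersections} I would normalise by Lemma \ref{lemma:assumenorm} so that $\N(\alpha_i)\geq 1$ for every non-zero entry.

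With a bound of the shape $C_0\cdot(\text{combinatorial/gamma factor})\cdot \N(\alpha)^{-\theta t}\cdot D(\alpha)^{\delta t}\cdot g(h_\infty(\alpha\beta))^{-dt/2}$ in hand, where $g$ is a convex-combination function bounded below by something like $\min\{\tfrac43,e^{c_1/3(m+1)},\cosh(c_1)\}$ once $h_\infty\geq c_1$, I would split the sum over $\beta\in(\calO_K^\times)^M$ exactly as in Proposition \ref{prop:mintersections}: a finite ``small-height'' part with $h_\infty(\alpha\beta)<c_0/2$, controlled by the triangle-inequality argument of Lemma \ref{lemma:unitcountonecoord} giving at most $\omega_K^M$ such $\beta$ and using $h(\alpha\beta)\geq c_1$ to split further according to whether $D(\alpha)<e^{dc_1/4}$ or not; and an infinite ``large-height'' tail, dyadically decomposed by the value of $h_\infty(\alpha\beta)$, where Lemma \ref{lemma:unitcount} supplies the polynomial-in-$n$ count of units in each dyadic shell and the exponential decay of $g(\cdot)^{-dt/2}$ beats it once $t>t_0$. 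The Gamma factor $\tfrac{V(tdm)}{V(td)^m}$ is handled by Lemma \ref{le:gamma_ratio}, which is where the $(td\pi)^{m/2}$ and the saving $m^{-mtd/2}$ (absorbed into the exponential decay) come from; one checks the $(m+1)^{mtd/2}$ blow-up from Lemma \ref{lemma:convexgen} is dominated by $m^{mtd/2}\cdot(\text{const})$ for $m\geq 2$, or more honestly that the net exponential rate after combining all of these is still strictly negative, which pins down the admissible $s>1$ and hence the formula for $t_0$.

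Summing the resulting geometric series gives the $C=\tfrac{4}{1-e^{d(t_0-t)/2}}$ shape of the constant. The threshold $t_0=\tfrac{2r_K\,m(m+1)(n-m)\log(2+12c_0^{-1}+2\log(n-m)c_0^{-1})}{d\log s}$ arises, just as in Proposition \ref{prop:mintersections}, from requiring that the dyadic-shell count $\bigl(\tfrac{n+S-1}{S}\cdot(\ldots)\bigr)^{(r_1+r_2-1)M}$ (with $M\leq m(n-m)$ and an extra $m+1$ from the convex-combination loss) be outweighed by $g(\cdot)^{-d(t-t_0)/2}$, the $\log(2+12c_0^{-1}+2\log(n-m)c_0^{-1})$ being the logarithm of the first shell's count after the $\sqrt{1/(n-m)}$-rescaling in Lemma \ref{lemma:grtoprojallcols} shifts heights down by $\tfrac12\log(n-m)$. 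The main obstacle I anticipate is bookkeeping the two competing estimates from Lemmas \ref{lemma:grtoproj} and \ref{lemma:grtoprojallcols} so that the \emph{same} choice of exponents simultaneously yields a convergent zeta factor and a decay rate $\varepsilon_1$ independent of $d$ and $t$; in particular one must be careful that the $\N(\alpha)$-power extracted is small enough ($<1$ per variable, after the $\tfrac1{m+1}$ and $\tfrac{1}{4m(n-m)}$ splittings) for the later sum over ideals in Theorem \ref{thm:generalBogobound} to converge, while still leaving enough height-decay to kill the unit counts. Everything else is a routine, if lengthy, repetition of the Proposition \ref{prop:mintersections} template.
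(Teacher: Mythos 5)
Your overall template is reasonable, but there is a genuine gap in the central step. You propose to bound each term by a \emph{blend} (geometric mean, or minimum) of the bound from Lemma \ref{lemma:grtoproj} applied to a single column containing a non-unit and the bound from Lemma \ref{lemma:grtoprojallcols}, hoping that the first provides height decay and the second provides the norm factor. The difficulty is that the column you pick merely for having a non-unit entry need not carry any appreciable share of $\N(\alpha)$; it could well have $\N(\alpha_j)=1$. In that case Lemma \ref{lemma:grtoproj} applied to it gives no norm factor at all, and Lemma \ref{lemma:grtoprojallcols}, after absorbing $(m+1)^{mtd/2}$ into the $\Gamma$-factor ratio via Lemma \ref{le:gamma_ratio}, still carries a residual $e^{td/2}$ which is \emph{not} dominated by $f_m(h_\infty(\sqrt{1/(n-m)}\alpha\beta))^{-dt/2}$ when $h_\infty(\alpha\beta)$ is small. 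So for $\beta$ of small shifted height your geometric mean does not simultaneously give both ingredients, which is exactly the regime you need most.

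The paper's proof does not blend the two lemmas on the same $\beta$. It partitions the $\beta$-sum into $A_m^{2,h_0}$ (scaled entries of bounded height) and $A_m^{2,\infty}$ (some entry of height $>h_0$), and uses Lemma \ref{lemma:grtoprojallcols} \emph{only} for the second piece, where the large height beats the $e^{td/2}$ loss. For the first piece it uses Lemma \ref{lemma:grtoproj} on one column, and the missing key step in your plan is the combinatorial claim that one can always select a column $\alpha_j$ with $k$ nonzero entries satisfying \emph{simultaneously} $\N(\alpha_j)\geq\N(\alpha)^{k/M}$ and ($k\geq 2$ or $\alpha_j\not\in(\OK^\times)^k$). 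This is proved by a pigeonhole argument: if all non-unit entries lie in columns with multiple entries, a multi-entry column carries the norm; otherwise some single-non-unit-entry column must carry it. That claim is precisely what reconciles the zeta-summable $\N(\alpha)$ factor with a column on which Lemma \ref{lemma:grtoproj} delivers decay (the three cases $k\geq 2$, large $D(\alpha_j)$ with $k=1$, small $D(\alpha_j)$ with $k=1$ yield the $\tfrac43$, $e^{c_1/3(m+1)}$, $\cosh(c_1)$ ingredients of $\varepsilon_1$, and the $\max(\N(\alpha)^{-t/4M},\N(\alpha)^{-tm/((m+1)M)})$ factor). Without that claim your argument does not close, and the blending idea would at best recover weaker constants after a considerably more delicate exponent-tracking exercise.
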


\begin{proof}
The proof proceeds similar to Proposition \ref{prop:mintersections} and uses Lemmas \ref{lemma:grtoproj} and \ref{lemma:grtoprojallcols}. 
We first record a count of unit $M$-tuples $\beta$ with bounded height after scaling by $\alpha$. 
Note that $D(\alpha)\in\mathbb{Z}_{\geq 1}$ by definition and moreover for any $1\leq i\leq M$ we have that 
$$\max(1, \N(\alpha_i)^{-1})\leq D(\alpha_i)\leq D(\alpha).$$
We may therefore apply Lemma \ref{lemma:unitcount} and bound
\begin{align}
    &\card \{\beta\in(\mathcal{O}_K^\times)^M\ \mid \ \tfrac{1}{d}\log H_\infty(\alpha\beta)\leq B\}\\
    \leq~&  \omega_K^M\cdot\prod_{i=1}^M \left(\tfrac{B+\max(0, \log(\N\left(\alpha_i\right)^{\frac{-1}{d}}))
+\tfrac{c_0}{2}}{\tfrac{c_0}{2}}\right)^{r_K}\\
    \leq~& \omega_K^M\cdot \left(\tfrac{B+\log
   (D\left(\alpha\right)^{\frac{1}{d}}))
+\tfrac{c_0}{2}}{\tfrac{c_0}{2}}\right)^{r_KM}.\\
\end{align}

 Note also that we will systematically use the bounds on the volume ratios involving unit balls in Lemma \ref{le:gamma_ratio} when applying Lemma \ref{lemma:grtoproj}. The approximation $\tfrac{V(ktd)}{V(td)^k}\approx k^{-ktd/2}$ will be factored into our estimates for $1\leq k\leq m$ whereas the error term bound in the Stirling approximation $p_k(t,d):=\frac{(td\pi)^{(k-1)/2}}{\sqrt{k}}\cdot e^{k/(6td)}$ ultimately yields the factor $(td\pi)^{m/2}$ in the statement of the proposition. \par 
\textbf{Type }$\mathbf{A_m^{2,h_0}}$ terms. We first estimate the sum for terms $D_\beta\in A_m^{2,h_0}$. We claim that since $D\in A_m^2\setminus A_m^{2,0}$, there exists a column of $D$ with $k$ non-zero entries $\alpha_j=(\alpha_{j1},\ldots, \alpha_{jk})$ satisfying 
\begin{equation}
    \N(\alpha_j)\geq \N(\alpha)^{\frac{k}{M}}\text{ and }\left(\alpha_j\notin( \OK^{\times} )^{k}\text{ or }k\geq 2.\right)
\end{equation}
Indeed, consider the nonempty set $J\subset \{1,\ldots,n\}$ of columns with multiple non-zero entries. If all non-zero entries of $D$ outside of $J$ are units, then we are done since $\OK^\times$-entries have unit norm and thus the norm condition is also satisfied for one of the columns of $J$. It remains to deal with the case when all the columns in $J$ fail the norm condition. 
But then the set $J'\subseteq \{1,\cdots...,n\} \setminus J$ of columns of $D$ with exactly one non-unit entry must be non-empty. Since at least one column in all of $\{1,\cdots,n\}$ must have the norm condition, 
in this case it will be a column in $J'$. Hence we get a column with the desired property. \par
Let $\alpha_j=(\alpha_{j1},\ldots, \alpha_{jk})$ henceforth denote such a column with its $k$ non-zero entries. Among the $M$ nonzero entries of $D$, the indices $\{j1,\ldots, j_k\}$ pick out a $k$-element subset of $\{1,\ldots,M\}$. Given $\beta\in(\mathcal{O}_K^\times)^M$, we shall therefore in what follows write $\alpha_j\beta$ for the $k$-tuple of algebraic numbers $\alpha_j=(\alpha_{j1}\beta_{j1},\ldots, \alpha_{jk}\beta_{jk})$. We apply Lemma \ref{lemma:grtoproj} to the column $\alpha_j$ in order to establish the proposition for terms in $\mathbf{A_m^{2,h_0}}$.

This yields, incorporating the unit counts and Stirling approximation terms above: 
\begin{align}
& 
    \sum_{\beta \in (\OK^{\times})^{M}}
\frac{ \int_{K_\mathbb{R}^{m \times t }} f(xD_\beta) dx_1\cdots dx_m  }{ V(td)^m R^{mtd} 
 }\\
    &\le (1+\tfrac{1}{k})^{ktd/2}\cdot p_k(t,d)\cdot \sum_{\substack{\beta\in(\calO_K^\times)^M\\ D_\beta\in  A_m^{2,h_0}}}\left(H_\infty(\alpha_j\beta)^{\frac{2}{d}}+k \N(\alpha_j)^{\frac{2}{dk}}\cdot 
    H_\infty(\alpha_j\beta)^{\frac{-2}{dk}}\right)^{-\frac{dt}{2}}\\
    &\leq (1+\tfrac{1}{k})^{ktd/2}\cdot p_k(t,d)\cdot \card \{\beta\in(\mathcal{O}_K^\times)^M\mid \tfrac{1}{d}\log H_\infty(\alpha_j\beta)\leq h_0\}\cdot f_k(\alpha_j)^{-\frac{dt}{2}}\\
    &\leq (1+\tfrac{1}{k})^{ktd/2}\cdot p_k(t,d)\cdot \omega_K^M\cdot 
    \left(\frac{h_0+\log(D\left(\alpha\right)^{\frac{1}{d}}))+\frac{c_0}{2}}{\tfrac{c_0}{2}}\right)^{r_KM}
    \cdot f_k(\alpha_j)^{-\frac{dt}{2}},
\end{align}
writing $p_k(t,d)=\frac{(td\pi)^{(k-1)/2}}{\sqrt{k}}\cdot e^{k/(6td)}$ and setting 
$$f_k(\alpha_j):= \min_{\substack{\beta\in(\calO_K^\times)^M\\D_\beta\in  A_m^{2,h_0}}} H_\infty(\alpha_j\beta)^{\frac{2}{d}}+k \N(\alpha_j)^{\frac{2}{dk}}\cdot H_\infty(\alpha_j\beta)^{\frac{-2}{dk}}.$$
We wish to give a lower bound on $f_k(\alpha_j)$. To that end, recall that by Hypothesis \ref{hyp:Lehmer} there is a lower bound 
\begin{equation}\label{eq:HDbound}
    H_\infty(\alpha_j\beta)\cdot D(\alpha_j)=H_\infty(\alpha_j\beta)\cdot D(\alpha_j\beta)\geq e^{dc_1}
\end{equation}
for some $c_1>0$ as long as $\alpha_j\beta\notin\mu_K^k$. Moreover we remark that by definition $D(\alpha_j)\leq D(\alpha)$. We distinguish two cases: 
\begin{description}
    \item[Case 1: ]The denominators are large so that $D(\alpha_j)\geq e^{\frac{1}{3}dc_1}$ or we have at least $k\geq 2$ non-zero entries $(\alpha_{j1},\dots,\alpha_{jk})$.
	    We then simply bound $f_k(\alpha_j)$ by taking its minimum as a function of the Weil height. 
	    It occurs when the equality $$H_\infty(\alpha_j\beta)^{\frac{2}{d}}=\N(\alpha_j)^{\frac{2}{d(k+1)}}$$ is satisfied and we obtain that 
    \begin{equation}
	    \label{eq:bigdenom}
   f_k(\alpha_j)^{-\frac{dt}{2}}\leq \N(\alpha_j)^{-\frac{t}{k+1}}\cdot (1+k)^{-\frac{1}{2}dt}\text{ together with }\left(D(\alpha_j)\geq e^{\frac{1}{3}dc_1}\text{ or }k\geq 2\right) . \end{equation}

We therefore have in the case where $D(\alpha_j)\geq e^{\frac{1}{3}dc_1}$ that
$$(1+\tfrac{1}{k})^{ktd/2} \cdot f_k(\alpha_j)^{-\frac{dt}{2}}\cdot D(\alpha)^{-\tfrac{t}{2(m+1)}}\leq \N(\alpha_j)^{-\frac{t}{k+1}} \cdot e^{-\tfrac{td}{2}\cdot\left(\tfrac{c_1}{3(m+1)}\right)} \cdot \left(
\frac{(k+1)^{(k-1)}}{k^k}\right)^{\frac{td}{2}}.
$$
If $k=1$, this gives us $$(1+\tfrac{1}{k})^{ktd/2} \cdot f_k(\alpha_j)^{-\frac{dt}{2}}\cdot D(\alpha)^{-\tfrac{t}{2(m+1)}}\leq \N(\alpha_j)^{-\frac{t}{k+1}} \cdot e^{-\tfrac{td}{2}\cdot\left(\tfrac{c_1}{3(m+1)}\right)} ,
$$
otherwise we know that 
$$\frac{(k+1)^{(k-1)}}{k^k} \le 
\frac{3}{4} \text{ for }k \ge 2
$$
and therefore under the assumption that either $D(\alpha_j)\geq e^{\frac{1}{3}dc_1}\text{ or }k\geq 2$, we can conclude
   $$(1+\tfrac{1}{k})^{ktd/2} \cdot f_k(\alpha_j)^{-\frac{dt}{2}}\cdot D(\alpha)^{-\tfrac{t}{2(m+1)}}\leq \N(\alpha_j)^{-\frac{t}{k+1}} \cdot e^{-\tfrac{td}{2}\cdot\min\left(\log\left(\tfrac{4}{3}\right),\tfrac{c_1}{3(m+1)}\right)}.$$
Thus, taking into account that $\N(\alpha_j )\ge \N(\alpha)^{\frac{k}{M}}$, we get that for any $k \ge 1$
$$\N(\alpha_j )^{\frac{1}{k+1}}\ge \N(\alpha)^{\frac{k}{M(k+1)}} \ge\min(\N(\alpha)^{\frac{m}{(m+1)M}},\N(\alpha)^{\frac{1}{2M}}) ,$$ 
where we upper or lower bound the exponent depending on whether $\N(\alpha)\leq 1$ or not. So we have 
 $$(1+\tfrac{1}{k})^{ktd/2} \cdot f_k(\alpha_j)^{-\frac{dt}{2}}\cdot D(\alpha)^{-\tfrac{t}{2(m+1)}}\leq \max (N(\alpha)^\frac{-t}{2M}, N(\alpha)^\frac{-tm}{(m+1)M}) \cdot e^{-\tfrac{td}{2}\cdot\min\left(\log\left(\tfrac{4}{3}\right),\tfrac{c_1}{3(m+1)}\right)}.$$

    \item[Case 2: ] 
	    The denominators satisfy $D(\alpha_j)< e^{\frac{1}{3}dc_1}$ and $k=1$. 
	    Then note that by our assumptions $\alpha_j\notin \OK^\times$ and 
	    therefore for any $\beta\in\calO_K^\times$ we have 
	    that $\alpha_j\beta\notin\mu_K$. Hence we deduce via \eqref{eq:HDbound} that $H_\infty(\alpha_j\beta)\geq e^{2dc_1/3}$. Moreover, we may rewrite
    $$f_k(\alpha_j)\geq \min_{\substack{\beta\in(\calO_K^\times)^M\\D_\beta\in  A_m^{2,h_0}}} \left(\N(\alpha_j)^{ \frac{1}{2d}}\cdot g\left(\frac{H_\infty(\alpha_j\beta)}{\N(\alpha_j\beta)^{1/4}}\right)\right)\text{ for }g(x)=x^{\frac{2}{d}}+x^{- \frac{2}{d}}$$
    and given that $g$ is increasing in the range $[1,\infty[$ 
    and $H_\infty(\alpha_j\beta)\geq \N(\alpha_j\beta)$ we get
\begin{equation}
g\left(\frac{H_\infty(\alpha_j\beta)}{\N(\alpha_j\beta)^{\frac{3}{4}}}\right) \geq 
g\left(H_\infty(\alpha_j\beta)^{\frac{3}{4}} \right) 
\end{equation}
   so that we can bound 
    \begin{align}\label{eq:smalldenomone}
	    (1+\tfrac{1}{k})^{ktd/2} \cdot f_k(\alpha_j)^{-\frac{dt}{2}}
     &\leq \N(\alpha_j)^{-\frac{t}{4}}\cdot(1+\tfrac{1}{k})^{ktd/2} \cdot g(e^{\frac{1}{2}dc_1})^{-dt/2}\\
     &\leq \N(\alpha_j)^{-\frac{t}{4}}\cdot \cosh(c_1)^{-dt/2}.
     \end{align}

Taking into account that 
$\N(\alpha_j )\ge \N(\alpha)^{\frac{k}{M}}$ and $k=1$, we can write\begin{align}%\label{eq:smalldenomone}
	    (1+\tfrac{1}{k})^{ktd/2} \cdot f_k(\alpha_j)^{-\frac{dt}{2}}
     &\leq \N(\alpha)^{-\frac{t}{4M}}\cdot \cosh(c_1)^{-dt/2}.
     \end{align} 

\end{description}

Putting all of these cases and bounds together, we obtain the upper bound on the volume ratio
\begin{align}
&\sum_{\substack{\beta\in(\calO_K^\times)^M\\D_\beta\in  A_m^{2,h_0}}} 
\frac{ \int_{K_\mathbb{R}^{m \times t }} f(xD_\beta) dx_1\cdots dx_m  }{ V(td)^m R^{mtd} 
 }\\
 &\le 3\cdot p_m(t,d)\cdot \max (N(\alpha)^\frac{-t}{4M}, N(\alpha)^\frac{-tm}{(m+1)M})\cdot D(\alpha)^{\tfrac{t}{2(m+1)}}\cdot \omega_K^M \cdot\left(\tfrac{h_0+\log (D\left(\alpha\right)^{\frac{1}{d}}))+\tfrac{c_0}{2}}{\tfrac{c_0}{2}}\right)^{r_KM}\cdot S^{-dt/2},
\end{align}

where  $p_m(t,d)= \tfrac{(td\pi)^{(m-1)/2}}{\sqrt{m}}\cdot e^{m/(6td)}$ and $S>1$ is given by
$$S=\min\{ \tfrac{4}{3}, e^{\frac{c_1}{3(m+1)}}, \cosh\left(c_1\right)\}.$$ 
Note that the various values of $S$ correspond to the cases when $k\geq 2$, $D(\alpha_j)\geq e^{\frac{1}{3}dc_1}$ or the remaining case. 

\par
It now suffices to find $t_0$ large enough so that
$$\left(\frac{h_0+\log(D\left(\alpha\right)^{\frac{1}{d}}))+\frac{c_0}{2}}{\tfrac{c_0}{2}}\right)^{r_KM}\cdot S^{-dt/2}\leq e^{-\varepsilon_1\cdot d\cdot (t-t_0)} \cdot
D(\alpha)^\frac{r_kM}{ed},$$

with $\varepsilon_1$ as in the statement of the proposition. By Jensen's inequality we may bound  
\begin{align}
     \left(\frac{h_0+\log(D\left(\alpha\right)^{\frac{1}{d}}))+\frac{c_0}{2}}{\tfrac{c_0}{2}}\right)^{r_KM}
    % (1+2c_0^{-1}h_0+2c_0^{-1}\log(  D(\alpha_j)^{\frac{1}{d}} ))^{r_K\cdot M}  
    \le
    \  2^{r_kM-1}\left( \left(1+\tfrac{2h_0}{c_0}\right)^{r_kM}+\left(\tfrac{2}{dc_0}\log D(\alpha) \right)^{r_KM}\right)
\end{align}
and we examine how each individual term behaves with growing $d,t$. Viewed as a function in $d\geq 1$, we may estimate $\left(\tfrac{\log D(\alpha_j)}{d}\right)^d\leq e^{\tfrac{\log D(\alpha_j)}{e}}$ and therefore obtain
\begin{equation}\label{eq:Dcontribution}
	\left(\frac{ 4\log D(\alpha_j)}{dc_0}\right)^{r_KM}\leq (\tfrac{4}{c_0})^{r_kM}\cdot D(\alpha_j)^\frac{r_kM}{ed}.
\end{equation}
The upper bound on $\sum_{\substack{\beta\in(\calO_K^\times)^M\\D_\beta\in  A_m^{2,h_0}}} 
\frac{ \int_{K_\mathbb{R}^{m \times t }} f(xD_\beta) dx_1\cdots dx_m  }{ V(td)^m R^{mtd} 
 }$ therefore holds as claimed in the proposition provided that 
$$t_0\geq \frac{2r_K\cdot M }{d\log S}\cdot \max\left\{\log\left(\tfrac{4}{c_0}\right), \log\left(2+4\tfrac{h_0}{c_0}\right)\right\}.$$
Note that for $t\geq t_0$ we also have $3\cdot p_m(t,d)\leq (td\pi)^{m/2}$. This concludes our dealings with the $A_m^{2,h_0}$-type terms.\par 
\textbf{Type }$\mathbf{A_m^{2,\infty}}$ terms.
By Lemma \ref{lemma:grtoprojallcols}, the sum
\begin{align}
\Sigma_{n,m}^\infty & := \sum_{\substack{\beta\in(\calO_K^\times)^M\\ D_\beta\in  A_m^{2,\infty}}}(\tfrac{m+1}{m})^{mtd/2}\left(e^{2\cdot h_\infty(\sqrt{\tfrac{1}{n-m}}\alpha\beta)}+\tfrac{M}{(n-m)^2} \N(\alpha)^{2/(dM)}\cdot e^{\tfrac{-2}{M}\cdot h_\infty(\sqrt{\tfrac{1}{n-m}}\alpha\beta)}\right)^{-\frac{dt}{2}}\\
    &\geq \sum_{\substack{\beta\in(\calO_K^\times)^M\\ D_\beta\in  A_m^{2,\infty}}}\frac{ \int_{K_\mathbb{R}^{m \times t }} f(xD_\beta) dx_1\cdots dx_m  }{ V(td)^m R^{mtd} 
 }
\end{align}
provides an upper bound and it suffices to estimate $\Sigma_{n,m}^\infty$.
For $D_\beta \in\mathbf{A_m^{2,\infty}}$ we have by assumption that the heights are bounded below by 
\begin{equation}
  h_\infty(\alpha\beta)\geq \max_{1\leq i\leq M}h_\infty(\alpha_i\beta_i)\geq h_0.
  \label{eq:assumption_h_0}
\end{equation}

Abbreviating $f_m(x)= e^{2x}+\tfrac{M}{(n-m)^2} \N(\alpha)^{\frac{2}{dM}}e^{-2\frac{x}{M}}$,
we may rewrite  
$$
\Sigma_{n,m}^\infty  = \sum_{\substack{\beta\in(\calO_K^\times)^M\\ D_\beta\in  A_m^{2,\infty}}}(\tfrac{m+1}{m})^{mtd/2}
f_m\left(h_\infty(\sqrt{\tfrac{1}{n-m}}\alpha\beta)\right).
$$
Now observe that 
\begin{equation}
	\log(\sqrt{n-m})+h_{\infty}\left(\tfrac{1}{\sqrt{n-m}} \alpha \beta\right) = \tfrac{1}{[K(\sqrt{n-m}):\mathbb{Q}]}
	\cdot\sum_{\sigma:K(\sqrt{n-m})\to\mathbb{C}}\max_{1\leq j\leq M}\max(\log\sqrt{n-m},\log(\vert\sigma(\alpha_j\beta_j)\vert)) \geq h_{\infty}(\alpha \beta).
\end{equation}
Hence, we know that for any $B\ge 1$
\begin{equation}
	h_{\infty}\left(\tfrac{1}{\sqrt{n-m}} \alpha \beta\right) \leq B
	\Rightarrow  h_{\infty}(\alpha\beta)\leq B+\tfrac{1}{2}\log(n-m)
\end{equation}
and therefore we have the inclusion of sets
\begin{align}
	& \left\{\beta \in ( \OK^{\times} )^{M} \mid D_\beta \in A_m^{2, \infty}, h_{\infty}\left( \frac{1}{\sqrt{n-m}} \alpha \beta\right) \leq B \right\} \\
	 \subseteq & \left\{\beta \in ( \OK^{\times} )^{M} \mid  h_0 \leq \tfrac{1}{d} \log\left( H_{\infty}\left( \alpha \beta\right)\right) \leq B + \tfrac{1}{2} \log(n-m) \right\}.
\end{align}

We may therefore bound the sum $\Sigma_{n,m}^\infty$ by 

\begin{align}
    & ((1+\tfrac{1}{m})^m)^{td/2}\cdot \sum_{i=1}^\infty\card \{\beta\in(\mathcal{O}_K^\times)^M\mid h_0\leq\tfrac{1}{d}\log H_\infty(\alpha\beta)\leq h_0+i\}\cdot f_m(h_0+i-1-\tfrac{1}{2}\log(n-m))^{-\frac{dt}{2}}\\
    &\leq e^{td/2}\cdot \omega_K^M\cdot \sum_{i=1}^\infty \left(\tfrac{h_0+i+\log
   (D\left(\alpha\right)^{\frac{1}{d}}))
+\tfrac{c_0}{2}}{\tfrac{c_0}{2}}\right)^{r_KM}\cdot f_m(h_0+i-1-\tfrac{1}{2}
\log(n-m))^{-\frac{dt}{2}},
\end{align}

where for the second inequality we count units via Lemma \ref{lemma:unitcount} as before. The inequality
$$H_\infty\Big(\sqrt{\tfrac{1}{n-m}}\alpha\beta\Big) \geq \N\left(\sqrt{\tfrac{1}{n-m}}\alpha\right)^{\frac{1}{M}}$$ 
allows us to simply estimate
	$$f_m(x) \ge \N(\alpha)^{\frac{1}{dM}}\cdot e^x \text{ for } x= h_0+i-1 -\tfrac{1}{2}\log(n-m)\text{ and }i\in\mathbb{N}.$$
Using Jensen's inequality and bounding the contribution of $D(\alpha)$ to unit counts as in Equation \eqref{eq:Dcontribution}, we therefore obtain 
\begin{align}
    \Sigma_{n,m}^\infty\leq \omega_K^M\cdot \N(\alpha)^{\frac{-t}{2M}}\cdot \sum_{i=1}^\infty 
   \exp \left(-\tfrac{d(t-t_0)}{2}\cdot (h_0+i-2-
\tfrac{1}{2}\log(n-m))\right)
\end{align}
where we have chosen
$$t_0\geq \frac{2r_K\cdot M }{d}\cdot\sup_{i\in \mathbb{Z}_{\ge 1}}\frac{\max\left\{\log\left(2+\frac{4}{c_0}\left(  h_0+i \right)\right),\log(\tfrac{4}{c_0})\right\}}{h_0+i-2-\tfrac{1}{2}\log(n-m)}.$$
Observe that the term in $i=1$ attains the maximum. We may now make a choice of threshold height  $$h_0=2+\tfrac{1}{2}\log(n-m).$$
This yields the condition 
$$t_0\geq \frac{2r_K\cdot M }{d}\cdot\log\left(2+12c_0^{-1}+2\log
   (n-m)\cdot c_0^{-1}\right).$$
The result follows by bookkeeping of all the bounds obtained, noting that we may bound $\log(S)^{-1}\leq (m+1)\log(s)^{-1}$ for any $m\geq 2$. Similarly the explicit constants can be chased through the arguments.
\end{proof}
With this in hand, we are ready to tackle: 
\begin{proof}[Proof of Theorem \ref{thm:generalBogobound}]
First, note that it suffices to prove the statement fixing pivot columns and some number $M$ of nonzero entries in the last $n-m$ columns of $D$. We have $M\geq n-m$ and the contributions for matrices in $A_m^{2,0}$ when $M=n-m$ are dealt with in Proposition \ref{prop:projsumoverideals} (we apply it with $M=n-m$ and $k=4$). All of these contributions must be taken into account when expliciting the constants in the asymptotic. \par

Second, we claim that $$\mathfrak{D}(D)\geq D(\alpha)=\N\left( \langle1,\alpha_1,\ldots, \alpha_M\rangle\right)^{-1},$$ 
where $(\alpha_1,\ldots, \alpha_M)$ are the non-zero entries of $D$ in the non-pivot columns. Note that a sharper result can be obtained by taking all of the Pl\"ucker coordinates of $D$ into account, see Part 2. of Proposition \ref{pr:result_about_volumes}, but the claim suffices for our purposes. 
To prove the claim, observe that $\mathfrak{D}(D)$ is by definition the index as a sub-lattice of $\calO_K^m$ of the set of $(c_1,\ldots,c_m)\in \calO_K^m$ such that for each column $1\leq i\leq n$ we have $\sum_{j=1}^m c_jD_{ij}\in \calO_K$. This amounts to a linear condition modulo the integral ideal $I_i=(\alpha_{i1},\ldots,\alpha_{is})^{-1}$ where $i1,\ldots ,is$ are the indices of the subset of $(\alpha_1,\ldots, \alpha_M)$ in the $i$-th column of $D$. Considering multiple columns, $(c_1,\ldots,c_m)$ must lie in an intersection of hyperplanes modulo $J=\sum_{i=m+1}^{n-m}I_i\subset\calO_K$. For every prime $\mathfrak{p}\mid J$, we then get that by construction the $c_i$ satisfy at least one linear equation 
$$\sum_{j=1}^m c_jD_j\equiv 0 \mod \mathfrak{p}^{\ord_\mathfrak{p}(J)}$$
with at least one $c_j\neq  0 \mod \mathfrak{p}^{\ord_\mathfrak{p}(J)}$. In other words, this forces $(c_1,\ldots,c_m)$ into the pre-image of a hyperplane under the reduction map $\calO_K^m\to (\calO_K/\mathfrak{p}^{\ord_\mathfrak{p}(J)})^m$, which then has index $\mathfrak{p}^{\ord_\mathfrak{p}(J)}$. By the Chinese remainder theorem, we therefore get that $\mathfrak{D}(D)\geq \N(J)$. But we have that $D(\alpha)=\N(J)$ and the claim follows. \par 

By the claim and  Proposition \ref{prop:GrWeilunitsum}, the proof of the theorem thus reduces to establishing convergence of the sum  
 $$\sum_{\alpha\in (K^\times)^M\setminus(\calO_K^\times)^M} \max (N(\alpha)^\frac{-t}{4M}, N(\alpha)^\frac{-tm}{(m+1)M})\cdot D(\alpha)^{\tfrac{t}{2(m+1)}+\tfrac{r_kM}{ed}}\cdot D(\alpha)^{-t}.$$
 Summing over ideals as in Proposition \ref{prop:projsumoverideals}, it therefore suffices to bound 
 $$\zeta_K(t-\tfrac{t}{2(m+1)}-\tfrac{r_kM}{ed})^{-1}\cdot\sum_{\substack{I\subset \calO_K\\ I \text{ integral ideal}}}\N(I)^{-t+\tfrac{t}{2(m+1)}+\frac{r_kM}{ed}}\prod_{1\leq i\leq M}\sum_{J\subset \calO_K}\N(J)^{-\frac{t}{4M}}\N(I)^\frac{tm}{(m+1)M}.$$
 
 We see that this is bounded by $Z(K,t,n,m)$ as claimed, and we record the additional condition on $t$ to ensure convergence of the zeta values. The rest is keeping track of bounds on $t$ and explicit exponents for the various cases. 
%Summing over ideals as in the proof of Proposition \ref{prop:projsumoverideals}, this becomes a ratio of zeta values.

\end{proof}
\subsection{General moments using the Bogomolov property}
Summarizing the results of this section, we obtain the following main theorem: 

\begin{theorem}\label{thm:maingeneralmomentsBogo}
Let $\mathcal{S}$ denote any set of number fields satisfying Hypothesis \ref{hyp:Lehmer} and let $c_0, c_1$ denote the resulting uniform constants. Fix a moment $n\geq 2$.
There exist constants $0<C_\mathcal{S},\varepsilon_\mathcal{S}<\infty$ uniform in $d,t$ such that the following holds: let $t_0$ denote
     $$ \sup_{K\in\mathcal{S}}\left(\frac{r_K n(n+1)^2}{d}\cdot\frac{\log(2+12c_0^{-1}+2\log
	     (n-1)\cdot c_0^{-1})}{\log \left( \min\{ \frac{64}{27},e^{\frac{1}{3}c_1},\cosh^3(c_1)\} \right)},
     \frac{2r_K(n-1)}{d}\cdot\frac{\log(17/8)}{\log\left(f_{n-1}(\tfrac{3}{4}c_0)\right)}\right),$$
where $f_m(x):=\frac{\exp(x)+m\exp(-\frac{x}{m})}{m+1}$ and $r_K$ is the rank of the unit group. 
We then have for any $t>t_0$ and for any $K\in \mathcal{S}$ of degree $d$ that the $n$-th moment $\mathbb{E}[\rho(\Lambda)^n]$ of the  number of nonzero $\calO_K$-lattice points in an origin-centered ball of volume $V$ in $K_\mathbb{R}^t$ satisfies: 
\begin{align}
       \omega_K^ne^{-V/\omega_K}\sum_{r=0}^\infty\frac{r^n}{r!}(\tfrac{V}{\omega_K})^r &\leq \mathbb{E}[\rho(\Lambda)^n]\\
       &\leq \omega_K^ne^{-V/\omega_K}\sum_{r=0}^\infty\frac{r^n}{r!}(\tfrac{V}{\omega_K})^r+C_\mathcal{S}\cdot \omega_K^{\tfrac{n^2}{4}}(td)^{\tfrac{n-2}{2}}\cdot e^{-\varepsilon_\mathcal{S}d(t-t_0)}\cdot (V+1)^{n-1}\cdot Z(K,t,n),\\
    \end{align}
    where $0\leq Z(K,t,n)=\frac{\zeta_K(\frac{1}{2(m+1)}t-\frac{1}{e}m(n-m))\cdot\zeta_K(\frac{t}{n^2})^{\frac{1}{4}n^2}}{\zeta_K(t-1)}$. Moreover, it suffices to take 
    $$\varepsilon_\mathcal{S}= \tfrac{1}{2}\log(\min\{ \tfrac{4}{3}, e^{\frac{c_1}{3n+2}}, f_{n-1}(\tfrac{3}{4}c_1)\}).$$
    The constant $C_\mathcal{S}$ may as well be chased down explicitly in terms of $n,\varepsilon_\mathcal{S}$.
\end{theorem}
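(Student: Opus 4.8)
The plan is to feed $g=\ind_{B}\otimes\cdots\otimes\ind_{B}$, with $B\subseteq K_\mathbb{R}^{t}$ the origin-centered ball of volume $V$, into the higher-moment identity of Corollary \ref{re:non-zero_terms}, and to split the summation over row-reduced echelon matrices $D$ of rank $m$ into the three families $A_m$, $A^1_m$, $A^2_m$ of Section \ref{se:poisson}. The lower bound is then immediate: every summand $\mathfrak{D}(D)^{-t}\int g(xD)\,dx$ is nonnegative, every $D\in A_m$ has no zero column and hence does occur in the identity, and Lemma \ref{le:poisson_term} (with $\vol(B)=V$ replacing $V(t[K:\mathbb{Q}])$ throughout its proof, which changes nothing) evaluates $\sum_{m=1}^{n}\sum_{D\in A_m}\mathfrak{D}(D)^{-t}\int g(xD)\,dx$ to exactly $\omega_K^{n}e^{-V/\omega_K}\sum_{r\ge 0}\frac{r^{n}}{r!}(V/\omega_K)^{r}=\omega_K^{n}m_n(V/\omega_K)$. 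This same evaluation is the main term of the upper bound, so everything reduces to bounding the contribution of $\bigsqcup_{m}(A^1_m\sqcup A^2_m)$.

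First I would discard the top range: for $m=n$ the only rank-$n$ row-reduced echelon matrix is $\Id_n\in A_n$, so $A^1_n=A^2_n=\emptyset$; and for $m=1$ the entire slice $\sum_{\alpha\in(K^\times)^{n-1}}D(\alpha)^{-t}\vol(B\cap\alpha_1^{-1}B\cap\cdots\cap\alpha_{n-1}^{-1}B)$ of the identity (see Proposition \ref{prop:reformulation}) is handled in one stroke by Theorem \ref{thm:mainballintersection} with $M=n-1$, which produces $\vol(B)\,\omega_K^{n-1}\bigl(1+C_{n-1}Z(K,t,n-1,k)e^{-\varepsilon_{n-1}d(t-t_0)}\bigr)$; since $A_1$ already accounts for $\omega_K^{n-1}\vol(B)$ of this, the leftover $A^1_1\sqcup A^2_1$ contributes only the displayed exponentially small error. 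Thus the nontrivial work is for $2\le m\le n-1$: the $A^1_m$-terms are bounded by Theorem \ref{th:poisson_dist}, giving a contribution at most $C\,(\tfrac{\sqrt3}{2})^{td}\vol(B)^{m}$ with $C$ at most polynomial in $d$ (the cardinality $\card A^1_m$, polynomial in $\omega_K$ hence in $d$, being absorbed by the exponential), and the $A^2_m$-terms are bounded by Theorem \ref{thm:generalBogobound}, giving at most $C_\mathcal{S}\,\omega_K^{m(n-m)}(td)^{(m-1)/2}Z(K,t,n,m)e^{-\varepsilon_\mathcal{S}d(t-t_0^{(m)})}\vol(B)^{m}$.

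Assembling, I would use $\vol(B)^{m}=V^{m}\le(V+1)^{n-1}$ for $1\le m\le n-1$, the bound $m(n-m)\le n^{2}/4$ so that $\omega_K^{m(n-m)}\le\omega_K^{n^{2}/4}$, the bound $(td)^{(m-1)/2}\le(td)^{(n-2)/2}$, the monotonicity (from $\zeta_K\ge 1$ together with $4m(n-m)\le n^{2}$) that makes each $Z(K,t,n,m)$ dominated by the $Z(K,t,n)$ of the statement, and the elementary observation that $(\tfrac{\sqrt3}{2})^{td}\le e^{-\varepsilon d(t-t_0)}$ once $\varepsilon$ is taken small enough. One then sets $t_0$ to be the maximum of the rank thresholds produced by Theorems \ref{th:poisson_dist}, \ref{thm:mainballintersection} (with $M=n-1$) and \ref{thm:generalBogobound} (over $2\le m\le n-1$) — in particular $t_0$ exceeds $n$, so the hypothesis $n<t$ of Theorem \ref{th:poisson_dist} and the convergence requirement $t\ge n+1$ are automatic — and $\varepsilon_\mathcal{S}$ to be the minimum of the associated decay rates, namely $\tfrac12\log\min\{\tfrac{4}{3},e^{c_1/(3n+2)},f_{n-1}(\tfrac{3}{4}c_1)\}$. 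The order of magnitude $t_0=O_\mathcal{S}(n^{3}\log\log n)$ follows from $r_K\le d$, $\omega_K=O(d\log\log d)$, and the uniform positive lower bounds on the logarithmic denominators $\log\min\{\tfrac{64}{27},e^{c_1/3},\cosh^{3}(c_1)\}$ and $\log f_{n-1}(\tfrac{3}{4}c_0)$.

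The only real difficulty is bookkeeping. The three error inputs are phrased with subtly different normalizations (some against $\vol(B)^{m}$, some against $V(td)^{m}R^{mtd}$), different polynomial prefactors in $td$ and in $\omega_K$, different zeta-value factors, and — most importantly — different rank thresholds and decay exponents; fusing them into the single inequality of the statement without degrading the $O_\mathcal{S}(n^{3}\log\log n)$ behaviour of $t_0$ requires care, in particular in checking that converting $(\tfrac{\sqrt3}{2})^{td}$ and the various $e^{-\varepsilon_\mathcal{S}d(t-t_0^{(m)})}$ to a common $e^{-\varepsilon_\mathcal{S}d(t-t_0)}$ costs only an adjustment of $C_\mathcal{S}$ and $\varepsilon_\mathcal{S}$, and that the polynomial-in-$\omega_K$ cardinalities of $A^1_m$ and $A^2_m$ never outpace the genuine exponential decay in $d$ supplied by Hypothesis \ref{hyp:Lehmer}.
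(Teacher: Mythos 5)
The proposal is correct and follows the same route as the paper: decompose the sum in Corollary \ref{re:non-zero_terms} into the main $A_m$-terms (evaluated by Lemma \ref{le:poisson_term}), the $A^1_m$-terms (Theorem \ref{th:poisson_dist}), the $m=1$ slice handled en bloc via Theorem \ref{thm:mainballintersection} with $M=n-1$, and the $A^2_m$-terms for $2\le m\le n-1$ via Theorem \ref{thm:generalBogobound}, then take the worst case over $m$ for $t_0$ and the minimum over $m$ for $\varepsilon_\mathcal{S}$, absorbing the milder $(\sqrt3/2)^{td}$ decay and polynomial prefactors into the constants. Your bookkeeping remarks (the monotonicity of the zeta factor via $4m(n-m)\le n^2$, the observation $A^1_n=A^2_n=\emptyset$, and the check that $\log(2/\sqrt3)\ge\varepsilon_\mathcal{S}$) make explicit some steps that the paper's proof leaves to the reader, but the substance is identical.
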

\begin{proof}
    This follows from our previous results, namely the terms with $m=1$ are dealt with in Theorem \ref{thm:mainballintersection} (we simply put $k=4$). The error terms in $A_m^2$ for $m\geq 2$ are bounded via Theorem \ref{thm:generalBogobound}, keeping the values of $2\leq m\leq n$ that give the worst bound on $t_0$. The zeta factors from $A_m^2$ are the larger ones. The contributions of terms in $A_m^1$ for $m\geq 2$ decay exponentially by Theorem \ref{th:poisson_dist} and they are thus easily handled error terms. 
    Finally, the main term contributions for $2\leq m\leq n-1$ are computed in Lemma \ref{le:poisson_term}. The explicit exponent $\varepsilon_\mathcal{S}$ is found by taking the smallest over all the different terms and the constant $C_\mathcal{S}$ can be chased down similarly as an enumeration of cases as well as geometric sums bounded in terms of $\varepsilon_\mathcal{S}$ and several counts, such as Stirling numbers, which depend only on the moment.
\end{proof} 
A few comments on Theorem \ref{thm:maingeneralmomentsBogo} are in order. First, the bound on $t$ is $t_0=O(n^3\log\log n)$ as $n$ increases with an implicit constant only depending on the number fields. For specific setups, especially for small moments where the contributions are covered in Theorem \ref{thm:mainballintersection}, the bound as well as the zeta factor can be sharpened slightly. Similarly, the explicit exponent may be optimised; Theorem \ref{thm:maingeneralmomentsBogo} emphasizes a general result for reasonable and explicit bounds, and we make no claim as to optimality of these. Recall also that $\omega_K=O(d\log\log d)$ so that the theorem indeed exhibits exponential decay in $d,t$ of the non-Poisson terms provided the zeta factors do not grow exponentially in $d$. \par
Second, one may trivially take $\mathcal{S}$ to be a constant number field $K$. Hypothesis \ref{hyp:Lehmer} is then satisfied and we obtain convergence of the moments of the number of $\omega_K$-tuples of lattice points inside a ball of volume $V$ towards the moments of a Poisson distribution of mean $V/\omega_K$ for any number field $K$ and large enough number of copies $t$. We record a version of this statement: 
\begin{corollary}\label{cor:PoissonforKfixed}
    Let $K$ be any number field of fixed degree $d$. Let $c_0, c_1$ denote the constants bounding the Weil height on $\OK$ and $K$ as in \ref{hyp:Lehmer} and fix a moment $n\geq 2$.
    Let $t_0$ be as in Theorem \ref{thm:maingeneralmomentsBogo}.
   %  $$ t_0=\max\left(\frac{r_K n(n+1)^2}{d}\cdot\frac{\log(2+12c_0^{-1}+4\log
	   %  (n-1)\cdot c_0^{-1})}{\log \left( \min\{ \frac{64}{27},e^{\frac{1}{3}c_1},\cosh^3(c_1)\} \right)},
   % \frac{2r_K(n-1)}{d}\cdot\log_2^{-1}\left(f_{n-1}(\tfrac{3}{4}c_0)\right)\right),$$
    %where $f_m(x):=\frac{\exp(x)+m\exp(-x/m)}{m+1}$ and $r_K$ is the rank of the unit group. 
    We then have for any $t>t_0$ that the $n$-th moment $\mathbb{E}[\rho(\Lambda)^n]$ of the  number of nonzero $\calO_K$-lattice points in an origin-centered ball of volume $V$ in $K_\mathbb{R}^t$ satisfies: 
\begin{align}
       \omega_K^ne^{-V/\omega_K}\sum_{r=0}^\infty\frac{r^n}{r!}(V/\omega_K)^r &\leq \mathbb{E}[\rho(\Lambda)^n]\\
       &\leq \omega_K^ne^{-V/\omega_K}\sum_{r=0}^\infty\frac{r^n}{r!}(V/\omega_K)^r+C_K\cdot t^{(n-2)/2}\cdot e^{-\varepsilon_K(t-t_0)}\cdot (V+1)^{n-1},\\
    \end{align}
    for constants $C_K,\varepsilon_K>0$ uniform in $t$. Moreover, we may take 
    $$\varepsilon_K= \tfrac{1}{2}\log(\min\{ \tfrac{4}{3}, e^{\frac{c_1}{3n+2}}, f_{n-1}(\tfrac{3}{4}c_1)\}).$$
    The constant $C_K$ may as well be chased down explicitly in terms of $n,\varepsilon_K,\omega_K$ and Dedekind zeta values of $K$.
\end{corollary}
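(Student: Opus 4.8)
\textit{Proof plan.} The plan is to obtain the statement as a direct specialization of Theorem~\ref{thm:maingeneralmomentsBogo} to the singleton family $\mathcal{S}=\{K\}$, the only real content being the verification of the hypotheses and the bookkeeping that folds every quantity depending solely on $K$ (equivalently, on the now-fixed degree $d=[K:\mathbb{Q}]$) into the constant $C_K$. First I would record, as already remarked in Section~\ref{se:Bogosection}, that a single number field always satisfies Hypothesis~\ref{hyp:Lehmer}: by Northcott's theorem the set $\{\alpha\in K:h(\alpha)\le B\}$ is finite for every $B$, so the infimum of $h$ over $K^\times\setminus\mu_K$ (resp. of $h_\infty=h$ over $\OK\setminus\{0,\mu_K\}$) is attained and strictly positive, yielding admissible constants $c_1,c_0>0$; explicit values are furnished by Voutier's effective form of Dobrowolski's theorem \cite{D1979,Voutier96} together with the trivial denominator bound $h(\alpha)\ge \tfrac1d\log D(\alpha)$ for non-integral $\alpha$.

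Next I would simply invoke Theorem~\ref{thm:maingeneralmomentsBogo} with this $\mathcal{S}=\{K\}$. The lower bound $\omega_K^n\,m_n(V/\omega_K)\le\mathbb{E}[\rho(\Lambda)^n]$ is then literally the conclusion of that theorem, and $t_0$ is by definition the one appearing there (the $\sup_{K\in\mathcal{S}}$ collapses to the single value determined by the unit rank $r_K$ and $d$). For the upper bound, the error term supplied by Theorem~\ref{thm:maingeneralmomentsBogo} is
$$C_\mathcal{S}\cdot \omega_K^{n^2/4}\,(td)^{(n-2)/2}\,e^{-\varepsilon_\mathcal{S}\,d(t-t_0)}\,(V+1)^{n-1}\,Z(K,t,n),$$
and here $d$ is a fixed constant: $\omega_K^{n^2/4}$ and $d^{(n-2)/2}$ are constants depending only on $K$ and $n$, while $(td)^{(n-2)/2}=d^{(n-2)/2}t^{(n-2)/2}$. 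Since $d\ge1$ and $t>t_0$, one has $e^{-\varepsilon_\mathcal{S}\,d(t-t_0)}\le e^{-\varepsilon_\mathcal{S}(t-t_0)}$, so one may take $\varepsilon_K=\varepsilon_\mathcal{S}=\tfrac12\log\big(\min\{\tfrac43,e^{c_1/(3n+2)},f_{n-1}(\tfrac34 c_1)\}\big)$, which is exactly the stated value.

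It then remains to observe that $Z(K,t,n)=\dfrac{\zeta_K(\frac{1}{2(m+1)}t-\frac1e m(n-m))\cdot\zeta_K(t/n^2)^{n^2/4}}{\zeta_K(t-1)}$ is bounded uniformly in $t>t_0$. Indeed, the defining inequalities packaged into $t_0$ guarantee that for $t>t_0$ all arguments of the numerator's Dedekind zeta values are bounded below by some fixed real $>1$; since $\zeta_K$ is positive, strictly decreasing on $(1,\infty)$ with limit $1$, the numerator is bounded above on this range, while $\zeta_K(t-1)^{-1}\le\zeta_K(t_0)^{-1}<\infty$. Hence $Z_0:=\sup_{t>t_0}Z(K,t,n)<\infty$, and setting $C_K:=C_\mathcal{S}\cdot\omega_K^{n^2/4}\cdot d^{(n-2)/2}\cdot Z_0$ — a finite constant depending only on $n$, $\varepsilon_K$, $\omega_K$ and finitely many Dedekind zeta values of $K$ — gives the asserted inequality $\mathbb{E}[\rho(\Lambda)^n]\le \omega_K^n m_n(V/\omega_K)+C_K\,t^{(n-2)/2}e^{-\varepsilon_K(t-t_0)}(V+1)^{n-1}$.

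The proof is essentially pure bookkeeping; the one point requiring a moment's care is the uniform-in-$t$ boundedness of the zeta factor, and even there no genuine obstacle arises because the lower bound on $t$ built into Theorem~\ref{thm:maingeneralmomentsBogo} already keeps every relevant zeta argument in a region $[1+\delta,\infty)$ on which $\zeta_K$ is bounded.
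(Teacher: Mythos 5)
Your proof is correct and takes essentially the same route the paper intends: specialize Theorem~\ref{thm:maingeneralmomentsBogo} to $\mathcal{S}=\{K\}$, check Hypothesis~\ref{hyp:Lehmer} for a single field (Northcott/Kronecker, or effectively Voutier), and absorb $\omega_K^{n^2/4}$, $d^{(n-2)/2}$ and a uniform-in-$t$ bound on $Z(K,t,n)$ into $C_K$. One cosmetic slip: the intermediate inequality $\zeta_K(t-1)^{-1}\le\zeta_K(t_0)^{-1}$ goes the wrong way since $\zeta_K$ is decreasing, but the weaker bound $\zeta_K(t-1)^{-1}\le 1$ (valid because $\zeta_K>1$ on $(1,\infty)$) suffices for the conclusion.
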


Third, it is not entirely trivial that for appropriately large fixed $t,k$ the error term in Theorem \ref{thm:maingeneralmomentsBogo} decays exponentially in $d$ due to the dependence on $K$ of the zeta factor error terms $Z(K,t,n,k)$ in Theorem \ref{thm:maingeneralmomentsBogo}, which a priori could grow exponentially in $d$. Proving bounds in $d$ for the growth does not appear trivial for general number fields. For instance, using lattice-point estimate based methods such as the Dedekind-Weber theorem to count ideals of bounded norm does not appear like a promising approach due to the fact that the best known bounds on the error term for counts of ideals of bounded norm grows exponentially in $d$ (see, e.g., \cite[Corollaire 1.3.]{Angeheights}). Nevertheless, for specific towers of number fields one should be able to prove the desired boundedness (or at least subexponential growth in $d$) for Dedekind zeta values. For instance we have: 
\begin{lemma}\label{lemma:cyclotomiczetabound}
Let $K=\mathbb{Q}(\zeta_n)$ be a cyclotomic field of degree $d=\varphi(n)$. Let $s>1$ be a real number. Then we have that
$$\zeta_K(s)\leq C(s)$$
for some constants $C(s)>0$ uniform in $d$. 
\end{lemma}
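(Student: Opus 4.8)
The plan is to use the Euler product of $\zeta_K$ grouped by the underlying rational primes, exploiting the fact that in $\mathbb{Q}(\zeta_n)$ a rational prime splits into many primes of the same residue degree, so that the bulk of the local factors are much closer to $1$ than the trivial bound $(1-p^{-s})^{-1}$ suggests. Writing $n$ for the conductor, so $d=\varphi(n)$, and for each rational prime $p$ letting $m_p$ be the prime-to-$p$ part of $n$, $f_p=\operatorname{ord}_{m_p}(p)$ the multiplicative order of $p$ modulo $m_p$, and $g_p=\varphi(m_p)/f_p$, the splitting law for cyclotomic fields gives that every prime of $\mathcal{O}_K$ above $p$ has norm $p^{f_p}$ with exactly $g_p$ such primes, hence
\[
\zeta_K(s)=\prod_p\bigl(1-p^{-f_ps}\bigr)^{-g_p},\qquad \log\zeta_K(s)=\sum_p g_p\bigl(-\log(1-p^{-f_ps})\bigr).
\]
Since $p^{-f_ps}\le 2^{-s}$ for $p\ge2$ and $s>1$, one has $-\log(1-p^{-f_ps})\le c_s\,p^{-f_ps}$ with $c_s=(1-2^{-s})^{-1}$, so everything reduces to bounding $\sum_p g_p p^{-f_ps}=\sum_p\frac{\varphi(m_p)}{f_p}p^{-f_ps}$ uniformly in $n$.

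I would then split this according to whether $p\nmid n$ or $p\mid n$. For $p\nmid n$ we have $m_p=n$ and $g_p=d/f_p\le d$, while $p^{f_p}\equiv1\pmod n$ and $p^{f_p}\ge2$; the map $p\mapsto p^{f_p}$ is injective with image contained in the arithmetic progression $\{jn+1:j\ge1\}$, so
\[
\sum_{p\nmid n}\frac{\varphi(m_p)}{f_p}p^{-f_ps}\le d\sum_{p\nmid n}\bigl(p^{f_p}\bigr)^{-s}\le d\sum_{j\ge1}(jn+1)^{-s}\le d\,n^{-s}\zeta(s)\le\zeta(s),
\]
the final step using $d=\varphi(n)\le n$. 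For the $\omega(n)$ primes $p\mid n$ (where $\omega(n)$ is the number of distinct prime divisors of $n$), from $p^{f_p}\equiv1\pmod{m_p}$ we get $p^{f_p}\ge m_p+1$, so each such term is at most $m_p(m_p+1)^{-s}\le m_p^{1-s}$; and since $m_p$ is divisible by every prime factor of $n$ other than $p$, we have $m_p\ge 2^{\omega(n)-1}$, so the total ramified contribution is at most $\omega(n)\,2^{(\omega(n)-1)(1-s)}$, which is bounded over all $\omega(n)\ge1$ by a finite constant $M_2(s)$ since the general term tends to $0$.

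Combining the two estimates yields $\log\zeta_K(s)\le c_s\bigl(\zeta(s)+M_2(s)\bigr)$, hence $\zeta_K(s)\le C(s):=\exp\!\bigl(c_s(\zeta(s)+M_2(s))\bigr)$, a bound depending only on $s$. The substantive input is purely the arithmetic of splitting in cyclotomic fields; the rest is elementary estimation. I expect the one point requiring a moment's thought to be the passage from the useless pointwise bound $\zeta_K(s)\le\zeta(s)^d$ to a uniform one — namely, recognising that although $g_p$ may be as large as $d$, it is attached to a local factor governed by $p^{f_p}\ge n+1>d$, and that $p\mapsto p^{f_p}$ embeds the unramified primes into an arithmetic progression of common difference $n$, which is exactly what makes the sum over $p\nmid n$ collapse to $O_s(1)$.
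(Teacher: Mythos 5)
Your proof is correct and follows essentially the same approach as the paper's: both use the Euler product grouped by rational primes via the cyclotomic splitting law, observe that for unramified $p$ the norms $p^{f_p}\equiv 1\pmod n$ lie in an arithmetic progression of common difference $n$ (giving $O_s(1)$ despite the factor of $d$), and bound the $\omega(n)$ ramified primes separately using $p^{f_p}\ge m_p+1$ together with a lower bound on $m_p$ growing with $\omega(n)$ (you use $m_p\ge 2^{\omega(n)-1}$; the paper uses a factorial bound on the largest prime factor — both suffice).
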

\begin{proof}
    We first claim that the Dedekind zeta function of cyclotomic fields $\mathbb{Q}(\zeta_n)$ may be written as
    $$\zeta_K(s)=\prod_{p\in\mathbb{P}}\frac{1}{\left(1-\frac{1}{p^{s\cdot \ord_{n_p}p}}\right)^{\frac{\varphi(n_p)}{\ord_{n_p}p}}},$$
    where $n_p=n\cdot p^{-v_p(n)}$ denotes the prime-to-$p$ part of $n$.\par
    The claim follows from examining for each Euler factor the splitting behaviour of primes above $p$ based on the factorization of the cyclotomic polynomial $\Phi_n(x)$ modulo $p$. For instance, if $p\nmid n$, the number of roots of a factor of $\Phi_n(x)$ modulo $p$ coincides with the size of the orbit of Frobenius acting via multiplication-by-$p$ on $(\mathbb{Z}/n\mathbb{Z})^\times$, and hence the result follows in this case. When $p\mid n$, the same applies to the subextension $\mathbb{Q}(\zeta_{n_p})$ unramified at $p$, and then the remaining extension $K/\mathbb{Q}(\zeta_{n_p})$ is totally ramified at $p$, and the claim follows. \par
    Using the claim, we have the following argument due to Danylo Radchenko: 
    write $\zeta_{K}(s) = T_1 T_2$ where 
\begin{equation}
  T_1 = \prod_{p \mid n} \frac{1}{\left( 1- \frac{1}{p^{s \ord_{n_{p}} p }}\right) ^\frac{\varphi(n_{p}) }{\ord_{  n_{p} } p }}.
\end{equation}
and 
\begin{equation}
	T_2 = \prod_{p \nmid n} \frac{1}{\left( 1- \frac{1}{p^{s \ord_{n} p }}\right) ^\frac{\varphi(n) }{ \ord_{n}p }}  . 
\end{equation}

For $T_2$, we have that 
\begin{equation}
\log T_2 \ll  \sum_{p \nmid n } \frac{\varphi(n) }{ ( \ord_{n} p  ) p^{s \ord_{n} p }}.
\end{equation}
We write $\varphi(n) \le n , \ord_{n} p \ge 1$ and observe that the set $\{p^{\ord_{n} p}\}_{p \in \mathbb{P}}$ lies in $\{n+1,2n+1,\dots\}$. Since they are simply different prime powers, there are no repetitions.

So we have 
\begin{equation}
  \log T_2 \ll n \left(\frac{1}{(n+1)^{s}} + \frac{1}{(2n+1)^{s}}+ \cdots \right) \ll \frac{1}{n^{s-1}} \ll 1.
\end{equation}

Now for $T_1$,
\begin{equation}
\log T_1 \ll  \sum_{p \mid n } \frac{\varphi(n_{p}) }{ ( \ord_{n_p} p  ) p^{s \ord_{n_{p}} p }},
\end{equation}
we again write 
$\varphi(n_{p}) \le n_{p} , \ord_{n_p} p \ge 1$ and use that $p^{\ord_{n_{p}} p} \ge n_{p}$ so this gives us 
\begin{equation}
\log T_1 \ll  \sum_{p \mid n } \frac{1}{n_{p}^{s-1}} = \frac{1}{n^{s-1}}\sum_{p \mid n} p^{\nu_{p}(n) \cdot (s-1)}
\end{equation}
Let $k$ be the number of primes in $n$. Then the largest prime factor of $n$ can be at most $\tfrac{n}{p_{1}p_{2} \cdots p_{k-1}}^{} \le \frac{n}{(k-1)!}$. So we write
\begin{equation}
	\log T_1 \ll  \frac{k}{n^{s-1}} \left( \frac{n}{(k-1)!}\right)^{s-1} .
\end{equation}
This tends to $0$ as $k \rightarrow \infty$ so it must be bounded.
\end{proof}

Finally, we make Theorem \ref{thm:maingeneralmomentsBogo} more explicit for towers of cyclotomic fields: 

\begin{corollary}\label{cor:cyclomoments}
    Consider a sequence of cyclotomic number fields given by $K_i=\mathbb{Q}(\zeta_{k_i})$ of degree $d_i=\varphi(k_i)$ and let $n\geq 2$. Moreover let 
    $$t_0=\max\left\{ 19 n(n+1)^2 \log(52+\tfrac{25}{3}\log(n-1)), \frac{(n-1)\log(\tfrac{17}{8})}{\log(f_{n-1}(\frac{9}{50}))}\right\}.$$
    where $f_{n-1}(x):=\frac{\exp(x)+(n-1)\exp(-\frac{x}{n-1})}{n}$. 
There exists constants $C_n,\varepsilon_n> 0$ uniform in $d_i,t$ such that for any $t>t_0$ and any degree $d_i$ the $n$-th moment $\mathbb{E}[\rho(\Lambda)^n]$ of the  number of nonzero $\calO_K$-lattice points in an origin-centered ball of volume $V$ in $K_\mathbb{R}^t$ satisfies 

\begin{align}
       \omega_{K_i}^ne^{-V/\omega_{K_i}}\sum_{r=0}^\infty\frac{r^n}{r!}(\tfrac{V}{\omega_{K_i}})^r &\leq \mathbb{E}[\rho(\Lambda)^n]\\
											  &\leq \omega_{K_i}^ne^{-V/\omega_{K_i}}\sum_{r=0}^\infty\frac{r^n}{r!}(\tfrac{V}{\omega_{K_i}})^r+C_n\cdot \omega_{K_i}^{\tfrac{n^2}{4}}(td_i)^{\tfrac{n-2}{2}}\cdot e^{-\varepsilon_n\cdot d_i(t-t_0)}\cdot (V+1)^{n-1},\\
    \end{align}
    ergo the moments of the number of $\omega_{K_i}$-tuples of nonzero lattice points approach the moments of a Poisson distribution of mean $V/\omega_{K_i}$ as $d_it\to \infty$. Moreover, we may take 
    $$\varepsilon_n= \tfrac{1}{2}\log(\min\{5^{\frac{1}{36n+24}}, f_{n-1}(\tfrac{\log 5}{16})\}).$$
    The constant $C_n$ may as well be chased down explicitly in terms of $n,\varepsilon_K$ and Dedekind zeta values of $K$.
\end{corollary}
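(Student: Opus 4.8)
The plan is to obtain the corollary as a direct specialization of Theorem~\ref{thm:maingeneralmomentsBogo} to the set $\mathcal{S}$ of all cyclotomic fields, the bulk of the work being the explicit evaluation of the constants and a uniform bound on the Dedekind zeta factors along the tower.

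First I would invoke Corollary~\ref{cor:cyclohypothesis}: every tower of cyclotomic fields satisfies Hypothesis~\ref{hyp:Lehmer} with the uniform constants $c_0=\tfrac12\log\varphi\approx 0.2406$ and $c_1=\tfrac{\log 5}{12}\approx 0.1341$. For $K=\mathbb{Q}(\zeta_k)$ of degree $d=\varphi(k)$ the field is CM (or $\mathbb{Q}$), so $r_1=0$, $r_2=d/2$ and the unit rank is $r_K=d/2-1$; hence $r_K/d<\tfrac12$ uniformly over $\mathcal{S}$. Substituting into the expression for $t_0$ in Theorem~\ref{thm:maingeneralmomentsBogo}, and using $\cosh^3(c_1)\le e^{c_1/3}\le \tfrac{64}{27}$ so that $\log\big(\min\{\tfrac{64}{27},e^{c_1/3},\cosh^3 c_1\}\big)=\log\cosh^3(c_1)>\tfrac1{38}$, together with $2+12c_0^{-1}<52$, $2c_0^{-1}<\tfrac{25}{3}$ and $\tfrac34 c_0>\tfrac9{50}$ (whence $f_{n-1}(\tfrac34 c_0)\ge f_{n-1}(\tfrac9{50})$, since $f_{n-1}$ is increasing on $[0,\infty)$), one checks that the $t_0$ appearing in Theorem~\ref{thm:maingeneralmomentsBogo} is $\le$ the explicit $t_0$ in the statement, so the hypothesis $t>t_0$ of the corollary implies that of Theorem~\ref{thm:maingeneralmomentsBogo}. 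Inserting $c_1=\tfrac{\log 5}{12}$ into $\varepsilon_\mathcal{S}=\tfrac12\log\big(\min\{\tfrac43,e^{c_1/(3n+2)},f_{n-1}(\tfrac34 c_1)\}\big)$ gives $e^{c_1/(3n+2)}=5^{1/(36n+24)}$ and $f_{n-1}(\tfrac34 c_1)=f_{n-1}(\tfrac{\log 5}{16})$; since $5^{1/(36n+24)}<\tfrac43$ for every $n\ge 1$ the term $\tfrac43$ never realizes the minimum, so $\varepsilon_\mathcal{S}$ coincides with the stated $\varepsilon_n$. The Poisson main term $\omega_K^n e^{-V/\omega_K}\sum_{r\ge 0}\tfrac{r^n}{r!}(V/\omega_K)^r$ carries over verbatim.

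It remains to absorb the zeta factor $Z(K_i,t,n)$ into a constant $C_n$ uniform in $d_i$ and $t$. For $t>t_0$ every argument of a Dedekind zeta value occurring in the numerator of $Z(K_i,t,n)$ — namely $\tfrac{t}{2(m+1)}-\tfrac1e m(n-m)$ for $2\le m\le n-1$ and $\tfrac{t}{n^2}$ — exceeds an explicit threshold $s_0(n)>1$ (indeed $t_0=O(n^3\log\log n)$ forces these to be $\ge 2$); since $\zeta_K$ is decreasing on $(1,\infty)$ and $\zeta_{K}(t-1)\ge 1$, we get $0\le Z(K_i,t,n)\le \zeta_{K_i}(s_0(n))^{1+n^2/4}$, and Lemma~\ref{lemma:cyclotomiczetabound} bounds $\zeta_{K_i}(s_0(n))$ by a constant depending only on $n$, uniformly in $d_i$. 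Taking $C_n$ to be the constant $C_\mathcal{S}$ of Theorem~\ref{thm:maingeneralmomentsBogo} times this bound yields the two displayed inequalities; the asymptotic statement then follows from $\omega_{K_i}=O(d_i\log\log d_i)$, which makes the prefactor $C_n\,\omega_{K_i}^{n^2/4}(td_i)^{(n-2)/2}(V+1)^{n-1}$ polynomial in $d_i,t$ while $e^{-\varepsilon_n d_i(t-t_0)}$ decays exponentially as $d_it\to\infty$. The main obstacle is precisely this uniformity in $d_i$ of the zeta factors: for a general Bogomolov tower it could fail, and it is only the cyclotomic structure, exploited via the explicit Euler product in Lemma~\ref{lemma:cyclotomiczetabound}, that makes it hold; everything else is careful but routine numerical verification.
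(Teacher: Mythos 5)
Your proposal is correct and follows the same route as the paper: specialize Theorem~\ref{thm:maingeneralmomentsBogo} to the cyclotomic tower via Corollary~\ref{cor:cyclohypothesis}, verify that the explicit $t_0$ of the corollary dominates the one in the theorem (using $r_K/d<\tfrac12$ for CM fields and the numerical estimates on $c_0,c_1$), match up $\varepsilon_\mathcal{S}$ with the stated $\varepsilon_n$, and absorb the Dedekind zeta factors into $C_n$ via Lemma~\ref{lemma:cyclotomiczetabound}. Your writeup is somewhat more explicit about the arithmetic ($\log\cosh^3 c_1>\tfrac1{38}$, $5^{1/(36n+24)}<\tfrac43$, monotonicity of $f_{n-1}$) than the paper's terse proof, but the idea and all key ingredients coincide.
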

\begin{proof}
    The result follows from Theorem \ref{thm:maingeneralmomentsBogo} using Corollary \ref{cor:cyclohypothesis} and the resulting constants. With these choices, some of the conditions on $t_0$ in Theorem \ref{thm:maingeneralmomentsBogo} simplify. We bound the zeta factors in Theorem \ref{thm:maingeneralmomentsBogo} uniformly in $\varphi(k_i)$ by Lemma \ref{lemma:cyclotomiczetabound}. 
\end{proof}
Again we note that for low moments such as $n=2,3$ better bounds can be achieved, in particular because we can reduce to contributions from projective space heights and Theorem \ref{thm:mainballintersection}. Moreover, as $n$ becomes large, $\frac{(n-1)\log(17/8)}{\log(f_{n-1}(9/50))}\le 45 n^2$ and it suffices to take $t_0\geq  19 n(n+1)^2 \log(52+\tfrac{25}{3}\log(n-1))$ in Corollary \ref{cor:cyclomoments}. 
\section{Odds and ends}
This section is devoted to a couple results and remarks complementing the results established in Section \ref{se:Bogosection}. 
\subsection{No limiting Poisson moments}
Section \ref{se:Bogosection} establishes that under some assumptions on height lower bounds, the $n$-th moments approach the moments of a Poisson distribution even when varying the number fields for a fixed large enough number of copies. We now show that in order to obtain such a behaviour, some assumption on the heights is necessary by exhibiting sequences of number fields $K$ for which moments do not converge to Poisson moments of the expected parameters $V/\omega_K$.

\begin{lemma}\label{le:intlowerbound}
    Let $B$ denote the unit ball in $K_\mathbb{R}^t$ and for $\alpha\in K^\times$ denote by $H_W(\alpha)$ its Mahler measure. Then 
    $$D(\alpha)^{-t}\cdot \sum_{\alpha\in\calO_K^\times}\frac{\vol(B\cap\alpha^{-1}B)}{\vol (B)}\geq \sum_{\alpha\in K^\times}H_W(\alpha)^{-t}.$$
\end{lemma}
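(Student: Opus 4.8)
The plan is to prove the inequality termwise: I will show that for every fixed $\alpha\in K^\times$
\[
D(\alpha)^{-t}\cdot\frac{\vol(B\cap\alpha^{-1}B)}{\vol(B)}\ \geq\ H_W(\alpha)^{-t},
\]
and then sum over $\alpha\in K^\times$, the left-hand side being precisely the ($\vol(B)$-normalized) $m=1$ contribution to the second-moment formula of Theorem \ref{th:weil} for $g=\ind_B\otimes\ind_B$.

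The first step is to factor the Mahler measure according to the places of $K$. Separating finite and infinite places,
\[
H_W(\alpha)=\Big(\prod_{v\nmid\infty}\max\{1,|\alpha|_v\}\Big)\cdot\Big(\prod_{v\mid\infty}\max\{1,|\alpha|_v\}\Big);
\]
the first factor is exactly the norm $D(\alpha)$ of the denominator ideal of $\alpha$ (the factor by which $H_W$ exceeds $H_\infty$, as recalled at the beginning of Section \ref{se:Bogosection}), while the second is $H_\infty(\alpha)=\prod_{\sigma:K\to\mathbb{C}}\max\{1,|\sigma(\alpha)|\}$. Hence $H_W(\alpha)=D(\alpha)\,H_\infty(\alpha)$ and the termwise inequality reduces to the purely archimedean statement
\[
\frac{\vol(B\cap\alpha^{-1}B)}{\vol(B)}\ \geq\ H_\infty(\alpha)^{-t}.
\]

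To prove this I will inscribe an ellipsoid inside $B\cap\alpha^{-1}B$. Identifying $K_\mathbb{R}\cong\mathbb{R}^{r_1}\times\mathbb{C}^{r_2}$ and recalling from Equation (\ref{eq:norm}) that the quadratic form defining $B$ is, up to the fixed scalar $\Delta_K^{-2/[K:\mathbb{Q}]}$, the sum over the $t$ copies and over the embeddings $\sigma$ of $|\sigma(\cdot)|^2$ (a complex place being counted twice), the ellipsoid $\alpha^{-1}B$ is obtained from $B$ by inserting the weights $|\sigma(\alpha)|^2$ into the $\sigma$-blocks. Let $E$ be the ellipsoid obtained by inserting instead the weights $\max\{1,|\sigma(\alpha)|^2\}$. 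Since $\max\{1,|\sigma(\alpha)|^2\}$ dominates both $1$ and $|\sigma(\alpha)|^2$, we have $E\subseteq B$ and $E\subseteq\alpha^{-1}B$, hence $E\subseteq B\cap\alpha^{-1}B$. Finally $E$ is the image of $B$ under the diagonal linear map that scales the $\sigma$-coordinate, in each of the $t$ copies, by $\max\{1,|\sigma(\alpha)|\}^{-1}$; its Jacobian has absolute value $\prod_{\sigma}\max\{1,|\sigma(\alpha)|\}^{-t}=H_\infty(\alpha)^{-t}$, because a real embedding contributes one real coordinate per copy and a complex embedding two, which is exactly how $H_\infty$ weights its factors. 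Therefore $\vol(B\cap\alpha^{-1}B)\ge\vol(E)=H_\infty(\alpha)^{-t}\vol(B)$; multiplying by $D(\alpha)^{-t}$ and summing over $\alpha\in K^\times$ completes the proof.

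The argument is elementary and I anticipate no genuine obstacle; the one point requiring care is the bookkeeping of normalizations — keeping the discriminant factor $\Delta_K^{-2/[K:\mathbb{Q}]}$ inert (it cancels in the volume ratio $\vol(E)/\vol(B)$) and checking that the per-embedding exponents in the Jacobian match the convention $H_\infty(\alpha)=\prod_{\sigma:K\to\mathbb{C}}\max\{1,|\sigma(\alpha)|\}$, in which conjugate complex embeddings are counted separately. It is also worth noting in passing that $\vol(B\cap\alpha^{-1}B)=\vol(B)$ exactly for $\alpha\in\mu_K$, so the bound is sharp on roots of unity and already yields $\sum_{\alpha\in K^\times}H_W(\alpha)^{-t}\ge\omega_K$; it is the extra mass beyond $\omega_K$ — large when the Bogomolov property fails and there are many algebraic numbers of small height — that forces the second moment to exceed the Poisson value $V^2+\omega_KV$.
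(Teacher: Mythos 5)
Your proof is correct and follows the same basic approach as the paper's own: inscribe an axis-aligned ellipsoid inside $B\cap\alpha^{-1}B$ and compare volumes termwise. Your version is in fact the cleaner one. The paper reduces ``without loss of generality'' to $\alpha\in\OK$ and inscribes an ellipsoid of semi-axes $\min\{1,|\sigma_i(\alpha)|\}$, but as written that ellipsoid is \emph{not} contained in $B\cap\alpha^{-1}B$ (it fails at the embeddings with $|\sigma(\alpha)|>1$); it is contained in $\alpha B\cap B$, and one then has to convert using $\vol(B\cap\alpha^{-1}B)=\N(\alpha)^{-t}\vol(\alpha B\cap B)$, a step the paper elides. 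You instead work directly with semi-axes $\max\{1,|\sigma(\alpha)|\}^{-1}$, which lies in $B\cap\alpha^{-1}B$ for arbitrary $\alpha\in K^\times$, so no reduction to integral $\alpha$ is needed. Your explicit factorization $H_W(\alpha)=D(\alpha)\,H_\infty(\alpha)$, separating the finite and infinite places, is exactly the right bookkeeping to see how the $D(\alpha)^{-t}$ factor in the $m=1$ term of the moment formula absorbs the non-archimedean part of the Mahler measure; the paper leaves this implicit in the ``WLOG'' step. Your check that each real embedding contributes one factor and each complex embedding two in the Jacobian is precisely the point that makes the identity $\vol(E)/\vol(B)=H_\infty(\alpha)^{-t}$ (with $H_\infty$ a product over all $d$ embeddings into $\mathbb{C}$) come out right.
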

\begin{proof}
    We may without loss of generality assume $\alpha\in \OK$. In each of the $t$ copies of $K_\mathbb{R}$ and for each nonzero $\alpha\in\calO_K$, the origin-centered ellipsoid of lengths 
    $$\min(1,\vert\sigma_1(\alpha)\vert),\ldots,\min(1,\vert\sigma_d(\alpha)\vert),$$
    where $\sigma_1,\ldots,\sigma_d$ are the embeddings $K\to\mathbb{C}$, is contained inside of $B\cap\alpha^{-1}B$.
\end{proof}
Note that this provides a lower bound for the second moment in termst of the height zeta function of $\mathbb{P}^1(K)$. We deduce: 
\begin{proposition}\label{prop:morethanPoisson}
    Let $f_n$ be a sequence of irreducible polynomials of degree $n$ such that their Mahler measures are uniformly bounded $\forall n$. Let $K_n:=\mathbb{Q}(f_n)$ be the resulting sequence of number fields. Then for any fixed number of copies $t>2$ there exists $C_t>0$ such that over any number field $K_n$ the second moment satisfies
    $$  \mathbb{E}[\rho(\Lambda)^2]\geq C_t+\omega_{K_n}^2e^{-V/\omega_{K_n}}\sum_{r=0}^\infty\frac{r^2}{r!}(V/\omega_{K_n})^r.$$
\end{proposition}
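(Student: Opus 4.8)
The plan is to read the second moment off the integral formula, isolate its deviation from the Poisson prediction as a single explicit nonnegative sum, and then bound that sum below by one term coming from a generator of $K_n$.

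First I would specialize Corollary~\ref{re:non-zero_terms} to $n=2$ with $g=\ind_B\otimes\ind_B$. The $m=2$ part consists of the single matrix $\Id_2$, which has $\mathfrak{D}(\Id_2)=1$ and contributes $\vol(B)^2=V^2$; the $m=1$ part ranges over the matrices $(1,\alpha)$ with $\alpha\in K_n$, $\alpha\neq 0$ (a zero column being forbidden), each with $\mathfrak{D}((1,\alpha))=D(\alpha)$ and integral $\vol(B\cap\alpha^{-1}B)$. Splitting $\alpha\in K_n^\times$ according to whether $\alpha\in\mu_{K_n}$, the matrices lying in $A_1\cup A_2$ reproduce, by Lemma~\ref{le:poisson_term} (or by direct inspection: each $\zeta\in\mu_{K_n}$ gives $D(\zeta)=1$ and $\vol(B\cap\zeta^{-1}B)=\vol(B)$), exactly the Poisson term $\omega_{K_n}^2\,m_2(V/\omega_{K_n})=V^2+\omega_{K_n}V$. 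Hence
\[
\mathbb{E}[\rho(\Lambda)^2]=\omega_{K_n}^2\,m_2(V/\omega_{K_n})+E,\qquad E:=\sum_{\alpha\in K_n^\times\setminus\mu_{K_n}}D(\alpha)^{-t}\,\vol(B\cap\alpha^{-1}B)\ \ge 0 ,
\]
so the task reduces to bounding $E$ below by a positive quantity independent of $n$.

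Next I would apply Lemma~\ref{le:intlowerbound}: keeping only the archimedean embeddings in the ``ellipsoid inside the intersection of two ellipsoids'' comparison gives $\vol(B\cap\alpha^{-1}B)\ge \vol(B)\,H_\infty(\alpha)^{-t}$ for every $\alpha\in K_n^\times$, and since $D(\alpha)\,H_\infty(\alpha)=H_W(\alpha)$ this yields $E\ge V\sum_{\alpha\in K_n^\times\setminus\mu_{K_n}}H_W(\alpha)^{-t}$, a partial height zeta value of $\mathbf{P}^1(K_n)$. I would then extract a single term. Let $\theta_n$ be a root of $f_n$, so $K_n=\mathbb{Q}(\theta_n)$ and $[\mathbb{Q}(\theta_n):\mathbb{Q}]=n=[K_n:\mathbb{Q}]$; by the standard identification of the Mahler measure of the minimal polynomial with the (non-normalised) Weil height one has $H_W(\theta_n)=M(f_n)\le M_0:=\sup_m M(f_m)<\infty$. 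Since $f_n$ is irreducible and non-cyclotomic --- Kronecker's theorem forces $M(f_n)>1$ here, as is realised for instance by $f_n(x)=x^n-2$ with $M(f_n)=2$ --- the element $\theta_n$ has infinite multiplicative order, i.e. $\theta_n\in K_n^\times\setminus\mu_{K_n}$, and therefore
\[
\mathbb{E}[\rho(\Lambda)^2]\ \ge\ \omega_{K_n}^2\,m_2(V/\omega_{K_n})+V\cdot H_W(\theta_n)^{-t}\ \ge\ \omega_{K_n}^2\,m_2(V/\omega_{K_n})+V\cdot M_0^{-t},
\]
so one takes $C_t:=V\cdot M_0^{-t}>0$.

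I do not expect a serious obstacle: the argument is essentially bookkeeping of the $n=2$ integral formula together with a direct application of Lemma~\ref{le:intlowerbound}. The one point needing genuine care is ensuring $\theta_n$ is not a root of unity, so that its contribution falls inside $E$ and not inside the Poisson main term; this is exactly where the hypothesis has to be read as excluding cyclotomic $f_n$ (equivalently $M(f_n)>1$), which I would state explicitly. A second, purely cosmetic point is that $C_t$ necessarily depends on the ball volume $V$ and on the uniform bound $M_0$, not on $t$ alone, since $E\to 0$ as $B$ shrinks; the notation $C_t$ is to be understood with $V$ (and the family $\{f_n\}$) fixed throughout.
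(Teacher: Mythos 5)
Your argument is in substance the paper's: isolate the $m=1$ sum via the $n=2$ integral formula, identify the $\mu_{K_n}$-contribution with the Poisson main term $V^2+\omega_{K_n}V$, and extract a single small-height term using the ellipsoid-inscription bound of Lemma~\ref{le:intlowerbound}. Where you improve on the published proof is that you keep the full sum over $K_n^\times$ and combine $D(\alpha)^{-t}$ with $H_\infty(\alpha)^{-t}$ to form $H_W(\alpha)^{-t}$, whereas the paper drops to the sub-sum over units $\alpha\in\OK^\times$; that reduction only yields a uniform positive contribution if $\theta_n\in\OK^\times$, which holds for the worked example $x^n-x+1$ but not, e.g., for $x^n-2$ (also of bounded Mahler measure), so your route is the right one for the statement as written. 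Your two further remarks are also valid: the hypothesis must tacitly exclude cyclotomic $f_n$, otherwise $\theta_n\in\mu_{K_n}$ collapses into the main term and in fact for cyclotomic towers the excess is $o(1)$ by Corollary~\ref{cor:cyclomoments}, contradicting the claimed inequality; and $C_t$ necessarily depends on $V$ and on $\sup_n H_W(\theta_n)$, as it does in your explicit choice $C_t=V\cdot M_0^{-t}$.
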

\begin{proof}
    From the integral formula, it suffices to show that $\sum_{\alpha\in\calO_K^\times}\frac{\vol(B\cap\alpha^{-1}B)}{\vol (B)}\geq C_t+\omega_{K_n}$
    for some $C_t>0$ not depending on $n$. But this is clear from Lemma \ref{le:intlowerbound} and our assumptions on heights in $K_n$. 
\end{proof}
Needless to say that similar results hold for higher moments as well. We also note that there are many sequences satisfying the assumptions of Proposition \ref{prop:morethanPoisson}. For example, one has limiting results for Mahler measures such as (see \cite{Boydlimit}):   
$$\lim_{n\to \infty}H_\infty(\alpha_n)=1.3815\ldots  \text{, where }\alpha_n^n-\alpha_n+1=0,$$
so where $\alpha_n$ is a root of $f_n(x)=x^n-x+1$. 

\subsection{More general bodies}
\label{se:oddsends}
Although the bounds are more easily derived for indicator functions of balls, we can use spherical symmetrization to obtain results for more general bodies. The quantities appearing in the integral formula will be largest in the spherical case, so that the upper bounds on moments are valid more generally. The same methods as in Rogers' work \cite[Theorem 1,2]{R1956} carry through, so we simply restate: 
\begin{theorem}\label{th:genbody}
Let $g$ be a non-negative compactly supported Riemann integrable function on $K_{\mathbb{R}}^{t\times n}$  and let $g^*$ denote the function obtained by spherical symmetrization. Let $g(\Lambda)$ and $g^*(\Lambda)$ denote the corresponding lattice sum functions over non-trivial lattice points. Then the moments over the space of unimodular $\calO_K$- lattices or over the smaller sets satisfying mean value formulas as in Theorem \ref{th:weil} satisfy: 
\begin{align}
    \mathbb{E}[g(\Lambda)]&=  \mathbb{E}[g^*(\Lambda)]\\
    \mathbb{E}[g(\Lambda)^2]&\leq  \mathbb{E}[g^*(\Lambda)^2]\\
    \mathbb{E}[g(\Lambda)^3]&\leq  \mathbb{E}[g^*(\Lambda)^3]
\end{align}
and moreover if for each constant $c>0$ the set of points with $g(x)>c$ is convex we have that 
$$\mathbb{E}[g(\Lambda)^k]\leq  \mathbb{E}[g^*(\Lambda)^k]$$
   for all $k\geq 4$ as well. 
\end{theorem}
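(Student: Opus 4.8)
The plan is to reduce all four comparisons to a single rearrangement inequality applied term by term to the integral formula. By Theorem~\ref{th:weil}, used in the form of Corollary~\ref{re:non-zero_terms} with the $k$-fold tensor function $g^{\otimes k}$, each moment $\mathbb{E}[g(\Lambda)^k]$ is a finite sum of terms
$$\mathfrak{D}(D)^{-t}\int_{K_\mathbb{R}^{t\times m}}\prod_{j=1}^{k} g\Big(\sum_{i=1}^{m} D_{ij}x_i\Big)\,dx_1\cdots dx_m,$$
indexed by $1\le m\le k$ and by suitable full-rank matrices $D\in M_{m\times k}(K)$; the very same expression with $g$ replaced by $g^*$ computes $\mathbb{E}[g^*(\Lambda)^k]$, and the combinatorial data — the index $\mathfrak{D}(D)$ and the set of admissible $D$ — do not depend on $g$. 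Since $\int g=\int g^*$, the first‑moment identity is then Siegel's theorem, and for $k\ge 2$ it suffices to prove, for each admissible $D$,
$$\int_{K_\mathbb{R}^{t\times m}}\prod_{j=1}^{k} g\Big(\sum_{i=1}^{m} D_{ij}x_i\Big)\,dx \ \le\ \int_{K_\mathbb{R}^{t\times m}}\prod_{j=1}^{k} g^*\Big(\sum_{i=1}^{m} D_{ij}x_i\Big)\,dx.$$
Writing each factor through the layer‑cake formula $g(v)=\int_0^\infty \ind_{\{g>c\}}(v)\,dc$ and expanding the product, this reduces to the set‑theoretic statement: for measurable $G_1,\dots,G_k\subseteq K_\mathbb{R}^t$ and $G_j^*$ the origin‑centred ball of the same volume as $G_j$ for the norm \eqref{eq:norm},
$$\vol\Big\{(x_1,\dots,x_m)\in K_\mathbb{R}^{t\times m}:\ \textstyle\sum_i D_{ij}x_i\in G_j\ (1\le j\le k)\Big\}\ \le\ \vol\Big\{(x_1,\dots,x_m):\ \textstyle\sum_i D_{ij}x_i\in G_j^*\ (1\le j\le k)\Big\}.$$

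This is a multidimensional Brascamp--Lieb--Luttinger (Riesz) rearrangement inequality for the linear forms $\sum_i D_{ij}x_i$, and here the argument of Rogers \cite[Theorems 1 and 2]{R1956} is invoked. The only feature absent from the $\mathbb{Z}$‑lattice case is that the maps $x_i\mapsto D_{ij}x_i$ are multiplication by elements of $K$ on $K_\mathbb{R}^t$; decomposing $K_\mathbb{R}=\prod_v K_v$ over the Archimedean places, each such map acts on the block $K_v^t$ as multiplication by the scalar $\sigma_v(D_{ij})\in K_v$, hence as a similarity of that block, and similarities commute with spherical rearrangement. Thus Rogers' symmetrization goes through place by place: for $k\le 3$ the required inequality is an instance of the Riesz rearrangement inequality, valid in every dimension, which settles $k=2,3$ with no further hypothesis; for $k\ge 4$ the unrestricted multilinear rearrangement inequality fails, so one imposes that every superlevel set $\{g>c\}$ be convex, in which case the inequality follows from a convergent sequence of symmetrizations, none of which decreases the relevant multilinear integral, exactly as in \cite{R1956}. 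Summing the term‑by‑term inequalities over $m$ and $D$ — legitimate since all terms are nonnegative and the series converge for $t\ge k+1$ by Corollary~\ref{co:convergence} — yields the four asserted comparisons.

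The main obstacle is the set‑theoretic rearrangement inequality in the last display; the genuinely new bookkeeping relative to Rogers is to carry out the symmetrization compatibly with the Archimedean place decomposition of $K_\mathbb{R}$ (along which the action of $K$ is by similarities) rather than along arbitrary coordinate hyperplanes, and to check that convexity of the superlevel sets is preserved through this reduction so that the $k\ge 4$ case applies. Once this is arranged, Rogers' symmetrization lemmas apply essentially verbatim, giving equality for the first moment, the inequalities for the second and third, and, under the convexity assumption, the inequality for all higher moments.
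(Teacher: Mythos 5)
Your proposal takes essentially the same approach as the paper: apply the integral formula (Theorem~\ref{th:weil}) to express each moment as a finite sum over $(m,D)$ of Euclidean integrals $\mathfrak{D}(D)^{-t}\int\prod_{j} g(\sum_i D_{ij}x_i)\,dx$, observe that the combinatorial weights $\mathfrak{D}(D)^{-t}$ are independent of $g$, and conclude by applying Rogers' rearrangement inequalities term by term — this is exactly what the paper does by citing \cite{Rogerstwoineq} and \cite[Theorems 1,2]{R1956}. Two small imprecisions in your justifying narrative are worth flagging, though neither breaks the argument: (i) the statement that ``the unrestricted multilinear rearrangement inequality fails for $k\ge 4$'' is historically and mathematically inaccurate — the Brascamp--Lieb--Luttinger inequality holds without convexity for any number of \emph{scalar} linear forms, and Rogers' convexity hypothesis is an artefact of his pre-BLL Steiner-symmetrization treatment; it persists here because the maps $x\mapsto D_{ij}x$ on $K_{\mathbb{R}}^t$ are block-scalar rather than scalar, so BLL does not literally apply; and (ii) the observation that multiplication by $D_{ij}\in K$ is a similarity on each Archimedean block $K_v^t$, while correct, must still be reconciled with the fact that $g^*$ is a \emph{single} spherical rearrangement on the whole of $K_{\mathbb{R}}^t$ (not a product of per-block rearrangements) before Rogers' symmetrization lemmas can be invoked verbatim — you do acknowledge this as the remaining bookkeeping, which is the right assessment.
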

\begin{proof}
    Given the integral formula as in Theorems \ref{th:weil}, this reduces to integral inequalities in Euclidean space, and thus follows from the inequalities in \cite{Rogerstwoineq} in the same way as \cite[Theorems 1,2]{R1956}.
\end{proof}

This yields the following version of the main theorem.

\begin{theorem}\label{thm:genconvex}
     Let $\mathcal{S}$ denote any set of number fields satisfying Hypothesis \ref{hyp:Lehmer} and let $c_0, c_1$ denote the resulting uniform constants. 
     Fix a moment $n\geq 2$. 
     Let $g$ be the characteristic function of a bounded, convex set $S$ in $K_\mathbb{R}^t$ of volume $V$, with the origin as centre and assume that $S$ is fixed by the coordinate-wise action of a cyclic group $\mu_{N_K}\subseteq\mu_K$ of order $N_K$. There exist explicit constants $C_\mathcal{S},\varepsilon_\mathcal{S}>0$ uniform in $d,t$ such that the following holds: for $t_0(\mathcal{S},n)$ as defined in Theorem \ref{thm:maingeneralmomentsBogo} and for all $t > t_0(\mathcal{S},n)$, we have 
\begin{align}
       N_K^n\cdot  m_{n}(\tfrac{1}{N_K} V) &\leq \mathbb{E}[g(\Lambda)^n]
       \leq \omega_K^n \cdot m_n(\tfrac{1}{\omega_K} V)  +C_\mathcal{S}\cdot \omega_K^{\tfrac{n^2}{4}}(td)^{\tfrac{n-2}{2}}\cdot e^{-\varepsilon_\mathcal{S}d(t-t_0)}\cdot (V+1)^{n-1}\cdot Z(K,t,n).\\
    \end{align}

    Here $m_n$ is as defined in (\ref{eq:def_of_poisson}) and $Z(K,t,n),C_\mathcal{S},\varepsilon_\mathcal{S}$ are as in Theorem \ref{thm:maingeneralmomentsBogo}.
   
\end{theorem}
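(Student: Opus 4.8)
## Proof Plan for Theorem \ref{thm:genconvex}

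The plan is to prove the two inequalities separately, each by reduction to a result already in hand. The common starting point is the integral formula of Corollary \ref{re:non-zero_terms} applied to the $n$-fold product $g\otimes\cdots\otimes g$ on $K_\mathbb{R}^{t\times n}$, which computes $\mathbb{E}[g(\Lambda)^n]$; since $g=\ind_S\ge 0$, every summand $\mathfrak{D}(D)^{-t}\int_{K_\mathbb{R}^{t\times m}}g(xD)\,dx$ in that formula is nonnegative, and this is what makes both one-sided estimates available.

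For the lower bound I would retain only the terms indexed by the set $A_m(N_K)$ of row-reduced echelon matrices of rank $m$ with exactly one nonzero entry per column, all nonzero entries lying in the subgroup $\mu_{N_K}\subseteq\mu_K$ — the evident analogue of the set $A_m$ of Lemma \ref{le:poisson_term} with $\mu_K$ replaced by $\mu_{N_K}$. For such $D$ one has $\mathfrak{D}(D)=1$ (the entries are algebraic integers), and since $S$ is invariant under the coordinate-wise action of $\mu_{N_K}$, the computation in the proof of Lemma \ref{le:poisson_term} gives $\int_{K_\mathbb{R}^{t\times m}}g(xD)\,dx=\vol(S)^m=V^m$. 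The count $\card A_m(N_K)$ is the one appearing in that lemma with $\omega_K$ replaced by $N_K$, so summing over $1\le m\le n$ and using the same Touchard polynomial identity produces precisely $N_K^{\,n}\,m_n(V/N_K)$, with $m_n$ as in Equation (\ref{eq:def_of_poisson}). Dropping the remaining nonnegative terms yields $\mathbb{E}[g(\Lambda)^n]\ge N_K^{\,n}\,m_n(V/N_K)$, and this step needs no convergence input since $A_m(N_K)$ is finite.

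For the upper bound I would apply the spherical symmetrization comparison of Theorem \ref{th:genbody}. Because $S$ is convex, every superlevel set $\{x:g(x)>c\}$ is $S$ or $\emptyset$, hence convex, so Theorem \ref{th:genbody} applies for all $n\ge 2$ and gives $\mathbb{E}[g(\Lambda)^n]\le\mathbb{E}[g^*(\Lambda)^n]$, where $g^*=\ind_B$ is the indicator of the origin-centered Euclidean ball $B$ of volume $V$. Now $\mathbb{E}[g^*(\Lambda)^n]$ is exactly the $n$-th moment of the number of nonzero $\calO_K$-lattice points in a ball of volume $V$ studied in Theorem \ref{thm:maingeneralmomentsBogo}; its hypothesis $t>t_0(\mathcal{S},n)$ is the one assumed here, and it bounds this quantity above by $\omega_K^n m_n(V/\omega_K)$ plus $C_\mathcal{S}\,\omega_K^{n^2/4}(td)^{(n-2)/2}e^{-\varepsilon_\mathcal{S}d(t-t_0)}(V+1)^{n-1}Z(K,t,n)$. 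Combining this with the lower bound gives the stated chain of inequalities.

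There is no substantive obstacle beyond bookkeeping; the one point that must be handled correctly is that the symmetry group relevant to the lower bound is $\mu_{N_K}$ rather than all of $\mu_K$ — the body $S$ is only assumed $\mu_{N_K}$-invariant, which is exactly why the Poisson parameter on the left is $V/N_K$ and not $V/\omega_K$. In the extreme case $N_K=\omega_K$ (for instance when $S$ is a ball) the two bounds pinch together up to the exponentially small error, recovering Theorem \ref{thm:maingeneralmomentsBogo}.
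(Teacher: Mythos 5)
Your proposal is correct and is the same argument the paper gives: the upper bound is spherical symmetrization (Theorem \ref{th:genbody}, applicable because the superlevel sets of $\ind_S$ are convex) followed by the ball case of Theorem \ref{thm:maingeneralmomentsBogo}, while the lower bound mirrors Lemma \ref{le:poisson_term} with $\mu_K$ replaced by $\mu_{N_K}$, giving the $N_K^{n-m}\stir{n}{m}V^m$ count and the Touchard identity. The paper's proof is essentially a two-sentence pointer to these same ingredients; you have merely spelled out the bookkeeping.
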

\begin{proof}
The upper bound follows from Theorem \ref{th:genbody} and from our results for the spherical case in Theorem \ref{thm:maingeneralmomentsBogo}. The lower bound follows by symmetry under $\mu_{N_K}$ in the same way as Lemma \ref{le:poisson_term} establishes the lower bound when $S$ is a ball and invariant under the whole $\mu_K$-action.    
\end{proof}
\begin{remark}
    The lower bound in Theorem \ref{thm:genconvex} can be tightened for general bounded convex bodies. For instance, for any chain of subgroups $\{\pm 1\}= G_1<\cdots <G_k=\mu_K$ and bounded convex set $S$, we may stratify $S=\sqcup_{i=1}^k S_i$ by setting 
    $$S_k= \bigcap_{g\in G_k}gS\text{ and }S_{i-1}=\bigcap_{g\in G_{i-1}}gS\setminus S_i$$
    for $1\leq i\leq k$. The lower bound can then be improved (in the notations of Theorem  \ref{thm:genconvex}) to 
    $$ \sum_{i=1}^k  (\card G_i)^n\cdot m_{n}(\tfrac{ \vol(S_i)}{\card G_i}) \leq \mathbb{E}[g(\Lambda)^n]$$
    by applying Theorem \ref{thm:genconvex} to the $G_i$-symmetrized and measurable sets $S_k\sqcup \cdots \sqcup S_i$ and inclusion-exclusion. 
\end{remark}

\newpage
\begin{appendix}
	\section{Proof of Lemma \ref{le:equivalence_of_height}}
  \label{ap:three_page_proof}

It is clear what $\det(D)\mathfrak{D}(D)$ on the right-hand side is measuring. Indeed, first note that 
\begin{equation}
\mathfrak{D}(D) = \card \frac{\mathcal{O}_K^{m}}{\{ v \in \mathcal{O}_K^{m} \mid D^{T}v \in \mathcal{O}_K^{n} \}} = \card
\frac{D^{T} \mathcal{O}_K^{m}}{ D^{T} \mathcal{O}_K^{m} \cap \mathcal{O}_K^{n} } 
\end{equation}
So we have that 
\begin{equation}
\mathfrak{D}(D) \det (D )=
\card \frac{D^{T} \mathcal{O}_K^{m}}{ D^{T} \mathcal{O}_K^{m} \cap \mathcal{O}_K^{n} } 
\det(D; M_{1 \times m }(\mathcal{O}_K))
\end{equation}
is simply the volume of a parallelepiped spanning a $\mathbb{Z}$-basis of the lattice $ \Lambda = D^{T} \mathcal{O}_K^{m} \cap \mathcal{O}_K^{n} = D^{T} K^{n} \cap \mathcal{O}_K^{m}$ (this equality holds since $D^{T}$ is column reduced and $m \le n$). The following tells us that the height $H(S)$ is also calculating this volume. 

\begin{proposition}
\label{pr:result_about_volumes}

Suppose that $w_1, \dots,w_m \in K^{n}$ are a set of $K$-linearly independent vectors spanning a subspace $S$. 
Let $\Lambda = \mathcal{O}_K^{n}\cap K$.
If we consider the lattice $ \Lambda' = \OK w_1 + \OK w_2 + \dots + \OK w_m$, 
then $[\Lambda:\Lambda' \cap \Lambda]<\infty$ and $[\Lambda':\Lambda' \cap \Lambda]<\infty$. So we have that 
\begin{equation}
	[\Lambda : \Lambda' \cap \Lambda] \det \Lambda = [\Lambda': \Lambda' \cap \Lambda]  {\det \Lambda'} .
\end{equation}

We claim: 
\begin{enumerate}
\item
	Set $N = \binom{n}{m}$ and $[x_1,\dots,x_{N}] = \iota(S)$ (as defined in \S \ref{ss:heights}).
Then
% Let $\sigma_1,\dots,\sigma_r : K \rightarrow \mathbb{C}$ be all of the embeddings of $K$ in to the field $\mathbb{C}$. 
\begin{equation}
\det \Lambda' = \prod_{i=1}^{[K:\mathbb{Q}]} \sqrt{  \sum_{ j=1}^{N} |\sigma_i(x_{j})|^{2} }.
\end{equation}
\item
\begin{equation}
	\frac{[\Lambda: \Lambda' \cap \Lambda]}{[\Lambda': \Lambda' \cap \Lambda]} =  \N(\langle x_1,\dots,x_N \rangle ).
\label{eq:claim2}
\end{equation}
Here the right hand side denotes the norm of the fractional ideal $\OK x_1 + \dots + \OK x_N$.

\end{enumerate}
\end{proposition}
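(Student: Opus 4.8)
The goal is to prove Proposition~\ref{pr:result_about_volumes}, which expresses the height of a subspace $S$ via the covolume of the lattice $\Lambda'=\OK w_1+\cdots+\OK w_m$ and a correction factor coming from the denominators of the Pl\"ucker coordinates. The plan is to treat the two claims in sequence and then observe that together with the index relation they recover Lemma~\ref{le:equivalence_of_height}.

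\textbf{Step 1: the covolume formula in claim (1).} First I would reduce to a clean linear-algebra statement. Write $W\in M_{n\times m}(K)$ for the matrix with columns $w_1,\dots,w_m$; then $\Lambda'=W\cdot\OK^m$ viewed inside $K_\mathbb{R}^n\simeq\mathbb{R}^{n[K:\mathbb{Q}]}$. The covolume of $W\cdot\OK^m$ is the covolume of $\OK^m$ (which is $1$ by the normalization in Equation~(\ref{eq:norm})) times the ``volume scaling factor'' of the map $x\mapsto Wx$ on the $m[K:\mathbb{Q}]$-dimensional source. Applying each embedding $\sigma_i$ turns $W$ into a complex matrix $W^{(i)}\in M_{n\times m}(\mathbb{C})$, and the Gram determinant of the columns computes the squared volume: $\det\Lambda'=\prod_i\sqrt{\det\big((W^{(i)})^{*}W^{(i)}\big)}$ up to the discriminant normalization, which is arranged to cancel exactly as in Equation~(\ref{eq:norm}). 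The Cauchy--Binet formula then gives $\det\big((W^{(i)})^{*}W^{(i)}\big)=\sum_{J}|\det W^{(i)}_{J}|^{2}$, the sum over $m$-element row-subsets $J$, and $\det W^{(i)}_J=\sigma_i(x_J)$ where $x_J$ is the corresponding Pl\"ucker coordinate. This yields $\det\Lambda'=\prod_{i=1}^{[K:\mathbb{Q}]}\sqrt{\sum_{j=1}^N|\sigma_i(x_j)|^2}$. I expect this step to be essentially bookkeeping once Cauchy--Binet is invoked, the only subtlety being to keep track of the $\Delta_K$-normalization so that the bare Gram determinant appears.

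\textbf{Step 2: the index ratio in claim (2).} Here I would argue prime-by-prime (localize at each prime $\mathcal{P}$ of $\OK$). Both $[\Lambda:\Lambda'\cap\Lambda]$ and $[\Lambda':\Lambda'\cap\Lambda]$ are finite since $\Lambda'\otimes\mathbb{Q}=\Lambda\otimes\mathbb{Q}=S$, and their ratio is multiplicative over primes, so it suffices to compute $\big(\operatorname{ord}_\mathcal{P}[\Lambda:\Lambda'\cap\Lambda]-\operatorname{ord}_\mathcal{P}[\Lambda':\Lambda'\cap\Lambda]\big)$ and match it with $\operatorname{ord}_\mathcal{P}$ of the fractional ideal $\langle x_1,\dots,x_N\rangle$. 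Over the discrete valuation ring $\OK_\mathcal{P}$, both $\Lambda_\mathcal{P}$ (a saturated, hence free, rank-$m$ summand of $\OK_\mathcal{P}^n$) and $\Lambda'_\mathcal{P}$ (free of rank $m$) live inside $S$, and by the elementary divisor / Smith normal form theorem over the DVR there is a basis realizing $\Lambda'_\mathcal{P}=\operatorname{diag}(\pi^{a_1},\dots,\pi^{a_m})\Lambda_\mathcal{P}$ for integers $a_j$. Then $\operatorname{ord}_\mathcal{P}[\Lambda:\Lambda'\cap\Lambda]-\operatorname{ord}_\mathcal{P}[\Lambda':\Lambda'\cap\Lambda]=\sum_j a_j$. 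On the other hand, the Pl\"ucker coordinates of $\Lambda'_\mathcal{P}$ are, up to a unit times the top Pl\"ucker coordinate of $\Lambda_\mathcal{P}$ (which generates a unit ideal, since $\Lambda_\mathcal{P}$ is saturated), the $m\times m$ minors of $\operatorname{diag}(\pi^{a_j})$ expressed in the adapted basis; the ideal they generate has valuation $\min$ over subsets, which for the diagonal picture is $\sum_j a_j$ realized by the full-support minor — actually $\operatorname{ord}_\mathcal{P}\langle\text{minors}\rangle=\sum_j a_j$ directly since the only minors are products of the $\pi^{a_j}$ and $\sum a_j$ is the largest. Reconciling this with the definition $\N(\langle x_1,\dots,x_N\rangle)=\prod_\mathcal{P}\N(\mathcal{P})^{\min_j\operatorname{ord}_\mathcal{P}(x_j)}$ gives Equation~(\ref{eq:claim2}).

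\textbf{Step 3: assembling Lemma~\ref{le:equivalence_of_height}.} Combining claims (1) and (2) with the index identity $[\Lambda:\Lambda'\cap\Lambda]\det\Lambda=[\Lambda':\Lambda'\cap\Lambda]\det\Lambda'$ yields $\det\Lambda=\N(\langle x_1,\dots,x_N\rangle)^{-1}\prod_i\sqrt{\sum_j|\sigma_i(x_j)|^2}=H(S)$. Finally, taking $w_1,\dots,w_m$ to be the rows of the row-reduced matrix $D$, one identifies $\Lambda=D^TK^n\cap\OK^m$ with the lattice whose covolume is $\det(D;M_{1\times m}(\OK))\cdot\mathfrak{D}(D)$ exactly as in the paragraph preceding Proposition~\ref{pr:result_about_volumes}, giving $H(S)=\det(D;M_{1\times m}(\OK))\cdot\mathfrak{D}(D)$.

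\textbf{Main obstacle.} I expect Step~2 to be the crux: one must carefully relate three lattices ($\Lambda$, $\Lambda'$, and their intersection) inside the same rational subspace and keep the valuation bookkeeping straight, in particular using saturation of $\Lambda=\OK^n\cap S$ to ensure its own Pl\"ucker vector is unimodular so that only the \emph{relative} position of $\Lambda'$ contributes. Step~1 is routine modulo Cauchy--Binet and the discriminant normalization, and Step~3 is a direct substitution.
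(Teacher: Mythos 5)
Your proposal is correct. Claim~1 via Cauchy--Binet and the discriminant cancellation is essentially the paper's argument stated more compactly: the paper works it out by writing the full $(rn)\times(rm)$ embedding matrix $A$, identifying which $rm\times rm$ minors survive, and factoring each surviving minor as $W\cdot B$ with $|\det B|$ supplying exactly the power of $\Delta_K$ that cancels the normalization — that is the ``bookkeeping'' you defer. The genuine departure is in Claim~2. The paper first clears denominators to reduce to integral $w_{ij}$, recasts $[\Lambda:\Lambda']$ as a count of solutions to a linear congruence modulo an auxiliary ideal generated by a product of minors, localizes, and then cites \cite[Lemma~4.5]{S1967} for the prime-local induction. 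You instead localize immediately and invoke the elementary divisor theorem over the DVR $\OK_{\mathcal{P}}$ to adapt a basis simultaneously to the saturated $\Lambda_{\mathcal{P}}$ and to $\Lambda'_{\mathcal{P}}$, so the index ratio at $\mathcal{P}$ is $\sum_j a_j$ by inspection, and the valuation of the Pl\"ucker content ideal equals $\sum_j a_j$ by $\GL_n(\OK_{\mathcal{P}})$-invariance of that ideal. This is self-contained, avoids the external citation, and handles the non-integral case without the $\kappa$-clearing step. One small infelicity: your phrase about ``products of the $\pi^{a_j}$'' and ``$\sum a_j$ is the largest'' suggests a minimization over several nonzero minors, but in the adapted basis the $n\times m$ matrix of $\Lambda'_{\mathcal{P}}$ is $\operatorname{diag}(\pi^{a_1},\dots,\pi^{a_m})$ atop a zero block, so exactly one Pl\"ucker coordinate is nonzero, equal to $\pi^{\sum_j a_j}$, and the valuation is $\sum_j a_j$ with no minimization needed.
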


{\bf Proof of Claim 1:}

Let us evaluate $\det \Lambda'$. 
Fix a $\mathbb{Z}$-basis $a_1,a_2,\dots,a_r$ of $\OK$, where $r=[K:\mathbb{Q}]$. 
% We index them so that $\sigma_1,\dots,\sigma_{r_1}: K \hookrightarrow \mathbb{R}$ are the real embeddings of $K$, $\sigma_{r_1+1},\dots,\sigma_{r_1+r_2}:K \rightarrow \mathbb{C}$ be complex embeddings of $K$, no two being equivalent under conjugation and such that $r=r_1+2r_2$, and $\sigma_{r_1+i} = \overline{	\sigma_{r_1+r_2+i}}$. This gives us the Minkowski embedding
% \begin{equation}
% 	(\sigma_i)_{i=1}^{r} : K \rightarrow \mathbb{R}^{r_1} \times \mathbb{C}^{r_2}.
% \end{equation}
A $\mathbb{Z}$-basis of $\Lambda'$ is given by $a_1w_1,$$a_2w_1,\dots$, $a_rw_1, a_1w_2,a_2w_2,\dots$, $a_rw_2, \dots$, $a_1w_m,a_2w_m,\dots$, $a_rw_m$. We will calculate the volume of the parallelepiped spanned by these vectors with respect to the quadratic form in Equation \ref{eq:norm}. Observe that for $x,y \in K_\mathbb{R}$
\begin{equation}
\Tr(x\overline{y}) = \sum_{i=1}^{r} \sigma_{i}(x) \overline{\sigma_i(y) }.
\end{equation}

Let $w_i= (w_{i1},w_{i2},\dots,w_{in}) \in K^{n}$.
If we define the $rn \times rm$ matrix
% \smaller
\begin{align}
& A =  \begin{bmatrix}
\sigma_1(a_1w_{11})  & \dots & \sigma_1(a_rw_{11}) &  \dots &  \sigma_1(a_1 w_{m1} )& \dots &\sigma_1(a_rw_{m1})  \\
% \sigma_2(a_1w_{11})  & \dots & \sigma_2(a_rw_{11}) &  \dots &  \sigma_2(a_1 w_{m1} )& \dots &\sigma_2(a_rw_{m1})  \\
& \vdots & &   \vdots & &\vdots &  \\
\sigma_{r}(a_1w_{11})  & \dots & \sigma_{r}(a_rw_{11}) &  \dots &  \sigma_{r}(a_1 w_{m1} )& \dots &\sigma_{r}(a_rw_{m1})  \\
% \sqrt{2}\re \left(\sigma_{r_1+1}(a_1w_{11})\right) & \dots & \sqrt{2}\re\left( (\sigma_{r_1+1}(a_rw_{11}) \right) &  \dots &  \sqrt{2}\re\left( \sigma_{r_1+1}(a_1 w_{m1} ) \right) & \dots & \sqrt{2}\re \left( \sigma_{r_1+1}(a_rw_{m1})  \right) \\
% \sqrt{2}\im \left(\sigma_{r_1+1}(a_1w_{11})\right) & \dots & \sqrt{2}\im\left( (\sigma_{r_1+1}(a_rw_{11}) \right) &  \dots &  \sqrt{2}\im\left( \sigma_{r_1+1}(a_1 w_{m1} ) \right) & \dots & \sqrt{2}\im \left( \sigma_{r_1+1}(a_rw_{m1})  \right) \\
%  & \vdots & &   \vdots & &\vdots &  \\
% \sqrt{2}\re \left(\sigma_{r_1+r_2}(a_1w_{11})\right) & \dots & \sqrt{2}\re\left( (\sigma_{r_1+r_2}(a_rw_{11}) \right) &  \dots & \sqrt{2} \re\left( \sigma_{r_1+r_2}(a_1 w_{m1} ) \right) & \dots & \sqrt{2}\re \left( \sigma_{r_1+r_2}(a_rw_{m1})  \right) \\
% \sqrt{2}\im \left(\sigma_{r_1+r_2}(a_1w_{11})\right) & \dots & \sqrt{2}\im\left( (\sigma_{r_1+r_2}(a_rw_{11}) \right) &  \dots &  \sqrt{2}\im\left( \sigma_{r_1+r_2}(a_1 w_{m1} ) \right) & \dots & \sqrt{2}\im \left( \sigma_{r_1+r_2}(a_rw_{m1})  \right) \\
& \vdots & &   \vdots & &\vdots &  \\
\sigma_1(a_1w_{1n})  & \dots & \sigma_1(a_rw_{1n}) &  \dots &  \sigma_1(a_1 w_{mn} )& \dots &\sigma_1(a_rw_{mn})  \\
% \sigma_2(a_1w_{1n})  & \dots & \sigma_2(a_rw_{1n}) &  \dots &  \sigma_2(a_1 w_{mn} )& \dots &\sigma_2(a_rw_{mn})  \\
& \vdots & &   \vdots & &\vdots &  \\
\sigma_{r}(a_1w_{1n})  & \dots & \sigma_{r}(a_rw_{1n}) &  \dots &  \sigma_{r}(a_1 w_{mn} )& \dots &\sigma_{r}(a_rw_{mn})  \\
% \sqrt{2}\re \left(\sigma_{r_1+1}(a_1w_{1n})\right) & \dots & \sqrt{2}\re\left( (\sigma_{r_1+1}(a_rw_{1n}) \right) &  \dots &  \sqrt{2}\re\left( \sigma_{r_1+1}(a_1 w_{mn} ) \right) & \dots & \sqrt{2}\re \left( \sigma_{r_1+1}(a_rw_{mn})  \right) \\
% \sqrt{2}\im \left(\sigma_{r_1+1}(a_1w_{1n})\right) & \dots & \sqrt{2}\im\left( (\sigma_{r_1+1}(a_rw_{1n}) \right) &  \dots &  \sqrt{2}\im\left( \sigma_{r_1+1}(a_1 w_{mn} ) \right) & \dots & \sqrt{2}\im \left( \sigma_{r_1+1}(a_rw_{mn})  \right) \\
%  & \vdots & &   \vdots & &\vdots &  \\
% \sqrt{2}\re \left(\sigma_{r_1+r_2}(a_1w_{1n})\right) & \dots & \sqrt{2}\re\left( (\sigma_{r_1+r_2}(a_rw_{1n}) \right) &  \dots & \sqrt{2} \re\left( \sigma_{r_1+r_2}(a_1 w_{mn} ) \right) & \dots & \sqrt{2}\re \left( \sigma_{r_1+r_2}(a_rw_{mn})  \right) \\
% \sqrt{2}\im \left(\sigma_{r_1+r_2}(a_1w_{1n})\right) & \dots & \sqrt{2}\im\left( (\sigma_{r_1+r_2}(a_rw_{1n}) \right) &  \dots &  \sqrt{2}\im\left( \sigma_{r_1+r_2}(a_1 w_{mn} ) \right) & \dots & \sqrt{2}\im \left( \sigma_{r_1+r_2}(a_rw_{mn})  \right) \\
\end{bmatrix}
\end{align}
% \normalsize
then it follows that 
\begin{equation}
\left( \det \Lambda'\right)^{2} = \Delta_{K}^{-2m}{ \det A^{*} A }.
\end{equation}

We can expand the right hand side using the Cauchy-Binet theorem, obtaining that 
\begin{equation}
\det A^{*} A  = \sum_{\substack{I \subseteq \{1\dots rn\} \\ \card I= rm}} |\det(A_{I})|^2, 
\label{eq:cauchy-binet}
\end{equation}
where $A_{I}$ is the $rm \times rm$ minor of $A$ with rows in $I$. 

Each row of $A$ is a complex embedding of the vector 
$$(a_i w_{jk})_{\substack{ 1 \le i \le r\\ 1 \le j \le m }}\text{ for some }k \in \{1,\dots,n\}.$$
We claim that the only $I$ for which $\det(A_I)^{2}$ could be non-zero are the ones where 
each embedding $\sigma_{i}$ appears exactly $m$ times applied to various $m$-subsets of these $n$ possible rows.
That is, $I \subseteq \{1\dots rn\}$ should be of the form
\begin{equation}
I = \bigsqcup_{ i\in 1}^{m} \bigcup_{k \in J_{i}} \{  kr-r+i\}, \ \text{ for some $J_1,J_2,\dots,J_r \subseteq \{1,\dots,n\}$, $\card J_1 = \card J_2 = \dots = \card J_r = m$}.
\label{eq:form_of_I}
\end{equation}

To observe this, note that if $m' > m$ then the following row-vectors are $K$-linearly dependent.
\begin{align}
	( w_{1k_1},\  w_{2k_1},\  \cdots\  & , w_{mk_1}\ ),\\
	(w_{1k_2}, \  w_{2k_2}, \   \cdots\  & , w_{mk_2}\ ), \\
	 \vdots\ \  &  \\
	(w_{1k_{m'}}, w_{2k_{m'}}, \cdots & , w_{mk_{m'}}).\\
\label{eq:ld_rows}
\end{align}

This implies that the following rows are $\mathbb{Q}$-linearly dependent
\begin{align}
	( a_1w_{1k_1} \ ,\dots ,a_rw_{1k_1}  ,\ &  \cdots, a_1w_{mk_1},\dots,a_r w_{mk_1}),\\
	(a_1w_{1k_2} \ ,\dots, a_r w_{1k_2} ,\  &  \cdots, a_1w_{mk_2},\dots,a_r w_{mk_2}),\\
	   & \ \ \vdots  \\
	(a_1w_{1k_{m'}},\dots,a_r w_{1k_{m'}},&   \cdots,a_1 w_{mk_{m'}} \dots,a_r w_{mk_{m'}}),\\
\label{eq:ld_rows_open}
\end{align}
and therefore if we apply invertible $\mathbb{Q}$-linear maps in each coordinate of these rows then they remain $\mathbb{Q}$-linearly dependent. 
Hence, each $\sigma_{i}$ will appear in no more than $m$ rows associated to each and as a result 
$I$ can only be of the form in Equation (\ref{eq:form_of_I}).

Suppose therefore that $I$ is of the form in Equation (\ref{eq:form_of_I}) where the $J_i = \{k_{i1},k_{i2},\dots,k_{im}\} \subseteq \{1,\dots,n\}$.
Then up to permutation of rows, the matrix $A_I$ is given by
\begin{align}
A_I = \begin{bmatrix}
\sigma_{1}(a_1)\sigma_1(w_{1k_{11}}) & \dots  & \sigma_{1}(a_r)\sigma_1(w_{1k_{11}}) &  \dots & \sigma_{1}( a_1)\sigma_1(w_{mk_{11}}) & \dots & \sigma_{1}(a_r)\sigma_1(w_{mk_{11}}) \\
\sigma_{1}(a_1)\sigma_1(w_{1k_{12}}) & \dots &  \sigma_{1}(a_r)\sigma_1(w_{1k_{12}}) &  \dots &  \sigma_{1}(a_1)\sigma_1(w_{mk_{12}}) & \dots & \sigma_1(a_r)\sigma_1(w_{mk_{12}})  \\
& & & \vdots &  & & \\
\sigma_{1}(a_1)\sigma_1(w_{1k_{1m}}) & \dots & \sigma_{1}(a_r)\sigma_1(w_{1k_{1m}} ) &  \dots &  \sigma_{1}(a_1)\sigma_1(w_{mk_{1m}}) & \dots & \sigma_{1}(a_r)\sigma_1(w_{mk_{1m}})\\
& & & \vdots &  & & \\
\sigma_{r}(a_1)\sigma_r(w_{1k_{r1}}) & \dots  & \sigma_{r}(a_r)\sigma_r(w_{1k_{r1}}) &  \dots & \sigma_{r}( a_r)\sigma_r(w_{mk_{r1}}) & \dots & \sigma_{r}(a_r)\sigma_r(w_{mk_{r1}}) \\
\sigma_{r}(a_1)\sigma_r(w_{1k_{r2}}) & \dots &  \sigma_{r}(a_r)\sigma_r(w_{1k_{r2}}) &  \dots &  \sigma_{r}(a_r)\sigma_r(w_{mk_{r2}}) & \dots & \sigma_r(a_r)\sigma_r(w_{mk_{r2}})  \\
& & & \vdots &  & & \\
\sigma_{r}(a_1)\sigma_r(w_{1k_{rm}}) & \dots & \sigma_{r}(a_r)\sigma_r(w_{1k_{rm}} ) &  \dots &  \sigma_{r}(a_r)\sigma_r( w_{mk_{rm}}) & \dots & \sigma_{r}(a_r)\sigma_r(w_{mk_{rm}})\\
\end{bmatrix}.
\label{eq:ld_cols}
\end{align}

Upon inspection, one can conclude that actually $A_I = W B$ where $W$ and $B$ are $rm \times rm$ matrices given by
\begin{align}
W = &
\begin{bmatrix}
\sigma_1(w_{1k_{11}}) & \dots & \sigma_1(w_{mk_{11}})       \\
& \vdots &  				\\
\sigma_1(w_{1k_{1m}})	& \dots & \sigma_{1}(w_{mk_{1m}}) \\
& & & \ddots \\
& & & &	   \sigma_r(w_{1k_{r1}}) & \dots & \sigma_r(w_{mk_{r1}})       \\
& & & &				 & \vdots &  				\\
& & & &	   \sigma_r(w_{1k_{rm}})	& \dots & \sigma_{r}(w_{mk_{rm}}) \\
\end{bmatrix},\\
B = & 
\begin{bmatrix}
\sigma_1(a_1) & \dots  & \sigma_1(a_r) \\
& & & \sigma_1(a_1) & \dots & \sigma_1(a_r) \\
& & & & & & \dots & & & \\\
& & & & & & & \sigma_1(a_1) & \dots & \sigma_1(a_r) \\
\sigma_2(a_1) & \dots  & \sigma_2(a_r) \\
& & & \sigma_2(a_1) & \dots & \sigma_2(a_r) \\
& & & & & & \dots & & & \\\
& & & & & & & \sigma_2(a_1) & \dots & \sigma_2(a_r) \\
& & & & & \vdots & & & \\
\sigma_r(a_1) & \dots  & \sigma_r(a_r) \\
& & & \sigma_r(a_1) & \dots & \sigma_r(a_r) \\
& & & & & & \dots & & & \\\
& & & & & & & \sigma_r(a_1) & \dots & \sigma_r(a_r) \\
\end{bmatrix}.
\label{eq:wandb}
\end{align}
It then follows that $|\det B |= \Delta_{K}^{m}$ and thus as $J_1,J_2,\dots,J_r$ go through all the possible $m$-subsets of $\{1,\dots,n\}$
\begin{equation}
\sum_{\substack{I \subseteq \{1,\dots,rn\} \\ \card I  = m}} |\det A_I|^2 = \Delta_{K}^{2m} \prod_{l=1}^{r}\left( \sum_{ \{k_1,\dots, k_{m}\} \subseteq \{1,\dots,n\}} \left|\det_{ 1 \le i,j \le m} \left[ \sigma_{l}(w_{i k_{j}} )\right] \right|^{2} \right).
\end{equation}
This settles the claim.

{\bf Proof of Claim 2:}

We are given $\{(w_{i1},w_{i2},\dots,w_{in})\}_{i=1}^{m} \in K^{n}$. Define $W \in M_{n \times m }(\OK)$ as 
% and suppose $(\alpha_1,\dots,\alpha_{m}) \in K^{m}$ such that 
\begin{equation}
W= 
\begin{bmatrix}
w_{11} & w_{21} & \dots & w_{m1} \\
w_{21} & w_{22} & \dots & w_{m2} \\
& & \vdots \\
w_{m1} & w_{m2} & \dots & w_{mn} \\
\end{bmatrix}.
% \begin{bmatrix}
% 	\alpha_1 \\
% 	\vdots\\
% 	\alpha_{m}
% \end{bmatrix}
% \in M_{1 \times n}(\OK).
\label{eq:brother}
\end{equation}
Then $\Lambda' = W \mathcal{O}_K^{m}$ and $\Lambda = WK^{m}\cap \mathcal{O}_K^{n}$. 

To prove this claim, it is sufficient to prove it for the case when $\{w_{ij}\} \subseteq \OK$. Indeed, let us multiply $W$ by an integer $\kappa \in \OK$ that can cancel all the denominators (i.e. $\kappa \cdot w_{ij} \in \OK$). Then note that $\kappa \Lambda' \subseteq \Lambda' \cap \Lambda$ and 
\begin{equation}
	[\Lambda' : \kappa \Lambda' ] = \N(\kappa)^{m},
\end{equation}
so we have 
\begin{equation}
	\frac{	[\Lambda :  \Lambda \cap \Lambda']}{[\Lambda' : \Lambda \cap \Lambda']} = 
	\frac{	[\Lambda :  \Lambda \cap \Lambda'][ \Lambda' \cap \Lambda : \kappa \Lambda' ]}{[\Lambda' : \Lambda \cap \Lambda'] [ \Lambda' \cap \Lambda : \kappa \Lambda' ]}
	= \frac{	[\Lambda :  \kappa \Lambda' ]}{[\Lambda' : \kappa \Lambda' ]} = [\Lambda: \kappa \Lambda' ] \N(\kappa)^{-m} .
\end{equation}
This establishing the identity $[\Lambda: \kappa \Lambda'] = \N(\langle \kappa^{m} x_1,\dots,\kappa^{m} x_N \rangle)$ would finish the proof.

Therefore, let us now assume without loss of generality that we have $\{\omega_{ij}\} \subseteq \OK$ and hence $\Lambda' \subseteq \Lambda$. We want to show that 
\begin{equation}
	\N(\langle x_1,\dots,x_N \rangle )= [\Lambda:\Lambda'] = [ W^{-1}\mathcal{O}_K^{n} : \mathcal{O}_K^{m} ],
\end{equation}
where $W^{-1} \mathcal{O}_K^n = \{ \alpha = (\alpha_1,\dots,\alpha_m) \in K^{m} \mid W \alpha \in \mathcal{O}_K^n\}$, which is an $\OK$-module in $K^{m}$. Let $W_{J}$ be the $m \times m$ minor of $W$ by selecting a subset of rows $J \subseteq \{1,\dots,m\}$ with $\card J = m$. Then by multiplying by adjoint matrices, it is clear that for $\alpha\in K^{m}$
\begin{equation}
W \cdot \alpha  \in \mathcal{O}_K^{n} \Rightarrow W_{J} \cdot \alpha \in \mathcal{O}_K^{m} \Rightarrow \det(W_J) \cdot \alpha  \in \mathcal{O}_K^m .
\end{equation}

Define $\mathcal{I}=\langle \rho \rangle    \subseteq \mathcal{O}_K$ to be the ideal generated by 
\begin{equation}
\rho = \prod_{J \in \binom{[n]}{m}} \det W_{J}.
\end{equation}
We see that if $\alpha \in \langle \tfrac{1}{\rho}  \rangle = \mathcal{I}^{-1}$, then we have
\begin{equation}
\mathcal{O}_K^{m} \subseteq W^{-1} \mathcal{O}_K^{n} \subseteq  ( \mathcal{I}^{-1} ) ^{m},
\end{equation}
since $\mathcal{I}^{-1}$ is the fractional ideal ``inverse'' of $\mathcal{I}$ defined as 
$\mathcal{I}^{-1} = \{ \kappa \in \OK \mid \kappa I \subseteq \OK\}$.
Note that $\mathcal{I}^{-1}$ is an $\OK$-module and so is $\mathcal{I}^{-1}/\OK$.
We are thus interested in simply calculating the number of solutions of 
\begin{equation}
W \cdot \alpha = 0 \pmod{\mathcal{O}_K}, \  \alpha \in (\mathcal{I}^{-1}/\OK)^{m}.
\end{equation}
In particular, we want to show that the number of solutions to this is equal, as in Equation (\ref{eq:claim2}), to $$\N(\langle \det W_{J} \rangle_{ \substack{ J \subseteq \{1,\dots,m\}\\ \card J = m}}).$$ 

This calculation can be done locally, with respect to each prime ideal $\mathcal{P}$ dividing $\mathcal{I}$. Since $\mathcal{I}$ is a principal ideal, multiplication by the generator $\rho$ gives us an isomorphism of $\OK$-modules as
\begin{equation}
\frac{\mathcal{I}^{-1}}{\OK} \simeq \frac{\mathcal{O}_K}{\mathcal{I}}.
\end{equation}
Factoring $\mathcal{I} = \mathcal{P}_1^{f_1}\mathcal{P}_2^{f_2} \dots \mathcal{P}_s^{f_s}$ and writing the sum of ideals generated by the $\det W_J$ as $\mathcal{P}_1^{e_1} \mathcal{P}_2^{e_2} \dots \mathcal{P}_s^{e_s}$, we have that $0 \le e_i \le f_i$ for each $i \in \{1,\dots,s\}$ and 
\begin{equation}
\frac{\mathcal{I}^{-1}}{\OK} \simeq \frac{\mathcal{O}_K}{\mathcal{I}}= \N(\mathcal{P}_1)^{e_1} \N(\mathcal{P}_2)^{e_2} \dots \N(\mathcal{P}_s)^{e_s}.
\end{equation}
Hence, the problem is reduced to showing that the number of solutions of the following is $\N(P)^{e_i}$ for each $i \in \{1,\dots,s\}$.
\begin{equation}
W \cdot \alpha = 0 \pmod{\mathcal{P}_i^{f_i}}, \  \alpha \in (\OK/\mathcal{P}_i^{f_i})^{m}.
\end{equation}

 % skip writing this calculation for now, 
 This can be proved via induction as explained
in  \cite[Lemma 4.5]{S1967}. 

\begin{remark}
\label{re:finite_places_height}
Observe that for any $x_1,\dots,x_N \in K$, we get that the norm of the principal ideal generated by $x_1,\dots,x_N$ is 
\begin{equation}
	\N\left( \langle x_1, \dots, x_N \rangle \right)^{-1} =  \prod_{\substack{v \in M_K \\ v \nmid \infty}} \max_{1 \le i \le N} |x_i|_v
\end{equation}

\end{remark}

\section{Convex combinations lemma} 

We record here a general lemma that was used in some special instances in the paper. We hope that future literature around this topic could benefit from this idea.

% viewed as a direct sum of Hilbert spaces

\begin{lemma}
\label{le:convex_combinations}

Let $D \in M_{m \times n}(K)$ (not necessarily reduced) be a matrix given as 
\begin{equation}
	\label{eq:defi_of_D}
  D =  
  \begin{bmatrix}
	  \alpha_{11} & \alpha_{12} & \dots & \alpha_{1n} \\
	  \alpha_{21} & \alpha_{22} & \dots & \alpha_{2n} \\
		      & \vdots & \\
	  \alpha_{m1} & \alpha_{m2} & \dots & \alpha_{mn} \\
  \end{bmatrix}.
\end{equation}
Let $f:K_\mathbb{R}^{t} \rightarrow \mathbb{R}$ be the indicator function of a unit ball.
Then, we have that for any $c_1,\dots,c_n \in [0,1]$ such that $\sum c_i = 1$
\begin{equation}
\label{eq:integral_expression}
  \frac{1}{V(mtd)}
	\int_{K_\mathbb{R}^{ t \times m}} \prod_{j=1}^{n} f\Big(\sum_{i=1}^{m} \alpha_{ij} x_{i}\Big) dx~\le~
	\prod_{\sigma  : K \rightarrow \mathbb{C}} \Big( \sum_{J \in \binom{[n]}{m}} ({  \scriptstyle\prod_{j \in J} c_j })  |\sigma \left(\det (D_{J})\right)|^{2} \Big)^{-\frac{t}{2}}.
\end{equation}
Here the product on the right is over $d=r_1+2r_2$ embeddings of $K$ into $\mathbb{C}$.
\end{lemma}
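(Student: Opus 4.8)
The plan is to follow the same convex-combination idea already used in the proof of Lemma \ref{lemma:convexgen}, but now carried out in full generality without assuming that the pivot block is the identity. First I would observe that the indicator function $f$ is the characteristic function of the unit ball in $K_{\mathbb{R}}^t$, so that $\prod_{j=1}^n f(\sum_i \alpha_{ij}x_i)$ is the indicator of the set of $(x_1,\dots,x_m)\in K_{\mathbb{R}}^{t\times m}$ satisfying $\|\sum_i \alpha_{ij}x_i\|\le 1$ for all $j=1,\dots,n$. For any convex weights $c_1,\dots,c_n\in[0,1]$ with $\sum c_j=1$, these constraints together imply
\begin{equation}
\sum_{j=1}^n c_j \Big\|\sum_{i=1}^m \alpha_{ij}x_i\Big\|^2 \le 1,
\end{equation}
which defines an ellipsoid $\mathcal{E}\subseteq K_{\mathbb{R}}^{t\times m}$ containing the region of integration; hence the left side of \eqref{eq:integral_expression} is bounded by $\vol(\mathcal{E})/V(mtd)$.

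Next I would compute this volume. Using the trace form \eqref{eq:norm} and splitting $K_{\mathbb{R}}$ into its $t$ coordinate copies and then, via the embeddings $\sigma:K\to\mathbb{C}$, into $d=r_1+2r_2$ real/complex blocks, the quadratic form $\sum_j c_j\|\sum_i\alpha_{ij}x_i\|^2$ decomposes as a block-diagonal form whose block associated to $\sigma$ (repeated $t$ times) is the $m\times m$ positive semidefinite Hermitian matrix
\begin{equation}
G_\sigma = \sum_{j=1}^n c_j\, v_{\sigma,j} v_{\sigma,j}^{*},\qquad v_{\sigma,j} = \big(\sigma(\alpha_{1j}),\dots,\sigma(\alpha_{mj})\big)^{T}.
\end{equation}
Therefore $\vol(\mathcal{E})/V(mtd) = \prod_\sigma \det(G_\sigma)^{-t/2}$, and it remains to evaluate $\det(G_\sigma)$. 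This is exactly a Cauchy--Binet computation: writing $G_\sigma = A_\sigma C A_\sigma^{*}$ where $A_\sigma$ is the $m\times n$ matrix with columns $v_{\sigma,j}$ (that is, $A_\sigma = \sigma(D)$ entrywise) and $C=\mathrm{diag}(c_1,\dots,c_n)$, the Cauchy--Binet formula gives
\begin{equation}
\det(G_\sigma) = \sum_{J\in\binom{[n]}{m}} \Big(\prod_{j\in J}c_j\Big)\,\big|\det(\sigma(D_J))\big|^2 = \sum_{J\in\binom{[n]}{m}} \Big(\prod_{j\in J}c_j\Big)\,\big|\sigma(\det D_J)\big|^2,
\end{equation}
which is precisely the factor appearing on the right-hand side of \eqref{eq:integral_expression}. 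Assembling the product over all $\sigma$ yields the claimed bound.

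The only genuine subtlety — and the step I would be most careful about — is the bookkeeping in the block decomposition and the appearance of the discriminant normalization $\Delta_K^{-2/[K:\mathbb{Q}]}$ in \eqref{eq:norm}: one must check that the factors of $\Delta_K$ coming from the $m$ copies in each of the $t$ coordinates cancel exactly against the normalization built into $\det(D;M_{1\times m}(\OK))$-type quantities, or equivalently that one is consistently using the same Lebesgue measure on $K_{\mathbb{R}}^{t\times m}$ as in Lemma \ref{le:volume_det}. This is the same cancellation already implicit in the proof of Lemma \ref{le:intersection_of_ellipsoids} (the special case $m=1$, $n=M+1$ with $c_i$ arbitrary), so I would cross-reference that computation rather than redo it. A secondary point is that the bound must hold for \emph{every} admissible choice of $(c_j)$ simultaneously, which is automatic since the ellipsoid containment holds for each such choice individually; one does not even need to optimize. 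I expect no real obstacle beyond this normalization check — the heart of the argument is just Cauchy--Binet applied to $\sigma(D)\,\mathrm{diag}(c)\,\sigma(D)^{*}$.
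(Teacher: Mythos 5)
Your argument is correct and follows the same strategy as the paper: use the convex combination $\sum_j c_j\|\sum_i\alpha_{ij}x_i\|^2\le 1$ to enclose the region in an ellipsoid, then compute the volume ratio as a product of block determinants. The only presentational difference is in the last step: the paper routes the determinant evaluation through Lemma~\ref{le:covolume} (interpreting $\vol(\mathcal{E})/V(mtd)$ as the reciprocal of a lattice covolume for the columns of $D$ scaled by $\sqrt{c_j}$, then invoking the Cauchy--Binet identity already proved in Proposition~\ref{pr:result_about_volumes}), whereas you apply Cauchy--Binet directly to $\sigma(D)\,\mathrm{diag}(c)\,\sigma(D)^{*}$, which is arguably cleaner and self-contained. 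Your worry about the $\Delta_K^{-2/[K:\mathbb{Q}]}$ normalization is a reasonable thing to check, but it is a non-issue: both the unit ball $B$ and the enclosing ellipsoid $\mathcal{E}$ carry the same $\Delta_K$ prefactor in their defining inequalities, and the Lebesgue measure used to form $V(mtd)$ is exactly the one induced by the same inner product, so the discriminant factors cancel identically in the ratio $\vol(\mathcal{E})/V(mtd)$ and only $\prod_\sigma\det(G_\sigma)^{-t/2}$ survives. One small point worth being explicit about: when some $c_j=0$ and $G_\sigma$ is merely positive semidefinite, the right-hand side of \eqref{eq:integral_expression} is $+\infty$ and the inequality holds trivially, so the bound is valid for every admissible choice of weights without further hypotheses.
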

\begin{proof}
Let $x = (x_1,\dots,x_m) \in (K_\mathbb{R}^{t})^{m}$. 
The integral is computing the volume of the set in $(K_\mathbb{R}^{t})^{m}$ satisfying 
\begin{align}
	\| \alpha_{11}x_1 + & \dots + \alpha_{1m} x_m \|^{2}\le 1,\\
  \| \alpha_{12}x_1 + & \dots + \alpha_{2m} x_m \|^{2}\le 1,\\
		      & ~ ~\vdots \\
  \| \alpha_{1n}x_1 + & \dots + \alpha_{mn} x_m \|^{2}\le 1 .
\end{align}
Adding all of these together with a weight of $c_i$ assigned to each respective condition, we get 
\begin{equation}
	\label{eq:quad_form_t_copies}
  \sum_{j=1}^{n} c_i \| \alpha_{1j} x_1 + \cdots + \alpha_{mj}x_m \|^{2} \le 1.
\end{equation}
This means that the set whose volume we are estimating is contained in the set of points satisfying inequality \eqref{eq:quad_form_t_copies}. The latter defines an ellipsoid whose volume is given by
\begin{equation}
	\frac{V(mtd)}{\vol\left(\tfrac{K_\mathbb{R}^{t \times m }}{ \mathcal{O}_{K}\sqrt{c_1} \omega_1 + \dots \OK \sqrt{c_n}\omega_n}\right)},
\end{equation}
where $\omega_1,\dots,\omega_n \in K_\mathbb{R}^{t}$ are the columns of $D$. 

Note that writing $K_\mathbb{R}^{t \times m} \simeq K_\mathbb{R}^{m \times t}$
the quadratic form defined by (\ref{eq:quad_form_t_copies}) is actually $t$ copies of the quadratic form in $K_{\mathbb{R}}^{m}$ defined by the same equation but with $x_1,\dots,x_m \in K_\mathbb{R}^{m}$ instead.
The result now follows from Lemma \ref{le:covolume}.
\end{proof}

\begin{lemma}
\label{le:covolume}
Let $D \in M_{m \times n}(K)$ be a full-rank matrix, for example, the one given in Equation (\ref{eq:defi_of_D}). 
Let $\Lambda \subseteq K_\mathbb{R}^{m}$ be the $\OK$-module generated by the columns of $D$. Then, the covolume of this lattice is 
\begin{equation}
	\prod_{\sigma  : K \rightarrow \mathbb{C}} \Big( \sum_{J \in \binom{[n]}{m}} |\sigma \left(\det (D_{J})\right)|^{2} \Big)^{\frac{1}{2}}.
\end{equation}
Here the product is $d=r_1+2r_2$ embeddings of $K \rightarrow \mathbb{C}$ and the inner sum is over all $m \times m$ minors of $D$.
\end{lemma}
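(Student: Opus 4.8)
The plan is to prove this by the same Gram--determinant and Cauchy--Binet computation that already establishes Claim 1 of Proposition~\ref{pr:result_about_volumes}: once $\Lambda$ is identified with the $\mathbb{Z}$-lattice spanned by the vectors $a_k\omega_i$ introduced below, the two statements are literally the same. First I would fix $r=[K:\mathbb{Q}]$, a $\mathbb{Z}$-basis $a_1,\dots,a_r$ of $\OK$, and denote by $\omega_1,\dots,\omega_m\in K^{n}$ the $m$ vectors that span $\Lambda$ over $\OK$ (concretely, the rows of $D$, which by the full-rank hypothesis $m\le n$, $\operatorname{rank} D=m$ are $K$-linearly independent). Then $\{a_k\omega_i:1\le k\le r,\ 1\le i\le m\}$ is a genuine $\mathbb{Z}$-basis of $\Lambda$, so $\Lambda$ has full rank $rm$ inside its real span, and its covolume satisfies $(\det\Lambda)^{2}=\Delta_K^{-2m}\det(A^{\ast}A)$, where $A$ is the $rn\times rm$ matrix whose rows are indexed by a coordinate $\ell\in\{1,\dots,n\}$ together with a complex embedding $\sigma_i$, whose columns are indexed by $(i',k)$, and whose entry is $\sigma_i\bigl(a_k\,(\omega_{i'})_\ell\bigr)$; the global factor $\Delta_K^{-2m}$ records the normalization $\Delta_K^{-2/[K:\mathbb{Q}]}$ in \eqref{eq:norm}, one copy per coordinate.

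Next I would expand $\det(A^{\ast}A)$ by the Cauchy--Binet formula as $\sum_{I}|\det A_{I}|^{2}$ over $rm$-element subsets $I$ of the $rn$ rows of $A$. The crucial point, identical to the appendix argument, is that $\det A_{I}=0$ unless each embedding $\sigma_i$ contributes exactly $m$ of the selected rows, i.e.\ unless $I$ is determined by a tuple $(J_1,\dots,J_r)$ of $m$-element subsets $J_i\subseteq\{1,\dots,n\}$; this is because any $m+1$ of the $n$ coordinate-rows of the matrix $(\omega_1\mid\dots\mid\omega_m)$ are $K$-linearly dependent (its rank is $m$), a dependence preserved under applying any fixed $\mathbb{Q}$-linear map coordinatewise, which forces the corresponding rows of $A_I$ to be $\mathbb{Q}$-dependent. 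For each surviving $I$ one factors $A_I=WB$, where $B$ involves only the embeddings $\sigma_i(a_k)$ of the integral basis and satisfies $|\det B|=\Delta_K^{m}$, while $\det W=\prod_{i=1}^{r}\sigma_i(\det D_{J_i})$ with $D_{J}$ the $m\times m$ minor of $D$ on the columns $J$. Summing over all admissible $I$ and cancelling $\Delta_K^{-2m}$ against $|\det B|^{2}=\Delta_K^{2m}$ leaves
$(\det\Lambda)^{2}=\prod_{i=1}^{r}\sum_{J\in\binom{[n]}{m}}|\sigma_i(\det D_{J})|^{2}$,
and taking square roots yields the stated formula.

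I do not expect a genuine obstacle here: the content is the combinatorial bookkeeping (the vanishing of non-admissible minors and the $A_I=WB$ factorization), and that bookkeeping is carried out in full in the proof of Claim~1 of Proposition~\ref{pr:result_about_volumes}, so I would simply invoke it after the identification in the first paragraph. The only thing that needs a little care is matching conventions so that the determinants appearing at the end are exactly the $\binom{n}{m}$ maximal minors of $D$ (this is where the implicit hypothesis $m\le n$ is used). Finally, the weighted form in which this lemma is applied in the proof of Lemma~\ref{le:convex_combinations} follows immediately by applying the present statement to the matrix obtained from $D$ by scaling its $j$-th column by $\sqrt{c_j}$, since then $\det D_{J}$ acquires the factor $\prod_{j\in J}\sqrt{c_j}$.
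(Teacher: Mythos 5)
Your proposal is correct and follows the same route as the paper: the paper's own proof of Lemma~\ref{le:covolume} is the single line ``Follows from Proposition~\ref{pr:result_about_volumes}, Part~1,'' and you simply unfold that reduction, re-deriving the Gram--Cauchy--Binet computation and the $A_I=WB$ factorization that Claim~1 carries out. Your remark at the end about how to obtain the weighted form used in Lemma~\ref{le:convex_combinations} (scale the $j$-th column of $D$ by $\sqrt{c_j}$) is also the right observation.

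One thing worth flagging, though it reflects on the lemma's statement rather than on your argument: you take $\Lambda$ to be the $\OK$-span of the $m$ \emph{rows} of $D$, viewed inside $K^n$, whereas the lemma as printed says $\Lambda\subseteq K_\mathbb{R}^m$ is generated by the \emph{columns} of $D$. Your reading is the one that makes the formula true and is the one Proposition~\ref{pr:result_about_volumes}, Claim~1, actually proves: if one instead takes the literal column reading, the claim fails already for $K=\mathbb{Q}$, $m=1$, $n=2$, $D=(1\;\,1)$, where the column lattice is $\mathbb{Z}\subset\mathbb{R}$ of covolume $1$, while the stated formula gives $\sqrt{2}$ (which is the correct covolume of the row lattice $\mathbb{Z}(1,1)\subset\mathbb{R}^2$). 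So you have silently corrected a typo in the statement; it would be cleaner to say so explicitly, since otherwise the identification ``rows of $D$'' appears to contradict the lemma you are proving.
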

\begin{proof}
	Follows from Proposition \ref{pr:result_about_volumes}, Part 1.
\end{proof}

%\begin{remark}\label{re:complex_embeddings}
%Note that in the product over embeddings $\sigma : K \rightarrow \mathbb{C}$, a real embedding appears only once in the product whereas complex embeddings can be paired up with their conjugates. This will make the expression look more complicated, so we avoid writing that for simplicity.As a test of sanity, one is invited to replace the matrix $D$ with $2D$ in the above expressions.
%\end{remark}

\section{Rogers' integral formula} 
\label{ap:integralformula}

In this section, we record a proof of Rogers' integral formula. As mentioned before in Section 
\ref{se:average_over_lattice}, numerous starting points to obtain this formula can already be found in the literature. The purpose of this section is to make our paper self-contained.

The proof of Theorem \ref{th:weil} follows along the following steps:
\begin{itemize}
  \item Find an explicit Haar measure on $\SL_{t}(K_\mathbb{R}) / \SL_{t}(\OK)$.
  \item Find a ``coarse fundamenental domain'', also known as a Siegel domain for $\SL_{t}(K_\mathbb{R})/\SL_{t}(\OK)$. That is, find a domain $\mathfrak{S} \subseteq \SL_{t}(K_\mathbb{R})$ that is somewhat easy to integrate on and so that $\mathfrak{S} \cdot \SL_{t}(\OK) = \SL_{t}(K)$.
  \item Integrate the function $g \SL_{t}(\OK) \mapsto ( \sum_{v \in g \OK^{t}} f(v) )^{n}$ on this Siegel domain and show that the integral converges.
  \item Anticipate the final answer and conclude the integration formula by substituting suitable test functions.
\end{itemize}

We will directly state the results for the first two steps since these are fairly well-known. An interested reader can either refer to \cite{W58}, \cite{G21} or \cite{B19} for the details.

Let us first define the following notations. Recall that the norm map $\N: K_\mathbb{R} \rightarrow \mathbb{R}$ is the determinant of left multiplication.
\begin{definition} 
\label{de:kan}
We define the following.
\begin{align}
  \GL_k(K_{\mathbb{R}})  & = \{ g \in M_k(K_{\mathbb{R}}) \ | \ g \text{ is not a zero divisor }  \} , \\
  \mathcal{K} &  = \{ \kappa \in \SL_{t}(K_\mathbb{R}) \ | \ \kappa ^{*} \kappa = 1_{M_k(A)}\}, \\
  \mathcal{A} & = \{ a \in \SL_{t}(K_\mathbb{R}) \ | \ a \text{ is diagonal}, a_{ii} \text{ invertible}, \N(a_{ii}) >0 \},\\
  \mathcal{N} & =  \{ n \in \mathcal{G}(\mathbb{R}) \ | \ n \text{ is upper triangular with $1_{K}$ on the diagonal}\}.
\end{align}
\label{de:kan_defi}
\end{definition}

The three groups $\mathcal{K},\mathcal{A},\mathcal{N}$ defined above are Lie groups. In particular, $\mathcal{K}$ is a compact Lie group and $\mathcal{A}$ is an abelian group.

\begin{proposition}
\label{pr:haar_measure}
The map
\begin{align}
\mathcal{K} \times \mathcal{A} \times \mathcal{N}  & \rightarrow \SL_{t}(K_\mathbb{R})  \\
(\kappa, a,n) & \mapsto \kappa an
\end{align}
is a smooth surjective map. Furthermore, for any compactly supported continuous function $f : \SL_{t}(K_\mathbb{R}) \rightarrow \mathbb{R}$, the following defined a Haar measure on $\SL_{t}(K_\mathbb{R})$.
\begin{equation}
  f \mapsto \int_{\mathcal{K } \times \mathcal{A} \times \mathcal{N}} f( \kappa a n ) \left(\prod_{i<j} \frac{\N(a_{ ii })}{\N(a_{jj})}\right) d\kappa da dn.
\end{equation}
\end{proposition}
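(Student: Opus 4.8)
The plan is to prove the statement in two stages: the $\mathcal{K}\mathcal{A}\mathcal{N}$-decomposition of the group, and then the identification of the Haar measure.

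\emph{Stage 1 (Iwasawa decomposition).} First I would reduce to a single archimedean place. Since $K_\mathbb{R}\cong\prod_{v\mid\infty}K_v$ with $K_v\in\{\mathbb{R},\mathbb{C}\}$, one has $M_t(K_\mathbb{R})=\prod_v M_t(K_v)$ and hence $\SL_t(K_\mathbb{R})=\prod_v\SL_t(K_v)$, and the groups $\mathcal{K},\mathcal{A},\mathcal{N}$ of Definition~\ref{de:kan} are the products of the corresponding subgroups of the factors $\SL_t(K_v)$. So it suffices to handle $\SL_t(\mathbb{R})$ and $\SL_t(\mathbb{C})$. For each such factor, surjectivity of $\Phi:(\kappa,a,n)\mapsto\kappa an$ is the Gram--Schmidt (or $QR$) factorization: orthonormalize the columns of $g\in\SL_t(K_v)$ for the standard Hermitian form to get $g=\kappa r$ with $\kappa$ unitary and $r$ upper triangular with positive-norm diagonal, then split $r=an$. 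Smoothness of $\Phi$ is clear (it is polynomial), and smoothness of the factorization map follows because the Gram--Schmidt formulas are rational with denominators that do not vanish on the invertible locus. I would point out here that at a complex place the torus $\mathcal{A}_v$ of Definition~\ref{de:kan} is \emph{larger} than the split diagonal torus: it contains the compact diagonal torus $T_v\subseteq\mathcal{K}_v$, so the factorization $g=\kappa an$ is not unique there, its fibres being cosets of the compact torus $T=\prod_v T_v$. This redundancy is harmless for the measure statement, which only claims the functional is \emph{a} Haar measure (unique up to a positive scalar in any case).

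\emph{Stage 2 (Haar measure).} Put $P=\mathcal{A}\mathcal{N}$, a closed subgroup with $G:=\SL_t(K_\mathbb{R})=\mathcal{K}P$ and $\mathcal{K}\cap P=T$ compact. Here $G$ is unimodular (each of $\SL_t(\mathbb{R})$, $\SL_t(\mathbb{C})$ is), $\mathcal{N}$ is unimodular with Lebesgue measure $dn=\prod_{i<j}dn_{ij}$ as bi-invariant Haar measure, $d\kappa$ is Haar measure on the compact group $\mathcal{K}$, and $da$ is a Haar measure on $\mathcal{A}$. The one computation that matters is the distortion factor: conjugation $n\mapsto ana^{-1}$ on $\mathcal{N}$ multiplies $dn$ by its Jacobian, which entry by entry equals $\prod_{i<j}\lvert a_{ii}a_{jj}^{-1}\rvert_v^{[K_v:\mathbb{R}]}=\prod_{i<j}\N(a_{ii})/\N(a_{jj})$, because $\N$ restricted to the $v$-component is $x\mapsto x$ for $v$ real and $z\mapsto\lvert z\rvert^2$ for $v$ complex. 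It is exactly this $z\mapsto\lvert z\rvert^2$ that accounts for the doubled root multiplicity at complex places and lets the single expression $\prod_{i<j}\N(a_{ii})/\N(a_{jj})$ serve uniformly. Feeding this distortion factor into the standard construction of Haar measure on a group $G=\mathcal{K}P$ from such a decomposition (cf.\ \cite{W58,B19,G21}) gives that $f\mapsto\int f(\kappa an)\prod_{i<j}\tfrac{\N(a_{ii})}{\N(a_{jj})}\,d\kappa\,da\,dn$ is left-$G$-invariant: invariance under left $\mathcal{K}$-translation is immediate from invariance of $d\kappa$, invariance under left $\mathcal{N}$-translation from translation-invariance of $dn$, and the displayed weight is precisely the compensating Jacobian for the remaining directions. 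Convergence for compactly supported continuous $f$ holds because $\Phi^{-1}(\supp f)$ is compact: the Iwasawa components of a bounded set are bounded (the diagonal entries $r_{ii}$ are bounded above since the matrix entries are, and bounded below since $\prod_i\N(r_{ii})=1$), and the fibres of $\Phi$ are the compact cosets of $T$.

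\emph{Main obstacle.} The substantive point is the modular-function bookkeeping of Stage 2 --- getting the weight to be exactly $\prod_{i<j}\N(a_{ii})/\N(a_{jj})$ (not its inverse, not some variant) uniformly across real and complex places, and confirming that the non-uniqueness of the decomposition at complex places does not spoil left-invariance. Both are dispatched by the Jacobian computation above together with the observation that the redundancy group $T$ is compact and central in $\mathcal{A}$, so integrating over it only rescales the measure by the finite constant $\vol(T)$.
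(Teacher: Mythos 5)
The paper does not actually prove Proposition~\ref{pr:haar_measure}; it is stated as well-known and delegated to the references \cite{W58,G21,B19}. Your proposal therefore supplies an argument where the paper leaves one implicit, and it follows the same standard route the references would take: archimedean factorization $\SL_t(K_\mathbb{R})=\prod_v\SL_t(K_v)$, the Iwasawa/$QR$ decomposition at each place, and the root--Jacobian computation identifying $\prod_{i<j}\N(a_{ii})/\N(a_{jj})$ with $\prod_{i<j}\lvert a_{ii}/a_{jj}\rvert_v^{[K_v:\mathbb{R}]}$, which uniformizes the real and complex places. Your observation that $\mathcal{A}$ as defined in Definition~\ref{de:kan} contains the compact diagonal torus $T$ at complex places (so the factorization is surjective but not injective, with compact fibres) is a genuine subtlety the paper glosses over, and you handle it correctly: since $T\subseteq\mathcal{K}\cap\mathcal{A}$ is compact and $\delta$ restricts to $1$ on $T$, the formula merely overcounts by $\vol(T)$, which is a harmless rescaling.

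One step in your Stage~2 is misstated. With the $\mathcal{K}\mathcal{A}\mathcal{N}$ ordering, left translation by $n_0\in\mathcal{N}$ sends $\kappa an\mapsto n_0\kappa an$, which is \emph{not} of the form $\kappa a(n_0' n)$: the $n_0$ cannot be absorbed into the $dn$ integral without re-performing the Iwasawa decomposition. So ``invariance under left $\mathcal{N}$-translation from translation-invariance of $dn$'' does not go through as written. What does go through cleanly is \emph{right}-invariance under $\mathcal{N}$ (substitute $n\mapsto nn_0^{-1}$) and under $\mathcal{A}$ (substitute $a\mapsto aa_0^{-1}$, $n\mapsto a_0na_0^{-1}$; the Jacobian $\delta(a_0)$ of the conjugation exactly cancels the change $\delta(a)\mapsto\delta(a)\delta(a_0)^{-1}$ of the weight), together with \emph{left}-invariance under $\mathcal{K}$. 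To upgrade this to full invariance one should then invoke a uniqueness argument: a Radon measure on $G=\mathcal{K}\cdot(\mathcal{A}\mathcal{N})$ that is left-$\mathcal{K}$-invariant and right-$(\mathcal{A}\mathcal{N})$-invariant is unique up to a positive scalar (disintegrate along the fibres of $G\to G/\mathcal{A}\mathcal{N}\cong\mathcal{K}/T$ and use transitivity of $\mathcal{K}$ on the base), and the bi-invariant Haar measure is one such measure since $G$ is unimodular. With that substitution your argument is complete.
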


Now let us define a Siegel domain $\mathfrak{S} \subseteq \SL_{t}(K_\mathbb{R})$.

\begin{definition}
Let $K_\mathbb{R}^{(1)}$ be the set of unit norm elements of $K_\mathbb{R}$.
	Let $\omega_{1}\subseteq K_{\mathbb{R}}^{(1)}$ be a relatively compact set and let $c_1,c_2>0$. 
	Also, let $b_1, b_2, \dots , b_m$ be some elements of $\GL_t(K)$. 
	Recall the definition of $\mathcal{K},\mathcal{A},\mathcal{N}$ as defined in Definition \ref{de:kan_defi}.
	% and $A^{(1)},A^{\mathbb{R}}$ as defined in \cref{de:kan} and \cref{de:aa}.
	Then, we define
\begin{align}
  \underline{\mathcal{A}}^{\mathbb{R}} & = \{ a \in \GL_t(K_{\mathbb{R}})  \ | \ a'_{ij} = 0 \text{ for }i\neq j,a_{ij}' \in \mathbb{R}_{>0} \subset K_{\mathbb{R}} \}, \\
  \mathcal{A}_{\omega_1}^{(1)} & =  \{ a \in \mathcal{A}\ | \ a_{ii} \in \omega_{1}  \} ,\\
  \underline{\mathcal{A}}_{c_1}^{\mathbb{R}} & = \{ a' \in \underline{\mathcal{A}}^{\mathbb{R}} \ | \ a_{ii}' \in \mathbb{R}_{>0} \subset K_{\mathbb{R}}, a'_{ii} \le c_1 a'_{i+1,i+1} \},\\
  { \mathcal{A} }_{c_1}^{\mathbb{R}} & =  \{ a' \in \mathcal{A} \ | \ a'_{ii} \in \mathbb{R}_{>0} \subseteq K_{\mathbb{R}} ,  a'_{ii} \le c_1 a'_{i+1,i+1}  \} = \underline{\mathcal{A}}_{c_1}^{\mathbb{R}}  \cap \mathcal{A} ,\\
  \mathcal{N}_{c_2} & =  \{ n \in \SL_{t}(K_\mathbb{R}) \ | \ n \text{ is upper triangular with $1$ on diagonal }, \Tr(n_{ij}\overline{ n_{ij} })  < c_{2}  \},\\
  \fS^{1}& = \fS^{1}_{\omega_{1},c_1,c_2}  =  \mathcal{K} \mathcal{A}_{\omega_1}^{(1)} \underline{\mathcal{A}}_{c_1}^{\mathbb{R}} \mathcal{N}_{c_2},\\
  \fS& = \fS_{\omega_{1},c_1,c_2}  =  \left( \bigcup_{i=1}^{m} \fS^{1} b_i^{-1} \right) \cap \SL_{t}(K_\mathbb{R})  =  \bigcup_{i=1}^{m} \det(b_i)^{\frac{1}{dt}}  (\mathcal{K} \mathcal{A}^{(1)}_{\omega_1} \mathcal{A}_{c_1}^{\mathbb{R}} \mathcal{N}_{c_{2}}) b_i^{-1} .
\end{align}
Here, $d=[K:\mathbb{Q}]$ and $\N,\Tr: K_\mathbb{R} \rightarrow \mathbb{R}$ are the usual norm and trace maps. Also, $\overline{ (\ ) }:K_\mathbb{R} \rightarrow K_\mathbb{R}$ like in Equation \ref{eq:norm} is the coordinate-wise complex conjugation (or identity on real embeddings) using the identification $K_{\mathbb{R}} \simeq \mathbb{R}^{r_1} \times \mathbb{C}^{r_2}$. 
Note that $\det(b_i) \in (K \otimes \mathbb{R})^{*}$ so $\det(b_i)^{\frac{1}{dt}}$ is well-defined 
upto choosing some roots of unity in the complex embeddings of $K$.
\label{de:siegel_set}
\end{definition}

\begin{remark}\label{re:liesinQ}
	Note that $\{ b_i\}_{i=1}^{n}$ lie in $\GL_t(K)$. 
	This means that for some $N \in \mathbb{N}$, $N\cdot b_i \in M_t(\mathcal{O}_K)$. 
	% We will use this in the proof of our integration formula in \cref{ch:int_formula}.
\end{remark}

The following result, which can be attributed to Weil \cite{W58}, holds:
\begin{theorem}
\label{th:bhc}
For some choice of $b_1,\dots,b_m \in \GL_{t}(K)$, some relatively compact set $\omega_{1} \subseteq K_{\mathbb{R}}^{(1)}$ and some $c_1,c_2 > 0$, the Siegel set $\fS$ defined in Definition \ref{de:siegel_set} satisfies the property that 
\begin{equation}
  \fS\cdot \SL_{t}(\OK)= \SL_{t}(K_\mathbb{R}).
\end{equation}
\end{theorem}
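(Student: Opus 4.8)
The plan is to follow the classical reduction theory of Borel--Harish-Chandra and Weil, adapted to $\SL_t$ over the number field $K$. First I would translate the statement into the language of $\OK$-lattices: an element $g\SL_t(\OK)$ of $\SL_t(K_\mathbb{R})/\SL_t(\OK)$ is the same datum as the free rank-$t$ unimodular $\OK$-module $\Lambda = g\OK^t \subseteq K_\mathbb{R}^t$ together with an ordered $\OK$-basis modulo change of basis. It therefore suffices to show that every such $\Lambda$ admits a basis whose associated Iwasawa $\mathcal{K}\mathcal{A}\mathcal{N}$-coordinates (via Proposition \ref{pr:haar_measure}) lie in one of finitely many translated Siegel sets $\fS^1 b_i^{-1}$.

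Second, I would run the standard successive-minima / Hermite reduction argument. One picks a basis $v_1,\dots,v_t$ of $\Lambda$ adapted to a flag $0 \subset M_1 \subset \cdots \subset M_t = \Lambda$ of primitive $\OK$-submodules realizing the successive minima of $\Lambda$ with respect to the quadratic form of Equation (\ref{eq:norm}). The graded pieces $M_i/M_{i-1}$ are rank-one $\OK$-modules, i.e. fractional ideals, and there are only $h_K = \card \mathrm{Cl}(K)$ ideal classes; this is what produces the finite list $b_1,\dots,b_m \in \GL_t(K)$, namely representatives of bases realizing each class pattern. Performing Gram--Schmidt over $K_\mathbb{R}$ on $v_1,\dots,v_t$ reads off the $\mathcal{K}$, $\mathcal{A}$ and $\mathcal{N}$ parts of the corresponding element: the diagonal entries $a_{ii}$ encode the "size" of the $i$-th Gram--Schmidt vector, and the strictly upper-triangular $n_{ij}$ encode the Gram--Schmidt coefficients. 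Then I would establish the three defining bounds of $\fS^1$: (i) the ratio condition $a'_{ii} \le c_1 a'_{i+1,i+1}$ on the positive-real parts is precisely the statement that a successive-minima reduced basis has controlled ratios between consecutive Gram--Schmidt norms, with $c_1$ depending only on $t$ and $K$ (via Minkowski's convex body theorem); (ii) the condition $a_{ii} \in \omega_1$ on the unit-norm parts follows because, after scaling out the positive real factor, $a_{ii} \in K_\mathbb{R}^{(1)}$, and the diagonal matrices in $\SL_t(\OK)$ together with rescaling the $v_i$ by units act on the tuple $(a_{ii})$ through $\OK^\times$, so by Dirichlet's unit theorem $K_\mathbb{R}^{(1)}/\OK^\times$ is compact and one can move each $a_{ii}$ into a fixed relatively compact fundamental region $\omega_1$; (iii) the condition $\Tr(n_{ij}\overline{n_{ij}}) < c_2$ follows by using upper-triangular unipotent matrices in $\SL_t(\OK)$ (whose entries range over the appropriate fractional ideals cut out by the flag and the chosen $b_i$) to reduce each Gram--Schmidt coefficient $n_{ij}$ modulo a fixed lattice in $K_\mathbb{R}$, bounding its trace-norm by a constant $c_2$ depending on $K$, $t$ and the $b_i$.

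The main obstacle, and the only place the argument is genuinely more delicate than over $\mathbb{Q}$, is the interplay between the class group and the torus/unipotent reductions: the graded pieces of a reduced flag are fractional ideals lying in possibly nontrivial classes, so one cannot literally diagonalize over $\OK$, and the matrices $b_1,\dots,b_m$ must be chosen so that $\bigcup_{i=1}^m \fS^1 b_i^{-1}$ simultaneously accommodates all $h_K$ patterns while the unipotent box-reduction in step (iii) is carried out modulo the correct $b_i$-twisted integral lattice. Here Remark \ref{re:liesinQ} is what makes this manageable: the $b_i$ are integral up to a bounded denominator $N$, so the finitely many relevant lattices are commensurable with $M_t(\OK)$ and the box-reduction constant $c_2$ stays uniform. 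Once the bookkeeping of ideal classes is set up --- exactly as in \cite{W58} or \cite{G21} --- the three bounds assemble into the claim $\fS \cdot \SL_t(\OK) = \SL_t(K_\mathbb{R})$, noting that $\fS \subseteq \SL_t(K_\mathbb{R})$ is automatic from the intersection with $\SL_t(K_\mathbb{R})$ in Definition \ref{de:siegel_set}. I would present the details of this class-group bookkeeping carefully and cite the above references for the purely computational verifications of the constants $c_1, c_2$ and the compactness of $\omega_1$.
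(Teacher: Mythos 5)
The paper does not actually supply a proof of Theorem~\ref{th:bhc}: it explicitly defers to the literature, writing ``We will directly state the results for the first two steps since these are fairly well-known. An interested reader can either refer to \cite{W58}, \cite{G21} or \cite{B19} for the details.'' So there is no in-text argument to compare against.

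That said, what you sketch is indeed the standard Weil/Borel--Harish-Chandra reduction-theory argument that those references contain, and your outline identifies the right ingredients and the right subtleties: successive-minima reduction supplying the ratio bound $c_1$ on $\underline{\mathcal{A}}^{\mathbb{R}}_{c_1}$ via Minkowski, Dirichlet's unit theorem giving compactness of (a quotient of) $K_\mathbb{R}^{(1)}$ by $\OK^\times$ to furnish $\omega_1$, box-reduction by integral unipotents for $\mathcal{N}_{c_2}$, and the class group producing the finite list $b_1,\dots,b_m$. You are also right that the genuinely new friction relative to $\mathbb{Q}$ is the simultaneous handling of the ideal-class bookkeeping and the $b_i$-twisted integral lattices in the unipotent reduction, and that Remark~\ref{re:liesinQ} (commensurability with $M_t(\OK)$ via a common denominator $N$) is what keeps $c_2$ uniform. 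A small point worth pinning down when you write the details: the units used to shift $(a_{11},\dots,a_{tt})$ into $\omega_1$ must respect the determinant constraint coming from $\SL_t$ rather than $\GL_t$ (only $\det(a)\det(\kappa)=1$ and $\det(a')=1$ are forced, not each $a_{ii}$), so the relevant compactness statement is for a codimension-one quotient of $(K_\mathbb{R}^{(1)})^t$ by a finite-index subgroup of $(\OK^\times)^t$; this is routine but should be stated correctly. If you intend to include a full proof rather than cite, the cleanest path is to follow \cite{W58} closely (or \cite{B19} for a modern treatment), as reconstructing the class-group bookkeeping from scratch is laborious but not conceptually novel.
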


The amazing fact about $\fS$ is that integrating it along the Haar measure defined in Proposition \ref{pr:haar_measure} very easily verifies that $\SL_{t}(K_\mathbb{R})/\SL_{t}(\OK)$ has finite Haar measure. Showing that $\fS$ has finite volume is showing a special case of Borel and Harish-Chanda's breakthrough theorem \cite{BHC62} about bounded volumes of arithmetic quotients for this particular algebraic group.

This completes the first two steps of our proof outline at the start of this section. 
To simplify the notations a bit from here on out, we will use the notation of group schemes over $\mathbb{Q}$. 
Our main object of study is $\mathcal{G}$, which is the group $\Res^K_{\mathbb{Q}} \SL_t$ which is the rank-restriction from $K$ to $\mathbb{Q}$ of the group $\SL_t$. We will denote by $\mathcal{G}(\mathbb{R}) =\SL_{t}(K_\mathbb{R})$ the continuous group of real points of $\mathcal{G}$ and $\mathcal{G}(\mathbb{Q}) = \SL_{t}(K)$. The arithmetic subgroup $\Gamma = \SL_t(\OK)$ can be defined as the points of $\mathcal{G}(\mathbb{Q})$ that preserve the lattice $\OK^t$ under the natural left-action of $\mathcal{G}(\mathbb{R})$ on $K_\mathbb{R}^t$.

\begin{remark}
The role of $\SL_t(\OK)$ in the above definition of Siegel domain may be played by other arithmetic subgroups $\Gamma$ as well. A new Siegel domain mapping surjectively onto $\SL_t(K_\mathbb{R})/\Gamma$ may be found in each case. Note that the finite set $b_1,...,b_m$ above is sensitive to the choice of the arithmetic subgroup $\Gamma$.

\end{remark}

Moving on to the next step, we now use our Siegel domain $\fS$ to prove the following result.

\begin{proposition}
\label{pr:bounded_integral}
Suppose $t > n$.
Let $V_\mathbb{Q} = K^{t \times n}$ and let $\mathcal{G}(\mathbb{Q}) = \SL_{n}(K)$ be acting on the left. 
Let $\Lambda = {\OK}^{t \times n} \subseteq V_\mathbb{Q}$.
Thus $\Gamma = \SL_t(\mathcal{O}_K)$ is a discrete subgroup of $\mathcal{G}(\mathbb{R}) = \SL_{t}(K_\mathbb{R})$ that preserves the lattice $\Lambda$. Let $V_\mathbb{R} = V_\mathbb{Q} \otimes \mathbb{R}$.

Suppose that $f:V_\mathbb{R} \rightarrow \mathbb{R}$ is a bounded and compactly supported function that is integrable on every real subspace $W \subseteq V_{\mathbb{R}}$. Then, for any $0 < \varepsilon \le 1$, we have  that 
\begin{equation}
\int_{\mathcal{G}(\mathbb{R})/\Gamma} \left( \varepsilon^{dnt }\sum_{v \in g \Lambda} \left|f( \varepsilon v)\right| \right) dg  
\end{equation}
is uniformly bounded (independent of $\varepsilon$) from above by a function on $\mathcal{G}(\mathbb{R})/\Gamma$ whose integral is finite.
\end{proposition}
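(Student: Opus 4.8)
The plan is to integrate over the Siegel domain $\fS$ described in Definition \ref{de:siegel_set}, exploiting the Iwasawa-type coordinates $g=\kappa a a' n$. The first move is to reduce to the ``diagonal'' contribution: since $f$ is bounded and compactly supported, say $|f|\le C_0\cdot\ind_{B_\rho}$ for a ball $B_\rho\subseteq V_\mathbb{R}$ of some radius $\rho$, it suffices to bound
$$\int_{\fS}\Big(\varepsilon^{dnt}\sum_{v\in g\Lambda}C_0\cdot\ind_{B_\rho}(\varepsilon v)\Big)\,d\mu(g),$$
where $d\mu$ is the explicit Haar measure of Proposition \ref{pr:haar_measure}, namely $\prod_{i<j}\tfrac{\N(a_{ii})}{\N(a_{jj})}\,d\kappa\,da\,dn$, and $\fS$ is a finite union of translates $\det(b_i)^{1/dt}\mathcal{K}\mathcal{A}^{(1)}_{\omega_1}\mathcal{A}^\mathbb{R}_{c_1}\mathcal{N}_{c_2}b_i^{-1}$. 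Since the $b_i$ are fixed and $N b_i\in M_t(\OK)$ for some $N$ (Remark \ref{re:liesinQ}), conjugating by $b_i^{-1}$ only changes $\Lambda$ to a commensurable lattice, so after enlarging $\rho$ and $C_0$ by constants depending only on the $b_i$ it is enough to treat a single piece $\mathcal{K}\mathcal{A}^{(1)}_{\omega_1}\mathcal{A}^\mathbb{R}_{c_1}\mathcal{N}_{c_2}$ acting on a fixed lattice $\Lambda'$.

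The heart of the argument is a pointwise estimate on the inner sum. Because $\mathcal{K}$ is compact and $\omega_1$ is relatively compact, the number of $v=g\lambda$ (for $\lambda\in\Lambda'$, $\lambda\ne 0$, columns of $\lambda$ being the vectors $\lambda_1,\dots,\lambda_n\in\OK^t$) lying in $\varepsilon^{-1}B_\rho$ is controlled by the successive minima of the lattice $a a' n\Lambda'$ in each of the $n$ copies. The key geometric input is that for $g=\kappa a a' n$ in the Siegel set, the entries $\N(a_{ii}(a'_{ii}))$ are essentially monotone (they satisfy $a'_{ii}\le c_1 a'_{i+1,i+1}$), so the smallest vectors of $g\Lambda'$ are spanned by the ``top rows,'' and one gets a bound of the shape
$$\varepsilon^{dnt}\sum_{v\in g\Lambda'}\ind_{B_\rho}(\varepsilon v)\ \ll\ \sum_{0\le k\le n}\ \varepsilon^{d(n-k)t}\cdot \frac{(\text{const})^{?}}{\prod_{i=1}^{k}\N(a_{ii}a'_{ii})^{t}}\cdot\big(1+\text{lower order}\big),$$
where the term indexed by $k$ counts configurations $v$ whose columns span a $k$-dimensional $K$-subspace of $K^t$ contained in the span of the first few coordinate lines; for $\varepsilon\le 1$ each such term is $\le$ (const) times $\prod_{i\le k}\N(a_{ii}a'_{ii})^{-t}$. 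This is exactly the step where Rogers' original counting lemma, adapted to number fields, is used; it is the part I expect to require the most care, since one must see that a nonzero vector of $g\Lambda'$ in a fixed ball forces $\N(a_{ii}a'_{ii})$ to be bounded below for the relevant indices $i$, uniformly in $\varepsilon$, using that $\Lambda'$ is discrete and $t>n$ gives room for the $n$ columns to be linearly independent ``generically.''

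With such a pointwise bound in hand, one integrates over $\mathcal{A}^\mathbb{R}_{c_1}$ (the radial part) and $\mathcal{N}_{c_2}$ (which is of finite Haar measure since it is a bounded region by definition), picking up the measure factor $\prod_{i<j}\N(a_{ii}a'_{ii})/\N(a_{jj}a'_{jj})$. For each $k$ the resulting integral over the $a'_{ii}$ converges precisely because the measure weight $\prod_{i<j}(\cdots)$ beats $\prod_{i\le k}\N(a_{ii}a'_{ii})^{-t}$ when $t>n$ — this is the classical reason Siegel sets have finite volume (Borel--Harish-Chandra) and why the hypothesis $t>n$ appears. The output is a bound by a fixed (independent of $\varepsilon$) nonnegative function $\Phi$ on $\mathcal{G}(\mathbb{R})/\Gamma$, built as a sum over $k$ of the above integrands pulled back through the $K$-$A$-$N$ decomposition; the same convergence estimate shows $\int_{\mathcal{G}(\mathbb{R})/\Gamma}\Phi<\infty$. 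The main obstacle, to reiterate, is establishing the uniform-in-$\varepsilon$ pointwise count of lattice points of $g\Lambda$ in a fixed ball in terms of the torus coordinates; once that is pinned down, the integration is the standard Siegel-domain convergence computation and only uses $t>n$.
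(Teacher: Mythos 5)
Your outer scaffolding (reduce to $|f|\le C_0\ind_{B_\rho}$, sum over the finitely many Siegel pieces $\fS^1 b_i^{-1}$, absorb $b_i^{-1}$ into a commensurable lattice, discard the compact $\mathcal{K},\mathcal{A}^{(1)}_{\omega_1},\mathcal{N}_{c_2}$ factors) exactly matches the paper. The divergence, and the genuine gap, is in the pointwise count of lattice points in a ball for $g = \kappa a'a\eta$. You propose a Rogers-style decomposition indexed by the $K$-rank $k$ of the matrix $v$, via successive minima and a "the smallest vectors live in the top rows" heuristic, and you yourself flag this as unfinished. This is the wrong tool here. The rank/flag decomposition is the content of the later unwinding step (Proposition \ref{th:diamond_lemma}), not of this integrability estimate. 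The paper instead uses a much more elementary device: after conjugating $a\eta$ past $a'$ into a fixed compact set $\mathcal{Y}$, it reduces to counting $M_{t\times n}(\OK)$-points in the ellipsoid $\{x : \sum_{i,j}(a'_{ii})^2\Tr(\overline{x_{ij}}x_{ij})\le (R'/\varepsilon)^2\}$ and bounds this by the enclosing axis-parallel box, giving a clean product over the $t$ rows,
\begin{equation}
\prod_{i=1}^{t}\left(B_1 + B_2\Big(\tfrac{R'}{a'_{ii}\,\varepsilon}\Big)^{dn}\right),
\end{equation}
from which the $\varepsilon$-uniformity is immediate: multiplying by $\varepsilon^{dnt}$ distributes one factor of $\varepsilon^{dn}$ into each bracket and then $\varepsilon\le 1$ kills it.

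Beyond being underdeveloped, your proposed pointwise bound also looks quantitatively off: the factor $\prod_{i\le k}\N(a_{ii}a'_{ii})^{t}$ carries exponent $t$, whereas the correct per-row exponent coming from counting $\OK^n$-points in a ball of radius $R'/(a'_{ii}\varepsilon)$ is $dn$ (not $dt$), consistent with the paper's box bound. The exponent $t$ does eventually appear in Theorem \ref{th:weil} through $\mathfrak{D}(D)^{-t}$ and Lemma \ref{le:height_relation}, but that happens in the identification of the constants $c_D$, not in this domination estimate. Finally, your convergence claim (that $t>n$ makes the $\mathcal{A}^{\mathbb{R}}_{c_1}$-integral finish) is the right intuition and is indeed where $t>n$ enters, but it cannot be checked until the pointwise bound is pinned down in the correct form. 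Replacing the successive-minima/rank heuristic with the explicit box count would close the gap and recover the paper's argument.
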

\begin{proof}

Let $ 0 < \varepsilon \le 1$ be a real number. Without loss of generality, we can replace $f$ by $|f|$ and therefore assume that $f$ is non-negative.
  
Recall the constructions of $\fS,\fS^{1}$ from Definition \ref{de:siegel_set}.
Let $R>0$ be such that $f$ is supported 
inside $B_{R}(0) \subset V_{\mathbb{R}}$, where $B_R(0)$ is a ball invariant under the action of the compact group $\mathcal{K}$ from Definition \ref{de:kan_defi}. We can create such a ball by averaging any quadratic form over $\mathcal{K}$.

Note that we have chosen the Haar measure on $\mathcal{G}(\mathbb{R})$ to be as given in Proposition \ref{pr:haar_measure}. This does not necessarily correspond to the probability measure on $\mathcal{G}(\mathbb{R})/\Gamma$. However, this will create a correction of at most a constant in our calculations, which does not affect the truthfulness of our proposition.

With this, we get that 
  % $C$, which arises out of the choice of scaling\footnote{In fact, $C$ would be equal to the reciprocal of the volume of $G/\Gamma$ in terms of the measure from Corollary \ref{co:haar}} of the Haar measure on $G/\Gamma$
  \begin{align}
	  \int_{\mathcal{G}(\mathbb{R})/\Gamma}^{} \varepsilon^{dnt} \sum_{v \in \Lambda}{f}( \varepsilon g v ) dg 
	  & \ll \  
	  \varepsilon^{dnt} \int_{\fS}^{} 
	  \left( \sum_{v \in  \Lambda }^{} f( \varepsilon g v) \right) dg \\
	  & \ll   \varepsilon^{dnt} \int_{\fS}^{} \left(  (g \Lambda ) \cap B_{R/ \varepsilon }(0)\right) dg \\
    \le &    \varepsilon^{dnt} \sum_{i=1}^{m}\int_{\fS^{1}}  \#  \left(  (g b_{i}^{-1} \Lambda ) \cap B_{R/ \varepsilon }(0)\right)dg\\
    =    \sum_{i=1}^{m}\varepsilon^{dnt} \int_{\mathcal{N}_{c_2}} & \int_{\mathcal{A}_{\omega_1}^{(1)}} \int_{\mathcal{A}^{\mathbb{R}}_{c_1}} \int_{\mathcal{K}}  \# \left( \det(b_{i})^{\frac{1}{dt}}(\kappa a' a \eta) b_i^{-1}\Lambda \cap B_{R/\varepsilon}(0)\right)\ \prod_{i < j}^{} \left( \frac{a'_{ii}}{a'_{jj}} \right)^{d} d\kappa da' da d\eta .
  \end{align}

  We remind the reader that $d = [K:\mathbb{Q}]$.
  Since $B_{R/\varepsilon}(0)$ is chosen to be invariant under $K$, we know that for any $\kappa \in K$,
  \begin{align}
    \# \left( \det(b_i)^{\frac{1}{dt}} (\kappa a' a \eta) b_i^{-1}  \Lambda\cap B_{R/\varepsilon}(0) \right) 
    = \# \left( \det(b_i)^{\frac{1}{dt}}(a'a\eta) b^{-1}_i\Lambda \cap B_{R/\varepsilon}(0) \right).
  \end{align}

  Because of Remark \ref{re:liesinQ}, we know that there exists some $N \in \mathbb{N}$ such that $b_i^{-1} \Lambda \subseteq \frac{1}{N} \Lambda $ for every $1 \le i \le m$. Hence, we deduce that 
  \begin{align}
    \# \left( \det(b_{i})^{\frac{1}{dt}}(a'a\eta) b^{-1}_i\Lambda \cap B_{R/\varepsilon}(0) \right) 
    \ll
    % & \#\left(  (a'an) \frac{\N(b_i)^{\frac{1}{dk}}}{N} \mathcal{O}^{k} \cap B_{R/ \varepsilon  }(0) \right)  \\
     &  \# \left( (a'a\eta) \Lambda \cap B_{R/\varepsilon}(0) \right).
     % , \text{ where } R_i = \frac{R N}{ \N(b_{i})^\frac{1}{dk}}.
  \end{align}
  Note that in this last step, we might have to move to a slightly bigger radius $R$ to accomodate for $\det(b_i)^{\frac{1}{dt}}$. This does not affect our conclusions. Now consider the set $$\mathcal{Y}= \{ a'an(a')^{-1} \ | \ {a' \in \mathcal{A}^{\mathbb{R}}_{c_1}, a \in \mathcal{A}^{(1)}_{\omega_1}, n \in \mathcal{N}_{c_2}} \} \subseteq \mathcal{G}(\mathbb{R}).$$ For $y \in \mathcal{Y}$, note that $y_{ij} = a'_{ii}a_{ii} n_{ij} (a'_{jj})^{-1} $. 
  Hence, $\Tr(\overline{ y_{ij} } y_{ij}) = \left(\frac{a'_{ii}}{a'_{jj}}\right)^{2}\Tr\left( \overline{ a_{ii} n_{ij} } a_{ii}n_{ij}\right)$.
  Here, $\left( \frac{a'_{ii}}{a'_{jj}} \right)$ is a positive real number bounded by $c_1^{j-i}$ because of the construction of $\mathcal{A}^{\mathbb{R}}_{c_1}$, and the other term is bounded because it continuously depends on $a_{ii} n_{ij}$ lying in a compact set. It follows that the set $\mathcal{Y}$ must lie inside a relatively compact set of $\mathcal{G}(\mathbb{R})$. 
  Furthermore, the set $\mathcal{Y}$ is only dependent on $c_1, c_2$ and $\omega_1$.

  Equipped with $\mathcal{Y} \subseteq \mathcal{G}(\mathbb{R})$, let $R' > 0$ be a radius such that 
  $\mathcal{Y}^{-1} B_{R}(0) \subseteq B_{R'}(0) \Rightarrow \mathcal{Y}^{-1}B_{R/\varepsilon}(0) \subseteq B_{R'/\varepsilon}(0)$. Then we write that 
  \begin{align}
    \# \left( (a'an) \Lambda \cap B_{R/\varepsilon}(0) \right) = & \# \left( (a'an(a')^{-1})a' \Lambda \cap B_{R/\varepsilon}(0) \right) \\
    \le & \# \left( a' \Lambda \cap \mathcal{Y}^{-1} B_{R/\varepsilon}(0) \right)  \\
    \le & \# \left( a' \Lambda \cap B_{R'/\varepsilon}(0) \right).
  \end{align}

 At this point, we invoke the identification $\Lambda =M_{t \times n}(\mathcal{O}_{K}) \subseteq M_{t \times n}(K) = V_\mathbb{Q}$. 
 After possibly replacing $R'$ with a bigger radius, we may assume that the norm on $V_\mathbb{R} \simeq {K_\mathbb{R}}^{t \times n}$ is the one given by 
 \begin{equation}
	 (x_1,\dots,x_{tn}) \mapsto \sum_{i,j} \Tr(\overline{ x_{ ij } }x_{ ij }).
 \end{equation}

 Then the value of the last expression 
 is equal to the number of integral solutions $(x_1, \dots, x_{tn}) \in M_{t \times n}(\mathcal{O})$ of the inequality
  \begin{align}
	  \sum_{i=1}^{t}  \sum_{j=1}^{n}  {a'_{ii}}^{2} \Tr(\overline{ x_{ij} } x_{ij}) \le \frac{ R'^{2}}{ \varepsilon^{2}}.
  \end{align}

  This is the number of points in a lattice intersecting with some ellipsoid. 
  By considering a suitable ``axis-parallel cuboid'' that contains this ellipsoid, an upper bound for the number of these lattice points in the ellipsoid is the following quantity.
  \begin{align}
    \prod_{i=1}^{t}\#\left\{  x \in \OK^{n} \ | \  \sum_{i=1}^{n}\Tr(\overline{ x_i  }x_i) \le \frac{ {R'}^{2} }{ {a'_{ii}}^{2} \varepsilon^{2} }  \right\}.
  \end{align}

  Each term in the product is the number of points in a ball of radius $R'/a_{ii}'\varepsilon$ in a ${dn}$-dimensional $\mathbb{R}$-vector space. Hence, there exist constants $B_1, B_2 > 0$ depending only on $\mathcal{O}_K, K $ such that 
  \begin{align}
    \#\left\{  x \in \mathcal{O}_K^{n} \ | \  \sum_{i=1}^{n}\Tr(\overline{ x_i } x_i) \le \frac{ {R'}^{2} }{ {a'_{ii}}^{2} \varepsilon^{2} }  \right\}
    % \#\left\{  x \in \mathcal{O} \ | \ \Tr_{D_{\mathbb{R}}}(x^{*} x) \le \frac{ {R'}^{2} }{ {a'_{ii}}^{2} \varepsilon^{2} }  \right\} 
    \le B_1 + B_2 \left( \frac{R'}{a'_{ii} \varepsilon} \right)^{{dn}} ,
  \end{align}
  and therefore 
  \begin{align}
    & \int_{\mathcal{G}(\mathbb{R})/\Gamma} \varepsilon^{dnt} \sum_{v\in \Lambda}{f}( \varepsilon g v) dg  \\ 
    & \ll \sum_{i=1}^{m} \varepsilon^{dnt} \int_{\mathcal{N}_{c_{2}}} \int_{\mathcal{A}^{(1)}_{\omega_1}} \int_{\mathcal{A}^{\mathbb{R}}_{c_1}} \int_{\mathcal{K}}\prod_{i=1}^{t}  \left(  B_1 + B_2\left( \frac{R'}{a'_{ii} \varepsilon} \right)^{dn}    \right) \prod_{i < j}^{} \left( \frac{a'_{ii}}{ a'_{jj}} \right)^{d} d\kappa da' da d\eta \\ 
    &  \ll \int_{\mathcal{N}_{c_{2}}} \int_{\mathcal{A}^{(1)}_{\omega_1}} \int_{\mathcal{A}^{\mathbb{R}}_{c_1}} \int_{\mathcal{K}} 
    \left( \prod_{i=1}^{t} \left( B_1 \varepsilon^{{dn}} + B_2\left( \frac{R'}{a'_{ii} } \right)^{{dn}}   \right) \right) \prod_{i < j}^{} \left( \frac{a'_{ii}}{ a'_{jj}} \right)^{d} d\kappa da' da d\eta.
  \end{align}

  Now $\varepsilon \le 1 \Rightarrow  B_{1} \varepsilon^{dn} \le B_1$. Therefore, we can bound the integral above by 
  \begin{align}
    % & \int_{G/\Gamma} \varepsilon^{dk} \Phi_{f_{\varepsilon}}(g \Gamma) dg  \\ 
    &  \int_{\mathcal{N}_{c_{2}}} \int_{\mathcal{A}^{(1)}_{\omega_1}} \int_{\mathcal{A}^{\mathbb{R}}_{c_1}} \int_{\mathcal{K}} 
    \left( \prod_{i=1}^{t} \left( B_1  + B_2\left( \frac{R'}{a'_{ii} } \right)^{{dn}}   \right) \right) 
    \prod_{i < j}^{} \left( \frac{a'_{ii}}{ a'_{jj}} \right)^{{d}} d\kappa da' da d\eta .
  \end{align}

  This last integral does not contain any appearance of $\varepsilon$. Note that for a decomposition of $g = \kappa a' a \eta$, the matrix $a'$ is unique. 
  Therefore, some appropriate scaling of the function $g \mapsto  \prod_{i=1}^{t}\left(B_1 + B_2(R' {a'_{ii}}^{-1})^{{dn}}\right)$ on a fundamental domain of $\mathcal{G}(\mathbb{R})/\Gamma$ contained in $\fS$
  is a dominating function of $g \Gamma \mapsto \varepsilon^{dk} \sum_{v \in g \Lambda}f({\varepsilon} v )$ if we prove that the integral above is convergent.

  The sets $\mathcal{K}, \mathcal{A}^{(1)}_{\omega_1}$ and $\mathcal{N}_{c_2}$ are compact and thus, $\int_{\mathcal{K}}{ d\kappa} \int_{\mathcal{N}_{c_2}}^{}dn $ and $\int_{\mathcal{A}^{(1)}_{\omega_{1}}}da$ are finite. Hence, we just need to show the finiteness of
  \begin{align}\label{eq:integral}
  \int_{\mathcal{A}^{\mathbb{R}}_{c_1}}  \left( \prod_{i=1}^{t} \left( B_1  + B_2\left( \frac{R'}{a'_{ii} } \right)^{dn}   \right) \right) \prod_{i < j}^{} \left( \frac{a'_{ii}}{ a'_{jj}} \right)^{d}  da' .
\end{align}
One can rewrite this as 
  \begin{align}\label{eq:integral}
  \int_{\substack{ \  \\ \  \\ a' = (a_1',\dots,a_t') \in \mathbb{R}_{>0}^{t}  \\ a_i' < c_1 a_{i+1}' \\ \prod_{i} a_{i'} = 1}}  \left( \prod_{i=1}^{t} \left( B_1  + B_2\left( \frac{R'}{a'_{i} } \right)^{dn}   \right) \right) \prod_{i < j}^{} \left( \frac{a'_{i}}{ a'_{j}} \right)^{d} \cdot  \prod_{i=1}^{t-1} \frac{da_{i}'}{a'_i} .
\end{align}
To show that this integral is finite is now a problem of multivariable calculus. We leave this for the reader to verify.
\end{proof}

\begin{corollary}
The same statement holds if we replace $\Lambda = \OK^{t \times n}$ by some finite index sublattice $\Lambda' \subseteq \frac{1}{N} \Lambda$ for some $N \in \mathbb{Z}_{\geq 1}$. 
\label{co:nonstandard_lattice}
\end{corollary}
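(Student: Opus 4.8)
\textbf{Proof plan for Corollary \ref{co:nonstandard_lattice}.}

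The plan is to reduce the statement for a general finite-index sublattice $\Lambda'\subseteq \tfrac{1}{N}\Lambda$ to the case $\Lambda=\OK^{t\times n}$ already established in Proposition \ref{pr:bounded_integral}, using only monotonicity of the quantities involved and the fact that the integrand in the proposition is controlled by \emph{counting} lattice points in balls (so enlarging the lattice can only increase the count by a bounded factor). First I would observe the chain of inclusions $N\Lambda \subseteq N\Lambda' \subseteq \Lambda$, so that $\Lambda'$ is sandwiched, up to the fixed scaling by $N$, between two homothetic copies of the standard lattice $\Lambda$. Since $N$ is a fixed integer independent of $\varepsilon$, scaling the lattice by $N$ is the same (after absorbing $N$ into $\varepsilon$, using $\varepsilon\le 1$ and replacing $\varepsilon$ by $\varepsilon/N\le 1$, or equivalently enlarging the support radius $R$ of $f$ by a factor of $N$) as replacing $f$ by a function with a slightly larger compact support; none of the estimates in the proof of Proposition \ref{pr:bounded_integral} depend on the precise radius $R$, only on its finiteness.

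The key steps, in order, would be: (1) For $g\in\mathcal G(\mathbb R)$, bound $\sum_{v\in g\Lambda'}|f(\varepsilon v)|$ pointwise. Since $f$ is nonnegative (replace $f$ by $|f|$) and supported in a ball $B_R(0)$, this sum is at most $\|f\|_\infty\cdot\#\big((g\Lambda')\cap B_{R/\varepsilon}(0)\big)$, exactly as in the proof of Proposition \ref{pr:bounded_integral}. (2) Use $\Lambda'\subseteq \tfrac1N\Lambda$ to get $\#\big((g\Lambda')\cap B_{R/\varepsilon}(0)\big)\le \#\big((\tfrac1N g\Lambda)\cap B_{R/\varepsilon}(0)\big)=\#\big((g\Lambda)\cap B_{NR/\varepsilon}(0)\big)$. (3) Feed this into exactly the same Siegel-domain integration carried out in Proposition \ref{pr:bounded_integral}, now with $R$ replaced by $NR$: the Haar-measure unfolding over $\mathfrak S=\bigcup_i \mathfrak S^1 b_i^{-1}$, the reduction to counting points of $a'\Lambda$ in a ball, the axis-parallel cuboid bound $B_1+B_2(R'/a'_{ii}\varepsilon)^{dn}$ per coordinate, and the final multivariable-calculus convergence check of the integral over $\mathcal A^{\mathbb R}_{c_1}$ all go through verbatim because they are insensitive to the value of the radius. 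The conclusion is that $\int_{\mathcal G(\mathbb R)/\Gamma}\big(\varepsilon^{dnt}\sum_{v\in g\Lambda'}|f(\varepsilon v)|\big)\,dg$ is dominated, uniformly in $0<\varepsilon\le1$, by the same integrable function on $\mathcal G(\mathbb R)/\Gamma$ (up to a constant depending on $N$, $\|f\|_\infty$, and $n,t,K$).

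One subtlety to address explicitly: the lattice $\Lambda'$ need not be $\Gamma$-invariant, so the left-hand side $\sum_{v\in g\Lambda'}f(\varepsilon v)$ is not obviously a well-defined function on the quotient $\mathcal G(\mathbb R)/\Gamma$. However, for the purposes of the corollary we only need the \emph{upper bound}, and the bound produced in step (2) is through the $\Gamma$-invariant lattice $\Lambda$, hence the dominating function genuinely descends to $\mathcal G(\mathbb R)/\Gamma$; alternatively, if one wants the statement for $\Gamma'$-invariant $\Lambda'$ with $\Gamma'$ a finite-index subgroup of $\Gamma$, one notes $[\Gamma:\Gamma']<\infty$ changes the total volume by a bounded factor only. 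I expect the whole argument to be essentially bookkeeping; the main (very mild) obstacle is simply making precise that ``enlarging the radius by the fixed factor $N$'' leaves the convergence of the integral \eqref{eq:integral} untouched, which is immediate since that convergence was shown to depend only on the signs of the exponents of the $y_i$, not on the constants $B_1,B_2,R'$.
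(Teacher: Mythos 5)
Your proposal is correct and takes essentially the same approach as the paper: the paper's one-line proof observes that $\sum_{v\in\Lambda'}\varepsilon^{dnt}f(\varepsilon gv)\leq\sum_{v\in\Lambda}\varepsilon^{dnt}f(\tfrac{\varepsilon}{N}gv)$ and invokes Proposition \ref{pr:bounded_integral} with $\varepsilon/N$ in place of $\varepsilon$, which is the same reduction you phrase as ``enlarge $R$ to $NR$.'' Your extra remark about $\Gamma$-invariance of $\Lambda'$ is a genuine subtlety that the paper leaves implicit (in the application the relevant $\Lambda_0$ is $\Gamma$-invariant), but it does not change the method.
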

\begin{proof}
We indeed observe that 
\begin{equation}
  \sum_{v \in \Lambda' }\varepsilon^{dnt} f(\varepsilon g v) \leq \sum_{v \in \Lambda} \varepsilon^{dnt} f\left(\frac{\varepsilon}{N} g v \right).
\end{equation}
The rest of the proof proceeds as before.
\end{proof}

With this in hand, we are almost ready to conclude Theorem \ref{th:weil}. The following proposition says that Theorem \ref{th:weil} is correct up to identifying some constants.

\begin{proposition}
	Consider the same notations as Proposition \ref{pr:bounded_integral}. Then, we have that for any function $f:V_\mathbb{R} \rightarrow \mathbb{R}$ that satisfies the hypothesis of \ref{pr:bounded_integral}, we get 
	\begin{equation}
	  \int_{\mathcal{G}(\mathbb{R})/\Gamma} \left( \sum_{v \in g \Lambda} f(v) \right) dg 
	  = 
	  f(0)+
	\sum_{m=1}^{{n}}\sum_{\substack{D \in M_{m \times n }(K) , \rank(D) = m \\ 
D \text{ is row reduced echelon}}} c_D  \cdot \int_{x \in K_\mathbb{R}^{t \times m }} f(x D ) dx,
			\label{eq:integral_formula_upto_constants}
	\end{equation}
	for some constants $c_D > 0$ such that the right hand side converges absolutely.
	\label{th:diamond_lemma}
\end{proposition}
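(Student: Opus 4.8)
\textbf{Proof plan for Proposition \ref{th:diamond_lemma}.}

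The strategy is a standard ``unfolding'' argument combined with an orbit decomposition of the lattice $\Lambda = \OK^{t\times n}$ under the left action of $\Gamma = \SL_t(\OK)$. First I would decompose the sum over $v\in g\Lambda$ according to the $\Gamma$-orbit of the corresponding point of $\OK^{t\times n}$. The key linear-algebra input is that two matrices $x, x'\in \OK^{t\times n}$ lie in the same $\SL_t(\OK)$-orbit essentially when they have the same \emph{row space} over $K$ together with the same ``saturation data'', and that every $\SL_t(\OK)$-orbit of rank-$m$ matrices contains a (not necessarily unique) representative of the form $CD$ where $D\in M_{m\times n}(K)$ is the unique row-reduced echelon matrix whose rows span that row space and $C$ ranges over an appropriate lattice in $K_\mathbb{R}^{t\times m}$ — this is exactly the rank-factorization bijection already set up in the (commented-out) proof of Theorem \ref{th:higher_moments}. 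The rank-$0$ orbit $\{0\}$ contributes the isolated term $f(0)$. Collecting orbits by the echelon matrix $D$ (equivalently, by the subspace $S = D^T K^m \in \Gr(m,K^n)$), the sum $\sum_{v\in g\Lambda} f(v)$ becomes $f(0) + \sum_{m=1}^n \sum_D \sum_{w\in g\Lambda_D} f(w)$, where $\Lambda_D$ is the $\OK$-stabilizer-adapted lattice attached to $D$.

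Next I would perform the unfolding. For a fixed $D$ of rank $m$, let $\Gamma_D \subseteq \Gamma$ be the stabilizer of the relevant flag/row space, so that $\sum_{\gamma \in \Gamma/\Gamma_D}$ sweeps out the orbit. Then
\begin{align}
\int_{\mathcal{G}(\mathbb{R})/\Gamma} \sum_{w \in \text{orbit}(D)} f(gw)\, dg
= \int_{\mathcal{G}(\mathbb{R})/\Gamma_D} f(g w_0)\, dg,
\end{align}
for a base representative $w_0$, by the usual folding/unfolding identity (this is legitimate precisely because Proposition \ref{pr:bounded_integral} and Corollary \ref{co:nonstandard_lattice} guarantee absolute convergence of the double integral/sum, so Fubini applies). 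The quotient $\mathcal{G}(\mathbb{R})/\Gamma_D$ fibers over $K_\mathbb{R}^{t\times m}$ — the orbit of $w_0$ under $\mathcal{G}(\mathbb{R})$ is (a full-measure subset of) $K_\mathbb{R}^{t\times m}$ via $g \mapsto g w_0 = x D$ — with the fiber being a homogeneous space of finite invariant volume (this finiteness is again a consequence of the Borel--Harish-Chandra type bound implicit in Proposition \ref{pr:bounded_integral}, applied to the smaller group fixing the subspace). Pushing the Haar measure forward along this fibration yields, up to a finite positive constant $c_D$ depending only on $D$ (the volume of the fiber and the Jacobian of the parametrization), the expression $c_D \int_{x\in K_\mathbb{R}^{t\times m}} f(xD)\, dx$. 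Summing over $m$ and $D$ gives \eqref{eq:integral_formula_upto_constants}, and absolute convergence of the right-hand side follows from the $f\geq 0$ version already established, i.e. Proposition \ref{pr:bounded_integral}.

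The main obstacle — and the reason this is stated only ``up to constants'' — is the bookkeeping of the orbit decomposition and the proof that the pushforward measure is genuinely a constant multiple of Lebesgue measure $dx$ on $K_\mathbb{R}^{t\times m}$ rather than something $x$-dependent. Concretely one must check: (i) that the map $g\Gamma_D \mapsto g w_0$ is injective on a full-measure set and its image is co-null in $K_\mathbb{R}^{t\times m}$; (ii) that the stabilizer of a generic point $xD$ inside $\mathcal{G}(\mathbb{R})$ has finite covolume in the stabilizer inside $\Gamma$-conjugates, uniformly — this is where one reuses the Siegel-domain volume estimate of Proposition \ref{pr:bounded_integral} for a parabolic-type subgroup; and (iii) that different echelon matrices $D$ really do index disjoint orbits so there is no overcounting — this is the uniqueness half of the rank-factorization bijection. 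The actual \emph{value} $c_D = \mathfrak{D}(D)^{-t}$ (matching Theorem \ref{th:weil}) is deliberately deferred: it will be pinned down in the next step of the appendix by testing \eqref{eq:integral_formula_upto_constants} against a convenient family of functions $f$ (e.g. indicators of balls of shrinking radius, as in Lemma \ref{le:volume_det}) and comparing with the known lattice-point count, so here I would only need positivity and finiteness of each $c_D$, both of which come for free from the convergence already in hand.
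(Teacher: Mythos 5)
Your proposal is correct and matches the paper's argument in all essentials: both decompose $\Lambda$ into $\Gamma$-orbits, unfold the integral against the pointwise stabilizer $\mathcal{G}_\omega$ of the column span, identify $\mathcal{G}(\mathbb{R})\omega$ with a full-measure subset of $K_\mathbb{R}^{t\times m}D$ via the rank factorization and uniqueness of $\mathcal{G}(\mathbb{R})$-invariant measures, and absorb the finite fiber volumes into $c_D$ using the convergence guarantee from Proposition \ref{pr:bounded_integral}. The only two points the paper makes explicit that you leave implicit are that $\mathcal{G}_\omega$ is a unimodular $\mathbb{Q}$-group (needed to place a Haar measure on $\mathcal{G}_\omega(\mathbb{R})/\Gamma_\omega$ and legitimize the coset decomposition of the Haar integral), and that several distinct $\Gamma$-orbits $\Gamma\omega$ share the same echelon matrix $D$, so $c_D$ is a sum of per-orbit constants rather than a single fiber volume --- neither affects the structure of the argument.
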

\begin{proof}
Setting $\varepsilon = 1$ in Proposition \ref{pr:bounded_integral} tells us that the left hand side of Equation (\ref{eq:integral_formula_upto_constants}) is finite. 

We decompose $\Lambda$ as 
\begin{equation}
  \Lambda =  \bigsqcup_{ \Gamma \omega \in \Gamma \backslash \Lambda} \{ \gamma \omega \ | \ \gamma \in \Gamma\}.
\end{equation}

	So, we write that
	\begin{align}
	  \int_{\mathcal{G}(\mathbb{R}) / \Gamma} \left( \sum_{ v\in \Lambda} f(gv)  \right) dg
	  & = \int_{\mathcal{G}(\mathbb{R}) / \Gamma} 
	  \sum _{ \Gamma \omega \in \Gamma \backslash \Lambda}
	  \sum_{  \gamma \in \Gamma/ \Gamma_\omega } f(  g \gamma \omega ) dg\\
	  & =
	  \sum _{ \Gamma \omega \in \Gamma \backslash \Lambda}
	  \left( \int_{\mathcal{G}(\mathbb{R}) / \Gamma} 
	  \sum_{  \gamma \in \Gamma/ \Gamma_\omega } f( g \gamma \omega ) \right) dg.
	\end{align}

Denote $\mathcal{G}_{\omega} = \{g \in \mathcal{G} \mid g \omega = \omega \}$. This lets us define the real points $\mathcal{G}_{\omega}(\mathbb{R})$, the rational points $\mathcal{G}_{\omega}(\mathbb{Q})$ 
and the arithmetic points $\Gamma_{\omega} = \mathcal{G}_{\omega}(\mathbb{R}) \cap \Gamma $.
	We then observe that the following inclusion of groups holds.
\begin{equation}
% https://tikzcd.yichuanshen.de/#N4Igdg9gJgpgziAXAbVABwnAlgFyxMJZARgBoAGAXVJADcBDAGwFcYkQAdDgW3pwAsAxk2ABxAL4AKLrwEAjOcABK4gJQhxpdJlz5CKcqWLU6TVuy6j63XgAINWkBmx4CRMsZoMWbRJw72mtouekQATEYm3uZ+tjJ8QiIS0jwJCspqAPoA7g7Bum4oZGFRZr7+Vjb0OYEmMFAA5vBEoABmAE4Q3EhkIDgQSORBIB1dSAAsNP09w6PdiJN9A4gAzLOd8ytTy0OOcxPbgzSMWGDlUBA4OPUgNHIwYFATAJzrYwuHiIYgJ2fsUPQ4PwbncHk9EABacavSjiIA
\begin{tikzcd}
                   & \mathcal{G}(\mathbb{R})                                                                            &                                       \\
\arrow[ru] \Gamma   & \                                                                                                  &  \mathcal{G}_\omega(\mathbb{R}) \arrow[lu] \\
                   & \Gamma_\omega  \arrow[lu] \arrow[ru] \arrow[uu, dashed, bend left=49] \arrow[uu, dashed, bend right=49] &                                      
\end{tikzcd}
\end{equation}
Notice that in the last expression, the integration is happening over $\mathcal{G}(\mathbb{R}) / \Gamma_\omega$ broken into two integrations along the left dashed path in the diagram. We want to instead break it down into the path on the right. 

Observe that the group $\mathcal{G}_{\omega}$ can be described in the following explicit way. Each $\omega \in \Lambda = \OK^{t \times n }$ is a matrix constituting of $n$ columns of vectors in $K^{t}$. The group $\mathcal{G}_{\omega}$ is then the pointwise stabilizer of the $K$-vector subspace of $K^{t}$ spanned by the columns of $\omega$. 
Therefore, we know that $\mathcal{G}_{\omega}$ is $g_0$-conjugate, 
for some  $g_0 \in \mathcal{G}_{\omega}(\mathbb{Q})$, to the stabilizer group of $K^{k} \times \{0\}^{t-k} \subseteq K^{t} $ for some $k \leq n$. This is a maximal parabolic $\mathbb{Q}$-subgroup of $\mathcal{G}(\mathbb{Q})$
and in particular this means that $\mathcal{G}_{\omega}(\mathbb{R})$ is a unimodular group.

Since $\mathcal{G}_{\omega}$ is a unimodular algebraic group, the homogeneous space $\mathcal{G}_{\omega}(\mathbb{R})/\Gamma_\omega$ has a well-defined Haar measure on which we can unfold our integral as follows:
	\begin{align}
	  \int_{\mathcal{G}(\mathbb{R}) / \Gamma} \left( \sum_{ v\in \Lambda} f(gv)  \right) dg
	  & =
	  \sum _{ \Gamma \omega \in \Gamma \backslash \Lambda}
	   \int_{ \gamma_1 \in \mathcal{G}(\mathbb{R}) / \mathcal{G}_\omega(\mathbb{R})}  
	   \left( 
	  \int_{  \gamma_2 \in \mathcal{G}_\omega(\mathbb{R}) / \Gamma_\omega }f( \gamma_1 \gamma_2 \omega )  d\gamma_2  \right) d \gamma_1.
	\end{align}
Now we know that $\gamma_2 \omega = \omega$. This gives us the following.
\begin{equation}
	\label{eq:diamand_winding}
	  \int_{\mathcal{G}(\mathbb{R}) / \Gamma} \left( \sum_{ v\in \Lambda} f(gv)  \right) dg
	=  \sum_{  \Gamma \omega  \in \Gamma \backslash \Lambda } \vol\left( \mathcal{G}_{\omega}(\mathbb{R})/ \Gamma_\omega \right) \int_{ \mathcal{G}(\mathbb{R}) / \mathcal{G}_{\omega}(\mathbb{R})   } f(g \omega) dg.
\end{equation}
We now revisit the explicit description of $\mathcal{G}_{\omega}$. As argued before, $\mathcal{G}_{\omega} \hookrightarrow \mathcal{G}$ is the pointwise stabilizer of the $K$-subspace of $W_{\mathbb{Q}} \subseteq K^{t}$ spanned by the columns of $\omega$.
This tells us that $\vol( \mathcal{G}_{\omega}/\Gamma_{\omega} )$ only depends on $W_{\mathbb{Q}}$. Furthermore, $\mathcal{G}(\mathbb{Q}) \omega  \simeq \mathcal{G}(\mathbb{Q})/\mathcal{G}_{\omega}(\mathbb{Q})$ can be identified with the set of all ordered ${K}$-bases of $W_{\mathbb{Q}}$ having a particular rank factorization. Let us make this a bit more precise.

According to the rank-factorization of $\omega \in M_{t \times n} (K)$, we can uniquely decompose 
\begin{equation}
  \omega = C \cdot D
\end{equation}
where $C \in M_{t \times k }(K)$ 
and $D \in M_{k \times n}(K)$ and $k=\rank(\omega)$ 
and $D$ is in a row-reduced echelon form. Then, the orbit $\mathcal{G}(\mathbb{Q}) \omega$ is the set of all 
the $\omega' \in M_{t \times n}(K)$ with $\rank \omega' = k$ and having the same $D$ in the rank factorization of $\omega' = C' \cdot D$.
Therefore, we get that $\mathcal{G}(\mathbb{Q}) \omega \subseteq M_{t \times k}(K) \cdot D$ forms a dense Zariski-open subset and hence $\mathcal{G}(\mathbb{R})\omega \subseteq M_{t \times k}(K_\mathbb{R}) \cdot D$ is a dense open subset in the usual topology. This bijection extends to a homeomorphism between $\mathcal{G}(\mathbb{R})/\mathcal{G}_{\omega}(\mathbb{R})$ and $\mathcal{G}\omega$ since it is a map from a quotient of locally compact groups to a locally compact space. Hence, continuous functions on $\mathcal{G}(\mathbb{R}) \omega$ are the same as continuous functions on $\mathcal{G}(\mathbb{R})/\mathcal{G}_{\omega}(\mathbb{R})$.
Finally, the integral
\begin{equation}
	f \mapsto \int_{M_{t \times k}(K_\mathbb{R})} f(x D) dx
\end{equation}
defines a left $\mathcal{G}(\mathbb{R})$-invariant measure on $\mathcal{G}(\mathbb{R}) \omega$. Therefore, up to a constant, this is equal to the integral $\int_{\mathcal{G}(\mathbb{R})/\mathcal{G}_{\omega}(\mathbb{R})} f(g\omega)dg$. Combining all the $\Gamma \omega \in \Gamma\backslash \Lambda$ with the same second term $D$ in the rank-factorization, we obtain that the right-hand side of Equation (\ref{eq:diamand_winding}) is exactly as we desire.
\end{proof}

\begin{remark}
As mentioned in Remark \ref{re:module_lattices}, the same methods and strategy can be used to show Rogers' integral formula for the larger space of module lattices. Proposition \ref{pr:bounded_integral}, Proposition \ref{th:diamond_lemma} and the proof below go through without any obstructions for the space $\SL_t(K_\mathbb{R})/\Gamma$, where $\Gamma$ is an arithmetic subgroup that stabilizes a maximal rank $\OK$-module
$\Lambda  \subseteq K \otimes \mathbb{R}^t$. Corollary \ref{co:nonstandard_lattice} is in this case applied to such a lattice stabilized by $\Gamma$. The space of module lattices, as discussed in \cite{DK22}, is a finite disjoint union of such quotients $\SL_t(K_\mathbb{R})/\Gamma$.
\end{remark}

We are now ready to finish our proof.

\begin{proof} {\bf (of Theorem \ref{th:weil})}

All that remains is to determine the constants $c_{D}$ in 
Proposition \ref{th:diamond_lemma}. Fix a row-reduced echelon matrix $D_0 \in M_{m_0 \times n}(K)$ of rank $m_0$. For $m_0 = 0$, there is nothing to prove so we assume $m_0 \geq 1$.

We shall determine $c_{D_0}$ by replacing $f$ by
\begin{equation}
  x \mapsto f(\varepsilon x) \ind_{M_{t \times m_0}(K_\mathbb{R}) D_0}(x) ,
\end{equation}
where $\ind_{M_{t \times m_0}(K_\mathbb{R})D_0}$ is the indicator function of the subspace $M_{t \times m_0}(K_\mathbb{R}) D_0 \subseteq M_{t \times n}(K_\mathbb{R})$. 

Equation (\ref{eq:integral_formula_upto_constants}) then yields, writing $\Lambda=M_{t \times n}(\OK)$: 
\begin{equation}
  \int_{\mathcal{G}(\mathbb{R})/\Gamma} \left( \sum_{v \in g \Lambda} f(\varepsilon v) \ind_{M_{t \times m_0}(K_\mathbb{R}) D_0}(x) \right) dg 
  = 
  f(0)+
\sum_{m=1}^{m_0}
\sum_{\substack{D \in M_{m \times n }(K), 
\rank(D) = m \\ 
D \text{ is row reduced echelon} \\ M_{t \times m}(K_\mathbb{R}) D \subseteq M_{t \times m_0}(K_\mathbb{R})D_0}} c_D  \cdot \int_{x \in K_\mathbb{R}^{t \times m }} f( \varepsilon x D ) dx.
		\label{eq:integral_formula_upto_constants_with_varepsilon}
\end{equation}
Indeed, row reduced echelon matrices $D$ contribute to the right hand side only if the intersection of subspaces $M_{t \times m}(K_\mathbb{R})D \cap M_{t \times m_0}(K_\mathbb{R}) D_{0}$ is not a subset of Lebesgue measure zero of $M_{t \times m}(K_\mathbb{R}) D$.\par
We now rewrite the left hand side of \eqref{eq:integral_formula_upto_constants_with_varepsilon} as follows: 
define the function $f_0(x)= f(xD_0)$ on $M_{t \times m_0}(K_\mathbb{R})$ as well as the lattice 
\begin{equation}
  \Lambda_0 = 
  \{C \in M_{t \times m_0}(\OK) \mid C \cdot D_0 \in M_{t \times n}(\OK)\} 
\end{equation}
in the Euclidean space $M_{t \times m_0}(K_\mathbb{R})$. Observe that, since $D_0$ is in reduced form, we have
\begin{equation}
  \Lambda \cap M_{t \times m_0}(K_\mathbb{R})D_0 = 
  \{C \in M_{t \times m_0}(K) \mid C \cdot D_0 \in M_{t \times n}(\OK)\} \cdot D_0=\Lambda_0\cdot D_0.
\end{equation}
We may therefore rewrite $ \sum_{v \in g \Lambda} f(\varepsilon v) \ind_{M_{t \times m_0}(K_\mathbb{R}) D_0}(x)$ as the lattice sum function 
$\sum_{v \in \Lambda_0} f_0 ( \varepsilon g v )$. 
The lattice $\Lambda_0$ is fixed by $\SL_{t}(\OK)$, so that Proposition \ref{pr:bounded_integral} and Corollary \ref{co:nonstandard_lattice}
with $n$ replaced by $m_0$
imply that the function 
\begin{equation}
g \SL_{t}(\OK) \mapsto \varepsilon^{m_0td}\sum_{v \in \Lambda_0} f_0 ( \varepsilon g v )
\end{equation}
is uniformly dominated by an integrable function on $\mathcal{G}(\mathbb{R})/\Gamma$. 
Multiplying Equation (\ref{eq:integral_formula_upto_constants_with_varepsilon}) by 
$\varepsilon^{m_0 td}$ on both sides, we obtain
\begin{align}
  \int_{\mathcal{G}(\mathbb{R})/\Gamma}\varepsilon^{m_0 td} \left( \sum_{v \in \Lambda_0} f_0 ( \varepsilon g v ) \right) dg 
  = & \varepsilon^{m_0td}f(0) 
   \\   + 
\sum_{m=1}^{m_0}
  \varepsilon^{( m_0-m )td}& 
\sum_{\substack{D \in M_{m \times n }(K), 
\rank(D) = m \\ 
D \text{ is row reduced echelon} \\ M_{t \times m}(K_\mathbb{R}) D \subseteq M_{t \times m_0}(K_\mathbb{R})D_0}} c_D    \cdot \int_{x \in K_\mathbb{R}^{t \times m }} f( x D ) dx. 
\end{align}
As $\varepsilon \rightarrow 0$, by dominated convergence the left hand side converges to the average over $\SL_{t}(K_\mathbb{R})/\SL_{t}(\OK)$ of 
\begin{equation}
	\lim_{\varepsilon\to 0}\varepsilon^{m_0 td} \sum_{v \in g\Lambda_0} f_0 ( \varepsilon v )=\vol(g\Lambda_{0})^{-1}
   \int_{M_{t \times m_0}(K_\mathbb{R}) } f(xD_{0})dx.
\end{equation}
On the right hand side, as $\varepsilon \rightarrow 0$ the surviving terms come from row reduced echelon matrices $D$ of rank $m_0$ satisfying $M_{t \times m}(K_\mathbb{R}) D \subseteq M_{t \times m_0}(K_\mathbb{R})D_0$. We deduce that $D$ must equal $D_0$ and the right hand side thus converges to 
\begin{equation}
  c_{ D_{0} } \int_{M_{t \times m_0}(K_\mathbb{R}) } f(xD_{0})dx.
\end{equation}
Therefore, comparing we obtain that 
\begin{equation}
  c_{ D_{0} }= \int_{\mathcal{G}(\mathbb{R})/\Gamma}\vol(g\Lambda_{0})^{-1}dg=[M_{t \times m_0}(\OK): \Lambda_{0}]^{-1}.
\end{equation}
The result follows since the index of $\Lambda_0$ in $M_{t \times m_0}(\OK)$ is exactly $\mathfrak{D}(D_0)^{t}$.

\end{proof}
\end{appendix}

\bibliographystyle{unsrt}
\bibliography{authfile}

  \bigskip
  \footnotesize

  N.~Gargava, \textsc{École Polytechnique Fédérale de Lausanne,
    Vaud, Switzerland}\par\nopagebreak
  \textit{E-mail address}:  \texttt{nihar.gargava@epfl.ch}

  \medskip

   V.~Serban, \textsc{New College of Florida,
    FL, USA}\par\nopagebreak
  \textit{E-mail address}:  \texttt{vserban@ncf.edu}

  \medskip

  M.~Viazovska, \textsc{École Polytechnique Fédérale de Lausanne,
    Vaud, Switzerland}\par\nopagebreak
                                                                                                     \textit{E-mail address}:  \texttt{maryna.viazovska@epfl.ch}

\end{document}